\documentclass[a4paper]{amsart}
\pdfoutput=1
\usepackage{mathmode}
\usepackage{bm}
\usepackage{yhmath}
\usepackage{tikz,tikz-cd}
\usepackage[margin=1.3in]{geometry}
\usepackage{graphicx,array,subfigure}
\usepackage{stackengine,scalerel}
\usepackage{amsthm}
\usepackage{enumitem}
 \usepackage{multirow}
\usepackage{caption}
\captionsetup{labelsep=period}
\usepackage{float}
\floatstyle{plain} %
\newfloat{diagram}{tbp}{lod} %
\floatname{diagram}{Diagram} %
\makeatletter
\renewcommand{\thediagram}{\Alph{diagram}} %
\renewcommand{\fnum@diagram}{Diagram~\thediagram.}
\makeatother
\def\parital{\partial}
\def\H{\mathbf{H}}
\def\DELTA{\mathbf{\Delta}}
\def\coef{\mathbf{coef}}
\def\deg{\mathbf{deg}}
\def\sgn{\mathrm{sgn}}
\def\cups{\cup\cdots\cup}
\def\Z{\mathbb{Z}}
\def\abs#1{\left |#1 \right |}

\def\szkh#1{[\![#1]\!]}

\def\R{\mathbb{R}}

\def\O{\mathcal{O}}
\def\slope{\text{slp}}
\def\Q{\mathbb{Q}}

\def\tto{\longrightarrow}

 \DeclareRobustCommand{\longleftmapsto}{\text{\reflectbox{$\longmapsto$}}}
\def\quotient#1#2{%
    \raise1ex\hbox{$#1$}\Big/\lower1ex\hbox{$#2$}%
}
\def\quo#1#2{%
    \raise0.6ex\hbox{$#1$}\big/\lower0.6ex\hbox{$#2$}%
}

\def\F{f^\dagger}

\def\limi{\varprojlim}

\def\Zx{\widehat{\Z}^{\times}}
\def\ab#1{{#1}^{\mathrm{ab}}}
\def\GL{\mathrm{GL}}
\def\SL{\mathrm{SL}}
\def\C{\mathbb{C}}
\def\Ab#1{{#1}^{\mathrm{Ab}}}
\def\slope{\text{slp}}
\def\slp{\text{slp}^\ast}
\def\Zxpm{\Zx/\{\pm1\}}
\def\tensor{\widehat{\mathbb{Z}}{\otimes}_{\mathbb{Z}}}

\tikzset{%
  symbol/.style={
    draw=none,
    every to/.append style={
      edge node={node [sloped, allow upside down, auto=false]{$#1$}}
    },
  },
}

\def\fdf#1{f^{\mathrm{Df}(\mathbf{#1})}}

\stackMath
\newcommand\reallywidehat[1]{%
\savestack{\tmpbox}{\stretchto{%
  \scaleto{%
    \scalerel*[\widthof{\ensuremath{#1}}]{\kern.1pt\mathchar"0362\kern.1pt}%
    {\rule{0ex}{\textheight}}
  }{\textheight}%
}{2.4ex}}%
\stackon[-6.9pt]{#1}{\tmpbox}%
}

\date{\today}

\author{Xiaoyu Xu}
\address{Beijing International Center for Mathematical Research\\Peking University\\ Beijing 100871, China P.R. }
\email{xuxiaoyu@stu.pku.edu.cn}

\title[Regularity of profinite isomorphisms and the A-polynomial]{On regularity of profinite isomorphisms between cusped hyperbolic 3-manifolds and the A-polynomial}

\begin{document}
\begin{sloppypar}
\maketitle
\begin{abstract}
We prove that any isomorphism between the profinite completions of the fundamental groups of two cusped finite-volume hyperbolic 3-manifolds is regular and peripheral regular. As an application, we show that the $A$-polynomial of prime knots in $S^3$ is a profinite invariant, up to possible mirror image. 
\end{abstract}
\setcounter{tocdepth}{1}
\tableofcontents 
\section{Introduction}
Profinite rigidity questions to which extent a finitely generated, residually finite group is determined by its finite quotient groups. These data are equivalently encoded in an algebraic  construction named profinite completion according to \cite{DFPR82}.
\begin{definition}\label{DEF: profinite completion}
Let $G$ be a group, the {\em profinite completion} of $G$ is defined as 
\begin{equation*}
\widehat{G}=\limi\,\quo{G}{N}
\end{equation*}
where $N$ ranges over all finite-index normal subgroups of $G$.
\end{definition}

The question becomes more accessible when we restrict ourselves to 3-manifold groups. Since 3-manifolds are largely determined by their fundamental groups, it is natural to ask which topological or geometrical properties of a compact 3-manifold $M$ is determined by the isomorphism type of $\widehat{\pi_1M}$.

\begin{convention*}
In this article, all 3-manifolds are orientable unless otherwise stated. 
\end{convention*}

\setcounter{footnote}{1}

Proven in a series of works by Wilton--Zalesskii \cite{WZ17, WZ17b, WZ19}, as well as the works of Wilkes \cite{WilkesJSJ,Wil19} for the bounded case, the profinite completion of the fundamental group determines whether a compact 3-manifold $M$ is geometric (in the sense of Thurston); it determines the geometry type in the geometric case, and determines the prime decomposition and the JSJ-decomposition in the non-geometric case. Many examples of 3-manifolds are proven to be profinitely rigid, that is, uniquely distinguished among compact 3-manifolds by the profinite completion of its fundamental group, see \cite{BMRS20, Cw24, Wil17, Wil18, Xu24b}. However, the general question for profinite rigidity in 3-manifolds is still widely open. In \cite{Xu24a}, the author proved that the the profinite completion of the fundamental group determines the homeomorphism type of a compact 3-manifold $M$ with empty or toral boundary up to finitely many possibilities. To determine whether such finite ambiguity really exists, the remaining challenging part lies in the case of hyperbolic 3-manifolds, including closed and cusped manifolds. 

\subsection{Regularity and peripheral regularity}
The first part of this paper characterizes a special feature of isomorphisms between the profinite completion of the fundamental groups of hyperbolic 3-manifolds. 
The notion of regularity for a profinite isomorphism was first introduced by Boileau--Friedl \cite{BF19}.
\begin{definition}\label{introdef: Zx regular}
Let $G$ and $H$ be two (finitely generated) groups. An isomorphism $f:\widehat{G}\to \widehat{H}$ is called {\em regular} if the induced isomorphism $\widehat{\ab{G}}\to \widehat{\ab{H}}$ is the profinite completion of an isomorphism $\ab{G}\to \ab{H}$, where `$\mathrm{ab}$' denotes the abelianization.  
\end{definition}

In the study of profinite rigidity among 3-manifold groups, peripheral subgroups in 3-manifold groups plays an important role. A related notion named peripheral regularity was introduced by the author in \cite{Xu24a,Xu24b}. 
\begin{definition}\label{introdef: peripheral Zx regular}
Let $M$ and $N$ be compact 3-manifolds with incompressible toral boundaries, and let $f:\widehat{\pi_1M}\to \widehat{\pi_1N}$ be an isomorphism. 
\begin{enumerate}[leftmargin=*]
\item\label{(1)} We say that $f$ {\em respects the peripheral structure} if there is a one-to-one correspondence between the boundary components of $M$ and $N$, denoted by $\partial_iM\longleftrightarrow \partial_iN$, such that  $f(\overline{\pi_1\partial_iM})=g_i^{-1}\cdot \overline{\pi_1\partial_iN} \cdot g_i$  for some $g_i\in \widehat{\pi_1N}$, where $\pi_1\partial_iM\le \pi_1M$ and $\pi_1\partial_iN\le \pi_1N$ are conjugacy representatives of the peripheral subgroups.
\item We say that $f$ is {\em peripheral regular} if $f$ respects the peripheral structure (so we follow the notation in (\ref{(1)})), and for each $i$, the isomorphism $C_{g_i}\circ f|_{\overline{\pi_1\partial_iM}}:\widehat{\pi_1\partial_iM}\cong \overline{\pi_1\partial_iM}\tto \overline{\pi_1\partial_iN}\cong\widehat{\pi_1\partial_iN}$ is regular, where $C_{g_i}(x)=g_ixg_i^{-1}$ denotes the conjugation.  
\end{enumerate}
%
\end{definition}

The main result of this article is the following theorem. 

\begin{theorem}\label{mainthm: regular}
Suppose $M$ and $N$ are cusped finite-volume hyperbolic 3-manifolds and $f:\widehat{\pi_1M}\to \widehat{\pi_1N}$ is an isomorphism. Then $f$ is regular and peripheral regular.
\end{theorem}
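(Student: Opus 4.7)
The plan is to combine the profinite detection of peripheral structure with a slope-theoretic analysis of how the peripheral tori sit inside $H_1$, then exploit finite covers to force integrality of the restriction matrices on each cusp; regularity on the full abelianization follows by assembling the per-cusp information. I first apply the profinite detection of the JSJ and geometric decompositions due to Wilkes~\cite{WilkesJSJ,Wil19} and Wilton--Zalesskii~\cite{WZ17b,WZ19}, together with the profinite detection of hyperbolicity, to obtain the bijection of cusps $\partial_i M \leftrightarrow \partial_i N$ and conjugating elements $g_i\in\widehat{\pi_1 N}$. After absorbing the $g_i$, each cusp restriction of $f$ becomes a matrix $A_i\in \mathrm{GL}_2(\widehat{\Z})$, and peripheral regularity reduces to the claim $A_i\in \mathrm{GL}_2(\Z)$.

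\textbf{Detecting homological longitudes.} For each cusp $\partial_i M$ set $K_i^M := \ker\bigl(\pi_1 \partial_i M \to H_1(M;\Z)/\mathrm{tors}\bigr)$. The half-lives-half-dies principle forces $\sum_i \mathrm{rank}\,K_i^M$ to equal the number of cusps; in the generic rank-one case, $K_i^M$ is generated by the homological longitude $\lambda_i^M$. Its profinite closure $\widehat{K}_i^M$ coincides with $\ker\bigl(\overline{\pi_1 \partial_i M} \to H_1(M;\widehat{\Z})/\mathrm{torsion}\bigr)$, which is purely functorial, so $f(\widehat{K}_i^M) = \widehat{K}_i^N$. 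This yields $f(\lambda_i^M) = \mu_i \lambda_i^N$ with $\mu_i\in\widehat{\Z}^{\times}$, so one column of $A_i$ is already integral up to the scalar $\mu_i$.

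\textbf{Integrizing via finite covers.} The main technical step is to upgrade $\mu_i$ to $\pm 1$ and integrize the remaining off-diagonal entry of $A_i$. For any finite-index $\Gamma\le\pi_1 M$ corresponding to a regular cover $\tilde M\to M$, each torus preimage above $\partial_i M$ has its own homological longitude, sitting inside $\pi_1 \partial_i M$ as a slope generally \emph{not} a rational multiple of $\lambda_i^M$. Under the matching cover of $N$ induced by $f$, each such cover-longitude must be sent to a cover-longitude on the $N$-side, yielding fresh $\Z$-integral constraints on $A_i$. Varying $\Gamma$ over the profusion of finite quotients of $\pi_1 M$ available through the virtual specialness and LERF theory for hyperbolic 3-manifold groups, I expect the cover-detected slopes to sweep out a dense $\Z$-sublattice of $\pi_1 \partial_i M$, forcing $A_i\in \mathrm{GL}_2(\Z)$.

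\textbf{Assembly and main obstacle.} Once $A_i\in \mathrm{GL}_2(\Z)$ on every cusp, regularity of $f$ on $\widehat{H_1(M;\Z)}$ follows by assembly: by half-lives-half-dies the peripheral images generate a finite-index subgroup of $H_1(M;\Z)/\mathrm{tors}$, and gluing the per-cusp $\mathrm{GL}_2(\Z)$-data yields a $\Z$-linear isomorphism, while the torsion subgroup is profinitely rigid by functoriality. The hard part is the cover step: one must verify that the finite-cover correspondence transfers coherently under $f$ so that cover-longitudes really map to cover-longitudes, and treat the degenerate cases $\mathrm{rank}\,K_i^M\in\{0,2\}$ by separate mechanisms --- direct $H_1$-detection when $K_i^M=0$, and a dual $H_2$-based analysis when $K_i^M$ is all of $\pi_1 \partial_i M\otimes\Q$.
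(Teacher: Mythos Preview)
Your approach has a fundamental gap at the ``Integrizing via finite covers'' step: slope-detection constraints cannot determine the scalar. Every constraint you extract from a cover-longitude is of the form $A_i \cdot v = \mu_v \cdot w$ with $v, w \in \Z^2$ and an \emph{unknown} $\mu_v \in \Zx$, and such constraints are invariant under replacing $A_i$ by $\lambda A_i$ for any $\lambda \in \Zx$ (absorb $\lambda$ into $\mu_v$). The auxiliary sublattice constraints $A_i(\widehat{L_j}) = \widehat{L_j'}$ from matching covers are likewise insensitive to $\lambda$, since $\lambda \cdot \widehat{L} = \widehat{L}$ for any finite-index sublattice $L \subset \Z^2$ and any $\lambda \in \Zx$. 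Three non-parallel detected slopes already force $A_i = \lambda B_i$ with $B_i \in \mathrm{GL}_2(\Z)$ and a common $\lambda \in \Zx$ (via \autoref{LEM: Zx+-1}); further slopes add nothing. So your scheme at best recovers peripheral $\Zx$-regularity, which was already known (\autoref{THM: peripheral Zx regular}). The entire new content of \autoref{mainthm: regular} is the assertion $\lambda = \pm 1$, and your proposal has no mechanism to see it --- the constraints you produce are homogeneous of degree zero in $\lambda$.

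The paper's proof introduces a genuinely different invariant, the \emph{profinite mapping degree} $\deg(f) \in \Zx$, read off from $\H_3(\widehat{\pi_1 M}, \{\overline{P_i}\}; \widehat{\Z}) \cong \widehat{\Z}$ via Wilkes's relative profinite (co)homology. Passing to the boundary long exact sequence and computing $f_*$ on $\H_2$ of each boundary torus as a $2 \times 2$ determinant gives $\deg(f) = \pm \lambda^2$ (\autoref{PROP: square}). Independently, Dehn-filling to closed hyperbolic manifolds and invoking Liu's formula $\coef^3 = \pm \deg$ for the closed case (\autoref{PROP: closed relation}, via degree-one maps to $T^3$) yields $\deg(f) = \pm \lambda^3$ (\autoref{PROP: cube}). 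Comparing forces $\lambda = \pm 1$. The mismatch between the exponents $2$ and $3$ is the crux, and it requires a three-dimensional homological invariant that a first-homology, slope-based argument cannot reach.
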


We remark that in general, a profinite isomorphism between 3-manifold groups may not be regular. This is demonstrated by the example of Hempel pairs \cite{Hem14}, as well as easily constructable examples for the 3-torus $T^3$, or more generally, the product of a compact surface with $S^1$. Thus, \autoref{mainthm: regular} is particularly special for hyperbolic 3-manifolds. 

Weaker versions of \autoref{mainthm: regular} have been proven. Liu \cite{Liu23} proved that any isomorphism between the profinite completion of the fundamental groups of finite-volume hyperbolic 3-manifolds (including closed and cusped) is $\Zx$-regular, that is, the induced isomorphism $\widehat{H_1(M;\Z)}\to \widehat{H_1(N;\Z)}$ is the profinite completion of an isomorphism $H_1(M;\Z)\to H_1(N;\Z)$ composed with a scalar multiplication by some element $\lambda\in \Zx$, see \autoref{THM: Zx regular}. The author  proved in \cite{Xu24a} that any isomorphism between the profinite completion of the fundamental groups of cusped hyperbolic 3-manifolds is peripheral $\Zx$-regular, that is, replacing `regular' in \autoref{introdef: peripheral Zx regular} with `$\Zx$-regular', see \autoref{THM: peripheral Zx regular}. 

The proof of \autoref{mainthm: regular} is an improvement of these two weaker results. The key technique being utilized in this paper is the relative cohomology theory for profinite groups introduced by Wilkes \cite{Wil19}. In fact, for an isomorphism $f$ between the profinite completion of the fundamental groups of cusped hyperbolic 3-manifolds, we can define a profinite mapping degree $\deg(f)\in \Zx$ witnessed in the third relative homology groups. The coefficient $\lambda\in\Zx$ that appears in the $\Zx$-regularity statement is denoted by $\coef(f)$. We establish two relations between $\coef(f)$ and $\deg(f)$. On one hand, through passing onto the peripheral subgroups, we prove that $\coef(f)^2=\pm\deg(f)$. On the other hand, using the  Dehn filling technique introduced in \cite{Xu24b}, we can also pass these invariants to a profinite isomorphism between closed hyperbolic 3-manifolds and prove that $\coef(f)^3=\pm \deg(f)$ based on a formula of Liu \cite[Lemma 5.3]{Liu25}. These two methods demonstrate a dimension shift, which in turn implies that $\coef(f)=\pm1$. 

\begin{remark}
When $M$ and $N$ are closed hyperbolic 3-manifolds and $f:\widehat{\pi_1M}\to \widehat{\pi_1N}$ is an isomorphism, Liu \cite[Lemma 7.1]{Liu25} showed that $\coef(f)^2=1$. This is currently the closest result towards regularity in the closed case. Our proof for the cusped case relies on the boundary structure, and it cannot be extended directly to the closed case. 
\end{remark}

\subsection{The $A$-polynomial}

Knot complements in $S^3$ provide a delightful testing ground for profinite rigidity or profinite properties in 3-manifold groups. For instance, Ueki \cite{Ueki} proved that knots in $S^3$ with profinitely isomorphic knot groups have identical Alexander polynomials. 
Torus knot complements and prime graph knot complements are profinitely rigid among all knot complements in $S^3$, according to Boileau--Friedl \cite{BF19} and Wilkes \cite{WilkesKnot} respectively.  
In \cite[Theorem C]{Xu24b}, the author proved that the complement of hyperbolic knots or hyperbolic-type satellite knots in $S^3$ are distinguished from compact orientable 3-manifolds by the profinite completion of their fundamental groups. In addition, in \cite{Xu24b}, the author also showed the profinite rigidity of many hyperbolic knot complements, such as the complement of full twist knots, including a new proof for the figure-eight knot firstly proven by Bridson--Reid \cite{BR20}. 

The $A$-polynomial of a knot $K$ is a two-variable polynomial $A_K(M,L)$  defined by Cooper, Culler, Gillet, Long, and Shalen in \cite{CCGLS94}. This is a knot invariant related to the $\SL(2,\C)$-representations of 3-manifold groups. An enhanced version of the $A$-polynomial is also defined and used in many literatures  containing slightly more information than the $A$-polynomial, which we denote as $\widetilde{A}_K(M,L)$, see \autoref{subsec: A-poly} for complete definition. 
Briefly speaking, when defining the enhanced version, the component in the character variety of the knot complement given by reducible representations is excluded.
The relation between these two versions is simple. In fact,
$
A_K(M,L)=\mathrm{Red}((L-1)\widetilde{A}_K(M,L))
$, 
where `$\mathrm{Red}$' means reducing repeating factors. In particular, the enhanced $A$-polynomial completely determines the $A$-polynomial.

For a knot $K$ in $S^3$, we denote $X(K)=S^3-\text{int}(N(K))$ as the exterior of $K$. An application of \autoref{mainthm: regular} is the following theorem.

\begin{theorem}\label{mainthm: A-polynomial general}
Suppose $J$ and $K$ are knots in $S^3$, and $f: \widehat{\pi_1(X(J))}\to \widehat{\pi_1(X(K))}$ is an isomorphism. Suppose either $f$ respects the peripheral structure\footnote{assuming $J$ and $K$ are non-trivial.}, or one of $J$ and $K$ is a prime knot. Then, up to possibly replacing $K$ with its mirror image, the enhanced $A$-polynomials of $J$ and $K$ are identical: $\widetilde{A}_J(M,L)\doteq \widetilde{A}_K(M,L)$, and consequently their $A$ polynomials are also identical.
\end{theorem}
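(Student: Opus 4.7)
The plan is to first reduce to the case where both $J$ and $K$ are hyperbolic knots, then use \autoref{mainthm: regular} to match the peripheral data, and finally transfer this to the $\SL(2,\mathbb{C})$-character variety information encoded in the enhanced $A$-polynomial.

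I would first dispose of the non-hyperbolic cases. The unknot is the unique knot with abelian fundamental group, hence profinitely distinguished, so we may assume $J$ and $K$ are both nontrivial. Using profinite detection of geometricity and of the JSJ decomposition (Wilton--Zalesskii, Wilkes), profinite rigidity of torus knot complements \cite{BF19} and of prime graph knot complements \cite{WilkesKnot}, together with the satellite knot results of \cite{Xu24b}, one can, under either hypothesis in the statement, identify the JSJ pieces of $X(J)$ and $X(K)$ compatibly. Multiplicativity of $\widetilde{A}_K$ under the satellite construction and connected sum then reduces the problem to the case where $X(J)$ and $X(K)$ are themselves hyperbolic.

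In the hyperbolic case, \autoref{mainthm: regular} asserts that $f$ is regular and peripheral regular. Regularity gives a genuine isomorphism $H_1(X(J);\mathbb{Z}) \to H_1(X(K);\mathbb{Z})$ of the form $\mu_J \mapsto \pm \mu_K$. Peripheral regularity provides a compatible isomorphism of the peripheral $\mathbb{Z}^2$-lattices; since the longitude is characterized intrinsically as the primitive null-homologous element of $\pi_1(\partial X(\cdot))$, the $H_1$-compatibility forces $(\mu_J,\lambda_J) \mapsto (\pm\mu_K,\pm\lambda_K)$. After replacing $K$ by its mirror image if necessary, the meridian--longitude bases correspond under the profinite identification.

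The main obstacle is to translate this peripheral alignment into equality of the enhanced $A$-polynomials, as $\widetilde{A}_K(M,L)$ is the defining equation for the image of the non-abelian locus of the $\SL(2,\mathbb{C})$-character variety of $\pi_1(X(K))$ in the peripheral $(M,L)$-torus. My plan is to exploit the arithmetic nature of hyperbolic knot groups: for each prime power $q$, $f$ induces a bijection between $\SL(2,\mathbb{F}_q)$-representations of $\pi_1(X(J))$ and $\pi_1(X(K))$ that, by peripheral regularity, is compatible with the peripheral restriction in the fixed meridian--longitude coordinates. Combined with trace-field invariance results of Liu-type and a Zariski-density/specialization argument over number fields, this should force the restricted $\SL(2,\mathbb{C})$-character varieties to agree, yielding $\widetilde{A}_J \doteq \widetilde{A}_K$, and hence $A_J \doteq A_K$ via the relation $A_K \doteq \mathrm{Red}((L-1)\widetilde{A}_K)$. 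The hardest step will be making this density argument precise, since $\SL(2,\mathbb{C})$-representations are not a priori profinite invariants, and the leap from finite-field-valued characters to the full complex character variety requires nontrivial arithmetic input beyond the peripheral matching supplied by \autoref{mainthm: regular}.
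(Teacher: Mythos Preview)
Your proposal has two significant gaps.

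First, your reduction to the hyperbolic case via ``multiplicativity of $\widetilde{A}_K$ under the satellite construction'' is not available: there is no general formula expressing the $A$-polynomial of a satellite knot in terms of the $A$-polynomials of its JSJ pieces. The paper handles satellite knots with hyperbolic pieces differently. When $f$ respects the peripheral structure, it proves a mixed-manifold extension of peripheral regularity (\autoref{THM: Mixed peripheral}), showing that the profinite isomorphism on the whole knot group is already peripheral regular, and then runs the representation-matching argument on the full knot group rather than on pieces. For the prime case with a Seifert fibered outermost piece, the paper shows this piece must be a cable space, matches the cabling parameters $(p,q)$ via the fiber-preservation property of $f_r$, and then applies the specific Ni--Zhang cabling formula (\autoref{THM: cable formula}) to the companion.

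Second, and more fundamentally, your proposed route from $\SL(2,\mathbb{F}_q)$-representations to the complex character variety is not the mechanism the paper uses, and you yourself flag it as the unresolved step. The paper's argument is direct and avoids any density or specialization argument. Given a representation $\theta:\pi_1(X(K))\to\SL(2,\C)$, one uses the technique of \cite{BMRS20}: choose a field isomorphism $\C\cong\overline{\Q_p}$ under which $\theta$ becomes bounded, so that its image has compact (hence profinite) closure in $\SL(2,\overline{\Q_p})$. By the universal property of profinite completion, $\theta$ then extends to a continuous homomorphism $\widehat{\pi_1(X(K))}\to\SL(2,\overline{\Q_p})$. Precomposing with $f$ and restricting to $\pi_1(X(J))$ produces a genuine $\SL(2,\C)$-representation of $\pi_1(X(J))$ whose peripheral restriction is read off directly from peripheral regularity and the meridian--longitude matching of \autoref{THM: knot boundary}; one checks separately that non-abelian representations go to non-abelian ones. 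This transfers individual complex representations and yields $\xi^\ast(J)=\xi^\ast(K)$ with no arithmetic approximation needed.
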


The assumption that either the isomorphism respects the peripheral structure, or one of the knots is a prime knot is analogous to the condition in distinguishing knots in $S^3$ through the fundmental group of their exteriors. In fact, given knots $J$ and $K$ in $S^3$, suppose $f: \pi_1(X(J))\to \pi_1(X(K))$ is an isomorphism; then, $J$ and $K$ are equivalent (including mirror image) if either $f$ respects the peripheral structure \cite{Wal68} or one of $J$ and $K$ is prime \cite{GL89}. Moreover, this assumption is necessary in \autoref{mainthm: A-polynomial general}. For non-prime knots $J=K_1\#K_2$ and $K=K_1\#\overline{K_2}$, where $\overline{K_2}$ denotes the mirror image of $K_2$, we always have $\pi_1(X(J))\cong \pi_1(X(K))$. However, in this case $A_J(M,L)$ and $A_K(M,L)$ may not coincide, see \cite{CL96} for the example of reef knot and granny knot.

The core behind \autoref{mainthm: A-polynomial general} can be stated in the fashion of character varieties. For a compact connected manifold $Y$, let $\mathscr X_n(Y)$ denote the $\SL(n,\C)$-character variety of $\pi_1Y$, where $n\in \mathbb{N}$. For a compact disconnected manifold $Z$ consisting of components $Z_1,\cdots, Z_k$, we denote $\mathscr X_n(Z)=\mathscr{X}_n(Z_1)\times \cdots \times \mathscr{X}_n (Z_k)$. 
\begin{theorem}\label{mainthm: character variety}
Suppose $M$ and $N$ are cusped finite-volume hyperbolic 3-manifolds, and $\widehat{\pi_1M}\cong \widehat{\pi_1N}$. Then, there exists a homeomorphism $\Psi:\partial M\to \partial N$ such that the images of the restriction maps $\mathscr{X}_n(M)\to \mathscr{X}_n(\partial M)$ and $\mathscr{X}_n(N)\to \mathscr{X}_n(\partial N)$ are identical through $\Psi$.  In addition, $\Psi$ is either orientation-preserving on all components or orientation-reversing on all components.
\end{theorem}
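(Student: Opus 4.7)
The plan is to bootstrap from \autoref{mainthm: regular}: first construct $\Psi$ from peripheral regularity, then transfer $\SL(n,\C)$-characters between $M$ and $N$ via non-archimedean compact forms.

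Applying \autoref{mainthm: regular}, the given isomorphism $f:\widehat{\pi_1M}\to \widehat{\pi_1N}$ is regular and peripheral regular. For each boundary pair $\partial_iM\leftrightarrow \partial_iN$, peripheral regularity yields a conjugator $g_i\in \widehat{\pi_1N}$ and a regular isomorphism $C_{g_i}\circ f|_{\overline{\pi_1\partial_iM}}$, which is the profinite completion of an abstract isomorphism $\phi_i:\pi_1\partial_iM\to \pi_1\partial_iN$, i.e.\ an element of $\GL(2,\Z)$. I realize each $\phi_i$ by a toral homeomorphism $\Psi_i$, unique up to isotopy, and assemble $\Psi=\bigsqcup_i\Psi_i$. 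For the uniform orientation claim, I use that $\deg(f)\in \Zx$ equals $\pm 1\in\Z$ by the methods underlying \autoref{mainthm: regular}. The sign of $\deg(f)$, computed in Wilkes's relative profinite cohomology and decomposed over peripheral components, detects each $\det\phi_i$; orientability of $M$ and $N$ then forces these signs to be uniformly $+1$ (if $\deg(f)=1$) or uniformly $-1$ (if $\deg(f)=-1$), yielding the orientation claim.

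For the image agreement, I would exploit Zariski density of algebraic characters in $\mathscr{X}_n(M)$. Given such a character $\chi=\chi_\rho$ with $\rho(\pi_1M)\subset \SL(n,\overline{\Q})$, choose a number field $K\subset\overline{\Q}$ containing all matrix entries, together with a non-archimedean place $v$ of $K$ at which $\rho$ is integral, so that $\rho:\pi_1M\to \SL(n,\O_{K_v})$ takes values in a profinite group. This $\rho$ extends to $\hat\rho:\widehat{\pi_1M}\to \SL(n,\O_{K_v})$; composing with $f^{-1}$ and restricting to $\pi_1N$ (after fixing an embedding $K_v\hookrightarrow \C$) produces $\rho':\pi_1N\to \SL(n,\C)$. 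By peripheral regularity, $\rho|_{\pi_1\partial_iM}$ and $\rho'|_{\pi_1\partial_iN}$ differ precisely by $\phi_i$, so their characters correspond under $\Psi$. Since algebraic characters are Zariski dense in $\mathscr{X}_n(M)$ and in $\mathscr{X}_n(N)$, and the images of the restriction maps are constructible in $\mathscr{X}_n(\partial M)$ and $\mathscr{X}_n(\partial N)$ respectively, taking Zariski closures together with the symmetric construction from $N$ to $M$ yields the desired equality.

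The main obstacle is this last step: rigorously transferring $\SL(n,\C)$-characters across the profinite isomorphism and controlling the constructible image through algebraic specializations. The compatibility between $f$ and $\Psi$ at the peripheral level is built directly into peripheral regularity, which makes the trace computation on the boundary straightforward once the transfer is carried out; the subtle point is ensuring that the $p$-adic construction is canonical enough, up to conjugation, that the resulting character $\chi_{\rho'}$ is well-defined independently of the choices of $K$, $v$, and embedding, and that the union over all algebraic $\chi$ sweeps out a Zariski-dense subset of the target image.
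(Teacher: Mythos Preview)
Your construction of $\Psi$ from peripheral regularity and your derivation of the uniform orientation sign from the profinite mapping degree match the paper exactly (the latter is the content of \autoref{PROP: square}~(\ref{7.1-1})). The divergence is in the character transfer.

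The paper does not restrict to algebraic representations. Given an \emph{arbitrary} $\theta:\pi_1N\to\SL(n,\C)$, it invokes \cite[Lemma 4.2]{BMRS20} to choose a prime $p$ and a field isomorphism $\varsigma:\overline{\Q_p}\to\C$ such that $\varsigma^{-1}_*\theta$ lands in a bounded subgroup of $\SL(n,K)$ for a finite extension $K/\Q_p$; the closure of the image is then compact and totally disconnected, hence profinite, so $\varsigma^{-1}_*\theta$ extends continuously to $\widehat{\pi_1N}$. Composing with $f$ and restricting to $\pi_1M$ (then applying $\varsigma_*$) produces $\phi:\pi_1M\to\SL(n,\C)$ whose peripheral restrictions agree with those of $\theta$ up to conjugacy and the isomorphisms $\psi_i$. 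This gives equality of the images in $\mathscr{Y}_n(\partial M)$ directly, with no density argument needed; passing to $\mathscr{X}_n$ is immediate via the quotient $\mathscr{Y}_n\to\mathscr{X}_n$.

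Your route via $\overline{\Q}$-valued representations and an integral place is workable, but the final step as written has a genuine gap: Zariski density of algebraic characters only yields equality of the \emph{closures} of the two images, not of the images themselves, and for constructible sets these differ in general. To repair it you would need something like the Lefschetz principle (both images are constructible and defined over $\Q$, so containment of their $\overline{\Q}$-points is a first-order statement transferring to $\C$-points by model completeness of $\mathrm{ACF}_0$). You also need to choose the embedding $K_v\hookrightarrow\C$ to extend the original $K\hookrightarrow\C$, so that the peripheral comparison genuinely recovers $\chi_\rho$ rather than a Galois conjugate; this is possible but should be said. The paper's field-isomorphism trick sidesteps all of this.
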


\subsection*{Recent related works}
Upon submitting this paper, the author found the very recent article \cite{CWYao} by Cheetham-West and Yao. Assuming  the validity of our \autoref{mainthm: regular}, they  have independently proven  \autoref{mainthm: character variety} for one-cusped hyperbolic manifolds in the case $n=2$. Our proof for \autoref{mainthm: character variety} is based on a different method.

\subsection{Structure of the paper}
\autoref{sec:2} is a brief collection of preliminary materials that might be familiar to experts. In \autoref{sec:3}, we review the existing results on $\Zx$-regularity \cite{Liu23} and peripheral $\Zx$-regularity \cite{Xu24a}; and in \autoref{sec:4}, we define the homology coefficient $\coef(f)\in \Zx/\{\pm1\}$ for a profinite isomorphism $f$ between hyperbolic 3-manifolds.

In \autoref{sec:5}, we expand on the relative cohomology theory for profinite groups established by Wilkes \cite{Wil19}; and in \autoref{sec:6}, we define the profinite mapping degree $\deg(f)\in \Zx$ for a profinite isomorphism $f$ between oriented hyperbolic 3-manifolds.

\autoref{sec:7} and \autoref{SEC: Dehn filling} introduce two relations between the homology coefficient and the profinite mapping degree; namely, we prove $\coef(f)^2=\pm \deg(f)$ in \autoref{sec:7} and $\coef(f)^3=\pm \deg(f)$ in \autoref{SEC: Dehn filling}. These yield the proof for \autoref{mainthm: regular} as restated in \autoref{COR: restate main}.

As a prepartion for \autoref{mainthm: A-polynomial general}, we extend the result of peripheral regularity to profinite isomorphisms within a  certain class of mixed manifolds in \autoref{sec:9}. Finally, we approach \autoref{mainthm: A-polynomial general} in \autoref{sec:10}, with its two cases proved separately as  \autoref{THM: A-polynomial peripheral} and \autoref{THM: A prime}. \autoref{mainthm: character variety} is proved as \autoref{COR: rep variety}.


\setcounter{footnote}{2}
\section{Preliminaries on profinite groups}\label{sec:2}
\subsection{Profinite groups}
A {\em profinite group} is an inverse limit of finite groups indexed over a directed partially ordered set, equipped with the subspace topology inherited from the product topology. A direct characterisation for profinite groups is shown by the following proposition.
\begin{proposition}[{\cite[Theorem 2.1.3]{RZ10}}]\label{PROP: Top}
A topological group $\Pi$ is a profinite group if and only if $\Pi$ is compact, Hausdorff, and totally disconnected.
\end{proposition}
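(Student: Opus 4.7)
The plan is to handle the two implications separately. The forward direction is essentially formal: a finite group with the discrete topology is compact, Hausdorff, and totally disconnected, and all three properties are preserved under arbitrary products (compactness by Tychonoff). Since an inverse limit of finite groups embeds as a closed subgroup of the product of those groups, cut out by the compatibility equations, it inherits the three properties from that product.

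For the reverse direction, given a compact Hausdorff totally disconnected group $\Pi$, I would construct an isomorphism of topological groups $\Pi \cong \limi_N \Pi/N$, where $N$ ranges over the open normal subgroups of $\Pi$. The crux is the claim that the open normal subgroups form a neighbourhood basis of the identity. I would prove this in three steps. First, recall the classical fact that in any compact Hausdorff totally disconnected space the clopen sets form a basis for the topology. Second, show that every clopen neighbourhood $U$ of $1$ contains an open subgroup: by continuity of multiplication, for each $x \in U$ one finds open neighbourhoods $V_x \ni 1$ and $W_x \ni x$ with $V_x W_x \subseteq U$, and compactness of $U$ lets one pass to a finite subcover, yielding a symmetric open $V \ni 1$ with $V \cdot U \subseteq U$; iterating gives $V^n \subseteq U$ for all $n$, so $H := \bigcup_n V^n$ is an open subgroup contained in $U$. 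Third, show every open subgroup $H$ contains an open normal subgroup: in a compact group $H$ has finite index, hence finitely many conjugates, whose intersection is the sought open normal subgroup.

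Granted this neighbourhood basis, each quotient $\Pi/N$ is finite (Hausdorff, discrete, and compact), and the canonical map $\varphi \colon \Pi \to \limi_N \Pi/N$ is a continuous group homomorphism. Injectivity follows from $\bigcap_N N = \{1\}$, which combines Hausdorffness with the basis property. For surjectivity, $\varphi(\Pi)$ is compact and therefore closed in the inverse limit, while a standard diagonal argument shows $\varphi(\Pi)$ is dense; hence $\varphi$ is surjective. As a continuous bijection between compact Hausdorff spaces $\varphi$ is a homeomorphism, and therefore a topological group isomorphism.

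I expect the main obstacle to be the middle step above, namely extracting an open subgroup from an arbitrary clopen neighbourhood of the identity. The first and third steps are standard point-set and finite-index manipulations, whereas this step is the one that genuinely couples the topological structure (compactness of $\Pi$ and of the clopen $U$) with the algebraic structure (joint continuity of multiplication); once it is in hand, the remainder of the argument is essentially bookkeeping.
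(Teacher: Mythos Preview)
The paper does not supply its own proof of this proposition; it merely cites \cite[Theorem 2.1.3]{RZ10} and uses the result as a black box. Your argument is correct and is essentially the standard textbook proof (indeed the one found in Ribes--Zalesskii): the forward direction via Tychonoff and closed subspaces, and the reverse direction by showing that open normal subgroups form a neighbourhood basis of the identity, whence the canonical map $\Pi \to \limi_N \Pi/N$ is a continuous bijection of compact Hausdorff groups. Your three-step breakdown of the neighbourhood-basis claim is accurate, and your identification of the second step (producing an open subgroup inside a clopen neighbourhood of $1$) as the substantive point is apt.
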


The profinite completion of an abstract group $G$, defined as $\widehat{G}=\limi_{N\lhd_{f.i.}G} G/N$ in \autoref{DEF: profinite completion}, is thus a profinite group. 

\subsection{Residual finiteness}
There is a canonical homomorphism.
$$\begin{aligned}
\iota_G: \; G&\longrightarrow \widehat{G}\\
g&\longmapsto (gN)_{N\lhd_{f.i.} G}
\end{aligned}
$$
It is easy to see that $\iota_G$ is injective if and only if $G$ is residually finite. Thus, in the following context, when $G$ is residually finite, we always identify $G$ as its image in $\widehat{G}$, and elements in $G$ are also viewed as elements in $\widehat{G}$. 

Hempel's theorem for residual finiteness \cite{Hem87}, together with the virtual Haken theorem \cite{Ago13}, yields the following result.

\begin{proposition}\label{PROP: RF}
The fundamental group of any compact 3-manifold is residually finite.
\end{proposition}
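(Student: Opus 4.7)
The plan is to assemble the proof from the prime/geometric decomposition together with the two cited inputs: Hempel's theorem \cite{Hem87} for Haken manifolds and Agol's virtual Haken theorem \cite{Ago13}. First I would reduce to the prime case via the Kneser--Milnor prime decomposition: if $M = M_1 \# \cdots \# M_k$, then $\pi_1 M$ is the free product of the $\pi_1 M_i$, and a classical theorem of Gruenberg ensures that a free product of (finitely many) finitely generated residually finite groups is residually finite. Thus it suffices to treat a prime compact 3-manifold $M$.

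Next I would split into three sub-cases. If $M \cong S^2 \times S^1$, then $\pi_1 M \cong \mathbb{Z}$ is trivially residually finite. If $M$ is irreducible with nonempty boundary, then $M$ is Haken (the boundary provides an incompressible surface, using the loop theorem to promote components when necessary), so Hempel's theorem gives residual finiteness directly. The remaining case is $M$ closed, irreducible and orientable; here I would further reduce to the case that $\pi_1 M$ is infinite (finite groups are automatically residually finite), and invoke Perelman's proof of the geometrization conjecture together with Agol's theorem that every closed aspherical 3-manifold is virtually Haken. Hence there exists a finite-index subgroup $H \le \pi_1 M$ corresponding to a Haken cover $\widetilde{M} \to M$, and Hempel's theorem yields residual finiteness of $H$.

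Finally I would propagate residual finiteness from $H$ up to $\pi_1 M$. The standard argument is to replace $H$ by its normal core $H_0 = \bigcap_{g \in \pi_1 M} g H g^{-1}$, which remains finite-index and residually finite; given any nontrivial $x \in \pi_1 M$, one either has $x \notin H_0$ (separated by the finite quotient $\pi_1 M / H_0$), or $x \in H_0$ is separated by a finite quotient of $H_0$, whose kernel contains a finite-index characteristic subgroup of $H_0$ that is normal in $\pi_1 M$.

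The genuine difficulty in this chain is hidden inside the black box of Hempel's theorem, whose original 1987 formulation was conditional on residual finiteness of finitely generated Kleinian groups; that conditional input was settled by the Agol--Wise machinery (cubulation of hyperbolic 3-manifold groups and Haglund--Wise separability for special cube complexes), which is the same circle of ideas underlying \cite{Ago13}. Since the proposition cites both \cite{Hem87} and \cite{Ago13}, I would treat those inputs as given and present only the decomposition-and-lift argument above.
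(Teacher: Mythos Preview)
Your proof is correct and matches the paper's approach; the paper itself gives no argument beyond citing \cite{Hem87} and \cite{Ago13}, and you have correctly unpacked those citations into the standard reduction (prime decomposition, Haken case via Hempel, closed aspherical case via virtual Haken, and passage to finite-index overgroups). One correction to your closing historical remark: Hempel's theorem for Haken manifolds is unconditional---residual finiteness of finite-volume Kleinian groups was already known from Mal'cev's theorem that finitely generated linear groups are residually finite, not from Agol--Wise---and the conditional hypothesis in \cite{Hem87} was Thurston's geometrization conjecture for the non-Haken pieces, which was settled by Perelman rather than by the cubulation machinery.
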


\subsection{The completion fuctor}
The profinite completion is functorial. Any homomorphism of abstract group $f: G_1\to G_2$ induces a continuous homomorphism  $\widehat{f}:\widehat{G_1}\to \widehat{G_2}$ so that the following diagram commutes.
\begin{equation*}
\begin{tikzcd}
G_1 \arrow[r, "f"] \arrow[d,"\iota_{G_1}"'] & G_2 \arrow[d,"\iota_{G_2}"] \\
\widehat{G_1} \arrow[r, "\widehat{f}"]   & \widehat{G_2}           
\end{tikzcd}
\end{equation*}

For any two finitely generated groups $G_1$ and $G_2$, the theorem of Nikolov-Segal \cite{NS03} implies that any isomorphism between $\widehat{G_1}$ and $\widehat{G_2}$ as abstract groups is indeed an isomorphism as profinte groups, that is, additionally a homeomorphism. Thus, we shall not emphasize on the continuity of isomorphisms between profinite completions of finitely generated groups in the following context.

The completion functor is right exact.
\begin{proposition}[{\cite[Proposition 3.2.5]{RZ10}}]\label{PROP: Completion Exact}
Given a short exact sequence of abstract groups
\begin{equation*}
\centering
\begin{tikzcd}
1 \arrow[r] & K \arrow[r, "\varphi"] & G \arrow[r, "\psi"] & L \arrow[r] & 1,
\end{tikzcd}
\end{equation*}
there is an exact sequence of profinite groups
\begin{equation*}
\centering
\begin{tikzcd}
\widehat K \arrow[r, "\widehat \varphi"] & \widehat G \arrow[r, "\widehat \psi"] & \widehat L \arrow[r] & 1.
\end{tikzcd}
\end{equation*}
\end{proposition}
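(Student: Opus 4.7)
The plan is to verify three things for the sequence $\widehat K \xrightarrow{\widehat\varphi} \widehat G \xrightarrow{\widehat\psi} \widehat L \to 1$: that $\widehat\psi \circ \widehat\varphi$ is trivial, that $\widehat\psi$ is surjective, and that $\ker \widehat\psi \subseteq \widehat\varphi(\widehat K)$. The first is immediate from functoriality of the completion, since $\widehat\psi \circ \widehat\varphi = \widehat{\psi \circ \varphi}$ is the completion of the trivial homomorphism $K \to L$. For surjectivity, note that $\widehat\psi(\widehat G) \supseteq \widehat\psi(\iota_G(G)) = \iota_L(\psi(G)) = \iota_L(L)$, which is dense in $\widehat L$, while on the other hand $\widehat\psi(\widehat G)$ is the continuous image of the compact group $\widehat G$ (by \autoref{PROP: Top}) in the Hausdorff group $\widehat L$, hence closed. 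A closed dense subset is everything, so $\widehat\psi$ is surjective.

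For the main step, the plan is to identify $\ker \widehat\psi$ intrinsically as $\overline{\iota_G(\varphi(K))}$ (closure in $\widehat G$), and then observe that $\widehat\varphi(\widehat K)$ is a closed subgroup containing $\iota_G(\varphi(K))$, hence containing this closure. For the identification, I would proceed in three sub-steps. First, one uses the standard correspondence between open normal subgroups $U \triangleleft \widehat G$ and finite-index normal subgroups $N \triangleleft_f G$, given by $N = \iota_G^{-1}(U)$ with inverse $N \mapsto \overline{\iota_G(N)}$; under this correspondence, $U \supseteq \overline{\iota_G(\varphi(K))}$ iff $N \supseteq \varphi(K)$. Second, such $N$ biject with $M \triangleleft_f L$ via $N = \psi^{-1}(M)$, and the induced isomorphisms $G/N \cong L/M$ are compatible, yielding a canonical identification of inverse limits $\widehat G/\overline{\iota_G(\varphi(K))} \cong \widehat L$, with quotient map $\widehat\psi$. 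Third, since profinite groups admit a neighborhood basis at the identity consisting of open normal subgroups, any closed normal subgroup is the intersection of the open normal subgroups containing it, so $\ker\widehat\psi = \overline{\iota_G(\varphi(K))}$. Finally, $\widehat\varphi(\widehat K)$ is closed in $\widehat G$ (as the continuous image of a compact group in a Hausdorff group) and contains $\widehat\varphi(\iota_K(K)) = \iota_G(\varphi(K))$, hence contains the closure $\overline{\iota_G(\varphi(K))} = \ker\widehat\psi$, as desired.

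The main technical content is verifying the quotient identification $\widehat L \cong \widehat G/\overline{\iota_G(\varphi(K))}$, which hinges on the correspondence of open normal subgroups above; this is routine but requires some bookkeeping. A secondary remark worth flagging is that $\widehat\varphi$ may fail to be injective, so $\widehat\varphi(\widehat K)$ can be a proper quotient of $\widehat K$; this causes no trouble for right-exactness but explains why one cannot naively prepend a $1$ to the left of the exact sequence without further hypotheses on how $\varphi(K)$ sits in $G$ (such as subgroup separability).
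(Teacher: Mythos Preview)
Your proof is correct. The paper does not supply its own proof of this proposition; it is quoted as a preliminary fact with a citation to Ribes--Zalesskii \cite[Proposition 3.2.5]{RZ10}, so there is nothing to compare against beyond noting that your argument is the standard one found in that reference. One small organizational remark: your ``third sub-step'' (closed normal subgroups are intersections of the open normals containing them) is really what justifies the inverse-limit identification claimed in the second sub-step, rather than a separate ingredient that comes after it; reordering those two would make the logical flow clearer.
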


However, the profinite completion may not preserve injectiveness.

\begin{proposition}[{\cite[Lemma 3.2.6]{RZ10}}]\label{PROP: Left exact}
Suppose $f:H\to G$ is an injective homomorphism of abstract groups. Then $\widehat{f}:\widehat{H}\to \widehat{G}$ is injective if and only if \textit{the profinite topology on $G$ induces the full profinite topology on $H$}, that is, for any finite-index subgroup $M\le H$, there exists a finite-index subgroup $N\le G$ such that $f^{-1}(N)\subseteq M$.
\end{proposition}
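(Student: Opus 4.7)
The plan is to identify $\ker(\widehat{f})$ explicitly as an intersection of clopen subgroups of $\widehat{H}$ and then apply elementary profinite topology. The crucial preliminary step is to establish
\begin{equation*}
\ker(\widehat{f}) \;=\; \bigcap_{N} \overline{f^{-1}(N)},
\end{equation*}
where $N$ ranges over finite-index normal subgroups of $G$ and closures are taken in $\widehat{H}$. To verify this, note that $\widehat{f}(\widehat{h})=1$ in $\widehat{G}$ iff $\widehat{h}$ maps to the identity in every finite quotient $G/N$. The composition $\widehat{H}\xrightarrow{\widehat{f}}\widehat{G}\to G/N$ agrees on the dense subset $H\subseteq\widehat{H}$ with the map $h\mapsto f(h)N$, which factors as $H\twoheadrightarrow H/f^{-1}(N)\hookrightarrow G/N$---the second arrow injective because $f$ is injective, and $f^{-1}(N)$ of finite index in $H$ since $[H:f^{-1}(N)]\le[G:N]<\infty$. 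Hence $\widehat{h}$ is killed in $G/N$ if and only if it is killed in $H/f^{-1}(N)$, if and only if $\widehat{h}\in\overline{f^{-1}(N)}$.

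Given this, the forward direction is immediate: assuming the stated condition, the family $\{f^{-1}(N)\}_N$ is cofinal among finite-index normal subgroups of $H$, so
\begin{equation*}
\ker(\widehat{f}) \;=\; \bigcap_N \overline{f^{-1}(N)} \;\subseteq\; \bigcap_M \overline{M} \;=\; \{1\},
\end{equation*}
where $M$ ranges over finite-index normal subgroups of $H$ and the last equality uses that the open normal subgroups of $\widehat{H}$ (namely the $\overline{M}$) have trivial intersection. For the reverse direction, I would argue by contrapositive. If the condition fails, pick a finite-index normal subgroup $M$ of $H$ with $f^{-1}(N)\not\subseteq M$ for every finite-index normal $N$ in $G$. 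The collection $\{\overline{f^{-1}(N)}\setminus\overline{M}\}_N$ of clopen subsets of the compact space $\widehat{H}$ has the finite intersection property: for $N_1,\dots,N_k$ with intersection $N$ the identity $\overline{f^{-1}(N_1)}\cap\cdots\cap\overline{f^{-1}(N_k)}=\overline{f^{-1}(N)}$ holds, and the negated condition furnishes an element $h\in f^{-1}(N)\setminus M$ lying in this intersection. Compactness of $\widehat{H}$ then produces $\widehat{h}$ in the total intersection, with $\widehat{h}\in\ker(\widehat{f})$ but $\widehat{h}\notin\overline{M}$; in particular $\widehat{h}\neq 1$, so $\widehat{f}$ is not injective.

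The main technical point to get right is the kernel formula of the first step, specifically the injectivity of $H/f^{-1}(N)\hookrightarrow G/N$ and the identification of $\widehat{H}\to\widehat{G}\to G/N$ with the continuous extension of $H\twoheadrightarrow H/f^{-1}(N)$; once those are in hand, both directions are routine profinite topology. A minor caveat worth noting is that the canonical map $H\to\widehat{H}$ need not be injective when $H$ fails to be residually finite, but this causes no trouble, as the argument only uses that the image of $H$ is dense in $\widehat{H}$ and that finite-index normal subgroups of $H$ correspond via closure to open normal subgroups of $\widehat{H}$.
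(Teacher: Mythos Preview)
The paper does not supply its own proof of this proposition; it is quoted verbatim from \cite[Lemma 3.2.6]{RZ10} as a preliminary fact, so there is nothing in the paper to compare against. Your argument is correct: the kernel identity $\ker(\widehat{f})=\bigcap_N\overline{f^{-1}(N)}$ is established cleanly, the cofinality argument for the sufficiency direction is sound (noting that $f^{-1}(N)$ is normal in $H$ whenever $N$ is normal in $G$, and one may pass to normal cores on both sides), and the compactness argument for necessity via the finite intersection property of the closed sets $\overline{f^{-1}(N)}\setminus\overline{M}$ is valid, using that $\iota_H^{-1}(\overline{M})=M$ for any finite-index $M\le H$.
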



For any subgroup $H\le G$, we denote $\overline{H}$ as the closure of $\iota_G(H)$ in $\widehat{G}$. Note that $H$ does not always inject into $\overline{H}$, for instance, when $G$ is not residually finite. In addition, the surjective homomorphism $\widehat{H}\to \overline{H}$ is an isomorphism if and only if the condition in \autoref{PROP: Left exact} holds.

\subsection{Lattice of finite-index subgroups}
For finitely generated groups $G_1$ and $G_2$, it is shown by \cite{DFPR82} that $\widehat{G_1}\cong \widehat{G_2}$ if and only if $G_1$ and $G_2$ have the same collection of finite quotient groups (up to isomorphism). Indeed, $G_1$ and $G_2$ actually have isomorphic lattices of finite-index subgroups as shown by the following proposition.

\begin{proposition}[{\cite[Proposition 3.2.2]{RZ10}}]\label{THM: correspondence of subgroup}
For an abstract group $G$, there is an isomorphism between the lattice of finite-index subgroups of $G$ and the lattice of open subgroups in $\widehat{G}$ given as follows. 
\begin{equation*}
\begin{tikzcd}[row sep=0.15em]
\left\{\text{Finite-index subgroups of }G\right\} \arrow[leftrightarrow,"1:1"]{r} & \left\{\text{Open subgroups of }\widehat{G}\right\} \\
H  \arrow[r, maps to] &\overline{H}                            \cong \widehat{H}    \\
\iota^{-1}_G(U) & U \arrow[l, mapsto]
\end{tikzcd}
\end{equation*}
%
%
%
In addition, this correspondence sends normal subgroups to normal subgroups.
\end{proposition}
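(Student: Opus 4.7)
The plan is to establish three things in turn: that the two maps $H\mapsto \overline{H}$ and $U\mapsto \iota_G^{-1}(U)$ are mutually inverse bijections, that the identification $\overline{H}\cong \widehat{H}$ holds for finite-index $H$, and that normality is preserved under the correspondence.

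First, I would verify well-definedness. Given $H\le G$ of finite index, let $K$ be its normal core (still of finite index). The universal property of the profinite completion promotes the projection $G\twoheadrightarrow G/K$ to a continuous surjection $\pi:\widehat{G}\twoheadrightarrow G/K$ with open kernel, and then $\overline{H}=\pi^{-1}(H/K)$ is open in $\widehat{G}$. Conversely, any open subgroup $U\le \widehat{G}$ has finite index (by compactness) so $\iota_G^{-1}(U)$ has finite index in $G$.

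Next, the two maps are inverse. The identity $\overline{\iota_G^{-1}(U)}=U$ follows because $\iota_G(G)$ is dense in $\widehat{G}$, so $\iota_G(G)\cap U=\iota_G(\iota_G^{-1}(U))$ is dense in the clopen set $U$. The identity $\iota_G^{-1}(\overline{H})=H$ is more delicate when $G$ is not residually finite. The approach I would take is again to use the normal core $K\le H$ and the continuous projection $\pi:\widehat{G}\to G/K$: under $\pi$, $\overline{H}$ maps exactly to $H/K$, so any $g\in G$ with $\iota_G(g)\in \overline{H}$ satisfies $gK\in H/K$, forcing $g\in H$. The isomorphism $\overline{H}\cong \widehat{H}$ is then an immediate application of \autoref{PROP: Left exact}: since $[G:H]<\infty$, every finite-index subgroup $M\le H$ has finite index in $G$, so $H$ inherits the full profinite topology from $G$.

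Finally, preservation of normality is a continuity argument. If $H\lhd G$, then $\iota_G(H)$ is stable under conjugation by $\iota_G(G)$; since conjugation is continuous and $\iota_G(G)$ is dense in $\widehat{G}$, $\overline{H}$ is stable under conjugation by all of $\widehat{G}$. The reverse direction is immediate from functoriality of preimages under $\iota_G$.

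The main obstacle is the non-injectivity of $\iota_G$ in the absence of residual finiteness, which prevents one from simply identifying $H$ with a subset of $\widehat{G}$. This forces the equality $\iota_G^{-1}(\overline{H})=H$ to be proven through the auxiliary finite quotient $G/K$ rather than by a direct topological argument inside $\widehat{G}$.
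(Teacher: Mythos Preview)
Your proof is correct and complete. Note, however, that the paper does not actually supply its own proof of this proposition: it is stated with a citation to \cite[Proposition 3.2.2]{RZ10} and used as a black box. Your argument is the standard one (and essentially the one in Ribes--Zalesskii): pass to the normal core $K$ to realize $\overline{H}$ as the preimage of $H/K$ under the canonical finite quotient $\widehat{G}\to G/K$, use density of $\iota_G(G)$ for the inverse identity, and invoke the finite-index condition to check that $H$ inherits the full profinite topology for the isomorphism $\overline{H}\cong\widehat{H}$. Your handling of the non--residually-finite case via the finite quotient $G/K$ is exactly the right way to avoid illicitly identifying $G$ with a subgroup of $\widehat{G}$.
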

\begin{definition}\label{DEF: corresponding}
Given an isomorphism $f:\widehat{G_1}\to \widehat{G_2}$, we say that $H_1\le_{f.i.} G_1$ and $H_2\le_{f.i.} G_2$ is an {\em $f$--corresponding pair of finite-index subgroups} if $f(\overline{H_1})=\overline{H_2}$. 
\end{definition}

By \autoref{THM: correspondence of subgroup}, the $f$--correspondence yields an isomorphism between the lattices of finite-index subgroups of $G_1$ and $G_2$. 


Finite-index subgroups in a 3-manifold group $\pi_1M$ bijectively correspond to finite coverings of $M$ (marked with basepoints). We also use \autoref{DEF: corresponding} for the case of 3-manifold groups. 
\begin{definition}\label{DEF: corresponding cover}
Let $M$ and $N$ be compact 3-manifolds, and let $f:\widehat{\pi_1M}\to \widehat{\pi_1N}$ be an isomorphism. We say that a pair of finite covers $M'$ and $N'$ of $M$ and $N$ is an {\em $f$--corresponding pair of finite cover} if $f(\widehat{\pi_1M'})=\widehat{\pi_1N'}$. In this case, we usually denote $f':\widehat{\pi_1M'}\to \widehat{\pi_1N'}$ as the restriction of $f$ on $\widehat{\pi_1M'}$. 
\end{definition}

\subsection{The profinite integers}
The profinite completion of $\Z$, denoted as $\widehat{\Z}$, is called the ring of {\em profinite integers}. By definition,
$$
\widehat{\Z}=\limi \Z/n\Z
$$ 
where $n\in\mathbb{N}$ is partially ordered by divisiblity. 
Two more equivalent definitions for $\widehat{\Z}$ are also useful. First, we can take a cofinal system $n!\in \mathbb{N}$, so that we can express $\widehat{\Z}$ as an inverse limit over a sequence
$$
\widehat{\Z}=\limi_{n\to \infty} \Z/n!.
$$
Second, we can decompose $\Z/n\Z$ into its prime components, and find $$
\widehat{\Z}=\prod_{p\text{: prime}} \Z_p.$$ 
In particular, this shows that $\widehat{\Z}$ is torsion-free.

The group of multiplicatively invertible elements in $\Z$ is denoted by $\Zx$. Indeed, 
$$
\Zx=\limi (\Z/n\Z)^\times
$$
is a profinite group. 

\begin{lemma}\label{LEM: Zx+-1}
Let $\lambda,\mu\in \Zx$ and $m,n\in \Z$, where $n\neq 0$. Suppose $m\lambda=n\mu$. Then $\lambda=\pm \mu$ and $m=\pm n$.
\end{lemma}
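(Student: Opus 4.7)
The plan is to first reduce to a one-variable statement and then use the product decomposition $\widehat{\Z} = \prod_{p} \Z_p$ together with $p$-adic valuations.

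First I would set $\nu = \lambda \mu^{-1} \in \Zx$, which makes sense because $\mu$ is invertible. Then the hypothesis $m\lambda = n\mu$ rewrites as $m\nu = n$ in $\widehat{\Z}$. So it suffices to prove: if $\nu \in \Zx$, $m \in \Z$, $n \in \Z\setminus\{0\}$, and $m\nu = n$, then $m = \pm n$ and $\nu = \pm 1$. Note first that $m \neq 0$: otherwise $n = m\nu = 0$, contradicting $n \neq 0$.

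Next I would pass to the prime components. Under the canonical isomorphism $\widehat{\Z} \cong \prod_{p\text{ prime}} \Z_p$, the element $\nu$ corresponds to a tuple $(\nu_p)_p$ with each $\nu_p \in \Z_p^{\times}$, and the equation $m\nu = n$ becomes $m\nu_p = n$ in $\Z_p$ for every prime $p$. Taking the $p$-adic valuation and using $v_p(\nu_p) = 0$ (since $\nu_p$ is a unit) yields $v_p(m) = v_p(n)$ for every prime $p$. As $m$ and $n$ are nonzero rational integers with equal $p$-adic valuations at every prime, this forces $|m| = |n|$, i.e.\ $m = \pm n$.

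Finally, substituting back, the relation $(\pm n)\nu = n$ in $\widehat{\Z}$ gives $n(\nu \mp 1) = 0$. Since $\widehat{\Z} = \prod_p \Z_p$ is a product of integral domains, it is torsion-free, so multiplication by the nonzero integer $n$ is injective; hence $\nu = \pm 1$, which translates to $\lambda = \pm \mu$, completing the proof.

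I do not anticipate a genuine obstacle here: the only conceptual step is the switch from $\widehat{\Z}$ to its component $\Z_p$'s so as to make the valuation argument available, and this is exactly what the product decomposition recalled just before the lemma is designed for.
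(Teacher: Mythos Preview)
Your proof is correct. Both your argument and the paper's reduce to showing that a unit of $\widehat{\Z}$ relates $m$ and $n$ only by a sign, but the routes differ. You invoke the decomposition $\widehat{\Z}\cong\prod_p\Z_p$ and read off $v_p(m)=v_p(n)$ for every prime $p$ from $m\nu_p=n$, forcing $|m|=|n|$. The paper instead works intrinsically in $\widehat{\Z}$: from $m=n(\mu\lambda^{-1})$ it observes $m\equiv 0\pmod n$, hence $m=na$ for some $a\in\Z$ (using $n\widehat{\Z}\cap\Z=n\Z$), and then torsion-freeness gives $a=\mu\lambda^{-1}\in\Zx\cap\Z=\{\pm1\}$. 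Your approach makes the arithmetic completely explicit via valuations; the paper's avoids the product decomposition and isolates the clean fact $\Z\cap\Zx=\{\pm1\}$, which it reuses later. Either is perfectly adequate for this lemma.
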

\begin{proof}
$m= n(\mu \lambda^{-1})$, so $m\equiv 0\pmod{n}$. Since $m\in \Z$, there exists $a\in \Z$ such that $m=na$. Then,  $na=n(\mu \lambda^{-1})$; in other words, $n(a-\mu\lambda^{-1})=0$. Since $\widehat{\Z}$ is torsion-free, it follows that $a-\mu\lambda^{-1}=0$. In particular, $a=\mu\lambda^{-1}\in \Zx$. Thus, $a=\pm 1$, for otherwise, $a\notin (\Z/a)^\times$. Therefore, $\mu=\pm\lambda$ and $m=\pm n$. 
\end{proof}

\section{$\Zx$-regularity and peripheral $\Zx$-regularity}\label{sec:3}

\subsection{$\Zx$-regularity}

For a discrete group $G$, let $\ab{G}=G/[G,G]$ denote its abelianization; and for a profinite group $\Pi$, let $\Ab{\Pi}=\Pi/\overline{[\Pi,\Pi]}$ denote its profinite abelianization. When $G$ is finitely generated, the profinite abelianization of $\widehat{G}$ is related to the abelianization of $G$ by the following proposition.

\begin{proposition}[{\cite[Proposition 2.11]{Xu24a}}]\label{PROP: abelianization}
Let $G$ be a finitely generated group, then there are natural isomorphisms $\tensor \ab{G} \cong \widehat{\ab{G}}\cong \Ab{\widehat{G}}$ such that the following diagram commutes.
\begin{equation*}
\begin{tikzcd}
G \arrow[r, two heads] \arrow[d, "\iota_G"'] & \ab{G} \arrow[r, hook] & \tensor \ab{G} \arrow[d, "\cong"]    \\
\widehat{G} \arrow[r, two heads]             & \Ab{\widehat{G}}       & \widehat{\ab{G}} \arrow[l, "\cong"']
\end{tikzcd}
\end{equation*}
\end{proposition}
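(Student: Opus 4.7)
The plan is to verify the two isomorphisms separately and then check that the asserted square commutes by naturality.

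First I would establish $\Ab{\widehat{G}}\cong \widehat{\ab{G}}$. Applying the right exactness of the profinite completion functor (\autoref{PROP: Completion Exact}) to the short exact sequence $1\to [G,G]\to G\to \ab{G}\to 1$ yields an exact sequence
\begin{equation*}
\widehat{[G,G]}\xrightarrow{\widehat{\varphi}} \widehat{G}\to \widehat{\ab{G}}\to 1,
\end{equation*}
so $\widehat{\ab{G}}\cong \widehat{G}/\overline{\mathrm{Im}(\iota_G|_{[G,G]})}$, where the closure is that of the image of $[G,G]$ under $\iota_G$. The key point is then to identify this closure with $\overline{[\widehat{G},\widehat{G}]}$. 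One containment is clear since $[G,G]\subseteq [\widehat{G},\widehat{G}]$ after identification along $\iota_G$. For the reverse, I would use the continuity of the commutator map together with the density of $\iota_G(G)$ in $\widehat{G}$: any commutator $[x,y]\in [\widehat{G},\widehat{G}]$ is a limit of commutators $[g_n,h_n]$ with $g_n,h_n\in G$, hence lies in $\overline{[G,G]}$. Therefore the two closures coincide and $\Ab{\widehat{G}}\cong \widehat{\ab{G}}$.

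Next, I would prove $\widehat{\ab{G}}\cong \tensor \ab{G}$. Since $G$ is finitely generated, $\ab{G}$ is a finitely generated abelian group and thus splits as $\Z^r\oplus T$ for some finite abelian group $T$. Profinite completion commutes with finite direct sums, and $\widehat{\Z^r}=\widehat{\Z}^r$ while $\widehat{T}=T$. On the other hand, since $\widehat{\Z}$ is flat over $\Z$ (being a product of the $\Z_p$), the tensor functor also commutes with direct sums, giving $\tensor\Z^r=\widehat{\Z}^r$ and $\tensor T=T$ (the latter by right exactness applied to $0\to n\Z\to \Z\to \Z/n\to 0$). Matching the two expressions summand by summand, using the obvious map $\ab{G}\to \tensor \ab{G}$, $a\mapsto 1\otimes a$, gives the desired natural isomorphism $\widehat{\ab{G}}\cong \tensor \ab{G}$.

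Finally, commutativity of the diagram follows from naturality. The composition $G\to \ab{G}\to \tensor \ab{G}$ sends $g$ to $1\otimes [g]$, while $G\to \widehat{G}\to \Ab{\widehat{G}}$ sends $g$ to the class of $\iota_G(g)$. Under the isomorphism $\tensor\ab{G}\cong\widehat{\ab{G}}$ constructed above, $1\otimes [g]$ corresponds to $\iota_{\ab{G}}([g])$; and under the isomorphism $\widehat{\ab{G}}\cong \Ab{\widehat{G}}$ (induced by the quotient $\widehat{G}\to \Ab{\widehat{G}}$ together with the universal property applied to the abelianization $G\to \ab{G}$), the element $\iota_{\ab{G}}([g])$ is sent to the class of $\iota_G(g)$. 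The only mildly subtle step is justifying that $\overline{[G,G]}=\overline{[\widehat{G},\widehat{G}]}$, but as above this is immediate from continuity of the commutator and density; so no step is a real obstacle here.
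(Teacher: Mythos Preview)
The paper does not give its own proof of this proposition; it is simply cited from \cite[Proposition 2.11]{Xu24a}, so there is nothing to compare against directly. Your argument is correct and is essentially the standard one: right exactness of profinite completion applied to $1\to [G,G]\to G\to \ab{G}\to 1$, together with the identification $\overline{\iota_G([G,G])}=\overline{[\widehat{G},\widehat{G}]}$, yields $\widehat{\ab{G}}\cong \Ab{\widehat{G}}$; and the structure theorem for finitely generated abelian groups gives $\widehat{\ab{G}}\cong \tensor \ab{G}$.

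One small remark on presentation: when you write that a commutator $[x,y]$ in $\widehat{G}$ is ``a limit of commutators $[g_n,h_n]$'', you are implicitly using sequences. This is fine here because $G$ is finitely generated, so $\widehat{G}$ has a countable base of open neighbourhoods of the identity and is metrizable; but the cleaner phrasing, which avoids any such assumption, is simply that the commutator map $\widehat{G}\times\widehat{G}\to\widehat{G}$ is continuous and $\iota_G(G)\times\iota_G(G)$ is dense, so the image of the commutator map lies in $\overline{\iota_G([G,G])}$. Since the latter is a closed subgroup, the entire group $\overline{[\widehat{G},\widehat{G}]}$ generated by commutators is contained in it as well.
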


\begin{corollary}\label{COR: abelianization}
Let $G$ and $H$ be finitely generated groups, and let $f:\widehat{G}\to \widehat{H}$ be a continuous homomorphism. Then $f$ induces a homomorphism of $\widehat{\Z}$-modules
$$
f_\ast: \;\tensor \ab{G}\cong \Ab{\widehat{G}}\tto \Ab{\widehat{H}}\cong \tensor \ab{H}.
$$
\end{corollary}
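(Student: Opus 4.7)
The plan is to obtain $f_\ast$ in two stages: first descend $f$ to the profinite abelianizations, then transport the resulting map across the natural isomorphisms provided by \autoref{PROP: abelianization}, and finally verify $\widehat{\Z}$-linearity.

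First I would note that, since $f$ is a homomorphism of groups, it sends $[\widehat{G},\widehat{G}]$ into $[\widehat{H},\widehat{H}]$; using continuity of $f$ together with the fact that $\overline{[\widehat{H},\widehat{H}]}$ is closed in $\widehat{H}$, it follows that $f\bigl(\overline{[\widehat{G},\widehat{G}]}\bigr)\subseteq \overline{[\widehat{H},\widehat{H}]}$. Hence $f$ descends to a continuous group homomorphism
\[
\bar f:\;\Ab{\widehat{G}}=\widehat{G}/\overline{[\widehat{G},\widehat{G}]}\;\tto\;\widehat{H}/\overline{[\widehat{H},\widehat{H}]}=\Ab{\widehat{H}}.
\]
Composing with the two natural isomorphisms $\tensor \ab{G}\cong \Ab{\widehat{G}}$ and $\Ab{\widehat{H}}\cong \tensor \ab{H}$ from \autoref{PROP: abelianization} then produces the desired continuous homomorphism of profinite abelian groups $f_\ast:\tensor\ab{G}\to \tensor\ab{H}$.

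The remaining step is to check that $f_\ast$ respects the $\widehat{\Z}$-action. Since $f_\ast$ is an additive group homomorphism, for every $n\in\Z$ and every $x$ one has $f_\ast(nx)=nf_\ast(x)$, i.e. $f_\ast$ is automatically $\Z$-linear. Then, for a general $\lambda\in\widehat{\Z}$, I would write $\lambda=\lim n_k$ with $n_k\in \Z$ and invoke continuity of scalar multiplication on the profinite abelian groups $\tensor\ab{G}$ and $\tensor\ab{H}$, together with the continuity of $f_\ast$, to conclude $f_\ast(\lambda x)=\lambda f_\ast(x)$. (Equivalently, the $\widehat{\Z}$-module structure on a profinite abelian group is uniquely determined by its abelian group structure and topology, so any continuous homomorphism is automatically $\widehat{\Z}$-linear.)

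There is essentially no serious obstacle: the only substantive input is already contained in \autoref{PROP: abelianization}, and the only point requiring mild care is the passage from $\Z$-linearity to $\widehat{\Z}$-linearity, which I would handle by the continuity argument above. Thus the proof reduces to a short bookkeeping exercise of assembling the commutative square from \autoref{PROP: abelianization} on both sides of $f$ and reading off $f_\ast$.
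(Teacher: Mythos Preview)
Your proposal is correct and is exactly the natural unpacking of the corollary; the paper in fact gives no proof at all, treating the statement as an immediate consequence of \autoref{PROP: abelianization}. Your two-step argument (descend $f$ to the profinite abelianizations via continuity, then transport through the natural isomorphisms and upgrade $\Z$-linearity to $\widehat{\Z}$-linearity by continuity) is precisely the routine verification the paper leaves implicit.
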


\begin{corollary}[{\cite[Remark 3.2]{Reid:2018}}]\label{COR: b1}
Suppose $G$ and $H$ are finitely generated groups and $\widehat{G}\cong \widehat{H}$. Then $\ab{G}\cong \ab{H}$, and in particular, $b_1(G)=b_1(H)$. 
\end{corollary}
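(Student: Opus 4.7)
The plan is to reduce the statement to elementary linear algebra over $\widehat{\Z}$ by invoking the previous corollary. First I would apply Corollary \ref{COR: abelianization} to any isomorphism $f:\widehat{G}\ttt\widehat{H}$, obtaining a $\widehat{\Z}$-module isomorphism $f_\ast:\tensor \ab{G}\ttt \tensor \ab{H}$. Since $G$ and $H$ are finitely generated, their abelianizations are finitely generated abelian groups, so by the structure theorem I can write $\ab{G}\cong \Z^r\oplus T_G$ and $\ab{H}\cong \Z^s\oplus T_H$ with $T_G,T_H$ finite and $r=b_1(G)$, $s=b_1(H)$.

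Next, using that $\widehat{\Z}\otimes_\Z(-)$ commutes with finite direct sums, together with the identities $\tensor\Z\cong \widehat{\Z}$ and $\tensor(\Z/n)\cong \widehat{\Z}/n\widehat{\Z}\cong \Z/n$, I would rewrite both sides as $\tensor \ab{G}\cong \widehat{\Z}^r\oplus T_G$ and $\tensor \ab{H}\cong \widehat{\Z}^s\oplus T_H$. Thus $f_\ast$ provides a $\widehat{\Z}$-module isomorphism $\widehat{\Z}^r\oplus T_G\cong \widehat{\Z}^s\oplus T_H$.

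The final step is to separate torsion and free parts. Because $\widehat{\Z}$ is torsion-free (a consequence of the product decomposition $\widehat{\Z}=\prod_p \Z_p$ noted earlier), the torsion $\widehat{\Z}$-submodule of $\widehat{\Z}^r\oplus T_G$ is precisely $T_G$, and likewise on the $H$-side. Hence $T_G\cong T_H$ as abelian groups. Quotienting out the torsion subgroups leaves $\widehat{\Z}^r\cong \widehat{\Z}^s$; reducing modulo any prime $p$ (i.e.\ tensoring with $\Fp$ over $\widehat{\Z}$) yields $\Fp^r\cong \Fp^s$, and comparing $\Fp$-dimensions forces $r=s$. Combining these, $\ab{G}\cong \Z^r\oplus T_G\cong \Z^s\oplus T_H\cong \ab{H}$, and in particular $b_1(G)=b_1(H)$.

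No serious obstacle arises: the argument is entirely formal once Corollary \ref{COR: abelianization} is in hand. The only delicate point is to correctly identify the torsion subgroup of $\tensor \ab{G}$, which hinges on the torsion-freeness of $\widehat{\Z}$, and the invariance of rank under base change to $\Fp$.
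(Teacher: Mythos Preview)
Your proof is correct. The paper does not supply its own proof of this corollary; it simply cites \cite[Remark 3.2]{Reid:2018} and moves on, so there is no in-paper argument to compare against. Your approach---passing to $\widehat{\Z}\otimes \ab{G}$, separating torsion from free parts using the torsion-freeness of $\widehat{\Z}$, and recovering the rank by reducing modulo a prime---is the standard way to unwind the statement from Corollary~\ref{COR: abelianization}, and it is carried out cleanly.
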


Liu \cite{Liu23} first introduced and proved the notion of $\Zx$-regularity, an intermediate towards regularity, in profinite isomorphisms between finite-volume hyperbolic 3-manifolds, including both closed and cusped cases.

\begin{definition}\label{DEF: Zx-regular}
Let $G$ and $H$ be finitely generated groups. Suppose $f:\widehat{G}\to\widehat{H}$ is a continuous homomorphism. We say that $f$ is {\em $\Zx$-regular} if the induced map
$$
f_\ast: \;\tensor \ab{G} \cong \Ab{\widehat{G}}\tto \Ab{\widehat{H}}\cong \tensor \ab{H}
$$
decomposes as $\lambda\otimes \phi$, where $\lambda\in \Zx$ and $\phi: \ab{G}\to \ab{H}$ is a homomorphism of abelian groups.  To specify the coefficient in $\Zx$, we say that in this case $f$ is {\em $\lambda$-regular}. 
\end{definition}

Comparing the definitions, $f$ is regular (in the sense of \autoref{introdef: Zx regular}) is equivalent to say that $f$ is $1$-regular (in the sense of \autoref{DEF: Zx-regular}).

For an abstract abelian group $A$, let $A_{\mathrm{tor}}$ denote its torsion subgroup, and let $A_{\mathrm{free}}=A/A_{\mathrm{tor}}$. According to \cite[Lemma 2.9]{Xu24a}, for a finitely generated abelian group $A$, $(\widehat{A})_{\mathrm{tor}}\cong \tensor A_{\mathrm{tor}}\cong A_{\mathrm{tor}}$ within the canonical isomorphism in  \autoref{PROP: abelianization}. Thus, the homomorphism $f_\ast: \tensor \ab{G}\to \tensor \ab{H}$ in \autoref{COR: abelianization} descends to the free part
$$
{(f_\ast)}_{\mathrm{free}}:\; \tensor \ab{G}_{\mathrm{free}}\tto \tensor \ab{H}_{\mathrm{free}}.
$$

We point out that the $\Zx$-regularity of $f$ only depends on the free part of the abelianizations.
\begin{lemma}\label{LEM: free part}
For finitely generated groups $G$ and $H$, a continuous homomorphism $f:\widehat{G}\to\widehat{H}$ is $\lambda$-regular if and only if the induced map
$$
{(f_\ast)}_{\mathrm{free}}:\; \tensor \ab{G}_{\mathrm{free}}\tto \tensor \ab{H}_{\mathrm{free}}
$$
splits as $\lambda\otimes \phi_{\mathrm{free}}$ for some homomorphism $\phi_{\mathrm{free}}: \ab{G}_{\mathrm{free}}\to \ab{H}_{\mathrm{free}}$.
\end{lemma}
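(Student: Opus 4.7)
The plan is to split both $\ab{G}$ and $\ab{H}$ into their free and torsion parts and show that the hypothesis on the free quotient suffices to reconstruct a full decomposition $f_\ast = \lambda \otimes \phi$, by handling each block separately.

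First, fix splittings $\ab{G} = F_G \oplus T_G$ and $\ab{H} = F_H \oplus T_H$, where $F_\bullet = \ab{\bullet}_{\mathrm{free}}$ and $T_\bullet = \ab{\bullet}_{\mathrm{tor}}$. Using the canonical identification $\widehat{\Z} \otimes T_\bullet \cong T_\bullet$ (valid because $T_\bullet$ is finite, as used in the paragraph preceding the lemma), we have $\tensor \ab{G} = \tensor F_G \oplus T_G$ and likewise for $H$, and $(f_\ast)_{\mathrm{free}}$ is identified with the $(F_G,F_H)$-block of $f_\ast$. Since $\tensor F_H$ is torsion-free while $T_G$ is torsion, the $(T_G,\tensor F_H)$-block of $f_\ast$ vanishes automatically, so $f_\ast$ is block lower-triangular with entries $f_{FF}$, $f_{FT}$, $f_{TT}$; any abstract homomorphism $\phi:\ab{G}\to\ab{H}$ shares the same triangular shape.

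The \emph{only if} direction is immediate: if $f_\ast = \lambda\otimes\phi$, projecting both sides modulo torsion yields $(f_\ast)_{\mathrm{free}} = \lambda \otimes \phi_{\mathrm{free}}$, where $\phi_{\mathrm{free}}$ is the map on free parts induced by $\phi$. For the \emph{if} direction, assume $f_{FF} = \lambda \otimes \phi_{\mathrm{free}}$ and construct $\phi$ block by block. Take $\phi_{FF}:=\phi_{\mathrm{free}}$. Since $T_H$ is finite, $\lambda\in\Zx$ acts invertibly on $T_H$, so we may set $\phi_{TT}:=\lambda^{-1}\,f_{TT}$. For the cross term, note that $f_{FT}:\tensor F_G \to T_H$ has open kernel (the target being finite and discrete) and hence is determined by its restriction to the dense subgroup $F_G$; we define $\phi_{FT}:=\lambda^{-1}\,f_{FT}|_{F_G}$. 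A direct blockwise check then confirms $\lambda\otimes\phi = f_\ast$.

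The only mildly subtle step, and where I expect to be careful, is the construction of the cross term $\phi_{FT}$: one must combine density of $F_G$ in $\tensor F_G$ with discreteness of the finite target $T_H$ to extract $\phi_{FT}$ uniquely from the continuous map $f_{FT}$. Everything else is routine manipulation of tensor products over $\widehat{\Z}$, relying only on the fact that $\lambda$ is invertible modulo any integer.
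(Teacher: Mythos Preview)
Your proof is correct. Both directions are handled cleanly, and the block decomposition argument works; the only caveat is the mild abuse of writing $F_G = \ab{G}_{\mathrm{free}}$ for what is really a chosen lift of the free quotient into $\ab{G}$, but since you explicitly fix a splitting this is harmless. Your argument for the cross term $\phi_{FT}$ via continuity and density is fine, and could equally well be phrased purely algebraically: any $\widehat{\Z}$-module map $\widehat{\Z}\otimes F_G\to T_H$ factors through $F_G/nF_G$ with $n=\lvert T_H\rvert$, hence is determined on $F_G$.

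The paper's proof takes a somewhat different, coordinate-free route: rather than choosing splittings and working block by block, it shows in one stroke that $\lambda^{-1}\circ f_\ast$ carries $\ab{G}$ into $\ab{H}$. The point is that $q\circ(\lambda^{-1}f_\ast)$ restricted to $\ab{G}$ lands in $\ab{H}_{\mathrm{free}}\subseteq\widehat{\Z}\otimes\ab{H}_{\mathrm{free}}$ (by the hypothesis on the free part), and then one checks $q^{-1}(\ab{H}_{\mathrm{free}})=\ab{H}$ inside $\widehat{\Z}\otimes\ab{H}$ using that $\ker q=\ab{H}_{\mathrm{tor}}$. This avoids any non-canonical choices and directly produces $\phi=\lambda^{-1}f_\ast|_{\ab{G}}$. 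Your approach is more explicit and perhaps easier to visualize; the paper's is slicker and makes clear that the resulting $\phi$ is canonical once $\lambda$ is fixed.
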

\begin{proof}
The ``only if'' part is clear. In fact, if $f_\ast: \tensor \ab{G} \to \tensor \ab{H}$ splits as $\lambda \otimes \phi$, then $\phi:\ab{G}\to \ab{H}$ naturally descends to $\phi_{\mathrm{free}}: \ab{G}_{\mathrm{free}}\to \ab{H}_{\mathrm{free}}$, and $(f_\ast)_{\mathrm{free}}= \lambda\otimes \phi_{\mathrm{free}}$.

We now prove the ``if'' part. Suppose $(f_\ast)_{\mathrm{free}}=\lambda\otimes \phi_{\mathrm{free}}$. Then there is the following commutative diagram 
\begin{equation*}
\begin{tikzcd}
                                                                            & \tensor \ab{G} \arrow[dd, "p", two heads] \arrow[rrr, "\lambda^{-1}\circ f_\ast"]                                        &  &                                         & \tensor \ab{H} \arrow[dd, "q", two heads] \\
\ab{G} \arrow[dd, two heads] \arrow[ru, hook]                               &                                                                                                                          &  &                                         &                                           \\
                                                                            & \tensor \ab{G}_{\mathrm{free}} \arrow[rrr, "\lambda^{-1}\circ (f_{\ast})_{\mathrm{free}}=1\otimes \phi_{\mathrm{free}}"] &  &                                         & \tensor \ab{H}_{\mathrm{free}}            \\
\ab{G}_{\mathrm{free}} \arrow[ru, hook] \arrow[rrr, "\phi_{\mathrm{free}}"] &                                                                                                                          &  & \ab{H}_{\mathrm{free}} \arrow[ru, hook] &                                          
\end{tikzcd}
\end{equation*}
where $\lambda^{-1}$ denotes the scalar multiplication by $\lambda^{-1}$ in $\tensor \ab{H}$ or $\tensor \ab{H}_{\mathrm{free}}$.

Consequently, $(\lambda^{-1}\circ f_\ast)(\ab{G})\subseteq q^{-1}(\ab{H}_{\mathrm{free}})$. $\ab{H}$ injects into $\tensor \ab{H}$ as an abelian subgroup. We claim that $q^{-1}(\ab{H}_{\mathrm{free}})= \ab{H}$. On one hand, it is clear that $q(\ab{H})= \ab{H}_{\mathrm{free}}$. On the other hand, since $\widehat{\Z}$ is torsion-free, $\widehat{\Z}$ is a flat $\Z$-module, and  $\ker(q)=\tensor \ab{H}_{\mathrm{tor}}=\ab{H}_{\mathrm{tor}}$. Thus, $\ker(q)\subseteq \ab{H}$. Therefore, $q^{-1}(\ab{H}_{\mathrm{free}})= \ab{H}$, so $(\lambda^{-1}\circ f_\ast)(\ab{G})\subseteq \ab{H}$. We rename this map as $\phi=\lambda^{-1}\circ f_\ast : \ab{G}\to \ab{H}$, which is a homomorphism of abelian groups. Then, $f_\ast=\lambda\otimes\phi$ and $f$ is $\lambda$-regular. 
\end{proof}

The next proposition discusses the uniqueness of the coefficient $\lambda$ in  \autoref{DEF: Zx-regular}.
\begin{proposition}\label{RMK: betti number 0}
Suppose $G$ and $H$ are finitely generated groups and $f:\widehat{G}\to \widehat{H}$ is a $\Zx$-regular isomorphism.
\begin{enumerate}[leftmargin=*]
\item If $b_1(G)=b_1(H)>0$, then the coefficient $\lambda\in \Zx$ such that $f$ is $\lambda$-regular is unique up to a $\pm$-sign.
\item If $b_1(G)=b_1(H)=0$, then the coefficient $\lambda$ can be any element in $\Zx$.
\end{enumerate}
\end{proposition}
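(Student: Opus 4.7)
The plan is to invoke Lemma \ref{LEM: free part} together with Lemma \ref{LEM: Zx+-1} to handle the two cases separately.

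For (1), I would first use Lemma \ref{LEM: free part} to reduce the condition of $\lambda$-regularity to the statement that $(f_\ast)_{\mathrm{free}}$ splits as $\lambda\otimes\phi_{\mathrm{free}}$ for some $\phi_{\mathrm{free}}:\ab{G}_{\mathrm{free}}\to\ab{H}_{\mathrm{free}}$. Since $b_1(G)=b_1(H)=n>0$, after fixing bases I may identify $\ab{G}_{\mathrm{free}}\cong\Z^n\cong\ab{H}_{\mathrm{free}}$ and regard $\phi_{\mathrm{free}}$ as a matrix in $M_n(\Z)$. If $f$ were simultaneously $\lambda$-regular and $\mu$-regular with witnesses $\phi_1,\phi_2\in M_n(\Z)$, then the matrix identity $\lambda\cdot\phi_1=\mu\cdot\phi_2$ would hold entrywise in $M_n(\widehat{\Z})$. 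Since $f$ is an isomorphism, $(f_\ast)_{\mathrm{free}}=\lambda\otimes\phi_1$ is invertible over $\widehat{\Z}$, which forces $\det(\phi_1)\in\Z\cap\Zx=\{\pm1\}$; in particular $\phi_1$ has at least one nonzero entry $(\phi_1)_{ij}$. Feeding the scalar identity $(\phi_1)_{ij}\,\lambda=(\phi_2)_{ij}\,\mu$ into Lemma \ref{LEM: Zx+-1}, with $(\phi_1)_{ij}$ playing the role of the nonzero integer $n$, yields $\lambda=\pm\mu$, as desired.

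For (2), the observation is that $b_1(G)=b_1(H)=0$ forces the finitely generated abelian groups $\ab{G}$ and $\ab{H}$ to be finite. Consequently $\tensor\ab{G}\cong\ab{G}$ and $\tensor\ab{H}\cong\ab{H}$ canonically, and $f_\ast:\ab{G}\to\ab{H}$ is merely a homomorphism between finite abelian groups. For any $\lambda\in\Zx$, the reduction $\Zx\to(\Z/|\ab{H}|)^\times$ makes multiplication by $\lambda$ into an honest group automorphism of $\ab{H}$. Setting $\phi:=\lambda^{-1}\cdot f_\ast$ then produces a bona fide homomorphism $\ab{G}\to\ab{H}$ of abstract abelian groups satisfying $f_\ast=\lambda\otimes\phi$, exhibiting $f$ as $\lambda$-regular. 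Since $\lambda\in\Zx$ was arbitrary, this establishes the claim.

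The only delicate point, rather than a serious obstacle, is the extraction of a single scalar equation in (1) where Lemma \ref{LEM: Zx+-1} applies: this requires $\phi_1$ to have a nonzero entry, which is automatic from the invertibility of $(f_\ast)_{\mathrm{free}}$ over $\widehat{\Z}$. Everything else is bookkeeping.
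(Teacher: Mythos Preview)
Your proof is correct and follows essentially the same approach as the paper: part (1) is identical in spirit (reduce via Lemma~\ref{LEM: free part} to a matrix equation over $\widehat{\Z}$, then feed a nonzero entry into Lemma~\ref{LEM: Zx+-1}), while for part (2) the paper takes the marginally shorter route of applying Lemma~\ref{LEM: free part} to the trivial free parts rather than your direct argument on the finite abelianizations.
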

\begin{proof}
(1) Suppose $b_1(G)=b_1(H)=n$. Let $(u_1,\cdots,u_n)$ be a free basis of $\ab{G}_{\mathrm{free}}$ over $\Z$, and let $(v_1,\cdots,v_n)$ be a free basis of $\ab{H}_{\mathrm{free}}$ over $\Z$. Then $(f_\ast)_{\mathrm{free}}$ is represented by a matrix $P\in \mathrm{GL}(n,\widehat{\Z})$, such that $$f\begin{pmatrix} u_1  &  \cdots & u_n \end{pmatrix}= \begin{pmatrix} v_1  &  \cdots & v_n \end{pmatrix} P.$$ Suppose $f$ is $\lambda$-regular for $\lambda\in\Zx$. Then, by \autoref{LEM: free part}, $P=\lambda\cdot A$ for some $A\in \mathrm{GL}(n,\Z)$. If $f$ is also $\mu$-regular for $\mu\in \Zx$, then $P=\mu \cdot B$ for some $B\in \mathrm{GL}(n,\Z)$. There is always a non-zero entry in $B$, so by \autoref{LEM: Zx+-1}, $\lambda\cdot A=\mu\cdot B$ implies $\lambda=\pm \mu$.

(2) When $b_1(G)=b_1(H)=0$, both $\ab{G}_{\mathrm{free}}$ and $\ab{H}_{\mathrm{free}}$ are trivial groups. Let $\phi_{\mathrm{free}}: \ab{G}_{\mathrm{free}}\to \ab{H}_{\mathrm{free}}$ be the trivial homomorphism. Then, ${(f_\ast)}_{\mathrm{free}}=\lambda\otimes \phi_{\mathrm{free}}$ for any $\lambda\in\Zx$. Thus, by   \autoref{LEM: free part}, $f$ is $\lambda$-regular for any $\lambda\in\Zx$.
\end{proof}

For profinite isomorphisms between hyperbolic 3-manifolds, Liu \cite{Liu23} proves the following result.

\begin{theorem}[{\cite[Theorem 1.2]{Liu23}}]\label{THM: Zx regular}
Suppose $M$ and $N$ are finite-volume hyperbolic 3-manifolds (including both closed and cusped manifolds), and $f:\widehat{\pi_1M}\to \widehat{\pi_1N}$ is an isomorphism. Then $f$ is $\Zx$-regular.
\end{theorem}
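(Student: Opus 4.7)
The task is to show that the matrix $P \in \GL(n, \widehat{\Z})$ representing $f_\ast : \tensor \ab{\pi_1M}_{\mathrm{free}} \to \tensor \ab{\pi_1N}_{\mathrm{free}}$ in integral bases factors as $\lambda A$ with $\lambda \in \Zx$ and $A \in \GL(n,\Z)$. By \autoref{COR: b1} one has $b_1(M)=b_1(N)=n$, and the case $n=0$ follows from \autoref{RMK: betti number 0}(2), so I assume $n \geq 1$. By \autoref{LEM: free part} it suffices to work with the free part of the abelianization, or dually with $\widehat{\Z}$-valued cohomology: the claim becomes that the pullback $f^{*}\colon H^1(N;\widehat{\Z}) \to H^1(M;\widehat{\Z})$ sends the integer lattice $H^1(N;\Z)$ into $\lambda \cdot H^1(M;\Z)$ for some common $\lambda \in \Zx$.

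The strategy is to identify an intrinsic profinite invariant on $\widehat{\Z}$-valued cohomology classes that, restricted to integer classes of a hyperbolic 3-manifold, recovers the Thurston norm. First I would reduce, by passing to $f$-corresponding finite covers via \autoref{DEF: corresponding cover}, to the setting where both $M$ and $N$ fiber over $S^1$; such covers exist by Agol's virtual fibering theorem, and $f$-correspondence is provided by \autoref{THM: correspondence of subgroup}. For a fibered integer class $\phi \in H^1(M;\Z)$, the Thurston norm satisfies $\|\phi\|_T = -\chi(F_\phi)$, and by the Friedl--L\"uck formula it coincides (up to normalisation) with the $L^2$-Alexander torsion of $\phi$; L\"uck approximation then expresses this as the exponential growth rate of $|H_1(M_k;\Z)_{\mathrm{tor}}|$ along the tower of finite cyclic covers $M_k \to M$ cut out by $\phi \bmod k$. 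Since these covers and their integer-homology torsions are all visible inside $\widehat{\pi_1M}$, the value $\|\phi\|_T$ depends only on the image of $\phi$ in $H^1(\widehat{\pi_1M};\widehat{\Z})$, so $f^{*}$ preserves the Thurston norm on integer classes.

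By Thurston's theorem, the fibered cone is a nonempty open cone in $H^1(M;\R)$, hence primitive fibered integer classes span $H^1(M;\Q)$. Combining this spanning property with the profinite invariance of Thurston norm along integer classes, and using \autoref{LEM: Zx+-1} to extract a single scaling ratio from the comparison of two integer lattices inside $H^1(M;\widehat{\Z})$, one obtains $f^{*}(H^1(N;\Z)) = \lambda \cdot H^1(M;\Z)$ for a common $\lambda \in \Zx$. This yields $P = \lambda A$ with $A \in \GL(n,\Z)$, and hence $\Zx$-regularity of $f$ via \autoref{LEM: free part}. The main obstacle is establishing the profinite invariance of the Thurston norm; this requires combining Agol's virtual fibering theorem, the Friedl--L\"uck interpretation of the Thurston norm via $L^2$-Alexander torsion, and L\"uck--Schick-type torsion approximation to express the norm through finite-cover homological data, and is the technical heart of Liu's argument in \cite{Liu23}.
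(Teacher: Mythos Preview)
The paper does not supply its own proof of this statement; it is quoted verbatim as \cite[Theorem 1.2]{Liu23} and used as a black box. So there is nothing in the paper to compare your sketch against directly.

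Your outline is aimed in the right direction --- Liu's proof does pass to virtually fibered covers and hinges on profinite control of fibered classes and the Thurston norm --- but two steps in your write-up are loose enough to count as gaps rather than compressions.

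First, the claim that the Thurston norm of a fibered class equals the exponential growth rate of $|H_1(M_k;\Z)_{\mathrm{tor}}|$ along the cyclic-cover tower is not correct as stated. That growth rate computes the logarithmic Mahler measure of the Alexander polynomial, which is neither $-\chi(F_\phi)$ nor the $L^2$-Alexander torsion in general; you are conflating several related but distinct invariants. Liu's route to profinite control of the Thurston norm on fibered classes goes instead through the profinite detection of fiberedness (Jaikin-Zapirain) together with the fact that the fiber subgroup, and hence the Euler characteristic of the fiber, is visible in $\widehat{\pi_1M}$.

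Second, and more structurally: your sentence ``so $f^{*}$ preserves the Thurston norm on integer classes'' is circular. The Thurston norm lives on $H^1(\,\cdot\,;\R)$, so asserting that $f^{*}$ preserves it on integer classes already presupposes that $f^{*}$ carries integer classes to real classes --- which is precisely the $\Zx$-regularity you are trying to establish. Liu orders the argument differently: for each primitive fibered class $\phi$ he first shows $f^{*}\phi = \lambda_\phi \cdot \psi$ for some primitive fibered integer class $\psi$ and some $\lambda_\phi \in \Zx$ (this is where profinite fiberedness enters), and only afterwards argues that $\lambda_\phi$ is independent of $\phi$ using the cone structure of fibered faces of the Thurston norm ball.
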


\subsection{Peripheral $\Zx$-regularity}

Let $M$ be a finite-volume hyperbolic 3-manifold with $n$ cusps. By a slight abuse of notation, we view $M$ as a compact 3-manifold with boundary consisting of $n$ tori, denoted as $\partial_1M,\cdots, \partial_nM$. The {\em peripheral subgroups} of $\pi_1M$, usually denoted by $P_1,\cdots, P_n$ in this paper, are conjugacy representatives of the images of $\pi_1\partial_iM\xrightarrow{\text{incl}_\ast} \pi_1M$, which are indeed isomorphic to $\Z^2$. 

The closure of each peripheral subgroup in $\widehat{\pi_1M}$, denoted as $\overline{P_i}$, is canonically isomorphic to $\widehat{P_i}$. The reason is that any abelian subgroup in $\pi_1M$ is separable \cite{Ham01}, so it induces the full profinite topology on $P_i$ (see also \autoref{PROP: surface full top}), and then the profinite completion of the inclusion map $\widehat{P_i}\to \widehat{\pi_1M}$ is injective by \autoref{PROP: Left exact}. To be more specific, $\widehat{\Z}^2\cong \tensor P_i\cong \widehat{P_i}\cong \overline{P_i}$ by \autoref{PROP: abelianization}.

A series of works by Wilton--Zalesskii \cite{WZ17,WZ19} imply that any profinite isomorphism between finite-volume hyperbolic 3-manifolds respects the peripheral structure, see \autoref{introdef: peripheral Zx regular}.

\begin{proposition}[{\cite[Lemma 6.3]{Xu24a}}]\label{PROP: respect peripheral structure}
Let $M$ and $N$ be finite-volume hyperbolic 3-manifolds, and suppose $f:\widehat{\pi_1M}\to \widehat{\pi_1N}$ is an isomorphism. Then $f$ respects the peripheral structure. In particular, $M$ and $N$ have the same number of cusps, and $M$ is closed if and only if $N$ is closed. 
\end{proposition}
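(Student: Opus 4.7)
The plan is to identify the closures of the peripheral subgroups with a distinguished collection of $\widehat{\Z}^2$-subgroups inside $\widehat{\pi_1M}$, characterised purely group-theoretically, so that any abstract isomorphism $f$ is forced to preserve this collection.

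As a preliminary step, I would verify that $\overline{P_i}\cong \widehat{\Z}^2$. By Hamilton's separability theorem \cite{Ham01} for abelian subgroups of hyperbolic 3-manifold groups, the profinite topology on $\pi_1M$ restricts to the full profinite topology on each $P_i\cong \Z^2$. \autoref{PROP: Left exact} then gives $\overline{P_i}\cong \widehat{P_i}\cong \widehat{\Z}^2$, and likewise for the peripheral closures $\overline{Q_j}$ in $\widehat{\pi_1N}$.

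The substantive content is the group-theoretic characterisation proved by Wilton--Zalesskii \cite{WZ17, WZ19}: the closures $\overline{P_1},\dots,\overline{P_n}$ exhaust the conjugacy classes of maximal $\widehat{\Z}^2$-subgroups of $\widehat{\pi_1M}$, and this collection is almost malnormal in the sense that distinct conjugates intersect trivially. Because this description is intrinsic to the abstract profinite group, $f$ must permute the conjugacy classes of maximal $\widehat{\Z}^2$-subgroups. Consequently, for each $i$ there exist an index $\sigma(i)$ and an element $g_i\in \widehat{\pi_1N}$ with $f(\overline{P_i})=g_i^{-1}\,\overline{Q_{\sigma(i)}}\,g_i$. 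Running the same argument with $f^{-1}$ produces an inverse to $\sigma$, so $\sigma$ is a bijection; this yields both the respect of peripheral structure and the equality of the numbers of cusps. The closed-versus-cusped dichotomy then follows either as the case $n=0$ of the same matching argument, or directly from Wilton--Zalesskii's earlier theorem that the geometric type of a compact 3-manifold is a profinite invariant.

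The main obstacle is the group-theoretic characterisation of maximal $\widehat{\Z}^2$-subgroups and their almost malnormality in the profinite completion. This is not elementary: it uses Wilton--Zalesskii's framework of efficient JSJ decompositions and acylindrical profinite Bass--Serre theory together with the Hamilton separability input. Once these tools are invoked, the remainder of the proposition reduces to the formal transport-of-structure argument sketched above.
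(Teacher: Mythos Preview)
Your proposal is correct and aligns with the paper's treatment: the paper does not supply an independent proof but cites \cite[Lemma 6.3]{Xu24a}, attributing the substance to Wilton--Zalesskii \cite{WZ17,WZ19}, and your outline---separability of peripheral $\Z^2$'s via \cite{Ham01}, then the intrinsic characterisation of their closures as the conjugacy classes of maximal $\widehat{\Z}^2$-subgroups---is exactly the mechanism behind that citation. The preliminary step you isolate is also the one the paper spells out in the paragraph immediately preceding the proposition.
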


Peripheral $\Zx$-regularity is a concept introduced by the author in \cite{Xu24a,Xu24b}  that plays an important role in describing the gluing between 3-manifolds along tori. 

\begin{theorem}[{\cite[Theorem 7.2]{Xu24a}}]\label{THM: peripheral Zx regular}
Suppose $M$ and $N$ are cusped finite-volume hyperbolic 3-manifolds and   $f:\widehat{\pi_1M}\to \widehat{\pi_1N}$ is an isomorphism. Then $f$ is peripheral $\Zx$-regular. 

To be precise, let $P_1,\cdots, P_n$ be the peripheral subgroups of ${\pi_1M}$, and let $Q_1,\cdots, Q_n$ be the peripheral subgroups of ${\pi_1N}$. Up to a possible reordering, there exist  $g_1,\cdots, g_n\in \widehat{\pi_1N}$ such that $f(\overline{P_i})=\overline{Q_i}^{g_i}$ for each $i$, where $\overline{Q_i}^{g_i}$ is the abbreviation for the conjugate $g_i^{-1}\overline{Q_i}g_i$. And then,  there exists a unified $\lambda\in \Zx$ such that for each $1\le i \le n$,  the map 
$$C_{g_i}\circ f|_{\overline{P_i}}: \widehat{P_i}\cong \overline{P_i} \tto \overline{Q_i}\cong \widehat{Q_i}$$
is $\lambda$-regular.
\end{theorem}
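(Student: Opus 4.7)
The plan is to combine \autoref{THM: Zx regular} (Liu's global $\Zx$-regularity) applied to $f$-corresponding finite covers with the virtual RFRS/fibering structure of cusped hyperbolic 3-manifolds.

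First, \autoref{PROP: respect peripheral structure} produces the peripheral correspondence $P_i\leftrightarrow Q_i$ together with conjugators $g_i\in \widehat{\pi_1 N}$, reducing the task to each individual $f_i := C_{g_i}\circ f|_{\overline{P_i}}:\overline{P_i}\to \overline{Q_i}$. Identifying $\overline{P_i}$ and $\overline{Q_i}$ with $\widehat{\Z}^2$, the map $f_i$ is represented by a matrix $F_i\in \GL(2,\widehat{\Z})$, and the goal is to exhibit a single $\lambda\in \Zx$ such that $\lambda^{-1}F_i$ is an integer matrix for every $i$.

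For any $f$-corresponding pair $(M',N')$ of finite covers, \autoref{THM: Zx regular} yields a scalar $\lambda_{(M',N')}\in \Zx$, well-defined up to $\pm 1$ by \autoref{RMK: betti number 0}, governing the induced map on the free part of first homology. The key is that, although the image of a single peripheral $\Z^2$ in $H_1(M;\Z)_{\mathrm{free}}$ has rank only one by half-lives-half-dies, one can always pass to a finite cover in which any prescribed nontrivial peripheral element acquires infinite order in integer first homology; this is a consequence of Agol's virtual RFRS/fibering theorem. By choosing covers $M_1, M_2 \to M$ that detect two independent generators (meridional and longitudinal) of a finite-index sublattice of $P_i$ respectively, the restriction of $f_i$ to this sublattice becomes $\lambda_{(M_1,N_1)}$-regular in one coordinate and $\lambda_{(M_2, N_2)}$-regular in the other. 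Compatibility of Liu's scalars on a common subcover, together with the torsion-freeness of $\widehat{\Z}$ (\autoref{LEM: Zx+-1}), forces the two scalars to agree up to sign and propagates the identity from the sublattice to the full $\overline{P_i}$, yielding the $\lambda_i$-regularity of $f_i$.

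To upgrade to the uniform $\lambda_1=\cdots=\lambda_n$, carry out the same construction on a single common cover $M^\sharp \to M$ chosen to simultaneously detect a full basis of every peripheral subgroup at once; the single (up to sign) scalar supplied by \autoref{THM: Zx regular} for $(M^\sharp, N^\sharp)$ then governs every $f_i$ at once, forcing equality of the $\lambda_i$'s.

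The main obstacle is the geometric cover-refinement step: bridging the rank-$2$ structure of $\overline{P_i}\cong \widehat{\Z}^2$ and the rank-$1$ image of $P_i$ in integer first homology of any cover, while producing the relevant covers compatibly across the $f$-correspondence. Here Agol's virtual fibering/RFRS theorem is essential for detecting arbitrary peripheral elements in $H_1$, and the Wilton--Zalesskii separability machinery is needed to ensure that the peripheral subgroup sits inside $\widehat{\pi_1 M}$ rigidly enough for the correspondence to transport the chosen covers to $N$.
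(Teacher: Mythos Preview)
This theorem is quoted from \cite{Xu24a} and not proved in the present paper, so there is no in-paper proof to compare against; I evaluate your sketch on its own.

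The strategy is reasonable, but the assembly step has a genuine gap. You write that detecting $\alpha_1$ in $M_1$ and $\alpha_2$ in $M_2$ makes $f_i$ ``$\lambda$-regular in one coordinate'' each, as though each cover controls one \emph{column} of the $2\times 2$ matrix of $f_i$. That is not what the cover delivers: the $\lambda$-regularity of $f_1'$ on $H_1(M_1)_{\mathrm{free}}$ only constrains $f_i(\alpha)$ after projecting $\overline{Q_i}$ along $\ker\bigl(Q_i\cap\pi_1N_1\to H_1(N_1)_{\mathrm{free}}\bigr)$, a rank-one direction determined entirely by $N_1$ and independent of which $\alpha_j$ you chose on the $M$-side. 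So each cover supplies one linear functional on the \emph{target}, applied to every column; two such functionals, living in different finite-index sublattices of $Q_i$ with different adapted bases, do not obviously determine all four entries of $f_i$.

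The missing ingredient is that $f_i$ is automatically upper-triangular in longitude--meridian bases. Since $f$ induces an isomorphism on $H_1(-)_{\mathrm{free}}\otimes\widehat{\Z}$, the commuting square with the peripheral inclusions forces $f_i$ to carry $\overline{\ker(P_i\to H_1(M)_{\mathrm{free}})}$ onto $\overline{\ker(Q_i\to H_1(N)_{\mathrm{free}})}$; writing $\ell,\ell'$ for generators of these kernels and $m,m'$ for complements, one gets $f_i(\ell)=d\,\ell'$ and $f_i(m)=c_1\ell'+c_2 m'$. Liu's theorem applied to $M$ itself already gives $c_2\in\lambda\Z$. Then a \emph{single} RFRS cover $M_1$ with $[\ell]\neq 0$ in $H_1(M_1)_{\mathrm{free}}$ suffices: the same kernel-preservation in $M_1$, together with $\lambda$-regularity of $f_1'$ (whose scalar equals $\pm\lambda$ by the well-definedness of the homology coefficient, proved independently of this theorem), forces $d\in\lambda\Z$ and then $c_1\in\lambda\Z$ via a short torsion-freeness computation in the spirit of \autoref{LEM: Zx+-1}. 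You also glossed over the conjugator: $g_i$ need not lie in $\widehat{\pi_1N_1}$, so one must write $g_i=h_ig_i'$ with $h_i\in\pi_1N$, $g_i'\in\widehat{\pi_1N_1}$, and absorb the deck-transformation action $(C_{h_i})_\ast$ on $H_1(N_1)$ into the integral part of the map. Uniformity of $\lambda$ across the $i$'s then follows as you indicate.
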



It is proven in \cite{Xu24a} that the peripheral $\Zx$-regularity does not depend on the choice of the conjugacy representatives of the peripheral subgroups, nor does it depend on the elements $g_i$  that induces the conjugation. 

\begin{remark}\label{samecoef}
 $\lambda\in \Zx$ that appears in \autoref{THM: peripheral Zx regular} is exactly the coefficient such that $f$ is $\lambda$-regular. This is because the maps $ P_i\cong\Z^2\cong H_1( \partial _i M;\Z)\to H_1(M;\Z)_{\mathrm{free}}$ have non-trivial image by the ``half lives, half dies'' theorem (\cite[Lemma 8.15]{Lic97}), so that $f$ can only be $\pm\lambda$-regular according to \autoref{LEM: Zx+-1}.
\end{remark}

For brevity of notation, we abbreviate the results of \autoref{THM: Zx regular}, \autoref{PROP: respect peripheral structure}, \autoref{THM: peripheral Zx regular} and \autoref{samecoef} into the following setting, and conventionally fix the use of the letters therein from \autoref{sec:4} through \autoref{SEC: Dehn filling}.
\begin{convention}\label{conv+}
A {\em cusped hyperbolic setting of profinite isomorphism} $(M,N,f,\lambda,\Psi)$ consists of the following ingredients.
\begin{enumerate}[leftmargin=*]
\item $M$ and $N$ are finite-volume hyperbolic 3-manifolds each with $n>0$ cusps denoted as $\partial_1M,\cdots,\partial_nM$ and $\partial_1N,\cdots,\partial_nM$ respectively.
\item $f:\widehat{\pi_1M}\to \widehat{\pi_1N}$ is an isomorphism which is $\lambda$-regular.
\item $P_1,\cdots, P_n$ are peripheral subgroups of $\pi_1M$ and $Q_1,\cdots, Q_n$ are peripheral subgroups of $\pi_1N$, such that $f(\overline{P_i})=\overline{Q_i}^{g_i}$ for certain fixed elements $g_1,\cdots, g_n\in \widehat{\pi_1N}$.
\item For $1\le i \le n$, denote $\partial f_i= C_{g_i}\circ f|_{\overline{P_i}}: \widehat{P_i}\to \widehat{Q_i}$, which uniquely decomposes as $\partial f_i=\lambda \otimes \psi_i$ for some isomorphism $\psi_i: P_i\to Q_i$.
\item Let $\Psi:\partial M\to \partial N$ be the unqiue homeomorphism up to isotopy that induces $\psi_1,\cdots,\psi_n$. In other words, $\Psi(\partial_iM)=\partial_iN$ and $(\Psi|_{\partial_iM})_\ast=\psi_i:P_i\cong \pi_1\partial_iM\to \pi_1\partial_iN\cong Q_i$.
\end{enumerate}
\end{convention}

Note that altering $\lambda$ by a $\pm$-sign changes $\Psi$ by an orientation-preserving involution, which does not affect our reasonings in the remaining context.

\section{The homology coefficient}\label{sec:4}

In this section, we elaborate on the coefficient in $\Zx$ appearing in the $\Zx$-regular profinite isomorphism $f:\widehat{\pi_1M}\to \widehat{\pi_1N}$ between finite-volume hyperbolic 3-manifolds. The first question to be settled is the ambiguity stated in \autoref{RMK: betti number 0}, that is, when $M$ and $N$ are closed and $b_1(M)=b_1(N)=0$, the coefficient in $\Zx$ cannot be determined directly from the abelianization level. Thus, we need a more careful definition.

\begin{definition}\label{DEF: homology coefficient}
Suppose $M$ and $N$ are finite-volume hyperbolic 3-manifolds (including both closed and cusped manifolds), and $f:\widehat{\pi_1M}\to \widehat{\pi_1N}$ is an isomorphism. Let $M'$ and $N'$ be any pair of $f$--corresponding finite covers of $M$ and $N$ such that $b_1(M')=b_1(N')>0$, and let $f':\widehat{\pi_1M'}\to \widehat{\pi_1N'}$ be the restriction of $f$ on $\widehat{\pi_1M'}$ (see \autoref{DEF: corresponding cover}). The {\em homology coefficient} of $f$ is the unique element $\coef(f)=\pm\lambda\in \Zxpm$\footnote{In this article, an element in $\Zxpm$ is conventionally denoted as $\pm\lambda$ where $\lambda\in \Zx$.} such that $f'$ is $\lambda$-regular.
\end{definition}

The finite covers with positive first betti numbers exist by the result of Agol and Wise \cite{Ago13, Wise21}, and with the finite covers $M'$ and $N'$ fixed, the uniqueness of $\pm \lambda \in \Zx/\{\pm1\}$ follows from \autoref{RMK: betti number 0}. To justify that the homology coefficient  is well-defined, the following lemma is involved to show that $\coef(f)$ does not depend on the choice of the finite cover $M'$.

\begin{lemma}\label{LEM: homology coefficient well defined}
The homology coefficient in \autoref{DEF: homology coefficient} is well-defined. To be precise, under the setting of \autoref{DEF: homology coefficient}, suppose $M''$ and $N''$ is another pair of $f$--corresponding finite covers of $M$ and $N$ with $b_1(M'')=b_1(N'')>0$, and likewise, denote $f'': \widehat{\pi_1M''}\to \widehat{\pi_1N''}$ as the restriction of $f$ on $\widehat{\pi_1M''}$. 
Then, $f''$ is also $\lambda$-regular.
\end{lemma}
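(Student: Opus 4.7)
The plan is to reduce everything to a common finite refinement. Given the two $f$-corresponding pairs $(M',N')$ and $(M'',N'')$ with positive first Betti numbers, I would construct a third pair $(M''',N''')$ where $M'''$ simultaneously finitely covers both $M'$ and $M''$, and $N'''$ similarly covers both $N'$ and $N''$, in an $f$-corresponding manner. If I can show that the coefficient attached to $f'''$ agrees up to sign with those attached to $f'$ and $f''$, the lemma follows by transitivity of the $\pm$-relation.

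The common refinement is built from the correspondence of finite-index subgroups. Setting $\pi_1M''' := \pi_1M' \cap \pi_1M''$ gives a finite-index subgroup of $\pi_1M$ whose closure in $\widehat{\pi_1M}$ coincides with $\overline{\pi_1M'} \cap \overline{\pi_1M''}$ by \autoref{THM: correspondence of subgroup}. Applying $f$ and using both $f$-correspondences, this closure is sent to $\overline{\pi_1N'} \cap \overline{\pi_1N''}$, which by the same correspondence is the closure of $\pi_1N''' := \pi_1N' \cap \pi_1N''$. Hence $(M''',N''')$ is $f$-corresponding, and $b_1(M''') \geq b_1(M') > 0$ by the transfer argument on rational homology.

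The comparison step is the heart of the argument; I carry it out for $f'$ versus $f'''$, the case of $f''$ being identical. Denote the covering maps by $p_M: M''' \to M'$ and $p_N: N''' \to N'$, and write $\mu$ and $\lambda$ for the coefficients of $f'$ and $f'''$ respectively. By \autoref{PROP: abelianization}, these covering maps induce a natural commutative square of $\widehat{\Z}$-linear maps between the profinite abelianizations, and by \autoref{LEM: free part} I may restrict to the free parts. Reading off commutativity gives
\[ \mu \otimes (\phi' \circ (p_M)_\ast^{\mathrm{free}}) \;=\; \lambda \otimes ((p_N)_\ast^{\mathrm{free}} \circ \phi''') \]
as $\widehat{\Z}$-linear maps, where $\phi'$ and $\phi'''$ are the integer-linear isomorphisms furnished by $\mu$- and $\lambda$-regularity. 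Both sides are $\Z$-linear homomorphisms rescaled by units in $\Zx$, so \autoref{LEM: Zx+-1} will yield $\lambda = \pm\mu$ once I verify that some integer matrix entry is nonzero. For this I would invoke the classical covering-space transfer $\mathrm{tr}: H_1(N';\Q) \to H_1(N''';\Q)$, satisfying $(p_N)_\ast \circ \mathrm{tr} = d \cdot \mathrm{id}$ for $d$ the covering degree; hence $(p_N)_\ast$ is rationally surjective, and since $b_1(N') > 0$, its composition with the isomorphism $\phi'''$ is nonzero.

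I expect the main obstacle to be precisely this last comparison step: arranging the rational surjectivity of the vertical covering-induced maps so that \autoref{LEM: Zx+-1} can be triggered. Once the transfer argument is in place, the remainder is a straightforward diagram chase, and the construction of the common refinement is essentially formal via \autoref{THM: correspondence of subgroup}.
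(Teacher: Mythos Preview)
Your proposal is correct and follows essentially the same approach as the paper's proof: both pass to a common $f$-corresponding finite cover and use the commutative square of profinite abelianizations induced by the covering maps, together with \autoref{LEM: Zx+-1}, to pin down the coefficient up to sign. The paper packages the key step as a standalone sub-lemma (if the top map in such a square is $\mu$-regular then so is the bottom map, using that the vertical maps have finite-index image) and applies it twice, whereas you compare the two coefficients directly via the same square and the transfer argument; these are the same idea organized slightly differently.
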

\begin{proof}
The proof relies on the following sub-lemma.
\def\free{\mathrm{free}}
\begin{sublemma}\label{LEM: Descend Zx regular}
Let $G$, $H$, $K$, $L$ be finitely generated groups, and let $p: G\rightarrow K$ and $q: H\rightarrow L$ be group homomorphisms such that $p(G)$ has finite index in $K$ and $q(H)$ has finite index in $L$. Suppose $f: \widehat{G}\to \widehat{H}$ and $g: \widehat{K}\to \widehat{L}$ are continuous homomorphisms such that the following diagram commutes.
\begin{equation*}
\begin{tikzcd}
\widehat{G} \arrow[r, "f"] \arrow[d, "\widehat{p}"'] & \widehat{H} \arrow[d, "\widehat{q}"] \\
\widehat{K} \arrow[r, "g"]                           & \widehat{L}                         
\end{tikzcd}
\end{equation*}
If $f$ is $\mu$-regular for some $\mu\in\Zx$, then $g$ is also $\mu$-regular.
\end{sublemma}
\begin{proof}
Denote $p_\ast: \ab{G}_{\free}\to \ab{K}_{\free}$ and $q_\ast: \ab{H}_{\free}\to \ab{L}_{\free}$. Then, through the isomorphisms in \autoref{PROP: abelianization}, $\widehat{p}_{\ast}:\tensor \ab{G}_{\free} \to \tensor \ab{K}_{\free}$ is decomposed as $1\otimes p_\ast$, and $\widehat{q}_\ast:\tensor \ab{H}_{\free}\to \tensor \ab{L}_{\free}$ is decomposed as $1\otimes q_\ast$. According to \autoref{LEM: free part}, we assume that $f_\ast:\tensor \ab{G}_{\free} \to \tensor\ab{H}_{\free}$ is decomposed as $\mu\otimes \phi$. 
Then the naturalness of the isomorphisms in \autoref{PROP: abelianization} implies a commutative diagram of $\widehat{\Z}$-modules.

\begin{equation*}
\begin{tikzcd}
\tensor \ab{G}_{\free} \arrow[dd, "1\otimes p_\ast=\widehat{p}_\ast"'] \arrow[rr, "f_\ast=\mu\otimes \phi"] &  & \tensor \ab{H}_{\free} \arrow[dd, "\widehat{q}_\ast=1\otimes q_\ast"] \\
                                                                                                            &  &                                                                       \\
\tensor \ab{K}_{\free} \arrow[rr, "g_\ast"]                                                                 &  & \tensor \ab{L}_{\free}                                               
\end{tikzcd}
\end{equation*}

In particular, the commutative diagram implies that $\phi(\ker(p_\ast))\subseteq \ker(q_\ast)$. Thus, $\phi$ descends to a homomorphism $\psi: p_\ast (\ab{G}_{\free}) \to q_\ast (\ab{H}_{\free})$, and the following diagram commutes.

\begin{equation*}
\begin{tikzcd}
\tensor p_\ast(\ab{G}_{\free}) \arrow[rr, "\mu\otimes \psi"] \arrow[dd, hook,"\text{incl.}"'] &  & \tensor q_\ast( \ab{H}_{\free}) \arrow[dd, hook,"\text{incl.}"] \\
                                                                              &  &                                                  \\
\tensor \ab{K}_{\free} \arrow[rr, "g_\ast"]                                   &  & \tensor \ab{L}_{\free}                          
\end{tikzcd}
\end{equation*}

Since $p_\ast(\ab{G}_{\free})$ is the image of $p(G)$ in $\ab{K}_{\free}$ through abelianization, and $p(G)$ has finite index in $K$, it follows that $p_\ast(\ab{G}_{\free})$ has finite index in $\ab{K}_{\free}$. Similarly, $q_\ast(\ab{H}_{\free})$ has finite index in $\ab{L}_{\free}$. It then follows from \cite[Lemma 7.10 (3)]{Xu24a} that $g_\ast$ can be decomposed as $\mu\otimes \widetilde{\psi}$ such that $\widetilde{\psi}|_{p_\ast(\ab{G}_{\free})}=\psi$. Thus, according to \autoref{LEM: free part}, $g $ is $\mu$-regular. 
\end{proof}

In the situation of \autoref{LEM: homology coefficient well defined}, let $M^\star$ be a common finite cover of $M'$ and $M''$, and let $N^\star$ be the $f$--corresponding finite cover of $N$. Denote  $f^\star=f|_{\widehat{\pi_1M^\star}}: \widehat{\pi_1M^\star}\to \widehat{\pi_1N  ^\star}$. 
Then according to \autoref{THM: Zx regular}, $f^\star$ is $\Zx$-regular. We suppose that $f^\star$ is $\mu$-regular, where $\mu \in\Zx$. 

Let $p': \pi_1M^\star\to \pi_1M'$ and $q':\pi_1N^\star\to \pi_1N'$  be the inclusion maps induced by the coverings. Then, we have the following commutative diagram.
\begin{equation*}
\begin{tikzcd}
\widehat{\pi_1M^\star} \arrow[r,"f^\star"] \arrow[d,"\widehat{p'}"'] & \widehat{\pi_1N^\star} \arrow[d, "\widehat{q'}"]\\
\widehat{\pi_1M'}\arrow[r,"f'"] & \widehat{\pi_1N'}
\end{tikzcd}
\end{equation*}

Since these maps are induced by finite coverings, $p'(\pi_1M^\star)$ has finite index in $\pi_1M'$ and $q'(\pi_1N^\star)$ has finite index in $\pi_1N'$. Consequently, according to \autoref{LEM: Descend Zx regular}, $f'$ is $\mu$-regular. However, $f'$ is already $\lambda$-regular. Since $b_1(M')=b_1(N')>0$, \autoref{RMK: betti number 0} implies that $\mu=\pm \lambda$; in other words, $f^\star$ is $\lambda$-regular.

Then, consider the commutative diagram
\begin{equation*}
\begin{tikzcd}
\widehat{\pi_1M^\star} \arrow[r,"f^\star"] \arrow[d,"\widehat{p''}"'] & \widehat{\pi_1N^\star} \arrow[d, "\widehat{q''}"]\\
\widehat{\pi_1M''}\arrow[r,"f''"] & \widehat{\pi_1N''}
\end{tikzcd}
\end{equation*}
where $p'': \pi_1M^\star\to \pi_1M''$ and $q'':\pi_1N^\star\to \pi_1N''$ are the inclusion maps induced by the coverings, with images having finite index. Again by \autoref{LEM: Descend Zx regular}, we derive from $f^\star $ being $\lambda$-regular that $f''$ is $\lambda$-regular.
\end{proof}

As a corollary, the homology coefficient is invariant  under taking finite covers. 

\begin{corollary}\label{COR: homology coef finite cover}
Suppose $M$ and $N$ are finite-volume hyperbolic 3-manifolds, and $f:\widehat{\pi_1M}\to \widehat{\pi_1N}$ is an isomorphism. Suppose $M^\star$ and $N^\star$ is a pair of $f$--corresponding finite covers of $M$and $N$ and $f^\star=f|_{\widehat{\pi_1M^\star}}:\widehat{\pi_1M^\star}\to \widehat{\pi_1N^\star}$. Then $\coef(f^\star)=\coef(f)$. 
\end{corollary}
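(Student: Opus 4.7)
The plan is to deduce the corollary directly from \autoref{DEF: homology coefficient} together with the well-definedness statement \autoref{LEM: homology coefficient well defined}; the only content is to exhibit a single pair of finite covers that simultaneously serves as a witness for both $\coef(f)$ and $\coef(f^\star)$.

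First I would invoke the virtual positive first Betti number theorem of Agol and Wise \cite{Ago13, Wise21} applied to the finite-volume hyperbolic 3-manifold $M^\star$: there exists a finite cover $M'\to M^\star$ with $b_1(M')>0$. Let $N'$ be the $f^\star$--corresponding finite cover of $N^\star$; so $f^\star(\widehat{\pi_1M'})=\widehat{\pi_1N'}$. By \autoref{COR: b1}, since $\widehat{\pi_1M'}\cong \widehat{\pi_1N'}$, we have $b_1(N')=b_1(M')>0$.

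Next I would observe that $M'\to M^\star\to M$ and $N'\to N^\star\to N$ are finite coverings, so $M'$ is a finite cover of $M$ and $N'$ is a finite cover of $N$. Moreover, since $\widehat{\pi_1M^\star}\le \widehat{\pi_1M}$ and $f(\widehat{\pi_1M^\star})=\widehat{\pi_1N^\star}$, the inclusion $\widehat{\pi_1M'}\le \widehat{\pi_1M^\star}$ gives $f(\widehat{\pi_1M'})=f^\star(\widehat{\pi_1M'})=\widehat{\pi_1N'}$. Hence $(M',N')$ is also an $f$--corresponding pair of finite covers of $(M,N)$ with positive first Betti number. Crucially, the two restrictions
\[
f|_{\widehat{\pi_1M'}}:\widehat{\pi_1M'}\tto \widehat{\pi_1N'}\quad\text{and}\quad f^\star|_{\widehat{\pi_1M'}}:\widehat{\pi_1M'}\tto \widehat{\pi_1N'}
\]
are literally the same map.

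Finally, by \autoref{DEF: homology coefficient} and the well-definedness guaranteed by \autoref{LEM: homology coefficient well defined}, one may compute $\coef(f)$ by picking the $f$--corresponding pair $(M',N')$: $\coef(f)=\pm\lambda$, where $\lambda\in \Zx$ is any element such that $f|_{\widehat{\pi_1M'}}$ is $\lambda$-regular. Analogously, $\coef(f^\star)$ may be computed via the $f^\star$--corresponding pair $(M',N')$: $\coef(f^\star)=\pm\lambda'$, where $\lambda'\in\Zx$ is any element such that $f^\star|_{\widehat{\pi_1M'}}$ is $\lambda'$-regular. Since the two restricted maps coincide, any valid $\lambda$ is a valid $\lambda'$ and vice versa, so $\coef(f)=\coef(f^\star)$ in $\Zxpm$. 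There is no real obstacle beyond correctly matching the pair of covers to both invocations of the definition; the whole content is that Agol--Wise supplies a cover with $b_1>0$ downstairs of $M^\star$, which then witnesses both homology coefficients simultaneously.
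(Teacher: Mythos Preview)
Your proof is correct and takes essentially the same approach as the paper: choose a finite cover $M'$ of $M^\star$ with $b_1(M')>0$ (via Agol--Wise), observe that the resulting $f^\star$--corresponding pair $(M',N')$ is simultaneously an $f$--corresponding pair over $(M,N)$, and then invoke \autoref{DEF: homology coefficient} together with the well-definedness from \autoref{LEM: homology coefficient well defined} to conclude that both $\coef(f)$ and $\coef(f^\star)$ are computed by the same restricted map. Your write-up is simply more explicit about the intermediate verifications.
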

\begin{proof}
In the definition of the homology coefficient, we can choose an $f$--corresponding pair of finite covers $M'$ and $N'$ over $M^\star$ and $N^\star$ with $b_1(M')=b_1(N')>0$. Then by the well-definedness \autoref{LEM: homology coefficient well defined}, both $\coef(f)$ and $\coef(f^\star)$ are the unique element $\pm \lambda\in \Zx/\{\pm 1\}$ such that $f'=f|_{\widehat{\pi_1M'}}:\widehat{\pi_1M'}\to \widehat{\pi_1N'}$ is $\lambda$-regular. 
\end{proof}

We remind the readers that in a cusped hyperbolic setting of profinite isomorphism $(M,N,f,\lambda,\Psi)$, $\coef(f)=\pm \lambda$, as explained in \autoref{samecoef}. 

\subsection{The group-level explanation}
This is  a  supplementary subsection. Though it will not be used in the subsequent sections, it might provide some help in understanding the reasonableness of the definition of homology coefficient. Indeed,  we show that 
there is another way to determine the  homology coefficient, focusing only on the group level instead of the abelianization (homology) level. 
 Let us start with some preparations.

\begin{lemma}\label{LEM: conjugate coef}
Suppose $M$ and $N$ are finite-volume hyperbolic 3-manifolds, and  $f:\widehat{\pi_1M}\to \widehat{\pi_1N}$ is an isomorphism. Let $\gamma\in \widehat{\pi_1N}$, and let $C_{\gamma}:\widehat{\pi_1N} \to \widehat{\pi_1N}$ be the conjugation by $\gamma$ on the left. Then $\coef(f)=\coef(C_{\gamma}\circ f)$. 
\end{lemma}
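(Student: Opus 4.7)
The plan is to reduce the problem to a computation on a pair of \emph{regular} $f$--corresponding finite covers with positive first Betti number, where $\overline{\pi_1N^\star}$ is invariant under $C_\gamma$, and then exploit the fact that on an abelianization, conjugation is ``almost trivial''.

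First I would choose an $f$--corresponding pair $M^\star,N^\star$ of finite covers such that $\pi_1M^\star\lhd \pi_1M$, $\pi_1N^\star\lhd\pi_1N$, and $b_1(M^\star)=b_1(N^\star)>0$. Such a pair exists: by Agol--Wise there is a finite cover of $M$ with positive $b_1$, and replacing it by its normal core in $\pi_1M$ (which is still of finite index and still has positive $b_1$ since it further covers it) gives a regular cover; its $f$--image is automatically normal in $\widehat{\pi_1N}$ because $f$ is an isomorphism, and so corresponds via \autoref{THM: correspondence of subgroup} to a regular cover $N^\star$ of $N$ with $b_1(N^\star)=b_1(M^\star)>0$ by \autoref{COR: b1}. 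Since $\overline{\pi_1N^\star}$ is normal, conjugation by $\gamma$ preserves it, and therefore $(M^\star,N^\star)$ is also a $(C_\gamma\circ f)$--corresponding pair, with $(C_\gamma\circ f)^\star=\alpha\circ f^\star$, where $\alpha:=C_\gamma|_{\overline{\pi_1N^\star}}$ is an automorphism of the profinite group $\overline{\pi_1N^\star}\cong\widehat{\pi_1N^\star}$.

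The heart of the argument is to show that $\alpha_\ast$ on $\tensor \ab{\pi_1N^\star}$ is of the form $1\otimes\beta$ for some automorphism $\beta$ of $\ab{\pi_1N^\star}$. Here I would use density: pick $\gamma_0\in\pi_1N$ and $h\in\overline{\pi_1N^\star}$ with $\gamma=\gamma_0 h$ (possible because $\pi_1N$ is dense in $\widehat{\pi_1N}$ and $\overline{\pi_1N^\star}$ is open). Then $C_\gamma=C_h\circ C_{\gamma_0}$, and $C_h$ restricted to $\overline{\pi_1N^\star}$ is a genuine inner automorphism of the profinite group $\overline{\pi_1N^\star}$ (since $h\in \overline{\pi_1N^\star}$), hence trivial on the abelianization. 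Meanwhile $\gamma_0\in\pi_1N$ normalizes $\pi_1N^\star$ (as $\pi_1N^\star\lhd \pi_1N$), so $C_{\gamma_0}$ restricts to an honest automorphism $\sigma:\pi_1N^\star\to\pi_1N^\star$ of abstract groups, whose profinite completion is precisely $C_{\gamma_0}|_{\overline{\pi_1N^\star}}$. By the naturality in \autoref{PROP: abelianization} this gives $\alpha_\ast=1\otimes\sigma_\ast$ on $\tensor \ab{\pi_1N^\star}$.

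Putting these together, if $\coef(f)=\pm\lambda$ then $f^\star$ is $\lambda$--regular by \autoref{LEM: homology coefficient well defined}, so $f^\star_\ast$ (restricted to the free part, as in \autoref{LEM: free part}) decomposes as $\lambda\otimes\phi$. Therefore
\begin{equation*}
(C_\gamma\circ f)^\star_\ast \;=\; \alpha_\ast\circ f^\star_\ast \;=\; (1\otimes \sigma_\ast)\circ(\lambda\otimes\phi) \;=\; \lambda\otimes(\sigma_\ast\circ\phi),
\end{equation*}
so $(C_\gamma\circ f)^\star$ is $\lambda$--regular and hence $\coef(C_\gamma\circ f)=\pm\lambda=\coef(f)$. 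The only step requiring real care is the existence of the regular $f$--corresponding cover with $b_1>0$; everything else is bookkeeping once $\overline{\pi_1N^\star}$ is known to be $C_\gamma$--invariant.
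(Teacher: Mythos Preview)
Your proof is correct and follows essentially the same approach as the paper's: pass to a regular $f$--corresponding cover with positive $b_1$, write $\gamma$ as a product of an element of $\pi_1N$ and an element of the open normal subgroup, and observe that conjugation by the former is the completion of an abstract automorphism while conjugation by the latter is inner and hence trivial on the abelianization. One harmless slip: with $\gamma=\gamma_0 h$ and left conjugation one has $C_\gamma=C_{\gamma_0}\circ C_h$, not $C_h\circ C_{\gamma_0}$, but since $(C_h)_\ast=\mathrm{id}$ on the abelianization this does not affect the conclusion $\alpha_\ast=1\otimes\sigma_\ast$.
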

\begin{proof}For simplicity of notation, we assume $\coef(f)=\pm \lambda$. 

\textbf{Case 1:} $b_1(M)=b_1(N)>0$.

In fact, the conjugation has no effect on the (profinite) abelianization level, that is to say $f_\ast : \tensor H_1(M;\Z)\to \tensor H_1(N;\Z)$ and $(C_\gamma\circ f)_{\ast}: \tensor H_1(M;\Z)\to \tensor H_1(N;\Z)$ are the same map. Thus, the assumption that $f$ is $\lambda$-regular implies that $C_\gamma\circ f$ is also $\lambda$-regular. In this case, $b_1(M)=b_1(N)>0$, so there is no need to pass on to finite covers. Thus, $\coef(C_\gamma\circ f)= \pm \lambda$. 

\textbf{Case 2:} $b_1(M)=b_1(N)=0$.

Let $M'$ and $N'$ be a pair of $f$--corresponding finite regular covers of $M$ and $N$ with $b_1(M')=b_1(N')>0$, that is, $\pi_1M'$ and $\pi_1N'$ are normal subgroups of $\pi_1M$ and $\pi_1N$ respectively. Then $(C_{\gamma}\circ f) (\widehat{\pi_1M'})= C_{\gamma}(\widehat{\pi_1N'})= \widehat{\pi_1N'}$.  Denote $f'=f|_{\widehat{\pi_1M'}}$ and $(C_{\gamma}\circ f)'= (C_{\gamma}\circ f)|_{\widehat{\pi_1M'}}$. Then $f'$ is $\lambda$-regular.

We choose $\eta\in \widehat{\pi_1N'} \lhd \widehat{\pi_1N}$ and $h\in \pi_1N$ such that $\gamma= h\eta$. Since $\pi_1N'\lhd \pi_1N$, the conjugation map $C_{h}: \pi_1N\to \pi_1N$ restricts to an isomorphism $\varphi: \pi_1N'\to \pi_1N'$. Then it is easy to verify that the following diagram commutes. 
\begin{equation*}
\begin{tikzcd}[column sep=large, row sep=large]
\widehat{\pi_1M'} \arrow[r,"C_{\eta}\circ f'"] \arrow[d,"id=\widehat{id}"'] & \widehat{\pi_1N'}\arrow[d,"\widehat{\varphi}","\cong"'] \\
\widehat{\pi_1M'}\arrow[r, " (C_{\gamma}\circ f)'"] & \widehat{\pi_1N'}
\end{tikzcd}
\end{equation*}

By \textbf{Case 1}, since $\eta\in \widehat{\pi_1N'}$, $C_\eta \circ f'$ is $\lambda$-regular. Then, applying \autoref{LEM: Descend Zx regular} to this commutative diagram implies that $(C_\gamma\circ f)'$ is $\lambda$-regular. Recall that $b_1(M')=b_1(N')>0$, so by \autoref{DEF: homology coefficient}, $\coef(C_{\gamma}\circ f)=\pm \lambda$. 
\end{proof}

  For a profinite group $\Pi$ and an element $\alpha\in \Pi$, there exists a unique continuous homomorphism $\varphi_{\alpha}: \widehat{\Z}\to \Pi$ such that $\varphi_\alpha(1)=\alpha$, see \cite[Section 4.1]{RZ10}. For $\mu \in \widehat{\Z}$, we define $\alpha^\mu=\varphi_\alpha(\mu)\in \Pi$.  

\begin{proposition}\label{lem: conjugate homology coefficient}
Let $M$ and $N$ be finite-volume hyperbolic 3-manifolds. Suppose $f:\widehat{\pi_1M}\to \widehat{\pi_1N}$ is an isomorphism. If   $\alpha\in \pi_1M$ and $\beta\in \pi_1N$ are non-trivial elements such that $f(\alpha)$ is conjugate to $\beta^\lambda$ in $\widehat{\pi_1N}$ for some $\lambda\in \Zx$. Then, $\coef(f)=\pm \lambda$.
\end{proposition}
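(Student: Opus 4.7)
My strategy is to pass to $f$--corresponding finite normal covers $M',N'$ in which both $\alpha$ and $\beta$ have non-torsion image in the free part of the first homology, and then to compare the $\Zx$-regular decomposition $(f')_\ast=\lambda'\otimes \phi'$ against a direct computation of $(f')_\ast([\alpha])=[h\beta^\lambda h^{-1}]$, concluding via \autoref{LEM: Zx+-1}.

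By the virtual RFRS property of hyperbolic 3-manifold groups (Agol--Wise), I would first find finite-index normal subgroups $\pi_1M_\alpha\lhd \pi_1M$ and $\pi_1N_\beta\lhd \pi_1N$ such that $\alpha\in \pi_1M_\alpha$ is non-torsion in $H_1(M_\alpha;\Z)_{\mathrm{free}}$ and $\beta\in \pi_1N_\beta$ is non-torsion in $H_1(N_\beta;\Z)_{\mathrm{free}}$. Using \autoref{THM: correspondence of subgroup}, let $\pi_1N_\alpha,\pi_1M_\beta$ be the $f$--corresponding subgroups; then $\pi_1M':=\pi_1M_\alpha\cap \pi_1M_\beta$ and $\pi_1N':=\pi_1N_\alpha\cap\pi_1N_\beta$ form an $f$--corresponding pair. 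The simultaneous-containment claim $\alpha\in\pi_1M'$ and $\beta\in\pi_1N'$ follows from normality together with $\lambda\in\Zx$ being a unit: normality of $\widehat{\pi_1N_\beta}$ in $\widehat{\pi_1N}$ yields $f(\alpha)=h\beta^\lambda h^{-1}\in \widehat{\pi_1N_\beta}$ (using that $\beta^\lambda\in\widehat{\pi_1N_\beta}$, since closed subgroups are closed under profinite powers), whence $\alpha\in\pi_1M_\beta$. Dually, $f^{-1}(\beta^\lambda)=f^{-1}(h)^{-1}\alpha f^{-1}(h)\in\widehat{\pi_1M_\alpha}$, and invertibility of $\lambda$ gives $f^{-1}(\beta)=(f^{-1}(\beta^\lambda))^{\lambda^{-1}}\in\widehat{\pi_1M_\alpha}$, whence $\beta\in\pi_1N_\alpha$. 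Non-torsion-ness persists to the finer covers because covering-map projections send torsion to torsion, so in particular $b_1(M')=b_1(N')>0$.

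Writing $f'=f|_{\widehat{\pi_1M'}}$, \autoref{COR: homology coef finite cover} gives $\coef(f')=\coef(f)=:\pm\lambda'$. By \autoref{DEF: homology coefficient} and \autoref{LEM: free part}, the induced abelianization
\[(f')_\ast:\tensor\ab{(\pi_1M')}_{\mathrm{free}}\tto \tensor\ab{(\pi_1N')}_{\mathrm{free}}\]
decomposes as $\lambda'\otimes \phi'$ for some isomorphism $\phi'$. On the other hand, since $\widehat{\pi_1N'}\lhd\widehat{\pi_1N}$ is normal, the conjugation action of $\widehat{\pi_1N}$ on $\widehat{\pi_1N'}$ descends to an action of the finite group $G=\widehat{\pi_1N}/\widehat{\pi_1N'}\cong \pi_1N/\pi_1N'$ on $\Ab{\widehat{\pi_1N'}}$ by $\widehat{\Z}$-module automorphisms, which preserve the integer lattice $\ab{(\pi_1N')}_{\mathrm{free}}$ under the identification of \autoref{PROP: abelianization}. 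Therefore
\[(f')_\ast([\alpha])=[f(\alpha)]=[h\beta^\lambda h^{-1}]=h\cdot [\beta^\lambda]=\lambda\,(h\cdot[\beta])\]
in $\tensor\ab{(\pi_1N')}_{\mathrm{free}}$. Equating the two expressions yields $\lambda'\phi'([\alpha])=\lambda\,(h\cdot[\beta])$ inside $\tensor\ab{(\pi_1N')}_{\mathrm{free}}\cong\widehat{\Z}^{b_1(N')}$. Since $[\beta]\ne 0$ and $G$ acts by invertible integer matrices, $h\cdot[\beta]$ is a non-zero integer vector. Picking any non-vanishing coordinate reduces the identity to the hypothesis of \autoref{LEM: Zx+-1}, which forces $\lambda'=\pm\lambda$; hence $\coef(f)=\pm\lambda$.

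The main obstacle is two-fold: (i) arranging an $f$--corresponding pair of covers that houses both $\alpha$ and $\beta$ simultaneously as elements with non-torsion free-homology image, and (ii) correctly evaluating $[h\beta^\lambda h^{-1}]$ inside $\Ab{\widehat{\pi_1N'}}$ when $h$ is not assumed to lie in $\widehat{\pi_1N'}$. The first is resolved by the virtual-RFRS-plus-normality-plus-unit-power package; the second by routing the computation through the $\widehat{\Z}$-linear $G$-action on $\Ab{\widehat{\pi_1N'}}$, whose invertibility guarantees the non-vanishing needed to invoke \autoref{LEM: Zx+-1}.
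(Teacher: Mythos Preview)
Your overall strategy is sound and closely parallels the paper's: pass to an $f$--corresponding pair of finite covers where $\alpha$ becomes visible in free homology, then compare the $\Zx$-regular decomposition against the direct computation and conclude via \autoref{LEM: Zx+-1}. The key difference is that the paper first invokes \autoref{LEM: conjugate coef} to replace $f$ by $C_\gamma\circ f$ so that $f(\alpha)=\beta^\lambda$ on the nose; this eliminates the conjugating element entirely. Then a single (not necessarily normal) virtual-retract cover $M'$ suffices: Minasyan's theorem (abelian subgroups of virtually compact special groups are virtual retracts) gives $H\le_{f.i.}\pi_1M$ with $\langle\alpha\rangle$ a retract of $H$, so $[\alpha]$ is non-torsion in $H_1(H)$, and the $f$--corresponding $N'$ automatically contains $\beta$ because $\overline{\langle\beta\rangle}=f(\overline{\langle\alpha\rangle})\subseteq\widehat{\pi_1N'}$. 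No deck-group action or normality is needed, and non-torsion of $[\beta]$ is deduced a posteriori from $f'_\ast$ being an isomorphism.

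Your route instead keeps the conjugator $h$ and compensates with normality plus the deck-group action on $\Ab{\widehat{\pi_1N'}}$. The $G$-action computation and the final comparison are correct, but there is a gap in the first step: you assert the existence of \emph{normal} finite-index subgroups $\pi_1M_\alpha\lhd\pi_1M$ and $\pi_1N_\beta\lhd\pi_1N$ containing $\alpha$ and $\beta$ with non-torsion homology class, citing ``virtual RFRS''. A RFRS tower $G_0>G_1>\cdots$ only furnishes $G_i\lhd G_0$, not $G_i\lhd\pi_1M$, and there is no reason $\alpha$ lies in the virtually-RFRS subgroup $G_0$ to begin with; passing a virtual-retract subgroup to its normal core may well eject $\alpha$. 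The underlying claim is plausibly true for hyperbolic 3-manifold groups, but it needs a genuine argument beyond the phrase ``virtual RFRS''. The cleanest fix is simply to adopt the paper's trick: apply \autoref{LEM: conjugate coef} first, after which normality becomes irrelevant and your two-cover intersection and $G$-action machinery can be dispensed with.
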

\def\N{\mathbb{N}}
\begin{proof}
By \autoref{LEM: conjugate coef}, we can compose $f$ with a conjugation and assume that $f(\alpha)=\beta^\lambda$. 

Since $\pi_1M$ and $\pi_1N$ are torsion-free, the subgroups $\langle \alpha \rangle < \pi_1M$ and $\langle \beta \rangle <\pi_1N$ are isomorphic to $\Z$. 
According to \cite{Min19}, any abelian subgroup of a virtually compact special group is a virtual retract. Combined with the virtual specialization theorem \cite{Ago13,Wise21}, there exists a finite-index subgroup of $\pi_1M$ corresponding to a finite cover $M'$ such that $\langle \alpha\rangle \subseteq  \pi_1M'$ and $\langle \alpha\rangle$ is a retract of $\pi_1M'$. By \cite[Lemma 7.6]{Xu24a}, the map $$\Z\cong \langle \alpha\rangle\cong \ab{\langle \alpha\rangle} \to \ab{(\pi_1M)}\cong H_1(M;\Z)\to \ab{(\pi_1M)}_{\mathrm{free}}\cong H_1(M;\Z)_{\mathrm{free}}$$ is injective. In other words, $[\alpha]$ represents a  non-trivial class in $H_1(M';\Z)_{\mathrm{free}}$. In particular, $b_1(M')>0$. 

Let $N'$ be the $f$--corresponding finite cover of $N$ that corresponds to $M'$. Then $\overline{\langle \beta \rangle} =f(\overline{\langle \alpha \rangle}) \subseteq f(\overline{\pi_1M'})= \widehat{\pi_1N'}$. As a consequence, $\langle \beta \rangle \subseteq \pi_1N'$, and $[\beta]$ represents a class in $H_1(N';\Z)_{\mathrm{free}}$. 

Let $f'=f|_{\widehat{\pi_1M'}}: \widehat{\pi_1M'} \to \widehat{\pi_1N'}$. Then, $f'_{\ast} : \tensor H_1(M';\Z)_{\mathrm{free}} \to \tensor H_1(N';\Z)_{\mathrm{free}}$ is an isomorphism of $\widehat{\Z}$-modules. In addition, since $f'(\alpha)=\beta^\lambda$, $f'_{\ast}$ sends the non-trivial class $1\otimes[\alpha]$ to $\lambda \otimes [\beta]$. In particular, since $f'_\ast$ is an isomorphism, $[\beta]$ is a non-trivial class in $H_1(N';\Z)_{\mathrm{free}}$. Recall that $f'$ is $\Zx$-regular by \autoref{THM: Zx regular}, so $f'$ can only be $\pm\lambda$-regular according to \autoref{LEM: Zx+-1}. Thus, $\coef(f)=\pm \lambda$ by \autoref{DEF: homology coefficient}.
\end{proof}

Moreover, the candidates $\alpha$ and $\beta$ in \autoref{lem: conjugate homology coefficient} always exist  as shown by the following proposition.

\begin{proposition}
Suppose $M$ and $N$ are finite-volume hyperbolic 3-manifolds, and $f:\widehat{\pi_1M}\to \widehat{\pi_1N}$ is an isomorphism. Then there exist  non-trivial elements  $\alpha\in \pi_1M$ and $\beta\in \pi_1N$ such that $f(\alpha)$ is conjugate to $\beta^\lambda$ in $\widehat{\pi_1N}$ for some $\lambda\in \Zx$.
\end{proposition}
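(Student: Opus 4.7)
The plan is to split the argument into the cusped and closed cases, according to whether $M$ (equivalently $N$, by \autoref{PROP: respect peripheral structure}) has cusps.

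For the \emph{cusped case}, I will argue directly from peripheral $\Zx$-regularity. Adopting the notation of \autoref{conv+}, fix an index $i$ and take any non-trivial element $\alpha \in P_i$ in the peripheral subgroup of $\pi_1M$. Set $\beta := \psi_i(\alpha) \in Q_i \subseteq \pi_1N$, which is non-trivial because $\psi_i$ is an isomorphism. Identifying $\alpha$ with $1\otimes\alpha \in \widehat{\Z}\otimes_{\Z} P_i\cong\widehat{P_i}$ via \autoref{PROP: abelianization}, one computes
\[
g_i\,f(\alpha)\,g_i^{-1}=\partial f_i(\alpha)=(\lambda\otimes\psi_i)(1\otimes\alpha)=\lambda\otimes\psi_i(\alpha)=\beta^\lambda,
\]
where the last equality passes from the additive notation in $\widehat{\Z}\otimes_{\Z} Q_i$ to the multiplicative notation in $\widehat{Q_i}$. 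This gives $f(\alpha)=g_i^{-1}\beta^\lambda g_i$, so $f(\alpha)$ is conjugate to $\beta^\lambda$ in $\widehat{\pi_1N}$.

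For the \emph{closed case}, no peripheral structure is available, and the candidates $\alpha$ and $\beta$ must be constructed from a virtually fibered structure. My plan is to pass to a finite cover $M^\star\to M$ that fibers over $S^1$ via Agol's virtual fibering theorem, giving $\pi_1M^\star=\pi_1\Sigma\rtimes\langle t\rangle$ for a closed hyperbolic surface $\Sigma$ and monodromy generator $t$. Using the profinite invariance of fiberedness for 3-manifold groups (Jaikin-Zapirain) together with profinite detection of the fiber subgroup (after Wilton-Zalesskii), the $f$-corresponding cover $N^\star$ of $N$ also fibers, with $\pi_1N^\star=\pi_1\Sigma'\rtimes\langle t'\rangle$, and $f^\star(\widehat{\pi_1\Sigma})$ is conjugate to $\widehat{\pi_1\Sigma'}$ inside $\widehat{\pi_1N^\star}$. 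The induced isomorphism on the $\widehat{\Z}$-quotients $\widehat{\pi_1M^\star}/\widehat{\pi_1\Sigma}\to\widehat{\pi_1N^\star}/\widehat{\pi_1\Sigma'}$ is then multiplication by some $\mu\in\Zx$, and by the $\Zx$-regularity of $f^\star$ (\autoref{THM: Zx regular}) we will have $\mu=\pm\lambda$. A careful refinement in the choice of $\alpha$, for instance by pairing $t$ with a suitable element of $\pi_1\Sigma$ and possibly passing to a deeper finite cover, should then upgrade the congruence $f^\star(t)\equiv (t')^\mu$ modulo the fiber subgroup to a genuine equation $f(\alpha)=g^{-1}\beta^\lambda g$ with $\beta\in \pi_1N$.

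The main obstacle will be this closed case: promoting a congruence modulo the fiber subgroup to a literal conjugacy in $\widehat{\pi_1N}$ requires precise control of the $\widehat{\pi_1\Sigma'}$-component of $f^\star(t)$, which is where the delicate part of the argument sits. By contrast, the cusped case is little more than a rewording of \autoref{THM: peripheral Zx regular} in the required form, and so immediately yields the desired pair $(\alpha,\beta)$ together with the exponent $\lambda$ which \autoref{lem: conjugate homology coefficient} then identifies with the homology coefficient.
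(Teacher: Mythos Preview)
Your cusped case is correct and essentially identical to the paper's argument: both pick $\alpha$ in a peripheral subgroup and read off the conclusion directly from peripheral $\Zx$-regularity (\autoref{THM: peripheral Zx regular}).

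The closed case, however, has a genuine gap that you yourself flag. Knowing that $f^\star(t)\equiv (t')^\mu$ modulo $\widehat{\pi_1\Sigma'}$ only tells you $f^\star(t)=(t')^\mu\cdot\sigma$ for some $\sigma\in\widehat{\pi_1\Sigma'}$, and there is no mechanism in your sketch for forcing $\sigma$ (or any modification of it) to lie in the discrete group $\pi_1\Sigma'$, nor for arranging that the product be a $\lambda$-power of a discrete element up to conjugacy. ``Pairing $t$ with a suitable element of $\pi_1\Sigma$'' does not help: any such modification changes $\alpha$ but leaves you with the same uncontrolled profinite surface-group component on the $N$-side. Passing to deeper covers does not obviously improve matters either, since the obstruction lives in the profinite completion of a surface group at every stage.

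The paper resolves this by invoking a substantially deeper input: Liu's result \cite[Lemma 5.2]{Liu25} that, for $f'$-corresponding fibered structures, $f'$ sends any element representing a \emph{periodic trajectory of the pseudo-Anosov suspension flow} to a conjugate of the $\lambda$-power of another such element. These periodic-orbit elements are precisely the special $\alpha$'s for which the profinite surface-group ambiguity can be eliminated; this is a dynamical fact with no elementary substitute along the lines you sketch.
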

\begin{proof}
When $M$ and $N$ are cusped, one can choose $\alpha$ in a peripheral subgroup of $\pi_1M$. Then, by \autoref{THM: peripheral Zx regular}, $f(\alpha)$ is conjugate to $\beta^\lambda$ for some $\beta$ in the corresponding peripheral subgroup of $\pi_1N$.

When $M$ and $N$ are closed, by the virtual fibering theorem \cite{Ago13}, there exists an $f$--corresponding pair of finite covers $M'$ and $N'$ which are fibered. Denote by $f':\widehat{\pi_1M'}\to \widehat{\pi_1N'}$ the restriction of $f$ on $\widehat{\pi_1M'}$. Liu \cite[Theorem 1.3]{Liu23} proved that $f'$ bijectively matches up the fiber structures of $M'$ and $N'$. Furthermore, fixing any $f'$--corresponding fiber structures on $M'$ and $N'$, Liu in \cite[Lemma 5.2]{Liu25} showed that $f'$ sends any element $\alpha\in \pi_1M'$ that represents a periodic trajectory in the psuedo-Anosov suspension flow of $M$ to a conjugate of the $\lambda$-power of some element   $\beta \in \pi_1N'$ that also  represents a periodic trajectory. Thus, $f(\alpha)$ is also a conjugate of $\beta^\lambda$.
\end{proof}


\section{(Co)homology theory and relative (co)homology theory for profinite groups}\label{sec:5}
The (co)homology theory for profinite groups was first defined by Serre \cite{Ser01}, and a relative version of this theory was defined and studied in depth by Wilkes \cite{Wil19}, which is essential to the proof in this paper. 
In this section, we briefly review this theory and sketch some important properties that will be used in the following sections.



\subsection{Definitions}
We start by reviewing the definition for relative (co)homology theory of discrete groups. We refer the readers to \cite{BE78,Tak59} for general references. 

\begin{definition}
Let $G$ be a discrete group, and let $M$ be a $G$-module.
\begin{enumerate}[leftmargin=*]
\item The {\em homology and cohomology of $G$ with coefficients in $M$} are defined as 
$$ H_\ast(G;M)=\mathrm{Tor}^{G}_\ast (\Z,M),\; \text{and}\; H^\ast(G;M)=\mathrm{Ext}^{\ast}_{G} (\Z,M).$$
\item Let $\mathcal{H}=\{H_i\}_{i\in I}$ be a non-empty  collection of subgroups in $G$, and let $\Delta_{G,\mathcal{H}}$ be the kernel of the augmentation homomorphism $\mathop{\oplus}_{i\in I} \Z[G/H_i] \to \Z$. The {\em homology and cohomology of $G$ relative to $\mathcal{H}$  with coefficients in $M$} are defined as  
$$ H_{\ast+1} (G,\mathcal{H};M)= \mathrm{Tor}^{G}_\ast(\Delta_{G,\mathcal{H}}^\perp,M),\;\text{and}\; H^{\ast+1}(G,\mathcal{H};M)=\mathrm{Ext}_{G}^\ast (\Delta_{G,\mathcal{H}},M),$$
where $\Delta_{G,\mathcal{H}}^\perp$ denotes $\Delta_{G,\mathcal{H}}$ with the right $G$-action $\delta\cdot g=g^{-1}\delta$. 
\end{enumerate}
\end{definition}


With the above functors replaced by their continuous versions, a similar construction is adapted to profinite groups that applies for specific coefficient modules.

\begin{notation}
Let $\Pi$ be a profinite group. $\mathfrak{C}(\Pi)$ denotes the category of profinite $\Pi$-modules, $\mathfrak{D}(\Pi)$ denotes the category of discrete $\Pi$-modules, and $\mathfrak{F}(\Pi)=\mathfrak{C}(\Pi)\cap \mathfrak{D}(\Pi)$ denotes the category  of finite $\Pi$-modules.
\end{notation}

For a profinite group $\Pi$, $\widehat{\Z}\szkh{\Pi}$ denotes the complete group algebra of $\Pi$, which is a profinite ring, see \cite[Section 5.3]{RZ10}. 
When applying the continuous version of Tor, Ext, and (co)homology functors, it is sometimes more convenient to restrict ourselves to the category of $\widehat{\Z}\szkh{\Pi}$-modules rather than the category of $\Pi$-modules. The next proposition points out their relations. 

\begin{proposition}[{\cite[Proposition 5.3.6]{RZ10}}]
Let $\Pi$ be a profinite group.
\begin{enumerate}[leftmargin=*]
\item The category of profinite $\widehat{\Z}\szkh{\Pi}$-modules $\mathfrak{C}(\widehat{\Z}\szkh{\Pi})$ is identical with $\mathfrak{C}(\Pi)$.
\item The category of discrete $\widehat{\Z}\szkh{\Pi}$-modules $\mathfrak{D}(\widehat{\Z}\szkh{\Pi})$ is identical with the subcategory of torsion modules in $\mathfrak{D}(\Pi)$.
\item $\mathfrak{F}(\Pi)=\mathfrak{C}(\widehat{\Z}\szkh{\Pi})\cap \mathfrak{D}(\widehat{\Z}\szkh{\Pi})$.
\end{enumerate}
\end{proposition}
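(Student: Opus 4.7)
The plan is to exhibit, for each of the three assertions, mutually inverse extend-and-restrict functors between the categories in question. The key input is the presentation
\[
\widehat{\Z}\szkh{\Pi} \;=\; \limi_{n,U}\; (\Z/n)[\Pi/U],
\]
where $n$ ranges over positive integers and $U$ over open normal subgroups of $\Pi$; this realises $\widehat{\Z}\szkh{\Pi}$ as a profinite ring whose projections onto the finite group algebras encode the $\Pi$-action together with the $\widehat{\Z}$-scalar structure in a single coherent object.

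For (1), I would start with $M \in \mathfrak{C}(\Pi)$, writing $M = \limi_j M_j$ with each $M_j$ a finite $\Pi$-module. Continuity of the $\Pi$-action forces it to factor through a finite quotient $\Pi/U_j$ on $M_j$, and finiteness produces some $n_j$ annihilating $M_j$, so $M_j$ is canonically a module over the finite ring $(\Z/n_j)[\Pi/U_j]$. Pulling back along the projection $\widehat{\Z}\szkh{\Pi} \twoheadrightarrow (\Z/n_j)[\Pi/U_j]$ turns each $M_j$ into a profinite $\widehat{\Z}\szkh{\Pi}$-module, and passage to the inverse limit gives a profinite $\widehat{\Z}\szkh{\Pi}$-structure on $M$. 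Conversely, the natural continuous multiplicative map $\Pi \to \widehat{\Z}\szkh{\Pi}$ restricts any profinite $\widehat{\Z}\szkh{\Pi}$-module to a profinite $\Pi$-module, and the two constructions are visibly inverse on objects and compatible with morphisms.

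For (2), given $M \in \mathfrak{D}(\widehat{\Z}\szkh{\Pi})$ and $m \in M$, continuity forces the annihilator of $m$ in $\widehat{\Z}\szkh{\Pi}$ to be an open left ideal, hence to contain the kernel of some projection onto $(\Z/n)[\Pi/U]$; in particular $n\cdot m = 0$, so $M$ is a torsion abelian group and the restricted $\Pi$-action is continuous. Conversely, a discrete torsion $\Pi$-module is the directed union of its finite $\Pi$-submodules, each of which becomes a $\widehat{\Z}\szkh{\Pi}$-module by the construction in (1), and these patch into a discrete $\widehat{\Z}\szkh{\Pi}$-module structure on the whole. Part (3) is then essentially formal: an object in $\mathfrak{C}(\widehat{\Z}\szkh{\Pi})\cap\mathfrak{D}(\widehat{\Z}\szkh{\Pi})$ is compact Hausdorff with the discrete topology and hence finite, while any object of $\mathfrak{F}(\Pi)$ is simultaneously profinite (as a finite inverse limit) and discrete torsion, so (1) and (2) place it in both categories.

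The main technical obstacle I expect is the bookkeeping in (1): one has to verify that the choices of $(n_j, U_j)$ can be arranged compatibly along the transition maps of $\{M_j\}$ so that the pulled-back $\widehat{\Z}\szkh{\Pi}$-actions assemble into a genuinely continuous action on $M$, rather than only a collection of actions at each stage. This is a standard cofinality and universal-property argument for complete group algebras, but it is the only place where real content sits; once it is settled, (2) follows from openness of annihilators in a profinite ring and (3) from the topological characterisation of finite sets among profinite and discrete spaces.
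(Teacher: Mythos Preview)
The paper does not supply a proof of this proposition: it is stated with a bare citation to \cite[Proposition 5.3.6]{RZ10} and used as a black box thereafter. So there is no ``paper's own proof'' against which to compare your proposal.

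That said, your outline is correct and is essentially the standard argument one finds in Ribes--Zalesskii. The presentation $\widehat{\Z}\szkh{\Pi} = \limi_{n,U}(\Z/n)[\Pi/U]$ is exactly the right tool, and your extend/restrict dichotomy is how the equivalence is set up. The one place you flag as delicate --- arranging the $(n_j,U_j)$ compatibly --- is handled cleanly by the universal property of the completed group algebra: rather than choosing stages by hand, one shows directly that a continuous $\Pi$-action on a profinite abelian group $M$ extends uniquely to a continuous $\widehat{\Z}\szkh{\Pi}$-action because the multiplication map $\widehat{\Z}\szkh{\Pi}\times M \to M$ is already determined on the dense subring $\Z[\Pi]$ and continuity forces the extension. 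This sidesteps the cofinality bookkeeping entirely. Your treatment of (2) via open annihilators and of (3) via compact-plus-discrete-equals-finite is the standard route and is fine as written.
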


\def\Tor{\mathbf{Tor}}
\def\Ext{\mathbf{Ext}}

Though the cohomology of a profinite group $\Pi$ is actually defined for coefficients in $\mathfrak{D}(\Pi)$, see \cite[Section 6.4]{RZ10}, only those in $\mathfrak{D}(\widehat{\Z}\szkh{\Pi})$ will be used in this article. Thus, we introduce the definitions following \cite[Section 6.2--6.3]{RZ10}.

\begin{definition}\label{DEF: profinite homology}
Let $\Pi$ be a profinite group.
\begin{enumerate}[leftmargin=*]
\item For $M\in \mathfrak{C}(\Pi)=\mathfrak{C}(\widehat{\Z}\szkh{\Pi})$, the {\em profinite homology of $\Pi$ with coefficients in $M$} is defined as 
$$ 
\H_\ast(\Pi;M)=\Tor_\ast^\Pi(\widehat{\Z},M) \in \mathfrak{C}(\widehat{\Z}) .
$$
\item For $A\in \mathfrak{D}(\widehat{\Z}\szkh{\Pi})$, the {\em profinite cohomology of $\Pi$ with coefficients in $A$} is defined as 
$$ 
\H^\ast(\Pi;A)=\Ext^\ast_\Pi(\widehat{\Z},A) \in \mathfrak{D}(\widehat{\Z}).
$$
\end{enumerate}
Here $\Tor$ and $\Ext$ are the continuous versions of $\mathrm{Tor}$ and $\mathrm{Ext}$ defined in \cite[Section 6.1]{RZ10}.
\end{definition}

Most generally, relative profinite (co)homology is defined for a profinite group relative to a collection of closed subgroups continuously indexed over a profinite space, see \cite{Wil19}. In this article, we shall only  consider the profinite (co)homology of a profinite group relative to a finite collection of closed subgroups. We abbreviate this setting into the following terminology.

\begin{definition}
\begin{enumerate}[leftmargin=*]
\item We say that $(G,\mathcal{H})$ is a {\em discrete group pair} if $G$ is a discrete group, and $\mathcal{H}=\{H_i\}_{i=1}^n$ is an ordered, non-empty, finite collection of   subgroups in $G$.
\item We say that $(\Pi,\mathcal{S})$ is a {\em profinite group pair} if $\Pi$ is a profinite  group, and $\mathcal{S}=\{S_i\}_{i=1}^n$ is an ordered, non-empty, finite collection of    closed subgroups in $\Pi$.
\end{enumerate}
\end{definition}

We now define the relative profinite (co)homology following \cite[Definition 2.1]{Wil19}. 

\begin{definition}\label{DEF: relative profinite homology}
Given a profinite group pair $(\Pi,\mathcal{S})$, let $\DELTA_{\Pi,\mathcal{S}}$ be the kernel of the augmentation homomorphism
$$ \mathop{\oplus}_{i=1}^{n}\widehat{\Z}\szkh{\Pi/S_i} \longrightarrow \widehat{\Z}.$$
Then $\DELTA_{\Pi,\mathcal{S}}\in \mathfrak{C}(\Pi)=\mathfrak{C}(\widehat{\Z}\szkh{\Pi})$.
\begin{enumerate}[leftmargin=*]
\item For $M\in \mathfrak{C}(\Pi)=\mathfrak{C}(\widehat{\Z}\szkh{\Pi})$, the {\em profinite homology of $\Pi$ relative to $\mathcal{S}$ with coefficients in $M$} is defined as 
$$ 
\H_{\ast+1}(\Pi,\mathcal{S};M)=\Tor_{\ast}^\Pi(\DELTA_{\Pi,\mathcal{S}}^\perp,M)\in \mathfrak{C}(\widehat{\Z}).
$$
\item For $A\in \mathfrak{D}(\widehat{\Z}\szkh{\Pi})$, the {\em profinite cohomology of $\Pi$ relative to $\mathcal{S}$ with coefficients in $A$} is defined as 
$$ 
\H^{\ast+1}(\Pi,\mathcal{S};A)=\Ext^\ast_\Pi(\DELTA_{\Pi,\mathcal{S}},A)\in \mathfrak{D}(\widehat{\Z}).
$$
\end{enumerate}
\end{definition}

\subsection{The long exact sequence}
The following proposition demonstrates the long exact sequence for relative profinite homology, analogous to the one for relative homology of discrete groups.

\begin{proposition}[{\cite[Proposition 2.4]{Wil19}}]\label{PROP: LES}
Let $(\Pi,\mathcal{S}=\{S_i\}_{i=1}^{n})$ be a profinite group pair. There is a natural long exact sequence for $M\in \mathfrak{C}(\Pi)$: 
$$
\cdots \to \H_{k+1}(\Pi,\mathcal{S};M)\xrightarrow{\partial} \mathop{\oplus}_{i=1}^{n}\H_k(  S_i;M) \to \H_k(\Pi;M) \to \H_{k}(\Pi,\mathcal{S};M) \xrightarrow{\partial} \cdots .
$$
\end{proposition}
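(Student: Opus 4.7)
The plan is to unravel both the relative and absolute profinite homology groups as continuous $\Tor$-groups and then to apply the long exact sequence of $\Tor$ attached to the defining short exact sequence of $\DELTA_{\Pi,\mathcal{S}}$. First I would verify that
$$
0 \longrightarrow \DELTA_{\Pi,\mathcal{S}}\longrightarrow \bigoplus_{i=1}^{n}\widehat{\Z}\szkh{\Pi/S_i}\xrightarrow{\;\varepsilon\;} \widehat{\Z}\longrightarrow 0
$$
is a genuine short exact sequence in $\mathfrak{C}(\Pi)$: surjectivity of $\varepsilon$ is immediate since each summand surjects onto $\widehat{\Z}$ via the augmentation, and $\DELTA_{\Pi,\mathcal{S}}$ is closed because it is defined as the kernel of a continuous homomorphism between profinite modules. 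Applying the $(-)^\perp$ convention produces a parallel short exact sequence of right $\widehat{\Z}\szkh{\Pi}$-modules, using the standard identification $\widehat{\Z}\szkh{\Pi/S_i}^\perp\cong\widehat{\Z}\szkh{S_i\backslash \Pi}$.

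Next I would feed this right-module short exact sequence into the continuous $\Tor_\ast^\Pi(-,M)$ machinery. Since the category of profinite $\Pi$-modules has enough projectives and continuous $\Tor$ enjoys the usual long exact sequence in its first variable (cf.~\cite[\S 6.1]{RZ10}), we obtain a natural long exact sequence
$$
\cdots\to \Tor_k^\Pi(\DELTA_{\Pi,\mathcal{S}}^\perp,M)\to \bigoplus_{i=1}^{n}\Tor_k^\Pi\bigl(\widehat{\Z}\szkh{S_i\backslash \Pi},M\bigr)\to \Tor_k^\Pi(\widehat{\Z},M)\to \Tor_{k-1}^\Pi(\DELTA_{\Pi,\mathcal{S}}^\perp,M)\to\cdots,
$$
the direct sum commuting with $\Tor$ because it is finite.

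Finally I would identify each term with its counterpart in the stated sequence. Two identifications hold by definition: $\Tor_k^\Pi(\widehat{\Z},M)=\H_k(\Pi;M)$ from \autoref{DEF: profinite homology} and $\Tor_k^\Pi(\DELTA_{\Pi,\mathcal{S}}^\perp,M)=\H_{k+1}(\Pi,\mathcal{S};M)$ from \autoref{DEF: relative profinite homology}. The remaining identification is a profinite Shapiro lemma
$$
\Tor_k^\Pi\bigl(\widehat{\Z}\szkh{S_i\backslash \Pi},M\bigr)\;\cong\;\H_k(S_i;M),
$$
which I would derive by writing $\widehat{\Z}\szkh{S_i\backslash \Pi}\cong \widehat{\Z}\,\widehat{\otimes}_{\widehat{\Z}\szkh{S_i}}\widehat{\Z}\szkh{\Pi}$ and invoking flatness of $\widehat{\Z}\szkh{\Pi}$ over $\widehat{\Z}\szkh{S_i}$ via restriction, combined with associativity of the completed tensor product. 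Naturality in $M$ is inherited from the naturality of both the $\Tor$ long exact sequence and Shapiro's isomorphism.

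The main technical obstacle I anticipate is the profinite Shapiro lemma itself: the subgroups $S_i\le \Pi$ are closed but in general not open, so the familiar argument that produces a free $\widehat{\Z}\szkh{S_i}$-module structure on $\widehat{\Z}\szkh{\Pi}$ via a choice of open coset representatives is not directly available. One must instead use a continuous section of $\Pi\to S_i\backslash \Pi$ (valued in the appropriate profinite coset space) to realize $\widehat{\Z}\szkh{\Pi}$ as a projective profinite $\widehat{\Z}\szkh{S_i}$-module; once this projectivity is secured, the rest of the argument reduces to a formal diagram chase.
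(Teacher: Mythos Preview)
Your proposal is correct and follows essentially the same route as the paper: both take the defining short exact sequence for $\DELTA_{\Pi,\mathcal{S}}$, apply the long exact sequence of continuous $\Tor^\Pi_\ast(-,M)$, and identify the middle terms via the profinite Shapiro lemma. The only difference is that the paper simply cites \cite[Theorem~6.10.9]{RZ10} for Shapiro, whereas you sketch its proof and correctly flag the continuous-section issue for closed (non-open) subgroups as the one nontrivial technical ingredient.
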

\begin{proof}
The short exact sequence $$0\to \DELTA_{\Pi,\mathcal{S}}\to \mathop{\oplus}_{i=1}^{n} \widehat{\Z}\szkh{\Pi/S_i} \to \widehat{\Z}\to 0$$ in $\mathfrak{C}(\Pi)$ induces a long exact sequence
\newsavebox{\torequ}
\begin{lrbox}{\torequ}
$\cdots \to \Tor_{k}^{\Pi}(\DELTA_{\Pi,\mathcal{S}}^\perp, M)\to \mathop{\oplus}\limits_{i=1}^{n} \Tor_{k}^{\Pi}(\widehat{\Z}\szkh{\Pi/S_i}^\perp,M)\to \Tor_{k}^{\Pi}(\widehat{\Z},M)\to \Tor_{k-1}^{\Pi}(\DELTA_{\Pi,\mathcal{S}}^\perp, M)\to\cdots$
\end{lrbox}
\begin{equation*}
\usebox{\torequ}
\end{equation*}
by the homological functors $\Tor_{\ast}^{\Pi}$, see \cite[Proposition 6.1.9]{RZ10}. 

Note that $\Tor_{k}^{\Pi}(\DELTA_{\Pi,\mathcal{S}}^\perp, M)= \H_{k+1}(\Pi,\mathcal{S};M)$ and $ \Tor_{k-1}^{\Pi}(\DELTA_{\Pi,\mathcal{S}}^\perp, M)= \H_{k}(\Pi,\mathcal{S};M) $ by \autoref{DEF: relative profinite homology}, and $\Tor_{k}^{\Pi}(\widehat{\Z},M)= \H_k(\Pi;M)$ by \autoref{DEF: profinite homology}. In addition, 
$$\Tor_{k}^{\Pi}(\widehat{\Z}\szkh{\Pi/S_i}^\perp,M)\cong \Tor_{k}^{\Pi}((\widehat{\Z}\szkh{\Pi}\widehat{\otimes}_{\widehat{\Z}\szkh{S_i}}\widehat{\Z})^\perp,M)\cong \Tor^{S_i}_{k}(\widehat{\Z},M)=\H_k(S_i;M)$$ 
by the profinite version of Shapiro's lemma \cite[Theorem 6.10.9]{RZ10}. Thus, this implies the long exact sequence stated in the proposition. 
\end{proof}

Several properties related to this long exact sequence are proven in \cite{Wil19}.  
While most of them are only proved for cohomology,  the proofs for the homology version are completely analogous. 

{\em A  map of profinite group pairs} $f:(\Gamma,\mathcal{T}=\{T_i\}_{i=1}^{n})\to (\Pi,\mathcal{S}=\{S_i\}_{i=1}^{n})$ is a continuous homomorphism $f:\Gamma\to \Pi$ such that $f(T_i)\subseteq S_i$ for each $1\le i\le n$. The next proposition shows the functorial property of  relative profinite homology. 

\begin{proposition}[{\cite[Proposition 2.6]{Wil19}}]\label{PROP: functorial LES}
The relative profinite homology functors and the long exact squence (\autoref{PROP: LES})  are natural with respct to the maps of profinite group pairs $f:(\Gamma,\mathcal{T})\to (\Pi,\mathcal{S})$. In other words, for any $M\in \mathfrak{C}(\Pi)$, we can also view $M$ as belonging to $\mathfrak{C}(\Gamma)$ through $f$, and there is a commutative diagram of long exact sequences.
\begin{equation*}
\begin{tikzcd}[column sep=0.37cm]
\cdots \arrow[r] & {\H_{k+1}(\Gamma,\mathcal{T};M)} \arrow[d, "f_\ast"] \arrow[r, "\partial"] & \mathop{\oplus}\limits_{i=1}^{n}\H_k(T_i;M) \arrow[d, "(f|_{T_i})_\ast"] \arrow[r] & \H_k(\Gamma;M) \arrow[d, "f_\ast"] \arrow[r] & {\H_k(\Gamma,\mathcal{T};M)} \arrow[d, "f_\ast"] \arrow[r, "\partial"] & \cdots \\
\cdots \arrow[r] & {\H_{k+1}(\Pi,\mathcal{S};M)} \arrow[r, "\partial"]                           & \mathop{\oplus}\limits_{i=1}^{n}\H_k(S_i;M) \arrow[r]                                 & \H_k(\Pi;M) \arrow[r]                           & {\H_{k}(\Pi,\mathcal{S};M)} \arrow[r, "\partial"]                         & \cdots
\end{tikzcd}
\end{equation*}
\end{proposition}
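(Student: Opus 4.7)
The plan is to mirror Wilkes's cohomology argument from \cite[Proposition 2.6]{Wil19}. The key observation is that a map of profinite group pairs $f:(\Gamma,\mathcal{T})\to(\Pi,\mathcal{S})$ induces a morphism between the defining short exact sequences of \autoref{DEF: relative profinite homology}, and applying $\Tor$ together with the naturality of the profinite Shapiro isomorphism will yield the required commuting ladder of long exact sequences.

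First, I would construct a morphism of short exact sequences in $\mathfrak{C}(\Gamma)$. The containment $f(T_i)\subseteq S_i$ produces a continuous $\Gamma$-equivariant map $\widehat{\Z}\szkh{\Gamma/T_i}\to\widehat{\Z}\szkh{\Pi/S_i}$ for each $i$, where the target is regarded as a $\Gamma$-module through $f$. These assemble with the identity on $\widehat{\Z}$ into a commutative diagram
\begin{equation*}
\begin{tikzcd}[column sep=small]
0 \arrow[r] & \DELTA_{\Gamma,\mathcal{T}} \arrow[r] \arrow[d] & \mathop{\oplus}\limits_{i=1}^{n}\widehat{\Z}\szkh{\Gamma/T_i} \arrow[r] \arrow[d] & \widehat{\Z} \arrow[r] \arrow[d, equal] & 0 \\
0 \arrow[r] & \DELTA_{\Pi,\mathcal{S}} \arrow[r] & \mathop{\oplus}\limits_{i=1}^{n}\widehat{\Z}\szkh{\Pi/S_i} \arrow[r] & \widehat{\Z} \arrow[r] & 0
\end{tikzcd}
\end{equation*}
in $\mathfrak{C}(\Gamma)$, the bottom row being restricted along $f$. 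Any $M\in\mathfrak{C}(\Pi)$ likewise becomes an object of $\mathfrak{C}(\Gamma)$ via $f$, and applying $\Tor^{\Gamma}_{\ast}(-^{\perp},M)$ to the top row (respectively $\Tor^{\Pi}_{\ast}(-^{\perp},M)$ to the bottom row in $\mathfrak{C}(\Pi)$) recovers the two long exact sequences of \autoref{PROP: LES}.

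Second, I would connect the two sequences via a change-of-rings natural transformation $\Tor^{\Gamma}_{\ast}(A^\perp,M)\to\Tor^{\Pi}_{\ast}(A^\perp,M)$, valid whenever $A,M\in\mathfrak{C}(\Pi)$ are viewed in $\mathfrak{C}(\Gamma)$ via $f$; this is produced in the standard way by lifting a projective resolution of $A$ in $\mathfrak{C}(\Gamma)$ to one in $\mathfrak{C}(\Pi)$, using that $\mathfrak{C}(\Pi)$ has enough projectives (\cite[Section 6.1]{RZ10}). Specialising to $A=\widehat{\Z}$ and $A=\DELTA_{\Pi,\mathcal{S}}$ yields the vertical arrows $\H_{\ast}(\Gamma;M)\to\H_{\ast}(\Pi;M)$ and $\H_{\ast+1}(\Gamma,\mathcal{T};M)\to\H_{\ast+1}(\Pi,\mathcal{S};M)$, while the naturality of the connecting homomorphism attached to a morphism of short exact sequences of modules supplies commutativity of every square involving $\partial$.

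Finally, to identify the middle vertical arrow with $\bigoplus_i (f|_{T_i})_{\ast}$, I would invoke the naturality of the profinite Shapiro isomorphism $\Tor^{\Pi}_{\ast}(\widehat{\Z}\szkh{\Pi/S_i}^{\perp},M)\cong\H_{\ast}(S_i;M)$. Under the identifications $\widehat{\Z}\szkh{\Gamma/T_i}\cong\widehat{\Z}\szkh{\Gamma}\,\widehat{\otimes}_{\widehat{\Z}\szkh{T_i}}\widehat{\Z}$ and $\widehat{\Z}\szkh{\Pi/S_i}\cong\widehat{\Z}\szkh{\Pi}\,\widehat{\otimes}_{\widehat{\Z}\szkh{S_i}}\widehat{\Z}$ the middle map corresponds precisely to the map on $\Tor^{T_i}_{\ast}(\widehat{\Z},M)\to\Tor^{S_i}_{\ast}(\widehat{\Z},M)$ induced by $f|_{T_i}:T_i\to S_i$. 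The main obstacle is exactly this last step: one must check that the tensor-product and induction/restriction adjunction identifications underlying Shapiro's lemma are natural in maps of pairs $(T_i,\Gamma)\to(S_i,\Pi)$, i.e.\ compatible with simultaneous change of the ambient group and the subgroup. Once this bookkeeping is carried out at the chain level, the commuting ladder of long exact sequences follows purely formally from the naturality of $\Tor$ in both its short-exact-sequence argument and its coefficient.
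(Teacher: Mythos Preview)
Your proposal is correct and follows essentially the same approach as the paper: the paper's entire argument is to display the commutative diagram of short exact sequences in $\mathfrak{C}(\Gamma)$ that you construct in your first step, and to note that \autoref{PROP: functorial LES} follows from it. Your treatment is simply more explicit about the change-of-rings transformation and the naturality of the Shapiro isomorphism, which the paper leaves implicit.
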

In fact, \autoref{PROP: functorial LES}  
follows from the commutative diagram of short exact sequences in $\mathfrak{C}(\Gamma)$.
\begin{equation*}
\begin{tikzcd}
0 \arrow[r] & {\DELTA_{\Gamma,\mathcal{T}}} \arrow[d,"f_\ast"] \arrow[r] & \mathop{\oplus}\limits_{i=1}^{n}\widehat{\Z}\szkh{\Gamma/T_i} \arrow[d,"f_\ast"] \arrow[r] & \widehat{\Z} \arrow[d,"id"] \arrow[r] & 0 \\
0 \arrow[r] & {\DELTA_{\Pi,\mathcal{S}}} \arrow[r]              & \mathop{\oplus}\limits_{i=1}^{n}\widehat{\Z}\szkh{\Pi/S_i} \arrow[r]              & \widehat{\Z} \arrow[r]           & 0
\end{tikzcd}
\end{equation*}

The relative profinite (co)homology is not sensitive to taking conjugations of the closed subgroups, as shown by the following proposition.

\begin{proposition}[{\cite[Proposition 2.9]{Wil19}}]\label{PROP: conjugation invariant}
Let $(\Pi,\mathcal{S}=\{S_i\}_{i=1}^{n})$ be a profinite group pair. For $\gamma_1,\cdots, \gamma_n\in \Pi$, denote $\mathcal{S}^{\mathbf{\gamma}}=\{S_i^{\gamma_i}\}_{i=1}^{n}$. Then for any $M\in \mathfrak{C}(\Pi)$, there is an isomorphism $\H_\ast( \Pi,\mathcal{S}^{\mathbf{\gamma}};M)\cong \H_\ast( \Pi,\mathcal{S};M)$  such that the following diagram commutes.
\begin{equation*}
\begin{tikzcd}[column sep=0.31cm]
\cdots \arrow[r] & {\H_{k+1}(\Pi,\mathcal{S}^{\mathbf{\gamma}};M)} \arrow[d, "\cong"'] \arrow[r, "\partial"] & \mathop{\oplus}\limits_{i=1}^{n}\H_k(S_i^{\gamma_i};M) \arrow[d, "(C_{\gamma_i})_\ast"',"\cong"] \arrow[r] & \H_k(\Pi;M) \arrow[d, "id"'] \arrow[r] & {\H_{k}(\Pi,\mathcal{S}^{\mathbf{\gamma}};M)} \arrow[d, "\cong"'] \arrow[r, "\partial"] & \cdots \\
\cdots \arrow[r] & {\H_{k+1}(\Pi,\mathcal{S};M)} \arrow[r, "\partial"]                                       & \mathop{\oplus}\limits_{i=1}^{n}\H_k(S_i;M) \arrow[r]                                              & \H_k(\Pi;M) \arrow[r]                  & {\H_{k}(\Pi,\mathcal{S};M)} \arrow[r, "\partial"]                     & \cdots
\end{tikzcd}
\end{equation*}
\end{proposition}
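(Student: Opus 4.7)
My plan is to lift each conjugation $\gamma_i$ to an isomorphism of the defining permutation modules of $\DELTA_{\Pi,\mathcal{S}}$, assemble these into a morphism of short exact sequences that is the identity on the quotient $\widehat{\Z}$, and then transport everything through $\Tor_\ast^\Pi(-,M)$ exactly as in the proof of \autoref{PROP: LES}. The only non-formal step, which I expect to be the main obstacle, is to match the induced map on $\H_k(S_i^{\gamma_i};M)\to\H_k(S_i;M)$ with the conjugation-action map $(C_{\gamma_i})_\ast$ through the profinite Shapiro isomorphism.

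For each $i$, since $S_i^{\gamma_i}=\gamma_i^{-1}S_i\gamma_i$, the assignment $gS_i^{\gamma_i}\mapsto g\gamma_i^{-1}S_i$ is a well-defined $\Pi$-equivariant homeomorphism of profinite $\Pi$-sets $\Pi/S_i^{\gamma_i}\to \Pi/S_i$, with inverse $gS_i\mapsto g\gamma_iS_i^{\gamma_i}$. Extending by $\widehat{\Z}$-linearity and continuity gives a profinite $\widehat{\Z}\szkh{\Pi}$-module isomorphism
$$\phi_i\colon\widehat{\Z}\szkh{\Pi/S_i^{\gamma_i}}\;\tto\;\widehat{\Z}\szkh{\Pi/S_i}$$
intertwining the two augmentations to $\widehat{\Z}$. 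Taking the direct sum over $i$ and passing to augmentation kernels produces a commutative diagram of short exact sequences in $\mathfrak{C}(\Pi)$:
\begin{equation*}
\begin{tikzcd}
0 \arrow[r] & {\DELTA_{\Pi,\mathcal{S}^{\mathbf{\gamma}}}} \arrow[d,"\Phi"] \arrow[r] & \mathop{\oplus}\limits_{i=1}^{n}\widehat{\Z}\szkh{\Pi/S_i^{\gamma_i}} \arrow[d,"\oplus_i\phi_i"] \arrow[r] & \widehat{\Z} \arrow[d,"id"] \arrow[r] & 0 \\
0 \arrow[r] & {\DELTA_{\Pi,\mathcal{S}}} \arrow[r] & \mathop{\oplus}\limits_{i=1}^{n}\widehat{\Z}\szkh{\Pi/S_i} \arrow[r] & \widehat{\Z} \arrow[r] & 0
\end{tikzcd}
\end{equation*}
whose two rightmost vertical maps are isomorphisms, whence so is $\Phi$.

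Since $\Phi$ is left $\Pi$-equivariant, it is automatically right $\Pi$-equivariant with respect to the $\perp$-action $\delta\cdot g=g^{-1}\delta$. Applying $\Tor_\ast^\Pi(-,M)$ to the $\perp$-twisted version of the above morphism of short exact sequences and invoking naturality of the connecting homomorphism, exactly as in the derivation of \autoref{PROP: LES}, yields the required commutative ladder of long exact sequences: the identity on every copy of $\H_k(\Pi;M)$ comes from the identity on $\widehat{\Z}$, the isomorphism on the $\H_{k+1}(\Pi,\mathcal{S};M)$ column is $\Phi_\ast$, and the middle columns are the maps on $\mathop{\oplus}_i\H_k(S_i^{\gamma_i};M)\to \mathop{\oplus}_i\H_k(S_i;M)$ induced by the $\phi_i$ through the profinite Shapiro isomorphism $\Tor^\Pi_k(\widehat{\Z}\szkh{\Pi/S_i}^\perp,M)\cong \Tor^{S_i}_k(\widehat{\Z},M)$.

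The remaining task, and the step I expect to require real care, is the identification of this last map with $\oplus_i(C_{\gamma_i})_\ast$. The approach is to observe that $\phi_i$ is, by construction, the $\Pi$-equivariant extension of the pair consisting of the group isomorphism $c_{\gamma_i}\colon S_i^{\gamma_i}\tto S_i$, $x\mapsto \gamma_ix\gamma_i^{-1}$, together with the corresponding $\gamma_i$-twist on $M$: given a free $\widehat{\Z}\szkh{S_i}$-resolution $F_\bullet\to\widehat{\Z}$, the pullback $c_{\gamma_i}^\ast F_\bullet$ is a free $\widehat{\Z}\szkh{S_i^{\gamma_i}}$-resolution of $\widehat{\Z}$, and the induced resolutions $\widehat{\Z}\szkh{\Pi}\,\widehat{\otimes}_{\widehat{\Z}\szkh{S_i}}F_\bullet$ and $\widehat{\Z}\szkh{\Pi}\,\widehat{\otimes}_{\widehat{\Z}\szkh{S_i^{\gamma_i}}}c_{\gamma_i}^\ast F_\bullet$ are intertwined by right multiplication by $\gamma_i^{-1}$ in the outer tensor factor, which on degree zero recovers exactly $\phi_i$. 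Tensoring with $M$ and passing to homology therefore identifies $(\phi_i)_\ast$ with the conjugation map $(C_{\gamma_i})_\ast$ by the standard definition of the latter, completing the proof.
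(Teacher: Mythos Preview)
Your proof is correct and follows the same approach as the paper: both construct the commutative diagram of short exact sequences with the $\Pi$-equivariant isomorphisms $\widehat{\Z}\szkh{\Pi/S_i^{\gamma_i}}\to\widehat{\Z}\szkh{\Pi/S_i}$ and then apply $\Tor^\Pi_\ast(-,M)$. The paper's proof is in fact just the diagram you wrote down, with no further elaboration; your additional verification that the induced map through the Shapiro isomorphism is $(C_{\gamma_i})_\ast$ is a detail the paper leaves implicit.
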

In fact, \autoref{PROP: conjugation invariant} follows from the commutative diagram of short exact sequences in $\mathfrak{C}(\Pi)$.
\begin{equation*}
\begin{tikzcd}
0 \arrow[r] & {\DELTA_{\Pi,\mathcal{S}^{\mathbf{\gamma}}}} \arrow[d] \arrow[r] & \mathop{\oplus}\limits_{i=1}^{n}\widehat{\Z}\szkh{\Pi/S_i^{\gamma_i}} \arrow[d, "(C_{\gamma_i})_{i=1}^{n}"',"\cong"] \arrow[r] & \widehat{\Z} \arrow[d,"id"] \arrow[r] & 0 \\
0 \arrow[r] & {\DELTA_{\Pi,\mathcal{S}}} \arrow[r]                             & \mathop{\oplus}\limits_{i=1}^{n}\widehat{\Z}\szkh{\Pi/S_i} \arrow[r]                                      & \widehat{\Z} \arrow[r]           & 0
\end{tikzcd}
\end{equation*}

\begin{proposition}\label{PROP: conj natural}
Suppose $f: (\Gamma,\mathcal{T}=\{T_i\}_{i=1}^{n}) \to (\Pi, \mathcal{S}=\{S_i\}_{i=1}^{n})$ is a map of profinite group pairs. For elements $\gamma_1,\cdots, \gamma_n\in \Gamma$, let $\eta_i= f(\gamma_i)\in \Pi$. Then $f: (\Gamma, \mathcal{T}^{\gamma}=\{T_i^{\gamma_i}\}_{i=1}^{n})\to (\Pi, \mathcal{S}^\eta = \{S_i^{\eta_i}\}_{i=1}^{n})$ is also a map of profinite group pairs. In addition, for any $M\in \mathfrak{C}(\Pi)$, the following diagram commutes 
\begin{equation*}
\begin{tikzcd}
{\H_\ast(\Gamma,\mathcal{T}^{\gamma};M)} \arrow[d, "\cong"'] \arrow[r, "f_\ast"] & {\H_\ast(\Pi,\mathcal{S}^\eta; M)} \arrow[d, "\cong"] \\
{\H_\ast(\Gamma,\mathcal{T};M)} \arrow[r, "f_\ast"]                              & {\H_\ast(\Pi,\mathcal{S};M)}                         
\end{tikzcd}
\end{equation*}
where the vertical isomorphisms are given by \autoref{PROP: conjugation invariant}. 
\end{proposition}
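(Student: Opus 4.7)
The plan is to combine the morphism of short exact sequences underlying \autoref{PROP: functorial LES} with the one underlying \autoref{PROP: conjugation invariant} into a single commutative cubical diagram of short exact sequences in $\mathfrak{C}(\Gamma)$, and then apply the derived functors to pass to relative profinite homology.

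First, I would confirm that $f: (\Gamma,\mathcal{T}^\gamma) \to (\Pi, \mathcal{S}^\eta)$ is indeed a map of profinite group pairs: for each $i$,
\begin{equation*}
f(T_i^{\gamma_i}) = f(\gamma_i^{-1}T_i\gamma_i) = \eta_i^{-1} f(T_i) \eta_i \subseteq \eta_i^{-1} S_i \eta_i = S_i^{\eta_i},
\end{equation*}
using that $f$ is a homomorphism with $f(\gamma_i)=\eta_i$ and $f(T_i)\subseteq S_i$.

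The substantive step will be to verify that for each $i$ the square
\begin{equation*}
\begin{tikzcd}
\widehat{\Z}\szkh{\Gamma/T_i^{\gamma_i}} \arrow[r, "C_{\gamma_i}"] \arrow[d, "f_\ast"'] & \widehat{\Z}\szkh{\Gamma/T_i} \arrow[d, "f_\ast"] \\
\widehat{\Z}\szkh{\Pi/S_i^{\eta_i}} \arrow[r, "C_{\eta_i}"]                             & \widehat{\Z}\szkh{\Pi/S_i}
\end{tikzcd}
\end{equation*}
commutes in $\mathfrak{C}(\Gamma)$. Here the conjugation $C_{\gamma_i}$ sends the coset $gT_i^{\gamma_i}$ to $g\gamma_i^{-1}T_i$, and $C_{\eta_i}$ sends $hS_i^{\eta_i}$ to $h\eta_i^{-1}S_i$, while the vertical $f_\ast$ are the maps induced by $f$. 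Chasing a coset both ways, the top-right route sends $gT_i^{\gamma_i}$ to $f(g\gamma_i^{-1})S_i = f(g)\eta_i^{-1}S_i$, and the left-bottom route sends $gT_i^{\gamma_i}$ to $f(g)S_i^{\eta_i}$ and then to $f(g)\eta_i^{-1}S_i$, so the two agree. Together with the trivial commutativity of the augmentation maps to $\widehat{\Z}$, naturality of the kernel will give the commutativity of the induced maps $\DELTA_{\Gamma,\mathcal{T}^\gamma} \to \DELTA_{\Pi,\mathcal{S}}$ along the two routes in the resulting cubical diagram of short exact sequences.

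Finally, I would apply $\Tor_{\ast}^{\Gamma}(-^\perp, M)$ together with the change-of-groups comparison to $\Tor_{\ast}^{\Pi}$ that defines the horizontal maps $f_\ast$ in \autoref{PROP: functorial LES}. This will promote the commuting square of $\DELTA$-objects to the desired commuting square of profinite homology groups, with the vertical isomorphisms agreeing, by construction, with those supplied by \autoref{PROP: conjugation invariant}. The main technical point to handle carefully will be the compatibility of the change-of-groups morphism underlying $f_\ast$ with the conjugation isomorphisms, but this amounts to a standard naturality check, structurally identical to the arguments already used in \autoref{PROP: functorial LES} and \autoref{PROP: conjugation invariant}, so I do not expect a genuine obstacle beyond bookkeeping.
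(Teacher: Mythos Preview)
Your proposal is correct and follows essentially the same approach as the paper: the paper's proof consists of presenting exactly the cubical diagram of short exact sequences in $\mathfrak{C}(\Gamma)$ that you describe (with the $\DELTA$-modules, the $\widehat{\Z}\szkh{\cdot/\cdot}$-modules, and $\widehat{\Z}$ at the corners), and declaring the result follows. You spell out the coset check for the key square and the verification that $f:(\Gamma,\mathcal{T}^\gamma)\to(\Pi,\mathcal{S}^\eta)$ is a map of pairs, which the paper leaves implicit, but the structure of the argument is identical.
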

\newsavebox{\conjconjequ}
\begin{lrbox}{\conjconjequ}
$
\begin{tikzcd}[column sep=0.3cm]
             & 0 \arrow[rr] &                                                                                             & {\DELTA_{\Pi,\mathcal{S}^\eta}} \arrow[dd, "\cong"'{yshift=-2ex}] \arrow[rr] &                                                                                                                                               & \mathop{\oplus}\limits_{i=1}^{n}\widehat{\Z}\szkh{\Pi/S_i^{\eta_i}} \arrow[rr] \arrow[dd, "((C_{\eta_i})_\ast)"'{yshift=-2ex}] &                                                            & \widehat{\Z} \arrow[rr] \arrow[dd, "id"{yshift=-2ex}] &   & 0 \\
0 \arrow[rr] &              & {\DELTA_{\Gamma,\mathcal{T}^{\gamma}}} \arrow[rr] \arrow[ru, "f_\ast"] \arrow[dd, "\cong"'{yshift=-2ex}] &                                                                 & \mathop{\oplus}\limits_{i=1}^{n}\widehat{\Z}\szkh{\Gamma/T_i^{\gamma_i}} \arrow[rr] \arrow[dd, "((C_{\gamma_i})_\ast)"'{yshift=-2ex}] \arrow[ru, "f_\ast"] &                                                                                                                   & \widehat{\Z} \arrow[rr] \arrow[dd, "id"'{yshift=-2ex}] \arrow[ru, "id"] &                                          & 0 &   \\
             & 0 \arrow[rr] &                                                                                             & {\DELTA_{\Pi,\mathcal{S}}} \arrow[rr]                           &                                                                                                                                               & \mathop{\oplus}\limits_{i=1}^{n}\widehat{\Z}\szkh{\Pi/S_i} \arrow[rr]                                             &                                                            & \widehat{\Z} \arrow[rr]                  &   & 0 \\
0 \arrow[rr] &              & {\DELTA_{\Gamma,\mathcal{T}}} \arrow[rr] \arrow[ru, "f_\ast"]                               &                                                                 & \mathop{\oplus}\limits_{i=1}^{n}\widehat{\Z}\szkh{\Gamma/T_i} \arrow[rr] \arrow[ru, "f_\ast"]                                                 &                                                                                                                   & \widehat{\Z} \arrow[rr] \arrow[ru, "id"]                   &                                          & 0 &  
\end{tikzcd}
$
\end{lrbox}
\begin{proof}
This is deduced from the following commutative diagram. 
\begin{equation*}
\scalebox{0.9}{\usebox{\conjconjequ}}
\end{equation*}
\end{proof}

\subsection{The profinite completion}
The relative (co)homology of a discrete group pair and the relative profinite (co)homology of its profinite completion can be  related by the following proposition.
\begin{proposition}\label{PROP: completion induce LES}
Let $(G,\mathcal{H}=\{H_i\}_{i=1}^{n})$ be a discrete group pair. Let $\overline{H_i}$ be the closure of the image of $H_i$ in $\widehat{G}$ through the canonical homomorphism $\iota:G\to \widehat{G}$, and let $\overline{\mathcal{H}}=\{\overline{H_i}\}_{i=1}^{n}$. Then $(\widehat{G},\overline{\mathcal{H}})$ is a profinite group pair. 
\begin{enumerate}[leftmargin=*]
\item\label{ciLES1} For $M\in\mathfrak{C}(\widehat{G})=\mathfrak{C}(\widehat{\Z}\szkh{\widehat{G}})$, $M$ can be viewed as a $\widehat{\Z}[G]$-module, and there is a natural homomorphism $H_\ast(G,\mathcal{H};M)\to \H_\ast(\widehat{G},\overline{\mathcal{H}};M)$ of $\widehat{\Z}$-modules induced by $\iota$ such that the following diagram commutes. 
\begin{equation*}
\begin{tikzcd}[column sep=0.3cm]
\cdots \arrow[r] & {H_{k+1}(G,\mathcal{H};M)} \arrow[d, "\iota_\ast"] \arrow[r, "\partial"] & \mathop{\oplus}\limits_{i=1}^{n} H_k(H_i;M) \arrow[r] \arrow[d, "(\iota|_{H_i})_\ast"] & H_k(G;M) \arrow[r] \arrow[d, "\iota_\ast"] & {H_{k}(G,\mathcal{H};M)} \arrow[r, "\partial"] \arrow[d, "\iota_\ast"] & \cdots \\
\cdots \arrow[r] & {\H_{k+1}(\widehat{G},\overline{\mathcal{H}};M)} \arrow[r, "\partial"]   & \mathop{\oplus}\limits_{i=1}^{n} \H_k(\overline{H_i};M) \arrow[r]                      & \H_k(\widehat{G};M) \arrow[r]              & {\H_{k}(\widehat{G},\overline{\mathcal{H}};M)} \arrow[r, "\partial"]   & \cdots
\end{tikzcd}
\end{equation*}
\item For   $A\in \mathfrak{D}(\widehat{\Z}\szkh{\widehat{G}})$, $A$ can be viewed as a $\widehat{\Z}[G]$-module, and there is a natural homomorphism $\H^\ast(\widehat{G},\overline{\mathcal{H}};A)\to H^\ast(G,\mathcal{H};A)$ of $\widehat{\Z}$-modules induced by $\iota$ such that the following diagram commutes. 
\begin{equation*}
\begin{tikzcd}[column sep=0.33cm]
\cdots \arrow[r] & {\H^{k}(\widehat{G},\overline{\mathcal{H}};A)} \arrow[r] \arrow[d, "\iota^\ast"] & \H^k(\widehat{G};A) \arrow[r] \arrow[d, "\iota^\ast"] & \mathop{\oplus}\limits_{i=1}^{n} \H^k(\overline{H_i};A) \arrow[r] \arrow[d, "(\iota|_{H_i})^\ast"] & {\H^{k+1}(\widehat{G},\overline{\mathcal{H}};A)} \arrow[d, "\iota^\ast"] \arrow[r] & \cdots \\
\cdots \arrow[r] & {H^{k}(G,\mathcal{H};A)} \arrow[r]                                               & H^k(G;A) \arrow[r]                                    & \mathop{\oplus}\limits_{i=1}^{n} H^k(H_i;A) \arrow[r]                                              & {H^{k+1}(G,\mathcal{H};A)} \arrow[r]                                               & \cdots
\end{tikzcd}
\end{equation*}
\end{enumerate}
\end{proposition}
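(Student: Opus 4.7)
The plan is to construct the vertical comparison maps and then invoke the naturality of the derived-functor long exact sequences at both the discrete and profinite levels.

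First, I would produce a map of short exact sequences of modules. The canonical homomorphism $\iota:G\to\widehat{G}$ sends $H_i$ into $\overline{H_i}$, so the induced maps of coset spaces $G/H_i\to \widehat{G}/\overline{H_i}$ extend linearly to $\widehat{\Z}[G]$-equivariant homomorphisms $\Z[G/H_i]\to \widehat{\Z}\szkh{\widehat{G}/\overline{H_i}}$ (the target being viewed as a $\Z[G]$-module through $\iota$). Together with $\Z\hookrightarrow\widehat{\Z}$, these fit into a commutative diagram
\begin{equation*}
\begin{tikzcd}
0 \arrow[r] & \Delta_{G,\mathcal{H}} \arrow[d] \arrow[r] & \mathop{\oplus}\limits_{i=1}^{n}\Z[G/H_i] \arrow[d] \arrow[r] & \Z \arrow[d] \arrow[r] & 0 \\
0 \arrow[r] & \DELTA_{\widehat{G},\overline{\mathcal{H}}} \arrow[r] & \mathop{\oplus}\limits_{i=1}^{n}\widehat{\Z}\szkh{\widehat{G}/\overline{H_i}} \arrow[r] & \widehat{\Z} \arrow[r] & 0,
\end{tikzcd}
\end{equation*}
and the induced map $\Delta_{G,\mathcal{H}}\to \DELTA_{\widehat{G},\overline{\mathcal{H}}}$ fills in the left column.

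For part (\ref{ciLES1}), I would choose a $\Z[G]$-projective resolution $P_\bullet\to \Z$ and a $\widehat{\Z}\szkh{\widehat{G}}$-projective resolution $Q_\bullet\to \widehat{\Z}$. Viewing $Q_\bullet$ as a complex of $\Z[G]$-modules via $\iota$ and using projectivity of $P_\bullet$, the map $\Z\to\widehat{\Z}$ lifts to a $\Z[G]$-equivariant chain map $P_\bullet\to Q_\bullet$, uniquely up to $\Z[G]$-homotopy. Tensoring with $M$ over $\Z[G]$ on the one side and completing $\widehat\otimes_{\widehat{\Z}\szkh{\widehat{G}}}M$ on the other, together with the tautological map $P_\bullet\otimes_{\Z[G]}M \to Q_\bullet\widehat\otimes_{\widehat{\Z}\szkh{\widehat{G}}}M$, produces the natural homomorphism $H_\ast(G;M)\to \H_\ast(\widehat{G};M)$. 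Replacing $\Z$ and $\widehat{\Z}$ by $\Delta_{G,\mathcal{H}}^\perp$ and $\DELTA_{\widehat{G},\overline{\mathcal{H}}}^\perp$ gives the relative version. To identify the peripheral factor, I would use that Shapiro's lemma is natural: the comparison $\Z[G]\otimes_{\Z[H_i]}\Z \to \widehat{\Z}\szkh{\widehat{G}}\widehat\otimes_{\widehat{\Z}\szkh{\overline{H_i}}}\widehat{\Z}$ intertwines the discrete and profinite Shapiro isomorphisms, so the middle vertical map of the long exact sequence is exactly $\mathop{\oplus}(\iota|_{H_i})_\ast$. The commutativity of the two long exact sequences then follows from the naturality of the connecting homomorphism associated to a morphism of short exact sequences of chain complexes (equivalently, the horseshoe lemma applied simultaneously to both rows).

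Part (2) is dual: apply $\Hom_{\Z[G]}(-,A)$ to $P_\bullet$ and continuous $\Hom_{\widehat{\Z}\szkh{\widehat{G}}}(-,A)$ to $Q_\bullet$. Pre-composition with the chain map $P_\bullet\to Q_\bullet$ together with the inclusion of continuous into abstract Hom yields $\Hom_{\widehat{\Z}\szkh{\widehat{G}}}(Q_\bullet,A)\to \Hom_{\Z[G]}(P_\bullet,A)$, whose cohomology is the required map $\H^\ast(\widehat{G};A)\to H^\ast(G;A)$, going the opposite direction because $\Hom$ is contravariant; the relative and peripheral statements follow analogously. The main technical point I anticipate is verifying the compatibility of the Shapiro isomorphisms, which reduces to tracing the identifications $\widehat{\Z}\szkh{\widehat{G}/\overline{H_i}}\cong \widehat{\Z}\szkh{\widehat{G}}\widehat\otimes_{\widehat{\Z}\szkh{\overline{H_i}}}\widehat{\Z}$ through a projective resolution, but this is routine given that our coefficient module $M$ (resp. $A$) lies in $\mathfrak{C}(\widehat{G})$ (resp. $\mathfrak{D}(\widehat{\Z}\szkh{\widehat{G}})$), so the relevant change-of-rings and (co)induction functors agree with their profinite counterparts.
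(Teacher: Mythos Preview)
Your proposal is correct and follows essentially the same approach as the paper: both derive the comparison maps and the commutativity of the long exact sequences from the commutative diagram of short exact sequences of $G$-modules
\[
\begin{tikzcd}
0 \arrow[r] & \Delta_{G,\mathcal{H}} \arrow[d] \arrow[r] & \mathop{\oplus}\limits_{i=1}^{n}\Z[G/H_i] \arrow[d] \arrow[r] & \Z \arrow[d] \arrow[r] & 0 \\
0 \arrow[r] & \DELTA_{\widehat{G},\overline{\mathcal{H}}} \arrow[r] & \mathop{\oplus}\limits_{i=1}^{n}\widehat{\Z}\szkh{\widehat{G}/\overline{H_i}} \arrow[r] & \widehat{\Z} \arrow[r] & 0.
\end{tikzcd}
\]
The paper simply records this diagram and cites \cite[Section 6.1]{Wil19}, whereas you unpack the resolution-level details (lifting to a chain map $P_\bullet\to Q_\bullet$, naturality of Shapiro, dualizing for cohomology); this is a faithful elaboration rather than a different argument.
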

\begin{proof}
These are induced by the following commutative diagram of short exact sequences of $G$-modules, see \cite[Section 6.1]{Wil19}.
\begin{equation*}
\begin{tikzcd}
0 \arrow[r] & {\Delta_{G,\mathcal{H}}} \arrow[d, "\iota_\ast"] \arrow[r] & {\mathop{\oplus}\limits_{i=1}^{n}\Z[{G/H_i}]} \arrow[d, "\iota_\ast"] \arrow[r]      & {\Z} \arrow[d,hook] \arrow[r] & 0 \\
0 \arrow[r] & {\DELTA_{\widehat{G},\overline{\mathcal{H}}}} \arrow[r]    & \mathop{\oplus}\limits_{i=1}^{n}\widehat{\Z}\szkh{\widehat{G}/\overline{H_i}} \arrow[r] & \widehat{\Z} \arrow[r]                 & 0       
\end{tikzcd}
\end{equation*}
\end{proof}

The homomorphism $\iota_\ast$ in \autoref{PROP: completion induce LES}~(\ref{ciLES1}) is also natural with respect to the group pairs, as shown by the following proposition.
\begin{proposition}\label{PROP: completion induce natural}
Let  $(G,\mathcal{H}=\{H_i\}_{i=1}^{n})$ and $(G',\mathcal{H}'=\{H_i'\}_{i=1}^{n})$ be  discrete group pairs. Suppose $\varphi: (G,\mathcal{H}) \to (G',\mathcal{H}')$ is a map of discrete group pairs, that is, $\varphi:G\to G'$ is a group homomorphism such that $\varphi(H_i)\subseteq H_i'$. Then $\widehat{\varphi} : (\widehat{G},\overline{\mathcal{H}})\to (\widehat{G'},\overline{\mathcal{H}'})$ is a map of profinite group pairs, and for any $M\in \mathfrak{C}(\widehat{G'})$, the following diagram commutes.
\begin{equation*}
\begin{tikzcd}
{H_\ast(G,\mathcal{H};M)} \arrow[r, "\varphi_\ast"] \arrow[d, "\iota_\ast"]         & {H_\ast(G',\mathcal{H}';M)} \arrow[d, "\iota'_\ast"] \\
{\H_\ast(\widehat{G},\overline{\mathcal{H}};M)} \arrow[r, "\widehat{\varphi}_\ast"] & {\H_\ast(\widehat{G'},\overline{\mathcal{H}'};M)}   
\end{tikzcd}
\end{equation*}
\end{proposition}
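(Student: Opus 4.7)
The plan is to deduce this from the same commutative-cube argument used in the proof of \autoref{PROP: completion induce LES}. First I would verify that $\widehat{\varphi}$ genuinely defines a map of profinite group pairs, i.e.\ that $\widehat{\varphi}(\overline{H_i})\subseteq \overline{H_i'}$ for each $i$. This is immediate: $\widehat{\varphi}$ is continuous, $\overline{H_i'}$ is closed in $\widehat{G'}$, and $\widehat{\varphi}$ sends the dense subset $\iota_G(H_i)$ into $\iota_{G'}(H_i')\subseteq \overline{H_i'}$, thanks to the functoriality identity $\widehat{\varphi}\circ \iota_G = \iota_{G'}\circ \varphi$ together with the hypothesis $\varphi(H_i)\subseteq H_i'$.

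Next, I would assemble a three-dimensional diagram consisting of the four short exact sequences of modules that define $\Delta_{G,\mathcal{H}}$, $\Delta_{G',\mathcal{H}'}$, $\DELTA_{\widehat{G},\overline{\mathcal{H}}}$, and $\DELTA_{\widehat{G'},\overline{\mathcal{H}'}}$. The horizontal arrows are induced by $\varphi$ (between the two discrete rows) and by $\widehat{\varphi}$ (between the two profinite rows); the vertical arrows are the completion-induced maps already used in \autoref{PROP: completion induce LES}. Each face of this cube commutes by a direct check: the rightmost square collapses to the canonical inclusion $\mathrm{id}\colon \Z\hookrightarrow \widehat{\Z}$, the middle square commutes coset-by-coset because $\widehat{\varphi}$ sends the coset $\iota_G(g)\overline{H_i}$ to $\iota_{G'}(\varphi(g))\overline{H_i'}$ (by functoriality of completion and $\varphi(H_i)\subseteq H_i'$), and the leftmost square commutes by restriction of the middle one.

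Finally, applying the (continuous) Tor functors $\mathrm{Tor}_\ast^G(-,M)$, $\mathrm{Tor}_\ast^{G'}(-,M)$, $\Tor_\ast^{\widehat{G}}(-,M)$, $\Tor_\ast^{\widehat{G'}}(-,M)$ to the four faces of the cube, the naturality of the connecting homomorphism together with the naturality of Tor under the change-of-rings $\widehat{\varphi}\colon \widehat{G}\to \widehat{G'}$, $\varphi\colon G\to G'$, and $\iota_G,\iota_{G'}$ produces a commutative cube of long exact sequences whose one face is precisely the square in the statement. In particular, the equality $\widehat{\varphi}_\ast\circ \iota_\ast = \iota'_\ast\circ \varphi_\ast$ drops out.

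The only potential obstacle I foresee is bookkeeping for module structures: the coefficient $M\in\mathfrak{C}(\widehat{G'})$ must be simultaneously viewed as a $\widehat{G}$-module (via $\widehat{\varphi}$), a $G'$-module (via $\iota_{G'}$), and a $G$-module (via $\iota_{G'}\circ \varphi = \widehat{\varphi}\circ \iota_G$). Since these four module structures all factor through a single commutative square of ring homomorphisms, the naturality of $\mathrm{Tor}$ with respect to change-of-rings applies uniformly and no additional subtlety arises.
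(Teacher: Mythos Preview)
Your proposal is correct and takes essentially the same approach as the paper: both reduce the statement to the commutativity of the square of $\Delta$-modules (embedded in the coset-module short exact sequences) and then apply naturality of Tor under change of rings. The paper's proof simply presents that commutative square and notes that $\varphi_\ast$ and $\widehat{\varphi}_\ast$ on homology are induced by the corresponding maps on $\Delta_{G,\mathcal{H}}$ and $\DELTA_{\widehat{G},\overline{\mathcal{H}}}$, whereas you spell out the cube, the verification that $\widehat{\varphi}(\overline{H_i})\subseteq\overline{H_i'}$, and the module-structure bookkeeping in more detail.
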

\begin{proof}
This follows from the commutative diagram 
\begin{equation*}
\begin{tikzcd}[column sep=small, row sep=small]
{\Delta_{G,\mathcal{H}}} \arrow[dd, "\iota_\ast"] \arrow[rr, hook] \arrow[rd, "\varphi_\ast"]       &                                                                       & {\mathop{\oplus}\limits_{i=1}^{n} \Z[G/H_i]} \arrow[dd, "\iota_\ast"{yshift=-1.5ex}] \arrow[rd, "\varphi_\ast"]                 &                                                                               \\
                                                                                                    & {\Delta_{G',\mathcal{H}'}} \arrow[dd, "\iota'_\ast"{yshift=-1.5ex}] \arrow[rr, hook] &                                                                                                                  & {\mathop{\oplus}\limits_{i=1}^{n} \Z[G'/H_i']} \arrow[dd, "\iota'_\ast"]      \\
{\DELTA_{\widehat{G},\overline{\mathcal{H}}}} \arrow[rr, hook] \arrow[rd, "\widehat{\varphi}_\ast"'] &                                                                       & \mathop{\oplus}\limits_{i=1}^{n} \widehat{\Z}\szkh{\widehat{G}/\overline{H_i}} \arrow[rd, "\widehat{\varphi}_\ast"'] &                                                                               \\
                                                                                                    & {\DELTA_{\widehat{G'},\overline{\mathcal{H}'}}} \arrow[rr, hook]      &                                                                                                                  & \mathop{\oplus}\limits_{i=1}^{n} \widehat{\Z}\szkh{\widehat{G'}/\overline{H_i'}}
\end{tikzcd}
\end{equation*}

In fact, $\varphi_\ast: H_\ast(G,\mathcal{H};M)\to H_\ast(G',\mathcal{H'};M)$ is induced by $\varphi_\ast: \Delta_{G,\mathcal{H}}\to \Delta_{G',\mathcal{H}'}$, and $\widehat{\varphi}_\ast$ is induced by $\widehat{\varphi}_\ast: \DELTA_{\widehat{G},\overline{\mathcal{H}}}\to \DELTA_{\widehat{G'},\overline{\mathcal{H}'}}$ according to \autoref{PROP: LES}.
\end{proof}

\subsection{Cohomological goodness}

Cohomological goodness was first defined by Serre in \cite{Ser01}, which can also be generalized to the relative version. 

For a discrete group $G$, let $\mathfrak{F}(G)$ denote the category of finite $G$-modules. It is easy to see that $\mathfrak{F}(G)=\mathfrak{F}(\widehat{G})$. 
\begin{definition}
\begin{enumerate}[leftmargin=*]
\item A discrete group $G$ is {\em cohomologically good}  (in the sense of Serre) if for any $A\in \mathfrak{F}(G)=\mathfrak{F}(\widehat{G})$, the map $\iota^\ast: \H^\ast( \widehat{G};A)\to H^\ast(G;A)$ is an isomorphism. 
\item A discrete group pair $(G,\mathcal{H})$ is {\em cohomologically good} if for any $A\in \mathfrak{F}(G)=\mathfrak{F}(\widehat{G})$, the map  $\iota^\ast: \H^\ast( \widehat{G},\overline{\mathcal{H}};A)\to H^\ast(G,\mathcal{H};A)$ is an isomorphism.
\end{enumerate}
\end{definition}

\begin{example}\label{EX: good}
The following groups are cohomologically good.
\begin{enumerate}[leftmargin=*]
\item Finite groups.
\item Finitely generated abelian groups, \cite[p. 16, Exercise 2]{Ser01}.
\item Finitely generated free groups and surface groups, \cite[Proposition 3.6]{GJZ08}.
\item Finitely generated 3-manifold groups, \cite[Theorem 3.5.1 and Lemma 3.7.1]{Cav12}.
\end{enumerate}
\end{example}

A combination of \autoref{PROP: Left exact} and \cite[Proposition 6.3]{Wil19} yields the following proposition, which passes the cohomological goodness from groups to a group pair.

\begin{proposition}\label{PROP: pair goodness}
Let $(G,\mathcal{H}=\{H_i\}_{i=1}^{n})$ be a discrete group pair. Suppose that $G$ is cohomologically good, each $H_i$ is cohomologically good, and the profinite topology on $G$ induces the   full profinite topology on each $H_i$. Then the discrete group pair $(G,\mathcal{H})$ is  cohomologically good.
\end{proposition}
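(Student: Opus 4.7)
The plan is to apply the Five Lemma to the morphism of long exact sequences provided by \autoref{PROP: completion induce LES}(2), comparing the discrete relative cohomology of $(G,\mathcal{H})$ with the relative profinite cohomology of $(\widehat{G},\overline{\mathcal{H}})$, for an arbitrary fixed coefficient module $A\in \mathfrak{F}(G)=\mathfrak{F}(\widehat{G})$.

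First, I would verify that the two vertical comparison maps involving the absolute (co)homology are isomorphisms. The map $\iota^\ast:\H^\ast(\widehat{G};A)\to H^\ast(G;A)$ is an isomorphism directly by the hypothesis that $G$ is cohomologically good. For each peripheral subgroup $H_i$, the assumption that the profinite topology on $G$ induces the full profinite topology on $H_i$ combined with \autoref{PROP: Left exact} yields that the natural continuous homomorphism $\widehat{H_i}\to \overline{H_i}$ (where the closure is taken in $\widehat{G}$) is an isomorphism of profinite groups. Consequently $\H^\ast(\overline{H_i};A)\cong \H^\ast(\widehat{H_i};A)$, and the cohomological goodness of $H_i$ then gives an isomorphism $\iota^\ast:\H^\ast(\overline{H_i};A)\to H^\ast(H_i;A)$. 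Taking the direct sum over $i=1,\ldots,n$ yields an isomorphism on $\bigoplus_{i=1}^n \H^\ast(\overline{H_i};A)\to \bigoplus_{i=1}^n H^\ast(H_i;A)$.

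Next, I would invoke \autoref{PROP: completion induce LES}(2), which provides the commutative ladder of long exact sequences connecting the relative profinite cohomology of $(\widehat{G},\overline{\mathcal{H}})$ to the relative cohomology of $(G,\mathcal{H})$, with the comparison maps induced by $\iota$. In this ladder, four out of every five consecutive vertical arrows are isomorphisms by the previous paragraph (namely the two involving $\H^\ast(\widehat{G};A)\to H^\ast(G;A)$ and the two involving the direct sums over peripheral subgroups). Applying the Five Lemma to every degree then forces the remaining vertical arrow $\iota^\ast:\H^{k}(\widehat{G},\overline{\mathcal{H}};A)\to H^{k}(G,\mathcal{H};A)$ to be an isomorphism as well, which is exactly the definition of $(G,\mathcal{H})$ being cohomologically good.

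I do not foresee a substantial obstacle: the argument is purely diagrammatic once the identification $\widehat{H_i}\cong \overline{H_i}$ is in place. The only subtle point is ensuring that the hypothesis on induced profinite topology is used precisely to convert goodness of the abstract subgroup $H_i$ (which by definition compares $H^\ast(H_i;A)$ with $\H^\ast(\widehat{H_i};A)$) into a statement about the closure $\overline{H_i}\le \widehat{G}$, which is what appears in the long exact sequence. This step is where \autoref{PROP: Left exact} is essential.
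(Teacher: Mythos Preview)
Your proposal is correct and is precisely the standard Five Lemma argument one expects here. The paper itself does not give a proof of this proposition; it simply states that the result follows from a combination of \autoref{PROP: Left exact} and \cite[Proposition 6.3]{Wil19}, and your argument is exactly how that combination is unpacked.
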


The cohomological goodness can be transformed into the homological version by Pontryagin duality.

\begin{definition}
A discrete group $G$ is of {\em type $\text{FP}_{\infty}$} if the $G$-module $\Z$ is of type $\text{FP}_{\infty}$. A discrete group pair $(G,\mathcal{H})$ is of {\em type $\text{FP}_{\infty}$} if the $G$-module $\Delta_{G,\mathcal{H}}$ is of type $\text{FP}_{\infty}$.
\end{definition}

\begin{proposition}[{\cite[Proposition 6.2]{Wil19}}]\label{PROP: homology good}
\begin{enumerate}[leftmargin=*]
\item Suppose $G$ is a cohomologically good discrete group   of type $\text{FP}_{\infty}$. Then for any $M\in \mathfrak{F}(G)=\mathfrak{F}(\widehat{G})$, the map  $\iota_\ast: H_\ast(G ;M)\to \H_\ast (\widehat{G};M)$ is an isomorphism.
\item Suppose $(G,\mathcal{H})$ is a cohomologically good discrete group pair of type $\text{FP}_{\infty}$. Then for any $M\in \mathfrak{F}(G)=\mathfrak{F}(\widehat{G})$, the map  $\iota_\ast: H_\ast (G,\mathcal{H};M)\to \H_\ast(\widehat{G},\overline{\mathcal{H}};M)$ is an isomorphism.
\end{enumerate}
\end{proposition}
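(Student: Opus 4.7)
The natural approach is via Pontryagin duality, converting the given statement on homology with finite coefficients into the known statement on cohomology with finite coefficients (i.e., cohomological goodness).

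The plan is as follows. For a finite abelian group $A$, write $A^\vee = \Hom(A,\Q/\Z)$; Pontryagin duality $A\mapsto A^\vee$ is an exact, involutive contravariant functor on finite abelian groups, and if $A$ is a (finite) $G$-module, then $A^\vee$ inherits a $G$-action. Observe that $M\in \mathfrak{F}(G)$ implies $M^\vee\in \mathfrak{F}(G)$ too.

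First I would handle the absolute case. Since $G$ is of type $\mathrm{FP}_\infty$, one can choose a resolution $P_\bullet\tto \Z$ by finitely generated projective $\Z[G]$-modules. For any finitely generated projective $\Z[G]$-module $P$ and finite $G$-module $M$, the standard tensor-Hom adjunction gives a natural isomorphism
\begin{equation*}
\Hom_{\Z[G]}(P,M^\vee) \;\cong\; (P\otimes_{\Z[G]} M)^\vee .
\end{equation*}
Applying this termwise to $P_\bullet$ and using that $(-)^\vee$ is exact on finite abelian groups, I get a natural isomorphism $H^n(G;M^\vee)\cong H_n(G;M)^\vee$. Next I would transport $P_\bullet$ to the profinite side: applying the completed tensor product $\widehat{\Z}\szkh{\widehat{G}}\widehat{\otimes}_{\Z[G]}(-)$ yields a resolution of $\widehat{\Z}$ by finitely generated projective objects in $\mathfrak{C}(\widehat{G})$, and the analogous adjunction between the complete tensor product and continuous Hom provides $\H^n(\widehat{G};M^\vee)\cong \H_n(\widehat{G};M)^\vee$. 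Moreover, because the profinite resolution comes from completing $P_\bullet$, the comparison map $\iota_\ast$ on the homology side and the comparison map $\iota^\ast$ on the cohomology side are Pontryagin dual to one another. Cohomological goodness of $G$ says $\iota^\ast:\H^n(\widehat{G};M^\vee)\to H^n(G;M^\vee)$ is an isomorphism, and dualizing shows $\iota_\ast:H_n(G;M)\to \H_n(\widehat{G};M)$ is an isomorphism.

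For the relative case the argument is structurally identical, but everywhere I replace the trivial module $\Z$ by $\Delta_{G,\mathcal{H}}$ and its completion by $\DELTA_{\widehat{G},\overline{\mathcal{H}}}$. The $\mathrm{FP}_\infty$ hypothesis for the pair $(G,\mathcal{H})$ is precisely what gives a resolution of $\Delta_{G,\mathcal{H}}$ by finitely generated projective $\Z[G]$-modules; this completes to a resolution of $\DELTA_{\widehat{G},\overline{\mathcal{H}}}$ in $\mathfrak{C}(\widehat{G})$ (note that completion sends $\Delta_{G,\mathcal{H}}$ to $\DELTA_{\widehat{G},\overline{\mathcal{H}}}$, which can be read off from the exact sequences defining these kernels, as in the diagram preceding this proposition). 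Running the same Pontryagin duality argument yields $\H^{n+1}(\widehat{G},\overline{\mathcal{H}};M^\vee)\cong \H_{n+1}(\widehat{G},\overline{\mathcal{H}};M)^\vee$ and an analogous identity on the discrete side, so cohomological goodness of the pair transports to the desired homological isomorphism.

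The main obstacle I anticipate is the bookkeeping needed to verify that the profinite completion of a finitely generated projective $\Z[G]$-resolution is indeed a projective resolution in $\mathfrak{C}(\widehat{G})$ by finitely generated objects, and that the Pontryagin duality identification $\Hom_{\Z[G]}(P,M^\vee)\cong (P\otimes_{\Z[G]}M)^\vee$ is compatible with its continuous profinite analog in a manner that makes $\iota^\ast$ and $\iota_\ast$ literal duals of one another. Once this compatibility is set up carefully (both at the chain level and as natural transformations), the rest of the proof is a direct translation of cohomological goodness through Pontryagin duality.
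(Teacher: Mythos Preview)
The paper does not give its own proof of this proposition; it merely cites \cite[Proposition~6.2]{Wil19} and, immediately before the statement, remarks that ``the cohomological goodness can be transformed into the homological version by Pontryagin duality.'' Your proposal follows exactly this indicated route and is correct: the $\mathrm{FP}_\infty$ hypothesis guarantees the relevant chain and cochain groups are finite so that Pontryagin duality applies termwise, and you have correctly identified the one point requiring care, namely that $\iota_\ast$ and $\iota^\ast$ are genuinely dual under these identifications.
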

 
\begin{convention}
In the remaining part of this article, $\Z$ or $\widehat{\Z}$ appearing as coefficient modules of a certain (relative) (profinite) homology is assumed to have trivial action by the group unless otherwise stated. 
\end{convention}

\begin{proposition}\label{PROP: good tensor hatZ}
\begin{enumerate}[leftmargin=*]
\item\label{good tensor 1} Let $G$ be a cohomologically good discrete group of type $\text{FP}_{\infty}$. Then, there are isomorphisms of $\widehat{\Z}$-modules $$\tensor H_\ast(G;\Z) \xrightarrow{\;\;\cong\;\;} H_\ast (G;\widehat{\Z}) \xrightarrow[\iota_\ast]{\;\;\cong\;\;} \H_\ast(\widehat{G};\widehat{\Z}).$$
\item\label{good tensor 2} Let $(G,\mathcal{H})$ be a cohomologically good discrete group pair of type $\text{FP}_{\infty}$. Then, there are isomorphisms of $\widehat{\Z}$-modules $$\tensor H_\ast(G,\mathcal{H};\Z) \xrightarrow{\;\;\cong\;\;} H_\ast(G,\mathcal{H};\widehat{\Z})\xrightarrow[\iota_\ast]{\;\;\cong\;\;}  \H_\ast(\widehat{G},\overline{\mathcal{H}};\widehat{\Z}).$$
\end{enumerate}
\end{proposition}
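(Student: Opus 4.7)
The plan is to establish the two isomorphisms separately, then derive the pair version by a completely parallel argument. The first isomorphism $\tensor H_\ast(G;\Z)\xrightarrow{\cong} H_\ast(G;\widehat{\Z})$ is a direct consequence of flatness: since $\widehat{\Z}=\prod_p\Z_p$ is torsion-free and $\Z$ is a PID, $\widehat{\Z}$ is flat as a $\Z$-module. Choosing any projective resolution $P_\bullet\to\Z$ of $\Z$ over $\Z[G]$, the factorization $\widehat{\Z}=\Z\otimes_\Z\widehat{\Z}$ of the trivial $G$-module gives a chain-level identification $P_\bullet\otimes_{\Z[G]}\widehat{\Z}\cong(P_\bullet\otimes_{\Z[G]}\Z)\otimes_\Z\widehat{\Z}$, and the flat functor $-\otimes_\Z\widehat{\Z}$ commutes past $H_\ast$, yielding $H_\ast(G;\widehat{\Z})\cong H_\ast(G;\Z)\otimes_\Z\widehat{\Z}$.

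For the second isomorphism $H_\ast(G;\widehat{\Z})\xrightarrow[\iota_\ast]{\cong}\H_\ast(\widehat{G};\widehat{\Z})$, I would reduce to finite coefficients by passing to inverse limits. Using the $\text{FP}_{\infty}$ hypothesis, take $P_\bullet\to\Z$ to be a resolution by finitely generated free $\Z[G]$-modules. Then $C_\bullet:=P_\bullet\otimes_{\Z[G]}\widehat{\Z}$ is a complex of finite-rank free $\widehat{\Z}$-modules whose boundary maps are given by integer matrices, and writing $\widehat{\Z}=\limi_n\Z/n\Z$, each $C_k$ identifies with $\limi_n(P_k\otimes_{\Z[G]}\Z/n\Z)$. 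Since each $P_k$ is finitely generated, the abelian groups $H_k(G;\Z/n\Z)$ are all finite; therefore the inverse system satisfies the Mittag-Leffler condition, the corresponding $\limi^1$-terms vanish, and $H_k(G;\widehat{\Z})=H_k(C_\bullet)\cong\limi_n H_k(G;\Z/n\Z)$.

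On the profinite side, the continuity of $\Tor$ in the profinite category (continuous $\Tor$ commutes with inverse limits of profinite modules when the outputs satisfy Mittag-Leffler, which holds here since $\H_k(\widehat{G};\Z/n\Z)$ is finite by $\text{FP}_{\infty}$) gives $\H_k(\widehat{G};\widehat{\Z})\cong\limi_n\H_k(\widehat{G};\Z/n\Z)$. By \autoref{PROP: homology good} applied to each finite coefficient module $\Z/n\Z$, the map $\iota_\ast\colon H_k(G;\Z/n\Z)\to\H_k(\widehat{G};\Z/n\Z)$ is an isomorphism, and these isomorphisms are compatible with the quotient maps $\Z/mn\Z\twoheadrightarrow\Z/n\Z$ by naturality of $\iota_\ast$ in the coefficient module. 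Taking $\limi_n$ on both sides then produces the required isomorphism, and the first isomorphism together with the resulting identification of $H_\ast(G;\widehat{\Z})$ with $\H_\ast(\widehat{G};\widehat{\Z})$ give the asserted chain in (\ref{good tensor 1}).

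The pair version (\ref{good tensor 2}) follows by an entirely parallel argument, replacing $\Z$ with $\DELTA_{G,\mathcal{H}}^\perp$ and using the $\text{FP}_{\infty}$ assumption on the pair $(G,\mathcal{H})$ to obtain a resolution by finitely generated projective $\Z[G]$-modules; one then reruns the flat base change for the first isomorphism and the Mittag-Leffler computation for the second, invoking the pair version of \autoref{PROP: homology good}. The principal obstacle is justifying the interchange of homology and $\limi$ on both the discrete and the profinite side. In both places this reduces cleanly to Mittag-Leffler, which the $\text{FP}_{\infty}$ hypothesis supplies by forcing the homology groups with finite coefficients to be finite, so no genuine analytic input beyond the cited propositions is required.
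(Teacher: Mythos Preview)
Your proposal is correct and follows essentially the same approach as the paper: flat base change (equivalently, the universal coefficient theorem) for the first isomorphism, and reduction to finite coefficients via inverse limits, Mittag-Leffler, and \autoref{PROP: homology good} for the second. Two minor refinements worth noting: the paper uses the sequential cofinal tower $\Z/n!$ so that the standard $\varprojlim^1$ short exact sequence for towers applies directly, and on the profinite side continuous $\Tor$ commutes with inverse limits unconditionally by \cite[Corollary 6.1.10]{RZ10}, so no Mittag-Leffler hypothesis is needed there.
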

\begin{proof}
We only prove part (\ref{good tensor 2}), and the proof for part (\ref{good tensor 1}) is similar. The first isomorphism follows from the universal coefficient theorem. Since $\widehat{\Z}$ has trivial $G$-action, there is a short exact sequence $$0\to \tensor H_\ast(G,\mathcal{H};\Z)  \to H_\ast(G,\mathcal{H};\widehat{\Z}) \to \mathrm{Tor}^{\Z}_1 ( \widehat{\Z}, H_{\ast-1}(G,\mathcal{H};\widehat{\Z}))\to 0,$$ see \cite[p. 60, Exercise 3]{Bro82}. $\widehat{\Z}$ is torsion-free, and is hence a flat $\Z$-module. Thus, $\mathrm{Tor}^{\Z}_1 ( \widehat{\Z}, H_{\ast-1}(G,\mathcal{H};\widehat{\Z}))=0$, which implies the first isomorphism. 

Moving on to the second isomorphism, we first deal with the profinite side. For $n\in \mathbb{N}$, let $\Z/n! $ be the finite $\widehat{G}$-module with trivial $\widehat{G}$-action, and the quotient maps $\Z/(n+1)! \to \Z/n! $ are also homomorphisms of $\widehat{G}$-modules. Then in $\mathfrak{C}(\widehat{G})$, we still have $\widehat{\Z}= \limi_n \Z/n! $. The functor $\Tor_\ast^{\widehat{G}}$ commutes with inverse limits \cite[Corollary 6.1.10]{RZ10}, so there is an isomorphism
\begin{equation*}
\H_\ast(\widehat{G},\overline{\mathcal{H}};\widehat{\Z})=\Tor_{\ast-1}^{\widehat{G}}(\DELTA_{\widehat{G},\overline{\mathcal{H}}}^{\perp},\widehat{\Z})\xrightarrow{\;\cong\;}  \limi\limits_{n \to \infty} \Tor_{\ast-1}^{\widehat{G}}(\DELTA_{\widehat{G},\overline{\mathcal{H}}}^{\perp},\Z/n!) =\limi\limits_{n \to \infty} \H_\ast(\widehat{G},\overline{\mathcal{H}};\Z/n!).
\end{equation*}

As for the discrete side, the relative homology of a group pair may not, in general, commute with the inverse limit  taken in the coefficient modules. However, in this specific case, we show that 
\begin{equation}\label{tensor hatZ equ1}
H_\ast(G,\mathcal{H};\widehat{\Z})\tto \limi\limits_{n \to \infty} H_\ast(G,\mathcal{H};\Z/n!)
\end{equation}
is an isomorphism.

Since $(G,\mathcal{H})$ is of type $\text{FP}_\infty$, by \cite[Chapter VIII, Proposition 4.5]{Bro82}, we can take a free resolution $F_\bullet\to \Delta_{G,\mathcal{H}}^\perp \to 0$ of finite type in the category of right $\Z[G]$-modules. Let $C^n_\bullet=F_\bullet \otimes_{\Z[G]} \Z/n!$. Then,  $C^0_\bullet\leftarrow C^1_\bullet \leftarrow C^2_\bullet \leftarrow \cdots$ is a tower of chain complexes of abelian groups. 
Note that $\Z/(n+1)!\to \Z/n!$ is surjective and the tensor product $F_k\otimes_{\Z[G]} -$ is right exact, so $C^{n+1}_k \to C^n_k$ is surjective for any $n,k\in \mathbb{N}$. Consequently, the tower  $C^0_\bullet\leftarrow C^1_\bullet  \leftarrow \cdots$ satisfies degree-wise the Mittag-Leffler condition. Then, according to \cite[Theorem 3.5.8]{Wei94}, there is a short exact sequence 
$$
0\to {\limi\limits_n}^1 H_{\ast+1} (C^n_{\bullet})\to H_\ast (\limi\limits_n C^n_\bullet) \to \limi\limits_n H_\ast (C^n_\bullet)\to 0.
$$

By construction $H_\ast (C^n_\bullet)= \mathrm{Tor}_{\ast}^G(\Delta^\perp_{G,\mathcal{H}}, \Z/n!)= H_{\ast+1}(G,\mathcal{H};\Z/n!)$. Note that $C^n_k\cong (\Z/n!)^{\oplus r_k}$, where $F_k= \Z[G]^{\oplus r_k}$. Thus, $\limi_n C^n_k\cong \widehat{\Z}^{\oplus r_k}\cong F_k\otimes_{\Z[G]} \widehat{\Z}$, and $H_\ast(\limi_n C^n_\bullet)\cong H_\ast(F_\bullet \otimes_{\Z[G]} \widehat{\Z})=\mathrm{Tor}_\ast^G(\Delta^\perp_{G,\mathcal{H}}, \widehat{\Z})= H_{\ast+1}(G,\mathcal{H};\widehat{\Z})$. Thus, we actually have a short exact sequence
$$
0\to {\limi\limits_n}^1 H_{\ast} (C^n_{\bullet})\to H_{\ast}(G,\mathcal{H};\widehat{\Z}) \xrightarrow{\;\phi\;} \limi\limits_n H_{\ast}(G,\mathcal{H};\Z/n!)\to 0.
$$
The map $\phi$ is exactly the one in (\ref{tensor hatZ equ1}), since it is induced by the maps between the coefficient modules $\widehat{\Z}\to \Z/n!$.

Note that for each $n,k\in \mathbb{N}$, $C^n_k\cong (\Z/n!)^{\oplus r_k}$ is finite, so $H_k(C^n_\bullet)$ is also finite. As a consequence, the tower of abelian groups $H_k(C^n_\bullet)$ satisfies the Mittag-Leffler condition, and $\limi^1_n H_k (C^n_\bullet)=0$ according to \cite[Proposition 3.5.7]{Wei94}. Therefore, the map $\phi$ is indeed an isomorphism. 

We can now relate the two sides. 
Since $(G,\mathcal{H})$ is cohomologically good and of type $\text{FP}_\infty$, \autoref{PROP: homology good} implies that for $n\in \mathbb{N}$, the map $\iota_\ast: H_\ast(G,\mathcal{H};\Z/n!) \to \H_\ast (\widehat{G},\overline{\mathcal{H}};\Z/n!)$ is an  isomorphism. In addition, these isomorphisms are natural with respect to the coefficient modules $\Z/n!$. Thus, we have a commutative diagram of abelian groups
\begin{equation*}
\begin{tikzcd}
{H_\ast(G,\mathcal{H};\widehat{\Z})} \arrow[d, "\cong"',"\phi"] \arrow[r, "\iota_\ast"] & {\H_\ast(\widehat{G},\overline{\mathcal{H}};\widehat{\Z})} \arrow[d, "\cong"] \\
{\limi\limits_n H_\ast(G,\mathcal{H};\Z/n!)} \arrow[r, "\cong"',"\iota_\ast"]                  & {\limi\limits_n \H_\ast(\widehat{G},\overline{\mathcal{H}};\Z/n!)}           
\end{tikzcd}
\end{equation*}
where the two vertical maps and the bottom map are all proven to be isomorphisms. Thus, $\iota_\ast: {H_\ast(G,\mathcal{H};\widehat{\Z})} \to  {\H_\ast(\widehat{G},\overline{\mathcal{H}};\widehat{\Z})}$  is also an isomorphism (of abelian groups). Note that $\iota_\ast$ is obviously a homomorphism of $\widehat{\Z}$-modules, so the proof completes.
\end{proof}

A simple application of cohomological goodness is illustrated by finitely generated abelian groups. The following proposition is also stated in \cite[Lemma 5.3]{Liu25}, for which we present a different proof.

\begin{proposition}\label{PROP: abelian}
Let $A=\Z u_1\oplus \cdots \oplus \Z u_n$ be a finitely generated free abelian group. Then $\widehat{A}=\widehat{\Z}u_1\oplus \cdots\oplus \widehat{\Z}u_n$, and 
\begin{equation}\label{equisoA}
\begin{tikzcd}[column sep=small]
{\bigwedge} _{\widehat{\Z}}[u_1,\cdots,u_n] \cong \widehat{\Z}\otimes_\Z H_\ast(A;\Z)\arrow[r] & H_\ast(A;\widehat{\Z}) \arrow[r,"\iota_\ast"]& \H_\ast(\widehat{A};\widehat{\Z}), 
\end{tikzcd}
\end{equation}
is an isomorphism of $\widehat{\Z}$-modules, where $\bigwedge _{\widehat{\Z}}[u_1,\cdots,u_n]$ is the exterior algebra over $\widehat{\Z}$ freely generated by $u_1,\cdots,u_n$ of degree $1$. 

In addition, suppose $B=\Z v_1\oplus \cdots \oplus \Z v_m$ and $f:\widehat{A}\to \widehat{B}$ is a continuous homomorphism. Then $f_\ast : \H_\ast(\widehat{A};\widehat{\Z})\to \H_\ast(\widehat{B};\widehat{\Z})$ is exactly the induced homomorphism of exterior $\widehat{\Z}$-algebras under this isomorphism. In other words, $f_\ast(u_{k_1}\wedge \cdots\wedge u_{k_j})= f(u_{k_1})\wedge \cdots \wedge f(u_{k_j})$. 
\end{proposition}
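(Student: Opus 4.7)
The plan is to establish the chain of isomorphisms first, then address the functoriality statement separately. The equality $\widehat{A} = \widehat{\Z}u_1\oplus\cdots\oplus\widehat{\Z}u_n$ is immediate since profinite completion commutes with finite direct products. The first map of (\ref{equisoA}) is the universal coefficient map, which is an isomorphism because $\widehat{\Z}$ is torsion-free and hence flat as a $\Z$-module, so the $\mathrm{Tor}$-correction term vanishes. The second map $\iota_*$ is an isomorphism by \autoref{PROP: good tensor hatZ}(1), which applies since the finitely generated abelian group $A\cong\Z^n$ is cohomologically good by \autoref{EX: good}(2) and is of type $\text{FP}_\infty$, as witnessed by the finite-length Koszul resolution of $\Z$ over $\Z[A]$.

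The identification of $\widehat{\Z}\otimes_\Z H_*(A;\Z)$ with $\bigwedge_{\widehat{\Z}}[u_1,\ldots,u_n]$ then follows from the classical computation $H_*(\Z^n;\Z) \cong \bigwedge_\Z[u_1,\ldots,u_n]$, obtained by iterating the K\"unneth formula (or equivalently by taking the homology of the discrete Koszul resolution) and identifying $u_i$ with the fundamental class of $H_1(\Z u_i;\Z)$, together with the fact that exterior powers commute with the flat base change $\Z\to\widehat{\Z}$.

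Turning to functoriality, in degree one the composite isomorphism of (\ref{equisoA}) identifies $\H_1(\widehat{A};\widehat{\Z})$ with $\widehat{A}$ via $\H_1(\widehat{A};\widehat{\Z}) \cong \tensor \ab{A} \cong \Ab{\widehat{A}} = \widehat{A}$ (the last equality by abelianness of $\widehat{A}$, combining \autoref{PROP: abelianization} with the degree-one case of \autoref{PROP: good tensor hatZ}(1)), and under this identification the degree-one component of $f_*$ is exactly $f$ itself; hence $f_*(u_i) = f(u_i)$. To promote this to multiplicativity in all degrees, I would exhibit the profinite Koszul complex $K_\bullet(\widehat{A}) = \bigwedge_{\widehat{\Z}}[e_1,\ldots,e_n]\,\widehat{\otimes}_{\widehat{\Z}}\,\widehat{\Z}\szkh{\widehat{A}}$ with differential determined by $d(e_i) = u_i - 1$ extended as a graded derivation, which is a finite free resolution of $\widehat{\Z}$ in $\mathfrak{C}(\widehat{A})$ carrying a natural graded-commutative DG-algebra structure. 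Tensoring on the left with $\widehat{\Z}$ over $\widehat{\Z}\szkh{\widehat{A}}$ yields $\bigwedge_{\widehat{\Z}}[e_1,\ldots,e_n]$ with zero differential, producing the exterior product on $\H_*(\widehat{A};\widehat{\Z})$. A continuous homomorphism $f:\widehat{A}\to \widehat{B}$ then admits a canonical lift to a DG-algebra morphism $K_\bullet(\widehat{A}) \to K_\bullet(\widehat{B})$ sending each $e_i$ to a chosen lift of $f(u_i)\in \widehat{B}$ in degree one, from which $f_*(u_{k_1}\wedge\cdots\wedge u_{k_j}) = f(u_{k_1})\wedge\cdots\wedge f(u_{k_j})$ follows by passing to homology.

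The main obstacle I anticipate is verifying that the exterior-product structure intrinsic to $\H_*(\widehat{A};\widehat{\Z})$ obtained via the profinite Koszul resolution agrees with the one transported from $\bigwedge_\Z[u_1,\ldots,u_n] \cong H_*(A;\Z)$ through the isomorphisms of (\ref{equisoA}). A clean way to handle this is to compare the discrete Koszul resolution of $\Z$ over $\Z[A]$ with its profinite completion $K_\bullet(\widehat{A})$ via the chain map induced by $\iota:A\to\widehat{A}$, and invoke the uniqueness up to chain homotopy of multiplicative structures on Tor-algebras; the naturality of that comparison then delivers both the agreement of products and the desired functoriality formula simultaneously.
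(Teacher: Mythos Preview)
Your approach is essentially the paper's: both derive (\ref{equisoA}) from cohomological goodness and \autoref{PROP: good tensor hatZ}, and both compute $f_*$ via the profinite Koszul resolution of $\widehat{\Z}$ over $\widehat{\Z}\szkh{\widehat A}$.

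One subtlety is worth flagging, and here your formulation is actually safer than the paper's concrete calculation. The paper writes down an explicit lift $\Phi=f_\#\otimes\phi$ with $\phi(\alpha_i)=\sum_j c_j\beta_j$ whenever $f(u_i)=\sum_j c_j v_j$, and asserts that $\Phi$ commutes with the differentials; but already for $n=m=1$ with $f(u)=cv$ this would force $v^c-1=c(v-1)$ in $\widehat{\Z}\szkh{\widehat B}$, which fails for $c=2$ since $(v-1)^2\neq 0$. Your phrase ``a chosen lift of $f(u_i)$ in degree one'' is the correct repair, provided it is read as: choose any $\tilde e_i\in K_1(\widehat B)$ with $d\tilde e_i=f(u_i)-1$ (possible because $f(u_i)-1$ lies in the augmentation ideal, which is $\operatorname{im}d$), then extend multiplicatively. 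The resulting map is a DGA chain map because both differentials are derivations, so commutation with $d$ in higher degrees follows from degree one. Since any two chain lifts over $f_\#$ induce the same map on $\Tor$, and since $\H_1(\widehat A;\widehat{\Z})\cong\widehat A$ naturally, the degree-one part of $f_*$ is forced to be $f$ itself, and multiplicativity yields the formula in all degrees. Your flagged obstacle dissolves once you observe that the discrete Koszul complex for $A$ includes into the profinite one via the DGA map $\alpha_i\mapsto\alpha_i$, so $\iota_*$ is multiplicative for the Koszul products and these agree with the classical exterior product on $H_*(A;\Z)$; the paper's remark following the proposition suggests the equivalent Pontryagin-product route.
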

\begin{proof}
\def\a{\alpha}
\def\b{\beta}
$A\cong \Z^n$ is cohomologically good (\autoref{EX: good}) and of type $\text{FP}_\infty$ (in fact, $\text{FP}$) since it is the fundamental group of an aspherical finite CW-complex $T^n$. Thus, the isomorphsims $\widehat{\Z}\otimes_\Z H_\ast(A;\Z)\to H_\ast(A;\widehat{\Z}) \to \H_\ast(\widehat{A};\widehat{\Z})$ follow from \autoref{PROP: good tensor hatZ}. In order to determine the map $f_\ast: \H_\ast(\widehat{A};\widehat{\Z})\to \H_\ast(\widehat{B};\widehat{\Z})$, we present here a concrete calculation which meanwhile reproves  the isomorphism  (\ref{equisoA}).

There is a standard free resolution of $\Z$ as (left) ${\Z[A]}$-modules
\begin{equation*}
\begin{tikzcd}
{{\bigwedge}_{{\Z[A]}}[\a_1,\cdots,\a_n]=F_\ast(A)} \arrow[r, "\epsilon"] & \Z \arrow[r] & {0,}
\end{tikzcd}
\end{equation*}
where ${\bigwedge}_{{\Z[A]}}[\a_1,\cdots,\a_n]$ is a graded ${\Z[A]}$-module isomorphic to the exterior algebra generated by $\a_1,\cdots,\a_n$ of degree $1$.  The differentials are given by
\newsavebox{\equdiff}
\begin{lrbox}{\equdiff}
$d(\a_{k_1}\wedge\cdots\wedge \a_{k_r})=\mathop{\sum}\limits_{j=1}^{r} (-1)^{j-1}(u_{k_j}-1)\, \a_{k_1}\wedge\cdots\wedge \a_{k_{j-1}}\wedge \a_{k_{j+1}}\wedge \cdots\wedge \a_{k_r}$
\end{lrbox}
\begin{equation}\label{equdiff}
{\usebox{\equdiff}}
\end{equation}
satisfying the Leibniz rule, 
and $\epsilon: F_0(A)\cong {\Z[A]}\to \Z$ is the augmentation homomorphism.

It is easy to check that $\Z\otimes_{\Z[A]} F_\ast(A) \cong {\bigwedge}_{\Z}[\a_1,\cdots,\a_n]$ and the differentials in $\Z\otimes_{\Z[A]} F_\ast(A)$ are all trivial. Thus, $$H_\ast(A;\Z)=H_\ast(\Z\otimes_{\Z[A]} F (A))\cong \Z\otimes_{\Z[A]} F_\ast(A) \cong {\bigwedge}_\Z[\a_1,\cdots,\a_n].$$

Now we move on to the profinite side. Since ${\widehat{\Z}\szkh{\widehat{A}}}$ is an ${\Z[A]}$-algebra, let 
$$\widehat{F_\ast(A)}={\widehat{\Z}\szkh{\widehat{A}}} \otimes_{\Z[A]} F_\ast(A)={\bigwedge}_{{\widehat{\Z}\szkh{\widehat{A}}}}[\a_1,\cdots,\a_n]$$ be the graded ${\widehat{\Z}\szkh{\widehat{A}}}$-module equipped with the same differentials   defined by (\ref{equdiff}). Each $F_k(A)$ is a finitely generated free ${\Z[A]}$-module, and $A$ is cohomologically good. Thus, according to \cite[Proposition 2.9]{Wil17}, 
\begin{equation*}
\begin{tikzcd}
{{\bigwedge}_{{\widehat{\Z}\szkh{\widehat{A}}}}[\a_1,\cdots,\a_n]=\widehat{F_\ast(A)}} \arrow[r, "\epsilon"] & \widehat{\Z} \arrow[r] & {0,}
\end{tikzcd}
\end{equation*}
is also a free resolution of $\widehat{\Z}$ as (left) ${\widehat{\Z}\szkh{\widehat{A}}}$-modules. 
In addition,
\newsavebox{\equtensor}
\begin{lrbox}{\equtensor}
$
\widehat{\Z}\widehat{\otimes}_{\widehat{\Z}\szkh{\widehat{A}}} \widehat{F_\ast(A)} = \widehat{\Z}\widehat{\otimes}_{\widehat{\Z}\szkh{\widehat{A}}}(\widehat{\Z}\szkh{\widehat{A}} \otimes_{\Z[A]} F_\ast (A)) \cong \widehat{\Z}\otimes_{\Z} (\Z\otimes_{\Z[A]} F_\ast(A)) \cong {\bigwedge}_{\widehat{\Z}}[\a_1,\cdots,\a_n],$
\end{lrbox}
\begin{equation*}
{\usebox{\equtensor}}
\end{equation*}
and the differentials in $\widehat{\Z}\widehat{\otimes}_{\widehat{\Z}\szkh{\widehat{A}}} \widehat{F_\ast(A)}$ are again trivial by construction. 
Thus, 
\newsavebox{\tensorequii}
\begin{lrbox}{\tensorequii}
$\H_\ast(\widehat{A};\widehat{\Z})=H_\ast(\widehat{\Z}\widehat{\otimes}_{\widehat{\Z}\szkh{\widehat{A}}} \widehat{F_\ast(A)})\cong \widehat{\Z}\widehat{\otimes}_{\widehat{\Z}\szkh{\widehat{A}}} \widehat{F_\ast(A)} \cong 
 \widehat{\Z} {\otimes}_{\Z} H_\ast(A;\Z) \cong {\bigwedge}_{\widehat{\Z}}[\a_1,\cdots,\a_n]. 
$
\end{lrbox}
\begin{equation*}
{\usebox{\tensorequii}}
\end{equation*}

We apply the same construction to $B$, so that we obtain free resolutions
\begin{equation*}
\begin{tikzcd}
{{\bigwedge}_{{\Z[B]}}[\b_1,\cdots,\b_m]=F_\ast(B)} \arrow[r, "\epsilon"] & \Z \arrow[r] & {0}
\end{tikzcd}
\end{equation*}
and
\begin{equation*}
\begin{tikzcd}
{{\bigwedge}_{{\widehat{\Z}\szkh{\widehat{B}}}}[\b_1,\cdots,\b_m]=\widehat{F_\ast(B)}} \arrow[r, "\epsilon"] & \widehat{\Z} \arrow[r] & {0,}
\end{tikzcd}
\end{equation*}
whose differentials are similarly defined as in (\ref{equdiff}). 

The continuous homomorphism $f:\widehat{A}\to \widehat{B}$ induces a continuous homomorphism of complete group algebras $f_\sharp: \widehat{\Z}\szkh{\widehat{A}}\to \widehat{\Z}\szkh{\widehat{B}}$, so that $\widehat{F_\ast(B)}  \to \widehat{\Z} \to{0}$ also becomes a resolution of $\widehat{\Z}$ in $\mathfrak{C}(\widehat{\Z}\szkh{\widehat{A}})$. 
In order to determine $f_{\ast}:\H_{\ast}(\widehat{A},\widehat{\Z})\to \H_{\ast}(\widehat{B},\widehat{\Z})$, it remains to construct a homomorphism between the resolutions $\widehat{F_{\ast}(A)}\to \widehat{\Z}\to 0$ and $\widehat{F_\ast(B)}  \to \widehat{\Z} \to{0}$ in the category $\mathfrak{C}(\widehat{\Z}\szkh{\widehat{A}})$. 

Firstly, $\widehat{F_0(A)}=\widehat{\Z}\szkh{\widehat{A}}$ and $\widehat{F_0(B)}=\widehat{\Z}\szkh{\widehat{B}}$. Let $\Phi_0=f_\sharp: \widehat{F_0(A)}=\widehat{\Z}\szkh{\widehat{A}} \to \widehat{\Z}\szkh{\widehat{B}} = \widehat{F_0(B)}$. 
For the construction of $\Phi_1: \widehat{F_1(A)}\to \widehat{F_1(B)}$, we need the following sublemma whose proof is postponed towards the end of  this proposition. 
\begin{sublemma}\label{sublemma}
Let $\Pi$ be a profinite group, and let $\epsilon: \widehat{\Z}\szkh{\Pi}\to \widehat{\Z}$ be the augmentation homomorphism.  Suppose $x\in \Pi$ and $\gamma\in \widehat{\Z}$. Then, there exists $\omega_{x,\gamma}\in  \widehat{\Z}\szkh{\Pi}$ such that $\omega_{x,\gamma}(x-1)=x^\gamma-1$ and $\epsilon(\omega_{x,\gamma})=\gamma$. 
\end{sublemma}

Suppose that $f$ is defined by $f(u_i)=\sum_{j=1}^{m}c_i^jv_j$, where $c_i^j\in \widehat{\Z}$, written in the additive convention. Let $\Phi_1: \widehat{F_1(A)}= \widehat{\Z}\szkh{\widehat{A}}\alpha_1\oplus \cdots \oplus \widehat{\Z}\szkh{\widehat{A}}\alpha_n\to \widehat{F_1(B)}$ be the   homomorphism of $\widehat{\Z}\szkh{\widehat{A}}$-modules defined on the basis by
$$
\Phi_1(\alpha_i)=\sum_{j=1}^{m}(\prod_{k=1}^{j-1}v_{k}^{c_{i}^k})\omega_{v_j,c_i^j}\beta_j,
$$
where $\omega_{v_j,c_i^j}\in \widehat{\Z}\szkh{\widehat{B}}$ is given by \autoref{sublemma}. 

We claim that the following diagram commutes.
\begin{equation}\label{diagcommmm}
\begin{tikzcd}
\widehat{F_1(A)} \arrow[r,"d"] \arrow[d, "\Phi_1"] & \widehat{F_0(A)} \arrow[r, "\epsilon"] \arrow[d, "\Phi_0=f_{\sharp}"] & \widehat{\Z} \arrow[d, "id"] \\
\widehat{F_1(B)} \arrow[r,"d"]                     & \widehat{F_0(B)} \arrow[r, "\epsilon"]                                & \widehat{\Z}                
\end{tikzcd}
\end{equation}

\newsavebox{\aaay}
\begin{lrbox}{\aaay}
$
\begin{aligned}
d(\Phi_1(\alpha_i))=&\mathop{\textstyle\sum}\limits_{j=1}^{m} (\mathop{\textstyle\prod}\limits_{k=1}^{j-1} v_k^{c_i^k}) \omega_{v_jc_i^j}d(\beta_j) = \mathop{\textstyle\sum}\limits_{j=1}^{m} (\mathop{\textstyle\prod}\limits_{k=1}^{j-1} v_k^{c_i^k}) \omega_{v_jc_i^j}(v_j-1)\\
=& \mathop{\textstyle\sum}\limits_{j=1}^{m} (\mathop{\textstyle\prod}\limits_{k=1}^{j-1} v_k^{c_i^k}) (v_j^{c_{i}^{j}}-1) 
=   (\mathop{\textstyle\prod}\limits_{k=1}^{m}v_k^{c_i^k})-1 
=  f_\sharp (u_i-1) = \Phi_0(d(\alpha_i)),
\end{aligned}
$
\end{lrbox}

The commutativity of the right block is obvious. For the left block, since the diagram consists of homomorphisms of $\widehat{\Z}\szkh{\widehat{A}}$-modules, it suffices to verify the basis:
\begin{equation*}
\usebox{\aaay}
\end{equation*}
where the elements in $\widehat{B}$ are written in multiplicative convention. 

Then, $\Phi_0$ and $\Phi_1$ extend uniquely to a homomorphism of  $\widehat{\Z}\szkh{\widehat{A}}$-exterior algebras $\Phi_\ast:\widehat{F_{\ast}(A)}\to \widehat{F_{\ast}(B)}$. It follows   that $\Phi_\ast$ commutes with the differentials, since $\Phi_1$ and $\Phi_0$ do, and the differentials satisfy the Leibniz rule. 
%
In other words, we obtain a homomorphism of resolutions in $\mathfrak{C}(\widehat{\Z}{\szkh{\widehat{A}}})$.

\begin{equation*}
\begin{tikzcd}
\widehat{F_\ast(A)} \arrow[d, "\Phi_\ast"] \arrow[r,"\epsilon"] & \widehat{\Z} \arrow[d,"id"] \arrow[r] & 0\\
\widehat{F_\ast(B)} \arrow[r,"\epsilon"] &\widehat{\Z} \arrow[r] & 0 
\end{tikzcd}
\end{equation*}

Thus, $f_\ast: \H_\ast(\widehat{A};\widehat{\Z}) =\Tor_\ast^{\widehat{\Z}{\szkh{\widehat{A}}}}(\widehat{\Z},\widehat{\Z}) \to \Tor_\ast^{\widehat{\Z}{\szkh{\widehat{B}}}}(\widehat{\Z},\widehat{\Z})= \H_\ast(\widehat{B};\widehat{\Z})$ is induced by the homomorphism of chain complexes 
\begin{equation*}
\begin{tikzcd}[column sep=large]
\widehat{\Z}\widehat{\otimes}_{\widehat{\Z}{\szkh{\widehat{A}}}}\widehat{F_\ast(A)} \arrow[rr,"id\,\widehat{\otimes}_{f_\sharp} \Phi_\ast"] &  & \widehat{\Z}\widehat{\otimes}_{\widehat{\Z}{\szkh{\widehat{B}}}}\widehat{F_\ast(B)} .
\end{tikzcd}
\end{equation*}
Both $\widehat{\Z}\widehat{\otimes}_{\widehat{\Z}{\szkh{\widehat{A}}}}\widehat{F_\ast(A)}$ and $\widehat{\Z}\widehat{\otimes}_{\widehat{\Z}{\szkh{\widehat{B}}}}\widehat{F_\ast(B)}$ have trivial differentials, so we finally obtain the following commutative diagram. 

\begin{equation*}
\begin{tikzcd}[column sep=small,row sep=large]
\H_\ast(\widehat{A};\widehat{\Z}) \arrow[d, "f_\ast"'] &  & H_\ast(\widehat{\Z}\widehat{\otimes}_{\widehat{\Z}{\szkh{\widehat{A}}}}\widehat{F_\ast(A)}) \arrow[d, "(id\,\widehat{\otimes}_{f_\sharp} \Phi_\ast)_\ast"'] \arrow[ll, "\cong"'] &  & \widehat{\Z}\widehat{\otimes}_{\widehat{\Z}{\szkh{\widehat{A}}}}\widehat{F_\ast(A)} \arrow[ll, "\cong"'] \arrow[d, "id\,\widehat{\otimes}_{f_\#}\Phi_\ast"] &  & {{\bigwedge}_{\widehat{\Z}}[\a_1,\cdots, \a_n]} \arrow[ll, "\cong"'] \arrow[d, "\Psi_\ast"] \\
\H_\ast(\widehat{B};\widehat{\Z})                       &  & H_\ast(\widehat{\Z}\widehat{\otimes}_{\widehat{\Z}{\szkh{\widehat{B}}}}\widehat{F_\ast(B)}) \arrow[ll, "\cong"']                                     &  & \widehat{\Z}\widehat{\otimes}_{\widehat{\Z}{\szkh{\widehat{B}}}}\widehat{F_\ast(B)} \arrow[ll, "\cong"']                                              &  & {{\bigwedge}_{\widehat{\Z}}[\b_1,\cdots, \b_m]} \arrow[ll, "\cong"']                   
\end{tikzcd}
\end{equation*}

Note that  $\Psi_\ast$ is a homomorphism of exterior algebras since $\Phi_\ast$ is. Thus, $\Psi_\ast$ is determined by $\Psi_0:\widehat{\Z}\to \widehat{\Z}$ and $\Psi_1:\widehat{\Z}\alpha_1\oplus\cdots\oplus \widehat{\Z}\alpha_n \to \widehat{\Z}\beta_1\oplus \cdots \oplus \widehat{\Z}\beta_m$. From diagram (\ref{diagcommmm}), it is clear that $\Psi_0=id: \widehat{\Z}\to \widehat{\Z}$, and $\Psi_1$ fits into the following commutative diagram
\begin{equation*}
\begin{tikzcd}
\widehat{\Z} \szkh{\widehat{A}} \alpha_1 \oplus \cdots \oplus \widehat{\Z} \szkh{\widehat{A}} \alpha_n \arrow[r,"\Phi_1"] \arrow[d,"{(\epsilon,\cdots,\epsilon)}"'] & \widehat{\Z}\szkh{\widehat{B}} \beta_1 \oplus \cdots \oplus \widehat{\Z} \szkh{\widehat{B}}\beta_m  \arrow[d,"{(\epsilon,\cdots,\epsilon)}"] \\
\widehat{\Z} \alpha_1 \oplus \cdots \oplus \widehat{\Z}\alpha_n  \arrow[r,"\Psi_1"] & \widehat{\Z}\beta_1\oplus \cdots \oplus \widehat{\Z}\beta_m 
\end{tikzcd}
\end{equation*}
where the vertical maps are induced by the augmentation homomorphisms $\widehat{\Z}\szkh{\widehat{A}}\xrightarrow{\epsilon} \widehat{\Z}$ and  $\widehat{\Z}\szkh{\widehat{B}}\xrightarrow{\epsilon} \widehat{\Z}$. 
Consequently, $$\Psi_1(\alpha_i)=(\epsilon,\cdots,\epsilon)\Phi_1(\alpha_i)=\mathop{\textstyle\sum}\limits_{j=1}^{m}\epsilon((\mathop{\textstyle\prod}\limits_{k=1}^{j-1}v_k^{c_i^k})\omega_{v_j,c_{i}^j})\beta_j=\mathop{\textstyle\sum}\limits_{j=1}^{m}\epsilon(\omega_{v_j,c_{i}^j})\beta_j= \mathop{\textstyle\sum}\limits_{j=1}^{m}c_i^j \beta_j,$$
where  the last equation follows from \autoref{sublemma}. 
Therefore, under the isomorphism (\ref{equisoA}), $\Psi_1$ is identical with $f$, and $f_\ast:\H_{\ast}(\widehat{A},\widehat{\Z})\to \H_{\ast}(\widehat{B},\widehat{\Z})$ is exactly the homomorphism  $\Psi_\ast$ of the  exterior $\widehat{\Z}$-algebras induced by $\Psi_1=f$.  
\end{proof}

\begin{proof}[Proof of \autoref{sublemma}]

Let $\Gamma=\widehat{\Z}$. For each $n\in \mathbb{N}$, denote by $\Gamma_n=\{s^n\mid s\in \Gamma\}$ the unique index-$n$ open subgroup. Let $q_n: \Gamma\to \Gamma/\Gamma_n \cong \Z/n$  be the quotient homomorphism, and let  $(q_n)_{\sharp}: \widehat{\Z}\szkh{\Gamma}\to \Z/n\szkh{\Gamma/\Gamma_n}= \Z/n[\Gamma/\Gamma_n]\cong \Z/n[\Z/n]$ be the quotient homomorphism of complete group algebras. Then, $\widehat{\Z}\szkh{\Gamma}$ can be expressed as an inverse limit $\widehat{\Z}\szkh{\Gamma}=\limi \Z/n [\Gamma/\Gamma_n]$. 

Denote 
$$
S_n=\left\{\left.\mathop{\textstyle\sum}\limits_{i=0}^{a-1}q_n(t)^{i} \right |\, a\in \mathbb{N}\text{ and }a\cong \gamma \;(\mathrm{mod}\;{n})\right\}\subseteq \Z/n [\Gamma/\Gamma_n].
$$
The following three observations are direct. First, for any $w\in S_n$, $w(q_n(t)-1)=q_n(t)^a-1=(q_n)_{\sharp}(t^{\gamma}-1)$, since $a\cong \gamma \;(\mathrm{mod}\;{n})$. Second, let $\epsilon_n: \Z/n[\Gamma/\Gamma_n]\to \Z/n$ be the augmentation homomorphism, then $\epsilon_n(w)\cong a\cong \gamma\;(\mathrm{mod}\;{n})$ for any $w\in S_n$. Third, $\{S_n\}_{n\in \mathbb{N}}$ forms an inverse system (ordered by divisibility) of finite sets. Therefore, according to \cite[Proposition 1.1.4]{RZ10}, $\limi S_n$ is non-empty. Pick $\omega\in \limi S_n$. The first observation implies that $(q_n)_\sharp (\omega(t-1))=(q_n)_\sharp  (t^\gamma -1)$ for all $n\in \mathbb{N}$, and hence $\omega(t-1)= t^\gamma -1$. Moreover, with $\epsilon':\widehat{\Z}\szkh{\Gamma}\to \widehat{\Z}$ being the augmentation homomorphism, the second observation implies that $\epsilon'(\omega)\cong \gamma \;(\mathrm{mod}\;{n}) $ for all $n\in \mathbb{N}$, and hence $\epsilon'(\omega)= \gamma$.

Now, let $\varphi:\Gamma\to \Pi$ be the unique continuous homomorphism such that $\varphi(t)=x$, and let $\varphi_{\sharp}: \widehat{\Z}\szkh{\Gamma}\to \widehat{\Z}\szkh{\Pi}$ be the homomorphism of complete group algebras induced by $\varphi$. We show that $\omega_{x,\gamma}:=\varphi_{\sharp}(\omega)$ meets the requirements. Indeed, $\varphi_{\sharp}$ is a homomorphism of rings, so $\omega_{x,\gamma}(x-1)=\varphi_{\sharp}(\omega)\varphi_{\sharp}(t-1)=\varphi_{\sharp}(t^\gamma-1)= x^\gamma-1$. Moreover, $\varphi_{\sharp}$ commutes with the augmentation homomorphisms: $\epsilon\circ \varphi_{\sharp}=\epsilon'$, so $\epsilon(\omega_{x,\gamma})=\epsilon'(x)=\gamma$. 
\end{proof}

\begin{remark}
In fact, since $A$ and $\widehat{A}$ are abelian, Pontryagin products can be defined on $H_\ast(A;\widehat{\Z})$ and $\H_\ast(\widehat{A};\widehat{\Z})$, making them $\widehat{\Z}$-algebras. A careful argument based on \autoref{PROP: good tensor hatZ} shows that $\iota_\ast:H_\ast(A;\widehat{\Z})\to \H_\ast(\widehat{A};\widehat{\Z})$ is an isomorphism of $\widehat{\Z}$-algebras. In addition, a continuous homomorphism $\widehat{A}\to \widehat{B}$ induces a homomorphism of $\widehat{\Z}$-algebras. The result of \autoref{PROP: abelian} then follows by identifying $\H_1(\widehat{A};\widehat{\Z})$ with $\widehat{A}$ and $\H_1(\widehat{B};\widehat{\Z})$ with $\widehat{B}$, since they are abelian.
\end{remark}


\subsection{Topological models}
(Co)homology of groups are related to (co)homology of their Eilenberg-MacLane spaces, possibly with respect to local coefficents. For simplicity, we make the statements for $\Z$-coefficients in this section. 
\begin{definition}
Given a discrete group $G$, a cell complex $X$ is called an {\em Eilenberg-MacLane space  for $G$} or a {\em $K(G,1)$--cell complex} if $X$ is connected, $\pi_1X=G$, and $\pi_iX=0$ when $i\ge 2$. 
\end{definition}

The following theorem is well-known, see for example \cite[Section I.4 and III.1]{Bro82}. 
\begin{theorem}\label{THM: K(G,1)}
Suppose $G$ is a discrete group. For     a $K(G,1)$--cell complex $X$,  there is a natural isomorphism $H_\ast(X;\Z)\cong H_\ast(G;\Z)$.
\end{theorem}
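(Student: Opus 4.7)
The plan is to identify $H_\ast(G;\Z)$ with the homology of the cellular chain complex of $X$ by exhibiting the universal cover of $X$ as a free $\Z[G]$-resolution of $\Z$. Concretely, let $p:\widetilde{X}\to X$ be the universal cover. Because $X$ is a $K(G,1)$, the space $\widetilde{X}$ is simply connected and has trivial higher homotopy groups, hence by the Whitehead and Hurewicz theorems $\widetilde{X}$ is contractible. The deck transformation group $G$ acts freely and cellularly on $\widetilde{X}$, so each cellular chain group $C_k(\widetilde{X})$ is a free $\Z[G]$-module (with basis given by choosing one lift of each $k$-cell of $X$), and contractibility of $\widetilde{X}$ makes
\begin{equation*}
\cdots \to C_2(\widetilde{X})\to C_1(\widetilde{X})\to C_0(\widetilde{X})\xrightarrow{\epsilon}\Z\to 0
\end{equation*}
a free resolution of the trivial $\Z[G]$-module $\Z$.

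Next, I would compute both sides from this resolution. By definition $H_\ast(G;\Z)=\mathrm{Tor}_\ast^{\Z[G]}(\Z,\Z)$, which is computed as the homology of $\Z\otimes_{\Z[G]} C_\ast(\widetilde{X})$. On the other hand, since $G$ acts freely on the cells of $\widetilde{X}$ with orbit space $X$, the canonical map
\begin{equation*}
\Z\otimes_{\Z[G]} C_\ast(\widetilde{X})\tto C_\ast(X),
\end{equation*}
sending a generator $1\otimes\widetilde{\sigma}$ to $p_\ast(\widetilde{\sigma})$, is an isomorphism of chain complexes. Taking homology gives the desired isomorphism $H_\ast(G;\Z)\cong H_\ast(X;\Z)$.

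Finally, for naturality, given a cellular map $f:X\to X'$ between $K(G,1)$ and $K(G',1)$ complexes inducing $\phi:G\to G'$ on fundamental groups, a choice of basepoint lifts produces a $\phi$-equivariant cellular map $\widetilde{f}:\widetilde{X}\to\widetilde{X'}$, which provides a chain map of free resolutions compatible with $\phi$ (any two such lifts are $\Z[G]$-chain homotopic by the comparison theorem for projective resolutions). After applying $\Z\otimes_{\Z[G]}(-)$ and $\Z\otimes_{\Z[G']}(-)$ respectively, this becomes the cellular chain map induced by $f$, so the isomorphism intertwines $f_\ast$ on $H_\ast(X;\Z)$ with $\phi_\ast$ on $H_\ast(G;\Z)$. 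No step is genuinely obstructive here; the only care required is in invoking the comparison theorem to pin down naturality up to the canonical chain-homotopy class, which is standard.
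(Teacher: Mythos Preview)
Your proof is correct and is exactly the standard argument (the universal cover gives a free $\Z[G]$-resolution of $\Z$, and tensoring with $\Z$ recovers the cellular chain complex of $X$). The paper does not actually prove this theorem; it is stated as well-known with a citation to \cite[Section I.4 and III.1]{Bro82}, where precisely this argument appears.
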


\begin{corollary}\label{COR: 3-mfd homology}
Let $M$ be a compact aspherical 3-manifold. Then, there is an isomorphism 
\begin{equation*}
\begin{tikzcd}[column sep=small]
\tensor H_\ast(M;\Z) \arrow[r,"\cong"] & \tensor H_\ast(\pi_1M;\Z)\arrow[r,"\cong"]  & H_\ast(\pi_1M;\widehat{\Z}) \arrow[r,"\cong"]  & \H_\ast(\widehat{\pi_1M};\widehat{\Z}) .
\end{tikzcd}
\end{equation*}
\end{corollary}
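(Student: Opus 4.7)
The plan is to obtain this as a direct chain of three isomorphisms, assembled from results already established in the excerpt.

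For the first isomorphism $\widehat{\mathbb{Z}}\otimes_{\mathbb{Z}} H_\ast(M;\mathbb{Z}) \cong \widehat{\mathbb{Z}}\otimes_{\mathbb{Z}} H_\ast(\pi_1 M;\mathbb{Z})$, the key point is that an aspherical compact 3-manifold $M$ is itself (homotopy equivalent to) a $K(\pi_1 M,1)$-cell complex. Thus \autoref{THM: K(G,1)} supplies a natural isomorphism $H_\ast(M;\mathbb{Z})\cong H_\ast(\pi_1 M;\mathbb{Z})$, and tensoring both sides over $\mathbb{Z}$ with $\widehat{\mathbb{Z}}$ yields the first isomorphism.

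For the remaining two isomorphisms, the plan is to invoke \autoref{PROP: good tensor hatZ}(\ref{good tensor 1}) applied to the discrete group $G=\pi_1 M$, which provides the combined chain $\widehat{\mathbb{Z}}\otimes_{\mathbb{Z}} H_\ast(\pi_1 M;\mathbb{Z})\cong H_\ast(\pi_1 M;\widehat{\mathbb{Z}})\cong \mathbf{H}_\ast(\widehat{\pi_1 M};\widehat{\mathbb{Z}})$ in one stroke. This requires verifying two hypotheses: (i) $\pi_1 M$ is cohomologically good, and (ii) $\pi_1 M$ is of type $\mathrm{FP}_\infty$. Hypothesis (i) is immediate from \autoref{EX: good}(4), since $\pi_1 M$ is a finitely generated 3-manifold group. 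For hypothesis (ii), I would use the fact that since $M$ is a compact aspherical manifold, it has the homotopy type of a finite CW-complex which is a $K(\pi_1 M,1)$; the cellular chain complex of its universal cover is then a finite free resolution of $\mathbb{Z}$ as a $\mathbb{Z}[\pi_1 M]$-module, so $\pi_1 M$ is even of type $\mathrm{FP}$, hence in particular of type $\mathrm{FP}_\infty$.

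There is no genuine obstacle here: the corollary is essentially a packaging of Theorem~\ref{THM: K(G,1)} together with Proposition~\ref{PROP: good tensor hatZ}, once one recognizes that asphericity together with compactness precisely yields the two hypotheses required by the latter. The only small point worth writing carefully is the naturality of the composite isomorphism, which follows because each of the three individual isomorphisms in the chain is natural (Theorem~\ref{THM: K(G,1)} is natural in $G$, the universal coefficient isomorphism is natural in both the coefficient module and the group, and $\iota_\ast$ is induced by the canonical map $\iota:\pi_1 M\to\widehat{\pi_1 M}$).
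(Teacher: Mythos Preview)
Your proposal is correct and follows essentially the same approach as the paper: the first isomorphism via \autoref{THM: K(G,1)}, and the remaining two via \autoref{PROP: good tensor hatZ}(\ref{good tensor 1}) after checking cohomological goodness (from \autoref{EX: good}) and type $\mathrm{FP}_\infty$ (from the finite CW-structure on the aspherical $M$). The paper's proof is almost word-for-word the same, citing \cite{Cav12} directly for goodness and \cite[Chapter I, Proposition 4.2]{Bro82} for the $\mathrm{FP}$ property.
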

\begin{proof}
The first isomorphism follows from \autoref{THM: K(G,1)}.  In addition, $M$ can be equipped with a finite cell complex structure, so $\pi_1M$ is of type $\text{FP}_\infty$ (in fact, $\text{FP}$) according to \cite[Chapter I, Proposition 4.2]{Bro82}. By \cite{Cav12}, $\pi_1M$ is cohomologically good. Thus, the second and third isomorphism follows from \autoref{PROP: good tensor hatZ}.
\end{proof}

Similar results are established in the relative version.

\begin{definition}
Let $(G,\mathcal{H}=\{H_i\}_{i=1}^{n})$ be a discrete group pair. 
A cell complex pair  $(X,Y)$ is called an {\em Eilenberg-MacLane pair for $(G,\mathcal{H})$} if 
\begin{enumerate}[leftmargin=*]
\item $X$ is a $K(G,1)$--cell complex, 
\item $Y$ is a subcomplex of $X$ consisting of $n$ connected components $Y_1,\cdots,Y_n$, 
\item each $Y_i$ is a $K(H_i,1)$--cell complex,
\item for each $1\le i \le n$, the inclusion map $Y_i\hookrightarrow X$ induces the injective homomorphism $H_i=\pi_1Y_i\to \pi_1X=G$, up to a suitable switch of basepoints.
\end{enumerate}
\end{definition}

\begin{theorem}[{\cite[Theorem 1.3]{BE78}}]\label{THM: space pair group pair}
Suppose $(G,\mathcal{H})$ is a discrete group pair and $(X,Y)$ is an Eilenberg-MacLane pair for $(G,\mathcal{H})$. Suppose $\mathcal{H}=\{H_i\}_{i=1}^{n}$ and $Y=\sqcup_{i=1}^{n}Y_i$. 
Then there is an isomorphism $H_\ast(X,Y;\Z)\cong H_\ast(G,\mathcal{H};\Z)$ that commutes with the long exact sequences up to the indicated sign change.
\newsavebox{\toples}
\begin{lrbox}{\toples}
$
\begin{tikzcd}[column sep=tiny]
\cdots \arrow[r] & {H_{k+1}(X,Y;\Z)} \arrow[dd,"\cong"] \arrow[rr] &  & \mathop{\oplus}\limits_{i=1}^{n}H_k(Y_i;\Z) \arrow[dd,"\cong"] \arrow[rr] &  & H_k(X;\Z) \arrow[dd,"\cong"] \arrow[rr] &            & {H_{k}(X,Y;\Z)} \arrow[dd,"\cong"] \arrow[r] & \cdots \\
                 &                                        &  &                                                                  &  &                                & \boxed{-1} &                                     &        \\
\cdots \arrow[r] & {H_{k+1}(G,\mathcal{H};\Z)} \arrow[rr]  &  & \mathop{\oplus}\limits_{i=1}^{n}H_k(H_i;\Z) \arrow[rr]            &  & H_k(G;\Z) \arrow[rr]            &            & {H_{k}(G,\mathcal{H};\Z)} \arrow[r]  & \cdots
\end{tikzcd}
$
\end{lrbox}
\begin{equation*}
{\usebox{\toples}}
\end{equation*}
\end{theorem}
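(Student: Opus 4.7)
The plan is to build a free $\Z[G]$-resolution of $\Delta_{G,\mathcal{H}}$ out of the cellular chain complex of the universal cover pair, and then identify the relative group homology with the relative cellular homology via a tensor product with $\Z$ over $\Z[G]$. Throughout, I will use $(X,Y)$ as an Eilenberg--MacLane pair for $(G,\mathcal{H}=\{H_i\}_{i=1}^n)$, and $p:\tilde X\to X$ as the universal covering.

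First, I would analyse $p^{-1}(Y)\subset\tilde X$. Since each $Y_i$ is a $K(H_i,1)$ and $\pi_1Y_i\to\pi_1X$ is injective, each component of $p^{-1}(Y_i)$ is homeomorphic to the universal cover $\tilde Y_i$ and is therefore contractible, and the set of such components is canonically $G/H_i$. Hence $p^{-1}(Y)=\bigsqcup_{i=1}^n\bigsqcup_{gH_i\in G/H_i}\tilde Y_i$ is a disjoint union of contractible pieces, so $H_0(p^{-1}(Y);\Z)\cong\bigoplus_{i=1}^n\Z[G/H_i]$ as $\Z[G]$-modules, and $H_k(p^{-1}(Y);\Z)=0$ for $k\ge 1$. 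Then the long exact sequence of the pair $(\tilde X,p^{-1}(Y))$, combined with the contractibility of $\tilde X$, identifies the reduced cellular homology of the relative pair as concentrated in degree one, with $H_1(\tilde X,p^{-1}(Y);\Z)\cong\Delta_{G,\mathcal{H}}$ (the kernel of the augmentation), and $H_k(\tilde X,p^{-1}(Y);\Z)=0$ for $k\ne 1$.

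Second, the relative cellular chain complex $C_*(\tilde X,p^{-1}(Y);\Z)$ consists of free $\Z[G]$-modules, since the cells of $\tilde X\setminus p^{-1}(Y)$ are permuted freely by $G$. Shifting degrees by one, $F_k:=C_{k+1}(\tilde X,p^{-1}(Y);\Z)$ provides a free $\Z[G]$-resolution $F_\bullet\to\Delta_{G,\mathcal{H}}\to 0$. Tensoring over $\Z[G]$ with $\Z$ corresponds to quotienting by the deck action, so $F_\bullet\otimes_{\Z[G]}\Z\cong C_{\bullet+1}(X,Y;\Z)$. Computing
\begin{equation*}
H_{k+1}(G,\mathcal{H};\Z)=\mathrm{Tor}_k^G(\Delta_{G,\mathcal{H}}^\perp,\Z)=H_k\!\bigl(F_\bullet\otimes_{\Z[G]}\Z\bigr)=H_{k+1}(X,Y;\Z),
\end{equation*}
yields the desired isomorphism.

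Finally, for compatibility of the two long exact sequences, I would compare the short exact sequences they come from, namely the topological one $0\to C_*(Y)\to C_*(X)\to C_*(X,Y)\to 0$ and the algebraic one $0\to\Delta_{G,\mathcal{H}}\to\bigoplus_i\Z[G/H_i]\to\Z\to 0$. Under the identification above, the maps $H_k(Y)\to H_k(X)$ and $H_k(X)\to H_k(X,Y)$ agree strictly with their algebraic counterparts $\bigoplus_iH_k(H_i)\to H_k(G)$ and $H_k(G)\to H_k(G,\mathcal{H})$, since both sides are induced by the augmentation-level maps. The connecting homomorphism $\partial$, however, is affected by the degree-one shift that was used to turn $C_{\bullet+1}(\tilde X,p^{-1}(Y))$ into a resolution of $\Delta_{G,\mathcal{H}}$; a standard snake-lemma/zig-zag diagram chase shows that this shift negates the sign of $\partial$ exactly once per square, which accounts for the $\boxed{-1}$ sign change. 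The main obstacle is bookkeeping this sign carefully: one must track whether the identification $H_1(\tilde X,p^{-1}(Y))\cong\Delta_{G,\mathcal{H}}$ introduces the sign at the level of the connecting map, and I would resolve this by spelling out the zig-zag lift of a relative cycle in $C_{k+1}(X,Y)$ through the two short exact sequences and comparing the two $\partial$'s term by term, as done in \cite[Theorem 1.3]{BE78}.
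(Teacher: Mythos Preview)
Your approach is essentially the one the paper uses (following \cite{BE78}): build a resolution of $\Delta_{G,\mathcal{H}}$ from the relative cellular chains of the universal cover, then tensor down. There is, however, a genuine technical gap in your second step. The shifted complex $F_k:=C_{k+1}(\tilde X,p^{-1}(Y))$ is \emph{not} a resolution of $\Delta_{G,\mathcal{H}}$ unless $C_0(\tilde X,p^{-1}(Y))=0$, i.e.\ unless every $0$-cell of $X$ lies in $Y$. In general $H_0$ of the relative pair vanishes because $d_1:C_1\to C_0$ is surjective, so the homology of your truncated complex $\cdots\to C_2\to C_1\to 0$ in degree zero is $C_1/B_1$, which sits in a short exact sequence $0\to\Delta_{G,\mathcal{H}}\to C_1/B_1\to C_0\to 0$ and is strictly larger than $\Delta_{G,\mathcal{H}}$ when $C_0\neq 0$. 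There is no natural surjection $C_1\to\Delta_{G,\mathcal{H}}$ with kernel $B_1$.

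The paper's construction (spelled out in the proof of the subsequent naturality proposition) repairs this exactly here: since $d_1$ is surjective and $C_0$ is free, the kernel $Z_1(\tilde X,p^{-1}(Y))$ is a \emph{projective} $\Z[G]$-module, and
\[
\cdots \to C_3(\tilde X,p^{-1}(Y))\to C_2(\tilde X,p^{-1}(Y))\to Z_1(\tilde X,p^{-1}(Y))\to \Delta_{G,\mathcal{H}}\to 0
\]
is a projective (not free) resolution. Tensoring with $\Z$ over $\Z[G]$ then yields $\cdots\to C_2(X,Y)\to Z_1(X,Y)\to 0$, whose homology agrees with $H_*(X,Y;\Z)$ in degrees $\ge 1$. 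Everything else you wrote goes through once this adjustment is made. A smaller remark: the $\boxed{-1}$ in the statement sits in the square comparing $H_k(X)\to H_k(X,Y)$ with $H_k(G)\to H_k(G,\mathcal{H})$, not in the topological boundary map; the reason is that the latter map is the \emph{connecting} homomorphism of the Tor long exact sequence for $0\to\Delta\to\bigoplus\Z[G/H_i]\to\Z\to 0$, whereas the former is not, and that mismatch is where the sign enters.
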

The isomorphism   $H_\ast(X,Y;\Z)\cong H_\ast(G,\mathcal{H};\Z)$ in \autoref{THM: space pair group pair}  is natural in the sense of the following proposition.

\begin{proposition}\label{PROP: space pair group pair natural}
Suppose $(X,Y)$ is an Eilenberg-MacLane pair of $(G,\mathcal{H})$, and $(X',Y')$ is an Eilenberg-MacLane pair for $(G',\mathcal{H}')$. Suppose $\mathcal{H}=\{H_i\}_{i=1}^{n}$, $\mathcal{H}'=\{H_i'\}_{i=1}^{n}$, $Y=\sqcup_{i=1}^{n}Y_i$ and $Y'=\sqcup_{i=1}^{n}Y_i'$. Let $\Phi: (X,Y)\to (X',Y')$ be a cellular map such that $\Phi(Y_i)\subseteq Y_i'$. Up to a switch of basepoints, $\Phi$ induces homomorphisms $G=\pi_1X\to \pi_1X'=G'$ and $H_i=\pi_1Y_i\to \pi_1Y_i'=H_i'$, which we denote as a map of discrete group pairs $\phi:(G,\mathcal{H})\to (G',\mathcal{H}')$. Then the following diagram commutes
\begin{equation*}
\begin{tikzcd}
H_\ast(X,Y;\Z) \arrow[r,"\Phi_\ast"] \arrow[d,"\cong"] & H_\ast(X',Y';\Z) \arrow[d,"\cong"]\\
H_\ast(G,\mathcal{H};\Z) \arrow[r,"\phi_\ast"] & H_\ast (G',\mathcal{H}';\Z)
\end{tikzcd}
\end{equation*}
where the vertical isomorphisms are given by \autoref{THM: space pair group pair}.
\end{proposition}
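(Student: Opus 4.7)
The plan is to lift $\Phi$ to a suitable map of universal covers and to observe that the resulting cellular chain map simultaneously computes both $\Phi_\ast$ and $\phi_\ast$. Recall that the vertical isomorphism in \autoref{THM: space pair group pair} is constructed from the cellular chain complex $C_\bullet(\widetilde{X},\widetilde{Y})$ of the universal-cover pair: this is a complex of free $\Z[G]$-modules, and, after a degree shift by $1$, $C_\bullet(\widetilde{X},\widetilde{Y})^\perp$ serves as a free $\Z[G]$-resolution of $\Delta_{G,\mathcal{H}}^\perp$. Tensoring over $\Z[G]$ with $\Z$ recovers $C_\bullet(X,Y)$, so its homology is simultaneously $H_\ast(X,Y;\Z)$ and (after the degree shift) $\mathrm{Tor}_\ast^G(\Delta_{G,\mathcal{H}}^\perp,\Z)=H_\ast(G,\mathcal{H};\Z)$. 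The same construction applies to $(X',Y')$ and $(G',\mathcal{H}')$.

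Next, I would fix compatible basepoint data. Choose $x_0\in X$, set $x_0'=\Phi(x_0)\in X'$, pick lifts $\widetilde{x_0}\in \widetilde{X}$ and $\widetilde{x_0'}\in \widetilde{X'}$, and take the unique lift $\widetilde{\Phi}\colon \widetilde{X}\to \widetilde{X'}$ sending $\widetilde{x_0}$ to $\widetilde{x_0'}$. By standard covering-space theory, $\widetilde{\Phi}$ is $\phi$-equivariant in the sense that $\widetilde{\Phi}(g\cdot \widetilde{x})=\phi(g)\cdot \widetilde{\Phi}(\widetilde{x})$ for all $g\in G$. Because $\Phi(Y_i)\subseteq Y_i'$, the lift sends each connected component of $\widetilde{Y}$ into a single component of $\widetilde{Y'}$. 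With compatible choices of basepoint lifts in each $Y_i$ and $Y_i'$, the induced map identifies the decompositions $C_0(\widetilde{Y})\cong \bigoplus_i \Z[G/H_i]$ and $C_0(\widetilde{Y'})\cong \bigoplus_i \Z[G'/H_i']$ componentwise through $\phi$.

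The resulting cellular chain map $\widetilde{\Phi}_\#\colon C_\bullet(\widetilde{X},\widetilde{Y})\to C_\bullet(\widetilde{X'},\widetilde{Y'})$ is $\phi$-equivariant and specializes in two ways. On one hand, applying $\Z\otimes_{\Z[G]}(-)$ recovers the ordinary cellular chain map $\Phi_\#\colon C_\bullet(X,Y)\to C_\bullet(X',Y')$, whose homology-level incarnation is $\Phi_\ast$. On the other hand, viewing $\widetilde{\Phi}_\#$ as a $\phi$-equivariant lift of free resolutions covering the map $\Delta_{G,\mathcal{H}}^\perp\to \Delta_{G',\mathcal{H}'}^\perp$ (which on the degree-$0$ piece is just $\phi_\#\colon \bigoplus_i \Z[G/H_i]\to \bigoplus_i \Z[G'/H_i']$), its induced map on $\mathrm{Tor}$ is exactly $\phi_\ast$ by the functoriality of derived functors along a ring map. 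Since both $\Phi_\ast$ and $\phi_\ast$ are realized by the single chain morphism $\mathrm{id}\otimes \widetilde{\Phi}_\#$ under the two identifications, the square commutes.

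The main obstacle is the basepoint bookkeeping carried out in the second paragraph: one must choose the lifts inside each component of $\widetilde{Y}$ and $\widetilde{Y'}$ so that $\widetilde{\Phi}$ matches the free $\Z[G]$- and $\Z[G']$-module bases in the way dictated by $\phi$, rather than by a twisted conjugate of $\phi$. Once this has been arranged, every remaining step is a formal manipulation of the chain-level identifications underlying \autoref{THM: space pair group pair}.
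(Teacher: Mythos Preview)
Your approach is essentially the same as the paper's: lift $\Phi$ equivariantly to universal covers, and observe that the single chain map $\widetilde{\Phi}_\#$ simultaneously computes $\Phi_\ast$ (after tensoring with $\Z$) and $\phi_\ast$ (as a map of resolutions covering $\Delta_{G,\mathcal{H}}\to\Delta_{G',\mathcal{H}'}$).

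There is one imprecision worth flagging. You assert that, after a degree shift, $C_\bullet(\widetilde{X},\widetilde{Y})$ ``serves as a free $\Z[G]$-resolution of $\Delta_{G,\mathcal{H}}$.'' This is not literally true unless every $0$-cell of $X$ lies in $Y$: in general $C_0(\widetilde{X},\widetilde{Y})\neq 0$, so the complex is not a resolution in the naive sense, only a bounded-below complex of free modules with homology concentrated in degree $1$. The paper handles this carefully by observing that $d_1\colon C_1(\widetilde{X},\widetilde{Y})\to C_0(\widetilde{X},\widetilde{Y})$ is surjective, so $Z_1=\ker d_1$ is projective, and then working with the truncated complex $D_\bullet=[\cdots\to C_2\to Z_1\to 0]$, which \emph{is} a genuine projective resolution of $\Delta_{G,\mathcal{H}}$. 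Your conclusion is still correct (a bounded-below complex of projectives quasi-isomorphic to $\Delta[1]$ computes $\mathrm{Tor}(\Delta,-)$ after a shift), but you should either say this explicitly or pass to the truncated complex as the paper does.
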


Since the author could not immediately find this in literature, let us include a quick proof for \autoref{PROP: space pair group pair natural}.

\begin{proof}
Let $\widetilde{X}$ be the universal covering space of $X$, and let $\widetilde{Y}$ be the pre-image of $Y$ in $\widetilde{X}$. Then, each component of $\widetilde{Y}$ is a universal covering space of some $Y_i$.  
Then $H_k(\widetilde{X};\Z)=H_k(\widetilde{Y};\Z)=0$ for $k\ge 1$, and there is the following commutative diagram
\begin{equation*}
\begin{tikzcd}
H_0(\widetilde{Y};\Z)\arrow[r,"\mathrm{incl}_\ast"]\arrow[d,"\cong"']& H_0(\widetilde{X};\Z) \arrow[d,"\cong"]\\
\mathop{\oplus}\limits_{i=1}^{n}\Z[G/H_i] \arrow[r,"\epsilon",two heads] & \Z
\end{tikzcd}
\end{equation*}
where $\epsilon$ denotes the augmentation homomorphism.

By the long exact sequence, $H_k(\widetilde{X},\widetilde{Y};\Z)=0$  for $k\neq 1$, and $H_1(\widetilde{X},\widetilde{Y};\Z)=\ker(\epsilon)=\Delta_{G,\mathcal{H}}$. Let $C_{\bullet}(\widetilde{X},\widetilde{Y})=C_\bullet(\widetilde{X})/C_\bullet(\widetilde{Y})$ be the relative cellular chain complex consisting of free $\Z[G]$-modules. 
In particular, $C_1(\widetilde{X},\widetilde{Y})\to C_0(\widetilde{X},\widetilde{Y})$ is surjective, so we have a short exact sequence
$$
0\tto Z_1(\widetilde{X},\widetilde{Y})\tto C_1(\widetilde{X},\widetilde{Y})\tto C_0(\widetilde{X},\widetilde{Y})\tto 0.
$$
Note that both $C_1(\widetilde{X},\widetilde{Y})$ and $C_0(\widetilde{X},\widetilde{Y})$ are free $\Z[G]$-modules, so $Z_1(\widetilde{X},\widetilde{Y})$ is a projective $\Z[G]$-module. Consequently, we obtain a projective resolution
$$
\cdots \to C_3(\widetilde{X},\widetilde{Y})\to C_2(\widetilde{X},\widetilde{Y})\to Z_1(\widetilde{X},\widetilde{Y}) \to H_1(\widetilde{X},\widetilde{Y};\Z)\cong \Delta_{G,\mathcal{H}}\to 0.
$$

We denote $D_{\bullet}(\widetilde{X},\widetilde{Y})= [\cdots  \to C_2(\widetilde{X},\widetilde{Y})\to Z_1(\widetilde{X},\widetilde{Y}) \to 0]$ and $D_{\bullet}(X,Y)=[\cdots  \to C_2(X, Y)\to Z_1(X, Y) \to 0]$ as the adjusted chain complexes. Obviously, $D_{\bullet}(X,Y)\cong D_{\bullet}^\perp(\widetilde{X},\widetilde{Y})\otimes_{\Z[G]}\Z$, and $H_\ast(D_{\bullet}(X,Y))\cong H_{\ast}(X,Y;\Z)$. The isomorphism in \autoref{THM: space pair group pair} is given by 
$$
H_\ast(G,\mathcal{H};\Z)=\mathrm{Tor}_{\ast-1}^{G}(\Delta_{G,\mathcal{H}}^{\perp},\Z)\cong H_\ast(D_{\bullet}^\perp(\widetilde{X},\widetilde{Y})\otimes_{\Z[G]}\Z)\cong H_\ast(D_{\bullet}(X,Y))\cong H_{\ast}(X,Y;\Z).
$$

We use the similar notations for $(X',Y')$. 
Up to a choice of basepoints, the map $\Phi$ lifts to a cellular map $\widetilde{\Phi}:(\widetilde{X},\widetilde{Y})\to (\widetilde{X'},\widetilde{Y'})$, which induces homomorphisms between chain complexes of $\Z[G]$-modules $\widetilde{\Phi}_{\#}: D_{\bullet}(\widetilde{X},\widetilde{Y})\to D_{\bullet}(\widetilde{X},\widetilde{Y})$. 
The construction implies the following commutative diagram. 
\newsavebox{\grpspaceequ}
\begin{lrbox}{\grpspaceequ}
$
\begin{tikzcd}[column sep=tiny]
{D_{\bullet}(\widetilde{X},\widetilde{Y})} \arrow[rr] \arrow[dd, "\widetilde{\Phi}_{\#}"'] &  & {H_1(\widetilde{X},\widetilde{Y};\Z)} \arrow[rr, "\partial"] \arrow[dd, "\widetilde{\Phi}_{\ast}"'] \arrow[rd, "\cong"] &                                                             & H_0(\widetilde{Y};\Z) \arrow[rr,"\text{incl}_\ast"] \arrow[dd, "\widetilde{\Phi}_{\ast}"'{yshift=3ex}] \arrow[rd, "\cong"] &                                                                                & H_0(\widetilde{X};\Z) \arrow[dd, "\widetilde{\Phi}_{\ast}"'{yshift=3ex}] \arrow[rd, "\cong"] &                     \\
                                                                                             &  &                                                                                                                         & {\Delta_{G,\mathcal{H}}} \arrow[rr] \arrow[dd, "\phi_\ast"{yshift=-2.7ex}] &                                                                                             & {\mathop{\oplus}\limits_{i=1}^{n}\Z[G/H_i]} \arrow[rr] \arrow[dd, "\phi_\ast"{yshift=-2.7ex}] &                                                                                  & \Z \arrow[dd, "id"] \\
{D_{\bullet}(\widetilde{X'},\widetilde{Y'})} \arrow[rr]                                      &  & {H_1(\widetilde{X'},\widetilde{Y'};\Z)} \arrow[rr, "\partial"{xshift=2ex}] \arrow[rd, "\cong"]                                      &                                                             & H_0(\widetilde{Y'};\Z) \arrow[rr,"\text{incl}_\ast"{xshift=4ex}] \arrow[rd, "\cong"]                                       &                                                                                & H_0(\widetilde{X'};\Z) \arrow[rd, "\cong"]                                       &                     \\
                                                                                             &  &                                                                                                                         & {\Delta_{G',\mathcal{H}'}} \arrow[rr]                       &                                                                                             & {\mathop{\oplus}\limits_{i=1}^{n}\Z[G'/H_i']} \arrow[rr]                       &                                                                                  & \Z                 
\end{tikzcd}
$
\end{lrbox}
\begin{equation*}
{\usebox{\grpspaceequ}}
\end{equation*}

Taking homologies, this indicates the following  commutative diagram
\newsavebox{\cgsp}
\begin{lrbox}{\cgsp}
$
\begin{tikzcd}[column sep=0.1cm]
{H_\ast(G,\mathcal{H};\Z)} \arrow[d, "\phi_\ast"'] \arrow[r,symbol=\mathop{=}] & {\mathrm{Tor}_{\ast-1}^{G}(\Delta_{G,\mathcal{H}}^{\perp},\Z)} \arrow[d, "\phi_\ast"'] \arrow[r,symbol=\cong] & {H_\ast(D_{\bullet}^\perp(\widetilde{X},\widetilde{Y})\otimes_{\Z[G]}\Z)} \arrow[d, "\widetilde{\Phi}_\ast"] \arrow[r,symbol=\cong] & {H_\ast(D_{\bullet}(X,Y))} \arrow[d, "\Phi_\ast"] \arrow[r,symbol=\cong] & {H_{\ast}(X,Y;\Z)} \arrow[d, "\Phi_\ast"] \\
{H_\ast(G',\mathcal{H}';\Z)} \arrow[r,symbol=\mathop{=}]                       & {\mathrm{Tor}_{\ast-1}^{G'}(\Delta_{G',\mathcal{H}'}^{\perp},\Z)} \arrow[r,symbol=\cong]                      & {H_\ast(D_{\bullet}^\perp(\widetilde{X'},\widetilde{Y'})\otimes_{\Z[G']}\Z)} \arrow[r,symbol=\cong]                                 & {H_\ast(D_{\bullet}(X',Y'))} \arrow[r,symbol=\cong]                      & {H_{\ast}(X',Y';\Z)}                     
\end{tikzcd}
$
\end{lrbox}
\begin{equation*}
\scalebox{0.95}{\usebox{\cgsp}}
\end{equation*}
which proves the naturalness.
\end{proof}

\begin{remark}
The switch of basepoints in \autoref{THM: space pair group pair} and \autoref{PROP: space pair group pair natural} induces possible conjugations on the subgroups $H_i$ and $H_i'$. However, these do not affect the homology groups up to canonical isomorphisms similar as \autoref{PROP: conjugation invariant}.  
\end{remark}

In order to apply this construction to 3-manifolds, let us first make some preparations.

\begin{proposition}[{\cite[Proposition 6.15]{Wil19}}]\label{PROP: FPinfty pair}
Suppose $(X,Y)$ is an Eilenberg-MacLane pair for a discrete group pair $(G,\mathcal{H})$, and $X$ has finitely many cells in each dimension. Then $(G,\mathcal{H})$ is of type $\text{FP}_\infty$. 
\end{proposition}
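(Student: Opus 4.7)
The plan is to lift the finite cellular structure of $(X,Y)$ to a finitely-generated projective resolution of the $\Z[G]$-module $\Delta_{G,\mathcal{H}}$. First I would pass to the universal cover $\widetilde{X}$ of $X$ and let $\widetilde{Y}\subset\widetilde{X}$ denote the preimage of $Y$. Because $X$ is aspherical with finitely many cells in each dimension and $G$ acts freely on the cells of $\widetilde{X}$, the cellular chain complex $C_\bullet(\widetilde{X})$ is a free $\Z[G]$-resolution of $\Z$ whose terms are finitely generated over $\Z[G]$. The $\pi_1$-injectivity of each inclusion $Y_i\hookrightarrow X$ ensures that $\widetilde{Y}$ is a $G$-invariant subcomplex, and each connected component of $\widetilde{Y}$ is a universal cover of some $Y_i$. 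Since $G$ still acts freely on the cells of $\widetilde{Y}$, every $k$-cell of $Y$ contributes a single free $\Z[G]$-orbit to $C_k(\widetilde{Y})$. Consequently $C_\bullet(\widetilde{Y})$ is also a chain complex of finitely generated free $\Z[G]$-modules, with $H_0(\widetilde{Y}) \cong \mathop{\oplus}_{i=1}^{n}\Z[G/H_i]$ and $H_k(\widetilde{Y})=0$ for $k\ge 1$.

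Next I would form the relative complex $C_\bullet(\widetilde{X},\widetilde{Y})=C_\bullet(\widetilde{X})/C_\bullet(\widetilde{Y})$, which is again a complex of finitely generated free $\Z[G]$-modules. Exactly as in the proof of \autoref{PROP: space pair group pair natural}, the long exact sequence of the pair yields
\[
H_1\bigl(C_\bullet(\widetilde{X},\widetilde{Y})\bigr)\cong\Delta_{G,\mathcal{H}},\qquad H_k\bigl(C_\bullet(\widetilde{X},\widetilde{Y})\bigr)=0\quad(k\ne 1).
\]
Since $H_0=0$ the sequence $0\to Z_1(\widetilde{X},\widetilde{Y})\to C_1(\widetilde{X},\widetilde{Y})\to C_0(\widetilde{X},\widetilde{Y})\to 0$ is exact, and because $C_0(\widetilde{X},\widetilde{Y})$ is free (hence projective), it splits. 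Thus $Z_1(\widetilde{X},\widetilde{Y})$ is a direct summand of a finitely generated free $\Z[G]$-module, hence finitely generated projective. Truncating the relative complex then produces the resolution
\[
\cdots\to C_3(\widetilde{X},\widetilde{Y})\to C_2(\widetilde{X},\widetilde{Y})\to Z_1(\widetilde{X},\widetilde{Y})\to \Delta_{G,\mathcal{H}}\to 0
\]
with every term a finitely generated projective $\Z[G]$-module, which shows that $\Delta_{G,\mathcal{H}}$, and therefore the pair $(G,\mathcal{H})$, is of type $\mathrm{FP}_\infty$.

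There is no substantive obstacle to this strategy; the only points demanding care are the book-keeping observations that each cell of $Y$ contributes a single free $\Z[G]$-orbit (not merely a free $\Z[H_i]$-orbit) to $C_\bullet(\widetilde{Y})$, which is what converts the per-dimension finiteness of cells in $X$ into finite generation of each $C_k(\widetilde{Y})$ over $\Z[G]$, and the degree-shift that moves from the topological relative complex to an algebraic resolution of $\Delta_{G,\mathcal{H}}$. Both are handled by the standard splitting argument above.
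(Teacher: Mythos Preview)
Your proof is correct. The paper does not supply its own proof of this proposition, citing it instead from \cite[Proposition 6.15]{Wil19}; however, the resolution you build is exactly the complex $D_\bullet(\widetilde{X},\widetilde{Y})$ constructed in the paper's proof of \autoref{PROP: space pair group pair natural}, so your argument is the natural one in context and aligns with the paper's own machinery.
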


\begin{proposition}\label{PROP: surface full top}
Let $M$ be a compact, orientable, aspherical, boundary-irreducible 3-manifold, and let $\Sigma$ be a properly embedded, incompressible and boundary-incompressible surface in $M$. Then the profinite topology on $\pi_1M$ induces the full profinite topology on $\pi_1\Sigma$. 
\end{proposition}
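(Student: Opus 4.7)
The plan is to reduce the statement to subgroup separability (LERF) of $\pi_1 M$ via a standard trick.

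By \autoref{PROP: Left exact}, what we need to show is equivalent to the following: for every finite-index subgroup $K\le \pi_1\Sigma$, there exists a finite-index subgroup $L\le \pi_1 M$ with $L\cap \pi_1\Sigma\subseteq K$. Since $\Sigma$ is a compact surface, $\pi_1\Sigma$ is finitely generated, and hence any finite-index $K\le \pi_1\Sigma$ is finitely generated as well. Note also that the incompressibility and boundary-incompressibility of $\Sigma$ ensure that the inclusion $\pi_1\Sigma\hookrightarrow \pi_1 M$ is injective, so there is no ambiguity in speaking of $\pi_1\Sigma$ as a subgroup of $\pi_1 M$.

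The key ingredient will be the LERF property of $\pi_1 M$, namely that every finitely generated subgroup is closed in the profinite topology. Under the hypotheses, the existence of the essential $\Sigma$ makes $M$ Haken, and the fundamental group of a compact aspherical orientable Haken 3-manifold is virtually compact special, by Agol's resolution of the virtual Haken conjecture \cite{Ago13}, Wise's quasiconvex hierarchy theory \cite{Wise21}, and the subsequent treatment of mixed manifolds. Virtually compact special groups are LERF, so in particular the finitely generated subgroup $K$ is separable in $\pi_1 M$.

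The conclusion follows by the standard separability-to-induced-topology trick: choose coset representatives $h_1=1, h_2,\ldots, h_n$ of $K$ in $\pi_1\Sigma$; for each $j\ge 2$, separability of $K$ in $\pi_1 M$ produces a finite-index subgroup $L_j\le \pi_1 M$ with $K\le L_j$ and $h_j\notin L_j$; the finite intersection $L=\bigcap_{j=2}^{n} L_j$ is then a finite-index subgroup of $\pi_1 M$ satisfying $L\cap \pi_1\Sigma = K$, which verifies the required criterion. The principal obstacle is invoking LERF for compact aspherical 3-manifold groups, which is a deep modern theorem resting on the cube-complex machinery of Agol, Wise, and others; once this input is granted, the rest of the argument is purely formal.
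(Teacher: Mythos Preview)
Your overall strategy---reduce to separability of finite-index subgroups $K\le\pi_1\Sigma$ in $\pi_1 M$, then use the coset-representative trick---is the same as the paper's. The gap is in how you establish separability. You assert that every compact aspherical orientable Haken 3-manifold group is virtually compact special and that virtually compact special implies LERF. Both claims are false. There are graph manifolds (which are Haken, aspherical, boundary-irreducible) whose fundamental groups are not virtually special (Liu's 2013 characterization shows virtual specialness for graph manifolds is equivalent to admitting a nonpositively curved metric, which can fail), and there are graph manifold groups that are not LERF (Niblo--Wise 2001, building on Burns--Karrass--Solitar). Even when a group \emph{is} virtually compact special, this only yields separability of quasiconvex subgroups, not of arbitrary finitely generated subgroups; $F_2\times F_2$ is a RAAG, hence virtually special, but is not LERF.

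The paper avoids this by proving separability only for the specific subgroups needed: finite-index subgroups of the \emph{embedded} surface group $\pi_1\Sigma$. In the closed case it invokes the Przytycki--Wise criterion (an immersed $\pi_1$-injective surface has separable image if it lifts to an embedding in a finite cover) together with Liu's spirality character, which shows that any finite cover of an embedded essential surface is aspiral and hence virtually embedded. The bounded cases are handled by doubling along the boundary and reducing to the closed case. This surface-specific separability does hold across all the geometries, including graph manifolds where full LERF fails, so the paper's argument goes through where yours does not.
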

\begin{proof}
We divide the proof into three cases.

\textbf{Case 1:} $M$ is closed, and then $\Sigma$ is also closed. 

A sufficient condition for $\pi_1M$ to induce the full profinite topology on $\pi_1\Sigma$ is that any finite-index subgroup of $\pi_1\Sigma$ is separable in $\pi_1M$, since this will imply that any closed subset in $\pi_1\Sigma$ under its profinite topology is closed under the subspace topology of $\pi_1\Sigma$ inherited from the profinite topology of $\pi_1 M$. In other words,  for any finite cover $\widetilde{\Sigma}$ of $\Sigma$, there is a $\pi_1$-injective immersion $f:\widetilde{\Sigma}\to \Sigma \hookrightarrow M$, and it suffices to show that $f_\ast(\pi_1\widetilde{\Sigma})$ is separable in $\pi_1M$. 

The criteria for separability is established by Przytycki--Wise \cite{PW14}. \cite[Theorem 1.1]{PW14} showed that given a $\pi_1$-injective immersed closed surface $f: S\looparrowright M$, $f_\ast(\pi_1S)$ is separable in $\pi_1M$ if $f$  lifts to an embedding in a finite cover $M'$ of $M$. 
Liu \cite{Liu16} provides a  characterization for virtual embedding. In fact, for an essentially immersed closed surface $f: S\looparrowright M$, he 
defined a spirality character $s(f)\in H^1(\Phi(S);\mathbb{Q}^{\times})$, in which $\Phi(S)$ denotes the almost fiber part of $S$. \cite[Theorem 1.1]{Liu16} showed that $S$ virtually embeds if and only if $s(f):H_1(\Phi(S);\Z)\to \mathbb{Q}^{\times}$ takes value in $\{\pm 1\}$, for which Liu called that $S$ is aspiral in the almost fiber part. In our case, if we denote $\iota:\Sigma\to M$ as the embedding, then $s(\iota)$ takes value in $\{\pm 1\}$.

Moreover, \cite[Proposition 4.1]{Liu16} showed that the spirality character is natural. To be concrete, for a finite cover $\kappa:\widetilde{\Sigma}\to \Sigma$, the spirality character of the immersion $f:\widetilde{\Sigma}\xrightarrow{\kappa}\Sigma\xrightarrow{\iota} M$ satisfies $s(f)=\kappa^\ast s(\iota)$. In other words, we have $$s(f): H_1(\Phi(\widetilde{\Sigma});\Z)\xrightarrow{\;\kappa_\ast\;} H_1(\Phi(\Sigma);\Z)\xrightarrow{\;s(\iota)\;} \mathbb{Q}^\times.$$ Therefore, $s(f)$ also takes value in $\{\pm 1\}$. Consequently, $f$ lifts to an embedding in a finite cover of $M$ according to \cite[Theorem 1.1]{Liu16}, and $f_\ast(\pi_1\widetilde{\Sigma})$ is separable according to the criteria of Przytycki--Wise \cite[Theorem 1.1]{PW14}.

\textbf{Case 2:} $M$ has non-empty boundary, while $\Sigma$ is closed. 

Let $D(M)$ be the double of $M$ along its boundary, which is a closed 3-manifold. Then the retraction $D(M)\to M$ implies that $\pi_1M$ injects into $\pi_1D(M)$ through the inclusion map. Thus, $\Sigma$ is an essentially embedded surface in $D(M)$. Since $M$ is aspherical and boundary-incompressible, $D(M)$ is also aspherical. Thus, by \textbf{Case 1}, the profinite topology on $\pi_1D(M)$ induces the full profinite topology on $\pi_1\Sigma$. Note that $\pi_1\Sigma <\pi_1M< \pi_1D(M)$, so the profinite topology on $\pi_1M$ also induces the full profinite topology on $\pi_1\Sigma$, since the profinite topology on the subgroup $\pi_1M$ is always finer than the subspace topology inherited from the profinite topology of $\pi_1D(M)$. 

\textbf{Case 3:} $M$ has non-empty boundary, and $\Sigma$ also has non-empty boundary. 

Let $D(\Sigma)$ be the double of $\Sigma$ along its boundary, which embeds into $D(M)$. Since $\Sigma$ is incompressible and boundary-incompressible, the double $D(\Sigma)$ is incompressible in $D(M)$. According to \textbf{Case 1}, $\pi_1D(M)$ induces the full profinite topology on $\pi_1D(\Sigma)$. There is a retraction map $r: \pi_1D(\Sigma)\to \pi_1\Sigma$ induced by the folding, that is, $r|_{\pi_1\Sigma}=id$. For any finite-index subgroup $H<\pi_1\Sigma$, $r^{-1}(H)$ is a finite-index subgroup in $\pi_1D(\Sigma)$ satisfying $r^{-1}(H)\cap \pi_1\Sigma=H$. Thus, the profinite topology on $\pi_1D(\Sigma)$ induces the full profinite topology on $\pi_1\Sigma$. Combining these two steps, the profinite topology on $\pi_1D(M)$ induces the full profinite topology on $\pi_1\Sigma$. Then, by the same reasoning as \textbf{Case 2}, the profinite topology on $\pi_1M$ induces the full profinite topology on $\pi_1\Sigma$.
\end{proof}

\begin{corollary}\label{COR: 3mfd with surface}
Let $M$ be a compact, orientable, aspherical, boundary-irreducible 3-manifold, and let $\Sigma_1,\cdots,\Sigma_n$ be disjoint, properly embedded, incompressible and boundary-incompressible surfaces in $M$. Then there are isomorphisms
\begin{equation*}
\begin{aligned}
\tensor H_\ast(M,\mathop{\cup}\limits_{i=1}^{n} \Sigma_i;\Z) & \xrightarrow{\cong}  \tensor H_\ast(\pi_1M,\{\pi_1\Sigma_i\}_{i=1}^{n};\Z)\\  & \xrightarrow{\cong}  H_\ast(\pi_1M,\{\pi_1\Sigma_i\}_{i=1}^{n};\widehat{\Z})  \xrightarrow{\cong}  \H_\ast(\widehat{\pi_1M},\{\overline{\pi_1\Sigma_i}\}_{i=1}^{n};\widehat{\Z}).
\end{aligned}
\end{equation*}
\end{corollary}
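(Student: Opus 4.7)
The plan is to mirror the proof of \autoref{COR: 3-mfd homology} in the relative setting. First I would verify that $(M,\cup_{i=1}^n \Sigma_i)$ is an Eilenberg-MacLane pair for the discrete group pair $(\pi_1M,\{\pi_1\Sigma_i\}_{i=1}^{n})$. Since $M$ is aspherical, $M$ is a $K(\pi_1M,1)$. Each $\Sigma_i$ is a properly embedded, incompressible, boundary-incompressible surface in a boundary-irreducible aspherical 3-manifold, so $\Sigma_i$ is neither a sphere nor a disk; hence $\Sigma_i$ is aspherical, i.e.\ a $K(\pi_1\Sigma_i,1)$. Incompressibility also guarantees that the inclusion-induced map $\pi_1\Sigma_i\to \pi_1M$ is injective (after a suitable switch of basepoints). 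After endowing $M$ with a finite CW-structure with $\cup_i\Sigma_i$ as a subcomplex, \autoref{THM: space pair group pair} then yields the isomorphism
\begin{equation*}
H_\ast(M,\cup_{i=1}^n \Sigma_i;\Z) \xrightarrow{\;\cong\;} H_\ast(\pi_1M,\{\pi_1\Sigma_i\}_{i=1}^{n};\Z),
\end{equation*}
and tensoring with the flat $\Z$-module $\widehat{\Z}$ gives the first isomorphism in the statement.

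Next I would verify that the hypotheses of \autoref{PROP: good tensor hatZ}~(\ref{good tensor 2}) hold for the group pair $(\pi_1M,\{\pi_1\Sigma_i\}_{i=1}^{n})$. For cohomological goodness, I invoke \autoref{PROP: pair goodness}: the ambient group $\pi_1M$ is cohomologically good by \autoref{EX: good}(4); each peripheral factor $\pi_1\Sigma_i$ is a finitely generated free group or surface group, hence cohomologically good by \autoref{EX: good}(3); and the requirement that the profinite topology on $\pi_1M$ induces the full profinite topology on each $\pi_1\Sigma_i$ is exactly the content of \autoref{PROP: surface full top}, which applies because $M$ is aspherical and boundary-irreducible while each $\Sigma_i$ is incompressible and boundary-incompressible. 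For type $\text{FP}_\infty$, I apply \autoref{PROP: FPinfty pair} to the Eilenberg-MacLane pair $(M,\cup_i\Sigma_i)$ built above, which has finitely many cells in each dimension.

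With both hypotheses confirmed, \autoref{PROP: good tensor hatZ}~(\ref{good tensor 2}) supplies the two remaining isomorphisms
\begin{equation*}
\tensor H_\ast(\pi_1M,\{\pi_1\Sigma_i\}_{i=1}^{n};\Z)\xrightarrow{\;\cong\;}H_\ast(\pi_1M,\{\pi_1\Sigma_i\}_{i=1}^{n};\widehat{\Z})\xrightarrow[\iota_\ast]{\;\cong\;}\H_\ast(\widehat{\pi_1M},\{\overline{\pi_1\Sigma_i}\}_{i=1}^{n};\widehat{\Z}),
\end{equation*}
completing the chain. The only genuinely non-formal ingredient is the topological separability input that underlies \autoref{PROP: surface full top}, which is already proved; the rest is essentially bookkeeping. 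The main subtlety to watch out for is the basepoint ambiguity arising when different components $\Sigma_i$ realize their $\pi_1$'s as subgroups of $\pi_1M$ only up to conjugation, but this is harmless since the relative (profinite) homology is conjugation-invariant by \autoref{PROP: conjugation invariant} and its discrete analogue.
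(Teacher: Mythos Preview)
Your proposal is correct and follows essentially the same approach as the paper's own proof: verify that $(M,\cup_i\Sigma_i)$ is an Eilenberg-MacLane pair to obtain the first isomorphism via \autoref{THM: space pair group pair}, then check cohomological goodness (via \autoref{PROP: pair goodness}, \autoref{EX: good}, and \autoref{PROP: surface full top}) and type $\text{FP}_\infty$ (via \autoref{PROP: FPinfty pair}) in order to apply \autoref{PROP: good tensor hatZ}~(\ref{good tensor 2}). Your added remarks on flatness of $\widehat{\Z}$ and conjugation-invariance are accurate but inessential elaborations.
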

\begin{proof}
One can equip the 3-manifold $M$ with a finite cell complex structure that realizes the surfaces $\Sigma_i$ as subcomplexes. Note that $M$ is aspherical and each $\Sigma_i$ is essential, so $\Sigma_i$ is also aspherical. Therefore, $ (M,\mathop{\cup}_{i=1}^{n} \Sigma_i)$ is an Eilenberg-MacLane pair for $(\pi_1M,\{\pi_1\Sigma_i\}_{i=1}^{n})$. The first isomorphism then follows from \autoref{THM: space pair group pair}. 

According to \cite{Cav12} and \cite{GJZ08} respectively, $\pi_1M$ and  $\pi_1\Sigma_i$ are cohomologically good (see \autoref{EX: good}). Moreover, by \autoref{PROP: surface full top}, the profinite topology on $\pi_1M$ induces the full profinite topology on each $\pi_1\Sigma_i$. Thus, according to  \autoref{PROP: pair goodness}, the discrete group pair $(\pi_1M,\{\pi_1\Sigma_i\}_{i=1}^{n})$ is cohomologically good. In addition, by \autoref{PROP: FPinfty pair}, the discrete group pair $(\pi_1M,\{\pi_1\Sigma_i\}_{i=1}^{n})$ has type $\text{FP}_\infty$. Therefore, the second and third isomorphisms follow from \autoref{PROP: good tensor hatZ}.
\end{proof}

Another example is to be used in \autoref{SEC: Dehn filling}, which we introduce as follows. 

\begin{proposition}\label{PROP: solid tori}
Let $M$ be a compact, orientable, aspherical 3-manifold. Suppose $V_1,\cdots, V_n$ are disjoint embedded solid tori in $\text{int}(M)$, so that their core curves are homotopically non-trivial in $M$. 
\begin{enumerate}[leftmargin=*]
\item The homomorphism $\pi_1V_i\to \pi_1M$ induced by inclusion is injective.
\item The profinite topology on $\pi_1M$ induces the full profinite topology on $\pi_1V_i$. In particular, $\overline{\pi_1V_i}\cong \widehat{\Z}$.
\item There are isomorphisms
\begin{align*}
\tensor H_\ast(M,\mathop{\sqcup}\limits_{i=1}^{n} V_i;{\Z}) & \xrightarrow{\cong} \tensor H_\ast(\pi_1M,\{\pi_1V_i\}_{i=1}^{n};\Z) \\ & \xrightarrow{\cong} H_\ast(\pi_1M,\{\pi_1V_i\}_{i=1}^{n};\widehat{\Z})   \xrightarrow{\cong}\H_\ast (\widehat{\pi_1M},\{\overline{\pi_1V_i}\}_{i=1}^n;\widehat{\Z}).
\end{align*}
\end{enumerate}
\end{proposition}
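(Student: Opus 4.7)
The plan is to prove the three parts in order, each being a natural analogue of the corresponding step in the proof of \autoref{COR: 3mfd with surface}, with solid tori playing the role of the embedded surfaces.

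For part (1), since each $V_i$ is a solid torus, it deformation retracts onto its core circle, so $\pi_1V_i\cong\Z$ is generated by the class of the core. The hypothesis that this core is homotopically non-trivial in $M$ means the generator maps to a non-identity element of $\pi_1M$. I would then observe that $\pi_1M$ is torsion-free: asphericity implies $\pi_1M$ acts freely on the contractible universal cover $\widetilde M$, and a non-trivial finite subgroup acting freely on a contractible CW complex would have finite cohomological dimension, a contradiction. Hence the image of the generator has infinite order in $\pi_1M$, and $\pi_1V_i\hookrightarrow\pi_1M$.

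For part (2), I would verify the criterion of \autoref{PROP: Left exact} directly. Let $\gamma$ generate $\pi_1V_i\cong\Z$; by part (1), $\gamma$ has infinite order in $\pi_1M$. For a given $n\ge 1$, residual finiteness (\autoref{PROP: RF}) yields, for each $1\le k<n$, a finite-index subgroup $N_k\le\pi_1M$ missing $\gamma^k$. Their finite intersection $N=\bigcap_{k=1}^{n-1}N_k$ is a finite-index subgroup of $\pi_1M$ containing no $\gamma^k$ with $1\le k<n$, so $\pi_1V_i\cap N\subseteq n\Z$. This shows the induced profinite topology on $\pi_1V_i$ coincides with the full profinite topology on $\Z$, and then \autoref{PROP: Left exact} and \autoref{PROP: abelianization} give $\overline{\pi_1V_i}\cong\widehat{\pi_1V_i}\cong\widehat{\Z}$.

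For part (3), I would equip $M$ with a finite CW structure in which each $V_i$ is a subcomplex (e.g.\ extend the product structure on $V_i\cong D^2\times S^1$ to all of $M$). Since $M$ is aspherical and each $V_i\simeq S^1$ has trivial higher homotopy, $M$ is a $K(\pi_1M,1)$-complex and each $V_i$ is a $K(\Z,1)$-complex; combined with the injectivity from part (1), $(M,\sqcup_{i}V_i)$ is an Eilenberg--MacLane pair for $(\pi_1M,\{\pi_1V_i\}_{i=1}^n)$. The first isomorphism then follows from \autoref{THM: space pair group pair}. For the remaining two, \autoref{EX: good} furnishes cohomological goodness of both $\pi_1M$ and each $\pi_1V_i\cong\Z$, part (2) provides the full-profinite-topology condition, and \autoref{PROP: pair goodness} combines these to give cohomological goodness of the pair. \autoref{PROP: FPinfty pair} applied to the finite CW structure yields type $\text{FP}_\infty$ for the pair, and then \autoref{PROP: good tensor hatZ}(\ref{good tensor 2}) supplies the last two isomorphisms.

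I do not anticipate a serious obstacle here: the argument is essentially a template instantiation of \autoref{COR: 3mfd with surface}, with the role of surface separability (Przytycki--Wise, Liu, Hamilton) replaced by the far simpler cyclic case that follows from residual finiteness alone. The only point requiring a moment's care is verifying that residual finiteness is genuinely sufficient in part (2); this is automatic because the finite-index subgroups of $\Z$ are totally ordered by divisibility, so excluding finitely many non-trivial powers of $\gamma$ already forces the intersection into $n\Z$.
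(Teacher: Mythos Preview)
Your arguments for parts (1) and (3) are correct and essentially match the paper's proof. The gap is in part (2).

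The step ``$\pi_1V_i\cap N\subseteq n\Z$'' does not follow. Since $N\cap\langle\gamma\rangle$ is a subgroup of $\Z$, it equals $m\Z$ for some $m\ge 1$; excluding $\gamma^1,\dots,\gamma^{n-1}$ only gives $m\ge n$, whereas $m\Z\subseteq n\Z$ requires $n\mid m$. (For example, $m=5$, $n=4$.) Your supporting remark that ``the finite-index subgroups of $\Z$ are totally ordered by divisibility'' is false: $2\Z$ and $3\Z$ are incomparable.

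More seriously, residual finiteness alone is \emph{not} sufficient to conclude that the induced profinite topology on an infinite cyclic subgroup is the full one. In $BS(1,2)=\langle a,t\mid tat^{-1}=a^2\rangle$, which is residually finite, the image of $a$ in any finite quotient is conjugate to its square and hence has odd order; thus for every finite-index normal $N$ one has $N\cap\langle a\rangle=d\langle a\rangle$ with $d$ odd, so $N\cap\langle a\rangle\not\subseteq 2\langle a\rangle$. So some genuinely three-dimensional input is needed.

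The paper supplies this input via Hamilton's theorem \cite{Ham01}: every abelian subgroup of $\pi_1M$ is separable. In particular each $\langle\gamma^n\rangle$ is separable, so for each $k$ with $1\le k<n$ one can choose a finite-index subgroup $N_k$ \emph{containing} $\langle\gamma^n\rangle$ and missing $\gamma^k$; then $N=\bigcap_k N_k$ satisfies $\gamma^n\in N$ together with $\gamma^k\notin N$ for $1\le k<n$, forcing $N\cap\langle\gamma\rangle=n\Z$ exactly. With this correction, the rest of your argument goes through.
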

\begin{proof}
(1) $\pi_1V_i\cong \Z$ is generated by a representative $\alpha$ of its core curve. By assumption, the image of $\alpha$ is non-trivial in $\pi_1M$. Since $M$ is aspherical, $\pi_1M$ is in fact torsion-free. Hence the map $\pi_1V_i\to \pi_1M$ is injective. From now on, we identify $\pi_1V_i$ as its image in $\pi_1M$. 

(2) By \cite{Ham01}, any abelian subgroup in $\pi_1M$ is separable. Since $\pi_1V_i\cong \Z$, any finite-index subgroup of $\pi_1V_i$ is separable in $\pi_1M$. It then follows from the same reasoning as \autoref{PROP: surface full top} that the profinite topology on $\pi_1M$ induces the full profinite topology on $\pi_1V_i$. Thus, by \autoref{PROP: Left exact} $\overline{\pi_1V_i}\cong \widehat{\pi_1V_i}\cong  \widehat{\Z}$.

(3) One can equip $M$ with a finite cell complex structure that realizes each $V_i$ as a subcomplex. Since $M$ is aspherical, each $V_i$ is aspherical, and $\pi_1V_i\to \pi_1M$ is injective, the cell complex pair $(M, \sqcup_{i=1}^{n} V_i)$ is an Eilenberg-MacLane pair for $(\pi_1M, \{\pi_1V_i\}_{i=1}^{n})$. The first isomorphism then follows from \autoref{THM: space pair group pair}. Moreover, the discrete group pair $(\pi_1M, \{\pi_1V_i\}_{i=1}^{n})$ is of type $\text{FP}_\infty$ according to \autoref{PROP: FPinfty pair}. 
In addition, by \autoref{EX: good}, $\pi_1M$ and $\pi_1V_i\cong \Z$ are all cohomologically good. Together with (2), \autoref{PROP: pair goodness} implies that the discrete group pair $(\pi_1M, \{\pi_1V_i\}_{i=1}^{n})$ is cohomologically good. The second and third isomorphisms then follow from \autoref{PROP: good tensor hatZ}.
\end{proof}

\section{The profinite mapping degree}\label{sec:6}
\def\tensor{\widehat{\Z}\otimes}
\subsection{The closed case}
The mapping degree of a map between closed oriented manifolds is defined through its induced map on the top homology. We extend this definition into the setting of profinite groups.

Let $M$ and $N$ be closed, orientable, aspherical 3-manifolds, and suppose $f: \widehat{\pi_1M}\to \widehat{\pi_1N}$ is a continuous homomorphism. Then, $f$ induces a homormophism of $\widehat{\Z}$-modules 
$$
f_\ast:\; \widehat{\Z}\otimes H_3(M;\Z) \cong \H_3(\widehat{\pi_1M};\widehat{\Z}) \longrightarrow \H_3(\widehat{\pi_1N};\widehat{\Z}) \cong \widehat{\Z}\otimes H_3(N;\Z),
$$
where the canonical isomorphisms $\widehat{\Z}\otimes H_3(M;\Z) \cong \H_3(\widehat{\pi_1M};\widehat{\Z})$ and $  \widehat{\Z}\otimes H_3(N;\Z) \cong \H_3(\widehat{\pi_1N};\widehat{\Z})$ are given by \autoref{COR: 3-mfd homology}.
\begin{definition}\label{DEF: closed profinite mapping degree}
Suppose  $M$ and $N$ are closed, oriented, aspherical 3-manifolds. Let $[M]\in H_3(M;\Z)$ and $[N]\in H_3(N;\Z)$ be the fundamental classes representing the orientations. For a continuous homomorphism $f:\widehat{\pi_1M}\to \widehat{\pi_1N}$, 
 the {\em profinite mapping degree} of $f$ is the unique element $\deg(f)\in\widehat{\Z}$ such that
$f_\ast(1\otimes[M])=\deg(f)\otimes [N]$.
\end{definition}

In particular, when $f$ is an isomorphism, $\deg(f)\in \Zx$. 

\begin{lemma}\label{LEM: deg ord}
Let $M$ and $N$ be closed, oriented, aspherical 3-manifolds, and let $p:\pi_1M\to \pi_1N$ be a homomorphism. Then, $\widehat{p}:\widehat{\pi_1M}\to \widehat{\pi_1N}$ is a continuous homomorphism, and $\deg(\widehat{p})=\mathrm{deg}(p)$, the usual mapping degree of $p$. 
\end{lemma}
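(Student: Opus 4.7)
The plan is to chase the fundamental class through a naturality diagram connecting the classical mapping degree to its profinite analogue.

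First I would recall that, since $N$ is aspherical, the homomorphism $p:\pi_1M\to \pi_1N$ is induced by a continuous map $F:M\to N$ unique up to homotopy. The classical mapping degree $\mathrm{deg}(p)=\mathrm{deg}(F)$ is then characterized by $F_\ast[M]=\mathrm{deg}(p)\cdot [N]$ in $H_3(N;\Z)$. By the naturality of the isomorphism $H_\ast(X;\Z)\cong H_\ast(\pi_1X;\Z)$ from \autoref{THM: K(G,1)}, applied to $F$, this becomes the statement that $p_\ast:H_3(\pi_1M;\Z)\to H_3(\pi_1N;\Z)$ sends the class corresponding to $[M]$ to $\mathrm{deg}(p)$ times the class corresponding to $[N]$.

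Next, I would assemble the naturality diagram
\begin{equation*}
\begin{tikzcd}[column sep=large]
\widehat{\Z}\otimes H_3(\pi_1M;\Z) \arrow[r,"1\otimes p_\ast"] \arrow[d,"\cong"'] & \widehat{\Z}\otimes H_3(\pi_1N;\Z) \arrow[d,"\cong"] \\
H_3(\pi_1M;\widehat{\Z}) \arrow[r,"p_\ast"] \arrow[d,"\iota_\ast"',"\cong"] & H_3(\pi_1N;\widehat{\Z}) \arrow[d,"\iota_\ast","\cong"'] \\
\H_3(\widehat{\pi_1M};\widehat{\Z}) \arrow[r,"\widehat{p}_\ast"] & \H_3(\widehat{\pi_1N};\widehat{\Z})
\end{tikzcd}
\end{equation*}
The top square commutes by naturality of the universal coefficient theorem (with $\widehat{\Z}$ a flat $\Z$-module, as used in \autoref{PROP: good tensor hatZ}). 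The bottom square commutes by the naturality of $\iota_\ast:H_\ast(G;\widehat{\Z})\to \H_\ast(\widehat{G};\widehat{\Z})$ with respect to group homomorphisms; this is the absolute case of \autoref{PROP: completion induce natural}, arising from the commutative square of $G$-modules $\Z\hookrightarrow \widehat{\Z}$ coupled with the profinite completion $\pi_1M\to \widehat{\pi_1M}$ and $\pi_1N\to \widehat{\pi_1N}$. The vertical isomorphisms are those of \autoref{COR: 3-mfd homology} that define $\deg(\widehat{p})$.

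Finally, I would chase the element $1\otimes [M]$ through the diagram: along the top row it maps to $1\otimes p_\ast[M]=1\otimes(\mathrm{deg}(p)[N])=\mathrm{deg}(p)\otimes [N]$, and then down to $\mathrm{deg}(p)\otimes [N]$ in $\H_3(\widehat{\pi_1N};\widehat{\Z})$. Going down first and then across gives $\widehat{p}_\ast(1\otimes[M])$, which by \autoref{DEF: closed profinite mapping degree} equals $\deg(\widehat{p})\otimes [N]$. Comparing yields $\deg(\widehat{p})=\mathrm{deg}(p)$.

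The only step requiring any genuine care is verifying that the comparison map $\iota_\ast$ appearing in \autoref{PROP: good tensor hatZ} is natural with respect to the group homomorphism $p$; however, this is essentially built into its construction as a Tor comparison arising from the natural maps $\Z\to \widehat{\Z}$ and $\Z[G]\to \widehat{\Z}\llbracket\widehat{G}\rrbracket$, so no substantial obstacle is expected.
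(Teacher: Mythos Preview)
Your proof is correct and follows essentially the same approach as the paper's: both assemble the two-square naturality diagram linking $\widehat{\Z}\otimes H_3(\pi_1(-);\Z)$, $H_3(\pi_1(-);\widehat{\Z})$, and $\H_3(\widehat{\pi_1(-)};\widehat{\Z})$, justify the top square by the universal coefficient theorem and the bottom square by naturality of $\iota_\ast$, and then chase $1\otimes[M]$ through. Your additional paragraph explaining why $p_\ast$ on group homology realizes $\mathrm{deg}(p)$ (via a realizing map $F:M\to N$ and \autoref{THM: K(G,1)}) makes explicit a point the paper leaves implicit.
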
  
\begin{proof}
We claim that the following diagram commutes.
\begin{equation}\label{equaldegequ1}
\begin{tikzcd}[column sep=small, row sep=0.2cm]
\tensor H_3(\pi_1M;\Z) \arrow[dd, "\cong"'] \arrow[rr, "1\otimes p_\ast"] &     & \tensor H_3(\pi_1N;\Z) \arrow[dd, "\cong"]   \\
                                                                          & \text{(a)} &                                              \\
H_3(\pi_1M;\widehat{\Z}) \arrow[dd, "\cong"',"\iota_\ast"] \arrow[rr, "p_\ast"]        &     & H_3(\pi_1N;\widehat{\Z}) \arrow[dd, "\cong","\iota_\ast"'] \\
                                                                          & \text{(b)} &                                              \\
\H_3(\widehat{\pi_1M};\widehat{\Z}) \arrow[rr, "\widehat{p}_\ast"]        &     & \H_3(\widehat{\pi_1N};\widehat{\Z})         
\end{tikzcd}
\end{equation}
In fact, the commutativity of block (a) follows from the universal coefficient theorem, and the commutativity of block (b) follows from the corresponding commutative diagram on the groups.
\begin{equation*}
\begin{tikzcd}
\pi_1M \arrow[d, "\iota"'] \arrow[r, "p"] & \pi_1N \arrow[d, "\iota"] \\
\widehat{\pi_1M} \arrow[r, "\widehat{p}"] & \widehat{\pi_1N}         
\end{tikzcd}
\end{equation*}

The top row of (\ref{equaldegequ1}) sends $1\otimes [M]$ to $1\otimes \mathrm{deg}(p)[N]=\mathrm{deg}(p)\otimes [N]$, while passing to the bottom row through the vertial isomorphisms sends $1\otimes [M]$ to $\deg(\widehat{p})\otimes [N]$. Thus, $\deg(\widehat{p})=\mathrm{deg}(p)$. 
\end{proof}

The profinite mapping degree of a profinite isomorphism is invariant when passing to finite covers.
\begin{lemma}\label{LEM: finite cover mapping degree}
Suppose $M$ and $N$ are closed, oriented, aspherical 3-manifolds, and suppose $f:\widehat{\pi_1M}\to \widehat{\pi_1N}$ is an isomorphism. Let $M'$ and $N'$ be an $f$--corresponding pair of finite covers of $M$ and $N$, and equip $M'$ and $N'$ with the orientations lifted from $M$ and $N$. Let $f'=f|_{\widehat{\pi_1M'}}:\widehat{\pi_1M'}\to \widehat{\pi_1N'}$. Then $\deg(f')=\deg(f)$.
\end{lemma}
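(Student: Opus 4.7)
The plan is to compare $\deg(f)$ and $\deg(f')$ via the commutative square on profinite third homology induced by the inclusions of open subgroups, using the identification in \autoref{COR: 3-mfd homology}.

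First, I would fix some bookkeeping. Since $f$ is an isomorphism matching the open subgroups $\overline{\pi_1M'}$ and $\overline{\pi_1N'}$, these two open subgroups have the same index in their respective profinite completions, and by \autoref{THM: correspondence of subgroup} this common index equals both $[\pi_1M:\pi_1M']$ and $[\pi_1N:\pi_1N']$; call it $d$. Since $M$ and $N$ are closed and aspherical, so are the finite covers $M'$ and $N'$, and the inclusions $\pi_1M'\hookrightarrow \pi_1M$, $\pi_1N'\hookrightarrow \pi_1N$ induce the standard covering-space maps on top integral homology sending $[M']\mapsto d[M]$ and $[N']\mapsto d[N]$ (the orientations on the covers are the pulled-back ones).

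Next, I would consider the commutative diagram of $\widehat{\Z}$-modules
\begin{equation*}
\begin{tikzcd}
\H_3(\widehat{\pi_1M'};\widehat{\Z}) \arrow[r,"f'_\ast"] \arrow[d,"\iota_\ast"'] & \H_3(\widehat{\pi_1N'};\widehat{\Z}) \arrow[d,"\iota_\ast"] \\
\H_3(\widehat{\pi_1M};\widehat{\Z}) \arrow[r,"f_\ast"] & \H_3(\widehat{\pi_1N};\widehat{\Z})
\end{tikzcd}
\end{equation*}
where the vertical maps come from the inclusions $\widehat{\pi_1M'}\hookrightarrow \widehat{\pi_1M}$ and $\widehat{\pi_1N'}\hookrightarrow \widehat{\pi_1N}$; commutativity follows from the hypothesis $f(\widehat{\pi_1M'})=\widehat{\pi_1N'}$ and functoriality of $\H_\ast$. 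Using \autoref{COR: 3-mfd homology} to identify each term with $\widehat{\Z}\otimes H_3(-;\Z)$ and the naturality of that identification (the horizontal arrows become $f_\ast$, $f'_\ast$ and the vertical arrows become $1\otimes \iota_\ast$, sending $1\otimes[M']$ to $d\otimes[M]$ and $1\otimes[N']$ to $d\otimes[N]$), I would chase the element $1\otimes[M']$ two ways. Right then down gives $\deg(f')\otimes[N'] \mapsto d\cdot\deg(f')\otimes[N]$, while down then right gives $d\otimes[M]\mapsto d\cdot\deg(f)\otimes[N]$.

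Finally, equating the two expressions yields $d\cdot\deg(f')\otimes[N] = d\cdot\deg(f)\otimes[N]$ in $\widehat{\Z}\otimes H_3(N;\Z)\cong \widehat{\Z}$, so $d(\deg(f')-\deg(f))=0$ in $\widehat{\Z}$. Since $d>0$ and $\widehat{\Z}$ is torsion-free (see the discussion after \autoref{DEF: profinite completion}), this forces $\deg(f')=\deg(f)$. The only mildly delicate point is the naturality of the identification $\widehat{\Z}\otimes H_3(-;\Z)\cong \H_3(\widehat{\pi_1(-)};\widehat{\Z})$ with respect to the subgroup inclusions, but this is built into \autoref{PROP: good tensor hatZ} (naturality of $\iota_\ast$ and of the universal coefficient map), so it is not a genuine obstacle.
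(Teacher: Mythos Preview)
Your proposal is correct and follows essentially the same approach as the paper: both arguments use the commutative square on $\H_3(-;\widehat{\Z})$ induced by the covering inclusions, identify the vertical maps with multiplication by the covering degree $d$, and conclude by torsion-freeness of $\widehat{\Z}$. The only cosmetic difference is that the paper packages the step ``the vertical map is multiplication by $d$'' into a separate preceding lemma (\autoref{LEM: deg ord}, $\deg(\widehat{p_\ast})=\mathrm{deg}(p)$), whereas you invoke it directly via naturality of the identification in \autoref{COR: 3-mfd homology}.
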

\begin{proof}

Let $p:M'\to M$ and $q:N'\to N$ be the covering maps. Then we have the following commutative diagram.
\begin{equation*}
\begin{tikzcd}
\widehat{\pi_1M'}\arrow[r,"f'"]\arrow[d,"\widehat{p_\ast}"'] & \widehat{\pi_1N'} \arrow[d,"\widehat{q_\ast}"] \\
\widehat{\pi_1M}\arrow[r,"f"] & \widehat{\pi_1N}
\end{tikzcd}
\end{equation*}

By definition, the profinite mapping degree of a composition of two continuous homomorphisms is the product of their respective profinite mapping degrees. Thus, $\deg(f)\cdot \deg(\widehat{p})=\deg(f')\cdot \deg(\widehat{q})$. 
Suppose $[M':M]=[N':N]=n$, where $n$ is a positive integer. Then, by \autoref{LEM: deg ord}, $\deg(\widehat{p})=\mathrm{deg}(p)=n$ and $\deg(\widehat{q})=\mathrm{deg}(q)=n$. Hence, $\deg(f)=\deg(f')$ since $\widehat{\Z}$ is torsion-free. 
\end{proof}

For a profinite isomorphism between closed hyperbolic 3-manifolds, \cite[Lemma 5.3]{Liu25} established the following relation between the profinite mapping degree and the homology coefficient. Since this formula is key to our proof, we display the proof here for the convenience of the readers. 

\begin{proposition}\label{PROP: closed relation}
Suppose $M$ and $N$ are closed oriented hyperbolic 3-manifolds, and $f:\widehat{\pi_1M}\to \widehat{\pi_1N}$ is an isomorphism.  Then, $\coef(f)^3=\pm\deg(f)$. 
\end{proposition}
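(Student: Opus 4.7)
\medskip

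\noindent\emph{Proposal.} The plan is to compare the action of $f_\ast$ on third profinite homology along two routes: one through the fundamental class, where it multiplies by $\deg(f)$, and one through the abelianization, where by \autoref{PROP: abelian} it acts on an exterior cube as $\lambda^3$. First I would reduce to the situation where $b_1(M)=b_1(N)\ge 3$ and the classifying (Abel--Jacobi) map $\tau_M\colon H_3(M;\Z)\to \bigwedge^3 H_1(M;\Z)_{\mathrm{free}}$ satisfies $\tau_M([M])\neq 0$; equivalently, some triple cup product on $H^1(M;\Z)$ is nonzero. By \autoref{COR: homology coef finite cover} and \autoref{LEM: finite cover mapping degree}, both $\coef(f)$ and $\deg(f)$ are invariant under passage to $f$--corresponding finite covers, so this reduction is harmless. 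Its existence relies on Agol's virtual RFRS/virtual fibering theorem for closed hyperbolic $3$--manifolds, together with the now-standard fact that closed hyperbolic $3$--manifolds admit finite covers with non-vanishing triple cup product on $H^1$; this is the main technical input.

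Next, writing $A_M=H_1(M;\Z)_{\mathrm{free}}$ and $A_N=H_1(N;\Z)_{\mathrm{free}}$, the abelianization-then-free-part maps $\pi_1M\twoheadrightarrow A_M$ and $\pi_1N\twoheadrightarrow A_N$ are natural under $f$ (since $f$ is $\lambda$--regular, the induced map on $\widehat{\Z}$--free parts splits as $\lambda\otimes\phi$ for an isomorphism $\phi\colon A_M\to A_N$). Combining \autoref{COR: 3-mfd homology} on the top row with \autoref{PROP: abelian} on the bottom row yields a commutative square
\begin{equation*}
\begin{tikzcd}
\widehat{\Z}\otimes H_3(M;\Z)\cong\H_3(\widehat{\pi_1M};\widehat{\Z}) \arrow[r,"f_\ast"] \arrow[d,"\tau_M"'] & \H_3(\widehat{\pi_1N};\widehat{\Z})\cong\widehat{\Z}\otimes H_3(N;\Z) \arrow[d,"\tau_N"] \\
\widehat{\Z}\otimes {\bigwedge}^{3} A_M\cong \H_3(\widehat{A_M};\widehat{\Z}) \arrow[r,"\lambda^3\otimes \bigwedge^3\phi"'] & \H_3(\widehat{A_N};\widehat{\Z})\cong \widehat{\Z}\otimes {\bigwedge}^{3} A_N.
\end{tikzcd}
\end{equation*}
The top arrow sends $1\otimes[M]$ to $\deg(f)\otimes[N]$ by definition of $\deg(f)$, and the bottom arrow is computed by \autoref{PROP: abelian} (exterior-algebra functoriality).

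Chasing $1\otimes[M]$ around the square gives the identity
\begin{equation*}
\lambda^3 \otimes \bigl({\textstyle\bigwedge}^{3}\phi\bigr)(\tau_M([M])) \;=\; \deg(f) \otimes \tau_N([N]) \qquad\text{in }\ \widehat{\Z}\otimes {\bigwedge}^{3} A_N.
\end{equation*}
Since $\tau_M([M])\neq 0$ and $\bigwedge^{3}\phi$ is an isomorphism of free $\Z$--modules, the left side is nonzero, forcing $\tau_N([N])\neq 0$ as well. Expanding both sides in a $\Z$--basis of $\bigwedge^{3} A_N$, one finds some index $J$ with integer coefficients $c_J$ and $b_J$, $b_J\neq 0$, satisfying the scalar identity $c_J\lambda^3 = b_J\deg(f)$ in $\widehat{\Z}$. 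Applying \autoref{LEM: Zx+-1} to this identity yields $\lambda^3=\pm\deg(f)$. Recalling $\coef(f)=\pm\lambda\in \Zxpm$, we conclude $\coef(f)^3=\pm\deg(f)$, as required. The principal difficulty lies in the first paragraph's reduction; everything else is bookkeeping in the naturality square supplied by \autoref{PROP: abelian} and \autoref{COR: 3-mfd homology}.
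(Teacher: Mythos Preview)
Your proposal is correct and follows essentially the same route as the paper: both pass to an $f$--corresponding finite cover with nonvanishing triple cup product (the paper cites \cite{LS18} for this, phrased as a virtual degree-one map $N'\to T^3$), then compare $f_\ast$ on $\H_3$ via the fundamental class against its action on the exterior cube of $H_1$ using \autoref{PROP: abelian}, and conclude with \autoref{LEM: Zx+-1}. The only difference is cosmetic: the paper projects to a chosen $T^3$ so the relation reads $1\cdot\deg(f')=\deg(h)\cdot\lambda^3$ with $\deg(h)\in\Z$, whereas you keep the full Abel--Jacobi map and extract the relation from any nonzero coordinate of $\tau_N([N])$.
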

\begin{proof}
According to \cite[Theorem 1.1]{LS18}, there exists a finite cover $N'$ of $N$ that admits a map $g: N'\to T^3$ to the (oriented) 3-torus $T^3$ with mapping degree $\mathrm{deg}(g)=1$. In this case, we obviously have $b_1(N')>0$.  Let $M'$ be the $f$--corresponding finite cover of $M$, that is, $\widehat{\pi_1M'}=f^{-1}(\widehat{\pi_1N'})$, and denote $f': \widehat{\pi_1M'}\to \widehat{\pi_1N'}$ as the restriction of $f$. By 
\autoref{THM: Zx regular} and \autoref{LEM: free part}, on the abelianization level, $f_\ast': \tensor H_1(M';\Z)_{\mathrm{free}} \to \tensor H_1(N';\Z)_{\mathrm{free}}$ can be decomposed as $\lambda\otimes \phi$, where $\pm\lambda=\coef(f')$ and $\phi: H_1(M';\Z)_{\mathrm{free}} \to H_1(N';\Z)_{\mathrm{free}}$ is an isomorphism.

On the fundamental groups, $g_\ast: \pi_1N'\to \pi_1T^3\cong \Z^3$ factors through $H_1(N';\Z)_{\mathrm{free}}$, which we denote as $$g_\ast: \pi_1N'\twoheadrightarrow H_1(N';\Z)_{\mathrm{free}}\xrightarrow{\psi} \pi_1T^3\cong \Z^3.$$
Then we define a homomorphism
$$
h:\, \pi_1M'\twoheadrightarrow H_1(M';\Z)_{\mathrm{free}} \xrightarrow{ \;\phi\;} H_1(N';\Z)_{\mathrm{free}}\xrightarrow{\;\psi\;} \pi_1T^3\cong \Z^3.
$$  

Let $\Lambda: \widehat{\Z}^3\cong \widehat{\pi_1T^3} \to \widehat{\pi_1T^3} \cong \widehat{\Z}^3$ be a scalar multiplication by $\lambda$. Then, 
 by construction, the following diagram commutes.
\begin{equation*}
\begin{tikzcd}[column sep=tiny]
& \widehat{\pi_1M'} \arrow[rrrrr,"f'"] \arrow[d,"\widehat{h}"'] & & & & &  \widehat{\pi_1N'} \arrow[d, "\widehat{g_\ast}"] &  \\
\widehat{\Z}^3 \arrow[r,symbol=\cong] & \widehat{\pi_1T^3} \arrow[rrrrr, "\Lambda"]  & &  &  &  & \widehat{\pi_1T^3} \arrow[r,symbol=\cong] &\widehat{\Z}^3
\end{tikzcd}  
\end{equation*}
We thus have 
\begin{equation}\label{degt3equ}
\deg(f')\cdot \deg(\widehat{g_\ast})=\deg(\widehat{h})\cdot \deg(\Lambda). 
\end{equation}

Let us list the properties for each of these profinite mapping degrees. First, since $f'$ is an isomorphism, $\deg(f')\in \Zx$. Second, by \autoref{LEM: deg ord}, $\deg(\widehat{g_\ast})=\mathrm{deg}(g_\ast)=\mathrm{deg}(g)=1$. Again by \autoref{LEM: deg ord}, $\deg(\widehat{h})=\deg(h)\in \Z$. Finally, let $(u_1,u_2,u_3)$ be an ordered basis of $\pi_1T^3\cong \Z^3$ that represents the positive orientation. Then, the fundamental class $[T^3]\in H_3(T^3;\Z)$ is represented by $u_1\wedge u_2\wedge u_3$, and by \autoref{PROP: abelian}, 
$$
\Lambda(u_1\wedge u_2\wedge u_3)=\Lambda(u_1)\wedge \Lambda(u_2)\wedge \Lambda(u_3)= \lambda u_1\wedge \lambda u_2\wedge \lambda u_3= \lambda^3 (u_1\wedge u_2\wedge u_3).
$$
Thus, $\deg(\Lambda)=\lambda^3$, and in particular, $\deg(\Lambda)\in \Zx$. 

Therefore, we can apply \autoref{LEM: Zx+-1} to (\ref{degt3equ}), and derive that $\lambda^3=\pm \deg(f')$. By \autoref{COR: homology coef finite cover}, $\pm\lambda= \coef(f')=\coef(f)$; and by \autoref{LEM: finite cover mapping degree}, $\deg(f')= \deg(f)$. Consequently, $\coef(f)^3=\pm \deg(f')$. 
\end{proof}
\subsection{The cusped hyperbolic case}

Mapping degrees can also be defined in the case of manifolds with boundary. 
Let $M$ and $N$ be compact, oriented, aspherical 3-manifolds with non-empty boundary. The mapping degree of a proper map $F:M\to N$ (i.e. $F(\partial M)\subseteq \partial N$) can be defined through its induced map on the relative homology $F_\ast: H_3(M,\partial M;\Z)\to H_3(N,\partial N;\Z)$. 

However, when we replace $F$ by only a homomorphism of fundamental groups $f:\pi_1M\to \pi_1N$ or a continuous homomorphism of profinite fundamental groups $f:\widehat{\pi_1M}\to \widehat{\pi_1N}$, the homomorphism $f$ may not respect the peripheral structure, so there is not a well-defined map from $H_3(\pi_1M,\pi_1\partial M;\Z)$ or $\H_3(\widehat{\pi_1M},\overline{\pi_1\partial M};\widehat{\Z})$ to $H_3(\pi_1N,\pi_1\partial N;\Z)$ or $\H_3(\widehat{\pi_1N},\overline{\pi_1\partial N};\Z)$. Fortunately, there is no such trouble when we consider a  profinite isomorphism between cusped hyperbolic 3-manifolds, which always respects the peripheral structure according to \autoref{PROP: respect peripheral structure}.

Let $(M,N,f,\lambda,\Psi)$ be a cusped hyperbolic setting of profinite isomorphism (\autoref{conv+}). 
Then, $f$ induces an isomorphism of $\widehat{\Z}$-modules
\begin{align*}
f_\star :\; \tensor H_3(M,\partial M;\Z) & \cong \H_3(\widehat{\pi_1M},\{\overline{P_i}\}_{i=1}^{n};\widehat{\Z}) \xrightarrow[\cong]{f_\ast} \H_3(\widehat{\pi_1N},\{\overline{Q_i}^{g_i}\}_{i=1}^{n};\widehat{\Z})\\& \xrightarrow[\cong]{conj} \H_3(\widehat{\pi_1N},\{\overline{Q_i}\}_{i=1}^{n};\widehat{\Z})\cong \tensor H_3(N,\partial N;\Z).
\end{align*}

The canonical isomorphisms $\tensor H_3(M,\partial M;\Z)  \cong \H_3(\widehat{\pi_1M},\{\overline{P_i}\}_{i=1}^{n};\widehat{\Z})$ and $\tensor H_3(N,\partial N;\Z)  \cong \H_3(\widehat{\pi_1N},\{\overline{Q_i}\}_{i=1}^{n};\widehat{\Z})$ are given by \autoref{COR: 3mfd with surface}, and the isomorphism $ \H_3(\widehat{\pi_1N},\{\overline{Q_i}^{g_i}\}_{i=1}^{n};\widehat{\Z}) \cong \H_3(\widehat{\pi_1N},\{\overline{Q_i}\}_{i=1}^{n};\widehat{\Z})$ is given by \autoref{PROP: conjugation invariant}. 

\begin{definition}\label{DEF: cusped mapping degree}
Suppose $(M,N,f,\lambda,\Psi)$ is a cusped hyperbolic setting of profinite isomorphism (\autoref{conv+}), with $M$ and $N$ oriented. 
Let $[M]\in H_3(M,\partial M;\Z)\cong \Z$ and $[N]\in H_3(N,\partial N;\Z)\cong\Z$ be the fundamental classes representing the orientations. 
The {\em profinite mapping degree} of $f$ is the unique element $\deg(f)\in \Zx$ such that $f_\star(1\otimes [M])= \deg(f)\otimes [N]$. 
\end{definition}


\section{Calculation through boundary}\label{sec:7}

In this section, we prove the first relation between the profinite mapping degree and the homology coefficient for a profinite isomorphism between cusped hyperbolic 3-manifolds. The method is to pass the calculation onto the boundary through the long exact sequence established in \autoref{PROP: LES}. 

\begin{proposition}\label{PROP: square}
Let $(M,N,f,\lambda,\Psi)$ be a cusped hyperbolic setting of profinite isomorphism (\autoref{conv+}) with $M$ and $N$ oriented. Then,
\begin{enumerate}[leftmargin=*]
\item\label{7.1-1} Equip $\partial M$ and $\partial N$ with the boundary orientations inherited from $M$ and $N$. Then, the homeomorphism $\Psi:\partial M\xrightarrow{{\tiny \cong}} \partial N$ is either orientation preserving on all components, or orientation reversing on all components.
\item\label{square-2} $\deg(f)=\coef(f)^2\cdot \mathrm{sgn}(\Psi)$. Here $\mathrm{sgn}(\Psi)=+1$ (resp. $-1$) if $\Psi$ is orientation preserving (resp. orientation reversing), and $\coef(f)^2=\lambda^2$ is a well-defined element in $\Zx$.
\end{enumerate}
\end{proposition}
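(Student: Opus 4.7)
The plan is to push the defining equation $f_\star(1\otimes[M])=\deg(f)\otimes[N]$ from $\H_3$ of the relative group pairs down to $\H_2$ of the peripheral subgroups, where everything becomes computable from the explicit decomposition $\partial f_i=\lambda\otimes\psi_i$ furnished by \autoref{conv+}. Assembling \autoref{PROP: LES}, \autoref{PROP: functorial LES}, \autoref{PROP: conjugation invariant}, and \autoref{PROP: conj natural} (the latter two to absorb the conjugation $\overline{Q_i}^{g_i}\rightsquigarrow\overline{Q_i}$ that appears because $f(\overline{P_i})=\overline{Q_i}^{g_i}$), I would assemble the commutative square
\begin{equation*}
\begin{tikzcd}[column sep=large]
{\H_3(\widehat{\pi_1M},\{\overline{P_i}\};\widehat{\Z})} \arrow[r,"\partial"] \arrow[d,"f_\star"'] & \mathop{\oplus}\limits_{i=1}^{n}\H_2(\overline{P_i};\widehat{\Z}) \arrow[d,"\oplus\,(\partial f_i)_\ast"] \\
{\H_3(\widehat{\pi_1N},\{\overline{Q_i}\};\widehat{\Z})} \arrow[r,"\partial"]                      & \mathop{\oplus}\limits_{i=1}^{n}\H_2(\overline{Q_i};\widehat{\Z}).
\end{tikzcd}
\end{equation*}
Since $M$ has non-empty boundary, $H_3(M;\Z)=0$, so $\H_3(\widehat{\pi_1M};\widehat{\Z})=0$ by \autoref{COR: 3-mfd homology}, and in fact the top $\partial$ is injective, though this will not be logically needed.

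Next I would identify the image of the fundamental class under the top $\partial$. Using the canonical isomorphisms $\tensor H_\ast(M,\partial M;\Z)\cong \H_\ast(\widehat{\pi_1M},\{\overline{P_i}\};\widehat{\Z})$ of \autoref{COR: 3mfd with surface} (built from \autoref{THM: space pair group pair}, \autoref{PROP: completion induce LES}, and \autoref{PROP: good tensor hatZ}), combined with their naturality as in \autoref{PROP: space pair group pair natural} and \autoref{PROP: completion induce natural}, the profinite $\partial$ is identified with the topological connecting homomorphism $H_3(M,\partial M;\Z)\to H_2(\partial M;\Z)$. The latter sends $[M]\mapsto\sum_i[\partial_iM]$ by the boundary-orientation convention, so $\partial(1\otimes[M])=\sum_i 1\otimes[\partial_iM]$, and analogously $\partial(\deg(f)\otimes[N])=\sum_i \deg(f)\otimes[\partial_iN]$.

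Then I would evaluate each $(\partial f_i)_\ast$ on $\H_2(\overline{P_i};\widehat{\Z})$ via \autoref{PROP: abelian}, which identifies this group with $\bigwedge^2_{\widehat{\Z}}\overline{P_i}$ and says the induced map is wedge-multiplication of values on generators. If $(u_1,u_2)$ is a positively oriented basis of $P_i$, then $u_1\wedge u_2$ corresponds to $[\partial_iM]$, and a one-line computation gives
\[
(\partial f_i)_\ast[\partial_iM]=\lambda\,\psi_i(u_1)\wedge \lambda\,\psi_i(u_2)=\lambda^{2}\det(\psi_i)\cdot[\partial_iN],
\]
with $\det(\psi_i)\in\{\pm1\}$ equal to $+1$ exactly when $\Psi|_{\partial_iM}$ is orientation-preserving. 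Chasing $1\otimes[M]$ around the square and using independence of the summands on the right-hand column yields the single relation
\[
\lambda^{2}\det(\psi_i)=\deg(f)\qquad\text{for every }i.
\]
Both halves of the proposition drop out at once: the common value of $\det(\psi_i)$ forces all $\Psi|_{\partial_iM}$ to share orientation behavior, giving (1) with $\sgn(\Psi)=\det(\psi_i)$; substituting back gives (2), namely $\deg(f)=\lambda^{2}\sgn(\Psi)=\coef(f)^{2}\sgn(\Psi)$, where squaring kills the $\pm1$ ambiguity in $\coef(f)\in\Zxpm$.

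The main obstacle I anticipate is the second paragraph: verifying carefully that the profinite connecting homomorphism really does realise $[M]\mapsto\sum_i[\partial_iM]$ under the composite isomorphism of \autoref{COR: 3mfd with surface}. The topological and profinite long exact sequences are each transparent, but matching them requires chasing $1\otimes[M]$ through the two-step identification (topological relative homology $\to$ relative homology of the discrete group pair $\to$ relative profinite homology) and confirming that naturality holds simultaneously for the horizontal $\partial$ and the vertical comparison maps. Once that bookkeeping is settled, the rest is a purely formal application of \autoref{PROP: abelian}.
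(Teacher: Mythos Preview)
Your proposal is correct and follows essentially the same route as the paper's proof: the paper assembles an explicit commutative diagram (its Diagram~A) linking the topological long exact sequence of $(M,\partial M)$ to the profinite long exact sequence of $(\widehat{\pi_1M},\{\overline{P_i}\})$ via \autoref{COR: 3mfd with surface}, \autoref{PROP: completion induce LES}, \autoref{PROP: functorial LES}, and \autoref{PROP: conjugation invariant}, then evaluates $(\partial f_i)_\ast$ on $p_i\wedge p_i'$ by \autoref{PROP: abelian} to obtain $\lambda^2\,\sgn(\psi_i)\cdot q_i\wedge q_i'$ and reads off the coordinatewise identity $\deg(f)=\lambda^2\,\sgn(\psi_i)$. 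The bookkeeping you flag as the main obstacle is exactly what the paper spells out block by block in that diagram.
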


\newsavebox{\tempDiagram}
\begin{lrbox}{\tempDiagram}

    \(
\begin{tikzcd}[column sep=0.06cm, row sep=tiny]
{\tensor H_3(M,\partial M;\Z)} \arrow[dddd,"\cong"] \arrow[rrrr, "1\otimes \partial"]                                                      &  &            &  & \tensor H_2(\partial M;\Z) \arrow[dddd,"\cong"] \arrow[r,symbol=\cong]                                        & \mathop{\oplus}\limits_{i=1}^{n} \tensor H_2(\partial_iM;\Z) \arrow[dddddddddddd,"\cong"']                                                      & 1\otimes [\partial_iM]\arrow[l,symbol=\in]\arrow[dddddddddddd,maps to] \\
                                                                                                                       &  &            &  &                                                                                       &                                                                                                                             & \\
                                                                                                                       &  & \text{(a)} &  &                                                                                       &                                                                                                                             & \\
                                                                                                                       &  &            &  &                                                                                       &                                                                                                                             & \\
{\tensor H_3(\pi_1M,\{P_i\}_{i=1}^n;\Z)} \arrow[dddd, "\cong"] \arrow[rrrr, "1\otimes\partial"]                     &  &            &  & \mathop{\oplus}\limits_{i=1}^{n} \tensor H_2(P_i;\Z)  \arrow[dddd, "\cong"]          &                             & \\
                                                                                                                       &  &            &  &                                                                                       &                                                                                                                             & \\
                                                                                                                       &  & \text{(b)} &  &                                                                                       &                                                                                                                             & \\
                                                                                                                       &  &            &  &                                                                                       &                                                                                                                             & \\
{ H_3(\pi_1M,\{P_i\}_{i=1}^n;\widehat{\Z})} \arrow[dddd, "\cong","\iota_\ast"'] \arrow[rrrr, "\partial"]                     &  &            &  &  \mathop{\oplus}\limits_{i=1}^n H_2(P_i;\widehat{\Z}) \arrow[dddd,"\cong","\iota_\ast"']          &                                      & \\
                                                                                                                       &  &            &  &                                                                                       &                                                                                                                             & \\
                                                                                                                       &  & \text{(c)} &  &                                                                                       &                                                                                                                             & \\
                                                                                                                       &  &            &  &                                                                                       &                                                                                                                             & \\
{\H_3(\widehat{\pi_1M},\{\overline{P_i}\}_{i=1}^{n};\widehat{\Z})} \arrow[rrrr, "\partial"] \arrow[dddd, "f_\ast"',"\cong"]     &  &            &  & \mathop{\oplus}\limits_{i=1}^{n}\H_2(\overline{P_i};\widehat{\Z}) \arrow[r,symbol=\cong] \arrow[dddd, "f_\ast"',"\cong"]        & \mathop{\oplus}\limits_{i=1}^{n}\H_2(\widehat{P_i};\widehat{\Z}) \arrow[dddddddd, "((\partial f_i)_\ast)_{i=1}^{n}"] &  p_i\wedge p_i' \arrow[l, symbol=\in] \arrow[dddddddd,"(\lambda\otimes \psi_i)_\ast", maps to]\\
                                                                                                                       &  &            &  &                                                                                       &                                                                                                                             & \\
                                                                                                                       &  & \text{(d)} &  &                                                                                       &                                                                                                                             & \\
                                                                                                                       &  &            &  &                                                                                       &                                                                                                                             & \\
{\H_3(\widehat{\pi_1N},\{\overline{Q_i}^{g_i}\}_{i=1}^{n};\widehat{\Z})} \arrow[rrrr, "\partial"] \arrow[dddd, "\cong"] &  &            &  & \mathop{\oplus}\limits_{i=1}^n \H_2(\overline{Q_i}^{g_i};\widehat{\Z}) \arrow[dddd, "\cong","(C_{g_i})_\ast"']  & & \\
                                                                                                                       &  &            &  &                                                                                       &                                                                                                                             & \\
                                                                                                                       &  & \text{(e)} &  &                                                                                       &                                                                                                                             & \\
                                                                                                                       &  &            &  &                                                                                       &                                                                                                                             & \\
{\H_3(\widehat{\pi_1N},\{\overline{Q_i}\}_{i=1}^{n};\widehat{\Z})} \arrow[rrrr, "\partial"]                             &  &            &  &\mathop{\oplus}\limits_{i=1}^n \H_2( \overline{Q_i} ;\widehat{\Z}) \arrow[r,symbol=\cong]                              & \mathop{\oplus}\limits_{i=1}^{n}\H_2(\widehat{Q_i};\widehat{\Z})                                                             & \lambda^2 \sgn(\psi_i)\cdot q_i\wedge q_i' \arrow[l, symbol=\in] \arrow[dddddddddddd, maps to]\\
                                                                                                                       &  &            &  &                                                                                       &                                                                                                                             & \\
                                                                                                                       &  & \text{(f)} &  &                                                                                       &                                                                                                                             & \\
                                                                                                                       &  &            &  &                                                                                       &                                                                                                                             & \\
{ H_3(\pi_1N,\{Q_i\}_{i=1}^{n};\widehat{\Z})} \arrow[uuuu, "\iota_\ast","\cong"'] \arrow[rrrr, "\partial"]           &  &            &  & \mathop{\oplus}\limits_{i=1}^n H_2( Q_i ;\widehat{\Z}) \arrow[uuuu, "\iota_\ast","\cong"']    &    &   \\
                                                                                                                       &  &            &  &                                                                                       &                                                                                                                             & \\
                                                                                                                       &  & \text{(g)} &  &                                                                                       &                                                                                                                             & \\
                                                                                                                       &  &            &  &                                                                                       &                                                                                                                             & \\
{\tensor H_3(\pi_1N,\{Q_i\}_{i=1}^{n};\Z)} \arrow[uuuu, "\cong"'] \arrow[rrrr, "1\otimes\partial"]           &  &            &  & \mathop{\oplus}\limits_{i=1}^n\tensor H_2(Q_i;\Z) \arrow[uuuu, "\cong"']     &   &   \\
                                                                                                                       &  &            &  &                                                                                       &                                                                                                                             & \\
                                                                                                                       &  & \text{(h)} &  &                                                                                       &                                                                                                                             & \\
                                                                                                                       &  &            &  &                                                                                       &                                                                                                                             & \\
{\tensor H_3(N,\partial N;\Z)} \arrow[rrrr, "1\otimes\partial"]                                                               \arrow[uuuu,"\cong"'] &  &            &  & \tensor H_2(\partial N;\Z) \arrow[r,symbol=\cong]           \arrow[uuuu,"\cong"']                                       & \mathop{\oplus}\limits_{i=1}^{n} \tensor H_2(\partial_iN;\Z)                                                               \arrow[uuuuuuuuuuuu,"\cong"] & \lambda^2\mathrm{sgn}(\psi_i)\otimes [\partial_iN] \arrow[l,symbol=\in]
\end{tikzcd}
\)
\end{lrbox}

\begin{proof}
Let us consider \autoref{Diagram A}, which is a  commutative diagram of $\widehat{\Z}$-modules. 
\begin{diagram}[ht]
\resizebox{13.8cm}{!}{\usebox{\tempDiagram}}
\caption{}
\label{Diagram A}
\end{diagram}

We first explain its validity. The vertical isomorphisms in the left column of the blocks (a), (b), (c), (f), (g), (h) are given by \autoref{COR: 3mfd with surface}. The vertical isomorphisms in the right column of blocks (a) and (h) follow from \autoref{THM: K(G,1)}, and the isomorphisms in the right column of blocks (b), (c), (f), (g) follow from \autoref{PROP: good tensor hatZ} since $\Z^2$ is cohomologically good (\autoref{EX: good}). The commutativity of blocks (a) and (h) follow from \autoref{THM: space pair group pair}, the commutativity of blocks (b) and (g) follow from the universal coefficient theorem, and the commutativity of blocks (c) and (f) follow from \autoref{PROP: completion induce LES}. The block (d) is given by \autoref{PROP: functorial LES}, and the block (e) is given by \autoref{PROP: conjugation invariant}.

Then, by \autoref{DEF: cusped mapping degree}, the left column of \autoref{Diagram A} sends $1\otimes [M]\in \tensor H_3(M,\partial M;\Z)$ to $\deg(f) \otimes [N]\in \tensor H_3(N,\partial N;\Z)$. Let $[\partial_iM]$ and $[\partial_iN]$ be the fundamental classes of $H_2(\partial_iM;\Z)$ and $H_2(\partial_iN;\Z)$ representing the boundary orientations. Then, the topological boundary maps in the top and bottom rows send $1\otimes [M]$ to $\sum_{i=1}^{n} 1\otimes[\partial_iM]$ and $1\otimes [N]$ to $\sum_{i=1}^{n} 1\otimes[\partial_iN]$ respectively. Now, we calculate the map on the right column. Let $(p_i,p_i')$ be a free basis of $P_i\cong \Z^2$ representing the positive orientation of $\partial _iM$, and let $(q_i,q_i')$ be a free basis of $Q_i\cong \Z^2$ representing the positive orientation of $\partial _iN$. The isomorphism $\partial f_i: \widehat{P_i}\to \widehat{Q_i}$ splits as $\lambda\otimes \psi_i$, so it can be represented by a matrix $\lambda\cdot A\in \mathrm{GL}(2,\widehat{\Z})$ 
$$
\partial f_i\begin{pmatrix} p_i & p_i' \end{pmatrix} = \begin{pmatrix} q_i & q_i'\end{pmatrix} \lambda\cdot A_i
$$
where $A_i\in \mathrm{GL}(2,\Z)$ represents the isomorphism $\psi_i:P_i\to Q_i$.
In particular,
$$
\det (A_i)= \mathrm{sgn}(\psi_i)=\left\{ \begin{aligned} +1, &\quad \psi_i\text{ is orientation preserving;}\\-1, &\quad \psi_i\text{ is orientation reversing.}\end{aligned}\right. 
$$

According to \autoref{PROP: abelian}, $\H_2(\widehat{P_i};\widehat{\Z})$ is the free $\widehat{\Z}$-module generated by $p_i\wedge p_i'$ that corresponds to $1\otimes [\partial_iM]\in \tensor H_2(\partial_iM ;\Z)$; and similarly, $\H_2(\widehat{Q_i};\widehat{\Z})$ is the free $\widehat{\Z}$-module generated by $q_i\wedge q_i'$ that corresponds to $1\otimes [\partial_iN]\in \tensor H_2(\partial_iN ;\Z)$. In addition, 
$$
(\partial f_i)_\ast(p_i\wedge p_i')= \partial f_i(p_i)\wedge \partial f_i(p_i')= \det(\lambda\cdot A_i)\cdot q_i\wedge q_i'=\lambda^2\mathrm{sgn}(\psi_i)\cdot q_i\wedge q_i'.
$$
Thus, the right column sends each $1\otimes[\partial_iM]$ to $\lambda^2  \mathrm{sgn}(\psi_i)\otimes[\partial_iN]$. 

Therefore, if we start from $1\otimes [M]\in \tensor H_3(M,\partial M;\Z)$ in the upper-left corner, first mapping through the top row  and then through the right column yields
$$
1\otimes [M] \mapsto \sum_{i=1}^{n} \lambda^2 \mathrm{sgn}(\psi_i)\otimes[\partial_iN],
$$ 
while first mapping through the left column and then through the bottom row yields
$$
1\otimes [M] \mapsto \sum_{i=1}^{n} \deg(f)\otimes[\partial_iN].
$$
Thus, $\sum_{i=1}^{n} \lambda^2 \mathrm{sgn}(\psi_i)\otimes[\partial_iN]= \sum_{i=1}^{n} \deg(f)\otimes[\partial_iN]$.

Since ${\oplus}_{i=1}^{n} \tensor H_2(\partial_iN;\Z)$ is a free $\widehat{\Z}$-module with basis $([\partial_1N],\cdots,[\partial_nN])$, we derive that $\mathrm{sgn}(\psi_i)$ are equal for all $1\le i \le n$, which is denoted as $\sgn(\Psi)$, and that $\deg(f) =\lambda^2\sgn(\Psi)=\coef(f)^2\cdot \sgn(\Psi)$. This finishes the proof of this proposition.
\end{proof}

\section{Calculation by Dehn filling}\label{SEC: Dehn filling}

In this section we prove the second relation between the profinite mapping degree and the homology coefficient for a profinite isomorphism between cusped hyperbolic 3-manifolds. The method is to apply the Dehn filling construction established in \cite{Xu24b} so as to pass the calculation to a profinite isomorphism between closed  hyperbolic manifolds, and then deduce  the relation according to \autoref{PROP: closed relation}.

\subsection{Dehn filling}
\def\R{\mathbb{R}}
Let us first specify our notions for Dehn fillings. Let $M$ be a compact   3-manifold with non-empty boundary consisting of tori $\partial_1M, \cdots, \partial_n M$. The {\em slopes} on $\partial_i M$ is denoted as 
$$
\slope(\partial_iM)= \{\text{free homotopy classes of  unoriented essential simple closed curves on }\partial_iM\cong T^2\}
$$
which is equipped with discrete topology. Indeed, an element in $\slope(\partial_iM)$ can be viewed as (a $\pm$-pair of) primitive integral points in $H_1(\partial_iM;\R)\cong \R^2$. The one-point compactification of $\slope(\partial_iM)$ is denoted as $\slp(\partial_iM)=\slope(\partial_iM)\cup\{\infty\}$. In fact, $\slp(\partial_iM)$ can be viewed as a subset of $\overline{\R^2}/(x\sim -x)\cong S^2/\sim\cong S^2$. For simplicity of notation, we denote $\slope(\partial M)=\slope(\partial_1M)\times \cdots \times \slope(\partial _nM)$ and $\slp(\partial M)= \slp(\partial_1M)\times \cdots \times \slp(\partial_nM)$. 

\def\c{\mathbf{c}}

\begin{definition}
Let $M$ be a compact   3-manifold with non-empty boundary consisting of $n$ tori. For $\c=(c_1,\cdots, c_n)\in \slp(\partial M)$, {\em the Dehn filling of $M$ along $\c$}, denoted by $M_{\c}$, is a compact 3-manifold constructed from $M$ as follows. For each $1\le i\le n$, if $c_i\neq \infty$, we glue a solid torus to the boundary component $\partial_i M$ so that the meridian of the solid torus is attached to $c_i$; if $c_i=\infty$, we do nothing and skip this boundary component. 
\end{definition}

The key result in \cite{Xu24b} is that a profinite isomorphic pair of orientable cusped hyperbolic 3-manifolds carries profinite isomorphisms between their  Dehn fillings. For the application in the following subsections, let us succinctly explain this result. We first introduce a notation. 

\def\ccc{\langle \!\langle \c\rangle\!\rangle}

\begin{definition}
Let $M$ be a compact 3-manifold with non-empty boundary consisting of $n$ tori. Let $\c=(c_1, \cdots, c_n)\in \slp(\partial M)$. For each $1\le i\le n$ such that $c_i\neq \infty$, we choose a conjugacy representative $\gamma_i\in \pi_1M$ that represents the free homotopy class $c_i$. We define a subgroup $\langle \!\langle \c\rangle\!\rangle \lhd \pi_1M$ to be the normal subgroup generated by the elements $\gamma_i$ for $1\le i\le n$ such that $c_i\neq \infty$. 
\end{definition}

\begin{lemma}[{\cite[Lemma 3.5]{Xu24b}}]\label{LEM: van Kampen}
Let $M$ be a compact 3-manifold with non-empty boundary consisting of tori, and let $\c\in \slp(\partial M)$. Let $p:\pi_1M \to \pi_1M_{\c}$ be the homomorphisms induced by inclusion.
\begin{enumerate}[leftmargin=*]
\item $p$ is surjective, and $\ker(p)=\ccc$.
\item $\widehat{p} : \pi_1M \to \pi_1M_{\c} $ is also surjective, and $\ker(\widehat{p})= \overline{\ccc}$. 
\end{enumerate}
\end{lemma}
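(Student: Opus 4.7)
For part (1), my plan is a standard application of Seifert--van Kampen. Writing $M_{\c} = M \cup_{\partial M} (\sqcup_i V_i)$, where $V_i \cong D^2 \times S^1$ is the solid torus filling $\partial_i M$ for each $i$ with $c_i \neq \infty$ (and $V_i$ is omitted when $c_i = \infty$), I would apply van Kampen iteratively, one cusp at a time. Since $\pi_1 V_i \cong \Z$ is generated by the meridian disk's boundary pushed to the core --- i.e., the meridian of $V_i$ represents the class $c_i$ in $\pi_1 \partial_i M$, and it bounds a disk in $V_i$ --- the amalgamated product
\[
\pi_1 M *_{\pi_1 \partial_i M} \pi_1 V_i
\]
simplifies to the quotient $\pi_1 M / \langle\!\langle \gamma_i \rangle\!\rangle$, where $\gamma_i$ is any conjugate representative of $c_i$ in $\pi_1 M$. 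Iterating over all filled boundary components yields $\pi_1 M_{\c} \cong \pi_1 M / \ccc$, so $p$ is surjective with $\ker(p) = \ccc$.

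For part (2), the idea is to profinitely complete the short exact sequence obtained from part (1):
\[
1 \longrightarrow \ccc \longrightarrow \pi_1 M \xrightarrow{\;p\;} \pi_1 M_\c \longrightarrow 1.
\]
By the right exactness of profinite completion (\autoref{PROP: Completion Exact}), this induces an exact sequence
\[
\widehat{\ccc} \longrightarrow \widehat{\pi_1 M} \xrightarrow{\;\widehat{p}\;} \widehat{\pi_1 M_\c} \longrightarrow 1,
\]
so $\widehat{p}$ is surjective. The kernel of $\widehat{p}$ is the image of $\widehat{\ccc}$ in $\widehat{\pi_1 M}$, and by the general fact that the image of the natural map $\widehat{H} \to \widehat{G}$ induced by any subgroup inclusion $H \le G$ coincides with the closure of $\iota_G(H)$ in $\widehat{G}$, this image is precisely $\overline{\ccc}$. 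Hence $\ker(\widehat{p}) = \overline{\ccc}$.

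The argument is essentially routine once one invokes the two functorial inputs (van Kampen and right exactness of completion); there are no real obstacles. The only point worth being careful about is confirming that the meridian of each solid torus $V_i$, when viewed in $\pi_1 \partial_i M$, represents exactly the slope class $c_i$ (so that the killed element is indeed $\gamma_i$), and that the normal closure of the collection $\{\gamma_i\}$ in $\pi_1 M$ is independent of the chosen conjugacy representatives --- both of which are immediate from the definitions of Dehn filling and of $\ccc$ respectively.
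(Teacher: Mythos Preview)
Your proof is correct and follows essentially the same approach as the paper: part (1) is dispatched by van Kampen (the paper simply cites this without the iterative detail you provide), and part (2) applies right exactness of profinite completion (\autoref{PROP: Completion Exact}) to the resulting short exact sequence, identifying $\ker(\widehat{p})$ with the image of $\widehat{\ccc}$, namely $\overline{\ccc}$.
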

\begin{proof}
(1) is a direct application of van-Kampen's theorem. Thus, we have a short exact sequence 
$$
1\to \ccc\hookrightarrow  \pi_1M\xrightarrow{p} \pi_1M_{\c}\to 1.
$$
Then, \autoref{PROP: Completion Exact} implies the following  exact sequence
$$
 \widehat{\ccc}\tto \widehat{\pi_1M}\xrightarrow{\;\,\widehat{p}\;\,} \widehat{\pi_1M_{\c}} \tto 1.
$$
In particular, $\widehat{p}$ is surjective, and the kernel of $\widehat{p}$ is the image of $ \widehat{\ccc}$ in $\widehat{\pi_1M}$, which is exactly $\overline{\ccc}$. This finishes the proof of (2). 
\end{proof}

\begin{theorem}[{\cite[Theorem A]{Xu24b}}]\label{THM: Dehn filling}
Let $(M,N,f, \lambda,\Psi)$ be a cusped hyperbolic setting of profinite isomorphisms (\autoref{conv+}). For any $\c\in \slp(\partial M)$, we have $\Psi(\c)\in \slp(\partial N)$. Let $p:\pi_1M \to \pi_1M_{\c}$ and $q:\pi_1N\to \pi_1N_{\c}$ be the homomorphisms induced by inclusions. Then, there exists an isomorphism $\fdf{c}:\widehat{\pi_1M_{\c}}\to \widehat{\pi_1N_{\Psi(\c)}}$ such that the following diagram commutes.
\begin{equation}\label{dehnfillingequ}
\begin{tikzcd}[column sep=large]
\widehat{\pi_1M}\arrow[d,"\widehat{p}"',two heads] \arrow[r,"f","\cong"'] & \widehat{\pi_1N} \arrow[d,"\widehat{q}", two heads]\\
\widehat{\pi_1M_{\c}} \arrow[r,"\fdf{c}","\cong"'] & \widehat{\pi_1N_{\Psi({\c})}}
\end{tikzcd}
\end{equation}
\end{theorem}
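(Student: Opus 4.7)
The plan is to identify both target completions as quotients of $\widehat{\pi_1M}$ and $\widehat{\pi_1N}$ via \autoref{LEM: van Kampen}, and then show that $f$ carries one kernel onto the other. First I would define $\Psi(\c)\in\slp(\partial N)$ componentwise: since $\Psi:\partial M\to\partial N$ restricts to a homeomorphism $\partial_iM\to\partial_iN$ inducing $\psi_i\colon P_i\to Q_i$ on fundamental groups, it sends $\pm$--primitive elements of $P_i$ to $\pm$--primitive elements of $Q_i$, hence slopes to slopes; set $\Psi(c_i)=\infty$ when $c_i=\infty$.

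Next, by \autoref{LEM: van Kampen}, $\widehat{p}$ is surjective with kernel $\overline{\ccc}$ and similarly $\widehat{q}$ is surjective with kernel $\overline{\langle\!\langle\Psi(\c)\rangle\!\rangle}$. To produce $\fdf{c}$ making (\ref{dehnfillingequ}) commute, it therefore suffices to verify the equality of closed normal subgroups
\[
f\bigl(\overline{\ccc}\bigr)\;=\;\overline{\langle\!\langle\Psi(\c)\rangle\!\rangle}
\]
in $\widehat{\pi_1N}$; the induced map on quotients is then automatically an isomorphism since $f$ is.

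For the inclusion ``$\subseteq$'', fix $i$ with $c_i\neq\infty$ and pick $\gamma_i\in P_i$ representing $c_i$. By \autoref{conv+}, $f(\gamma_i)=g_i^{-1}\cdot(\partial f_i)(\gamma_i)\cdot g_i = g_i^{-1}\,\psi_i(\gamma_i)^{\lambda}\,g_i$, where the exponentiation is the continuous extension $\varphi_{\psi_i(\gamma_i)}:\widehat{\Z}\to\widehat{Q_i}$ evaluated at $\lambda\in\Zx$. I would then invoke the standard fact that for any element $\beta$ of a profinite group and any unit $\lambda\in\Zx$, the procyclic group $\overline{\langle\beta^{\lambda}\rangle}$ equals $\overline{\langle\beta\rangle}$ (because $\overline{\langle\lambda\rangle}=\widehat{\Z}$, as $\lambda\cdot\widehat{\Z}=\widehat{\Z}$). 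Consequently the closed normal subgroup of $\widehat{\pi_1N}$ generated by $f(\gamma_i)$ agrees with the one generated by $\psi_i(\gamma_i)\in\pi_1N$, which by the standard fact that the closure of a discrete normal closure is normal in the profinite completion equals $\overline{\langle\!\langle\psi_i(\gamma_i)\rangle\!\rangle}$; this sits inside $\overline{\langle\!\langle\Psi(\c)\rangle\!\rangle}$. Ranging over $i$ yields $f(\overline{\ccc})\subseteq\overline{\langle\!\langle\Psi(\c)\rangle\!\rangle}$. The reverse inclusion follows by applying the same argument to $f^{-1}$ together with $\Psi^{-1}$, using that $\partial(f^{-1})_i$ decomposes as $\lambda^{-1}\otimes\psi_i^{-1}$ (up to a conjugation that is absorbed into taking the closed normal subgroup).

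The main technical point I expect to need care with is the symmetric step: verifying cleanly that under $f^{-1}$, the peripheral data behaves as $\lambda^{-1}\otimes\psi_i^{-1}$ after the appropriate inner automorphism, so that the two inclusions are genuinely symmetric. Everything else is formal from \autoref{conv+} and \autoref{LEM: van Kampen}, plus the procyclic observation about $\beta^{\lambda}$ for $\lambda\in\Zx$.
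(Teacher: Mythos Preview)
Your proposal is correct and follows essentially the same route as the paper: reduce via \autoref{LEM: van Kampen} to showing $f(\overline{\ccc})=\overline{\langle\!\langle\Psi(\c)\rangle\!\rangle}$, compute $f(\gamma_i)=g_i^{-1}\psi_i(\gamma_i)^{\lambda}g_i$ from \autoref{conv+}, and use $\lambda\in\Zx$ to drop the exponent. The one simplification the paper makes is that it never splits into two inclusions or invokes $f^{-1}$: since $f$ is a (topological) isomorphism, it carries the closed normal closure of a set to the closed normal closure of its image, so one gets the chain of \emph{equalities}
\[
f(\overline{\ccc})=\overline{\langle\!\langle f(\gamma_i)\rangle\!\rangle}=\overline{\langle\!\langle g_i^{-1}\psi_i(\gamma_i)^{\lambda}g_i\rangle\!\rangle}=\overline{\langle\!\langle \psi_i(\gamma_i)^{\lambda}\rangle\!\rangle}=\overline{\langle\!\langle \psi_i(\gamma_i)\rangle\!\rangle}=\overline{\langle\!\langle\Psi(\c)\rangle\!\rangle}
\]
in one stroke, and your ``main technical point'' about the symmetric step simply does not arise.
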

\begin{proof}
By \autoref{LEM: van Kampen}, it suffices to show that $f(\ker(\widehat{p}))= \ker(\widehat{q})$, or in other words, $f(\overline{\ccc})=\overline{\langle \! \langle \Psi(c) \rangle \! \rangle }$. For $1\le i\le n$ such that $c_i\neq  \infty$, we pick a representative $\gamma_i \in P_i$ that represents $c_i$. Then, $\psi_i(\gamma_i)\in Q_i\le \pi_1N $ is a conjugacy representative that represents  $\Psi(c_i)$. Within \autoref{conv+}, we  have $f(\gamma_i)= g_i^{-1} \psi_i(\gamma_i)^\lambda g_i$. Therefore,
\begin{align*}
f(\overline{\ccc})&= f(\overline{\langle \! \langle \gamma_i : c_i\neq \infty \rangle \! \rangle}) = \overline{\langle \! \langle f(\gamma_i) : c_i\neq \infty \rangle \! \rangle} = \overline{\langle \! \langle g_i^{-1} \psi_i(\gamma_i)^\lambda g_i : c_i\neq \infty \rangle \! \rangle} \\ &=\overline{\langle \! \langle   \psi_i(\gamma_i)^\lambda   : c_i\neq \infty \rangle \! \rangle} =\overline{\langle \! \langle   \psi_i(\gamma_i)   : c_i\neq \infty \rangle \! \rangle} =\overline{\langle \! \langle \Psi(c) \rangle \! \rangle }
\end{align*}
since $\lambda\in \Zx$, which finishes the proof. 
\end{proof}

\subsection{Relating homology coefficients}
\begin{lemma}\label{LEM: pass quotient regular}
Let $(M,N,f,\lambda,\Psi)$ be a cusped hyperbolic setting of profinite isomorphisms (\autoref{conv+}). Then, for any $\c\in \slp(\partial M)$, $\fdf{c}$ is $\lambda$-regular.
\end{lemma}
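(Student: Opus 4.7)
The plan is to apply the descent principle for $\Zx$-regularity (Sublemma~\ref{LEM: Descend Zx regular}) to the commutative square produced by \autoref{THM: Dehn filling}. Concretely, from that theorem we have
\begin{equation*}
\begin{tikzcd}[column sep=large]
\widehat{\pi_1M}\arrow[d,"\widehat{p}"',two heads] \arrow[r,"f","\cong"'] & \widehat{\pi_1N} \arrow[d,"\widehat{q}", two heads]\\
\widehat{\pi_1M_{\c}} \arrow[r,"\fdf{c}","\cong"'] & \widehat{\pi_1N_{\Psi({\c})}}
\end{tikzcd}
\end{equation*}
where the vertical maps are the profinite completions of the discrete inclusion-induced homomorphisms $p:\pi_1M\to \pi_1M_{\c}$ and $q:\pi_1N\to \pi_1N_{\Psi(\c)}$. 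By \autoref{LEM: van Kampen}~(1), both $p$ and $q$ are surjective, so in particular $p(\pi_1M)$ has finite (in fact trivial) index in $\pi_1M_{\c}$, and similarly for $q$. This puts us in exactly the hypothesis of \autoref{LEM: Descend Zx regular}.

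By the running convention \autoref{conv+}, $f$ is $\lambda$-regular. Applying \autoref{LEM: Descend Zx regular} with $G=\pi_1M$, $H=\pi_1N$, $K=\pi_1M_{\c}$, $L=\pi_1N_{\Psi(\c)}$, and $g=\fdf{c}$, we conclude directly that $\fdf{c}$ is $\lambda$-regular.

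The main ``obstacle'' is essentially bookkeeping: one must verify that the square really is the one coming from the convention-fixed isomorphism $f$ (so that the coefficient $\lambda$ is unambiguous, and not a sign-shifted variant), but this is immediate from the construction of $\fdf{c}$ in \autoref{THM: Dehn filling}. Thus the lemma reduces to a single invocation of the descent sublemma, and no additional argument is required.
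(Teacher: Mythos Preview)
Your proof is correct and follows essentially the same approach as the paper: both use the commutative square from \autoref{THM: Dehn filling}, note that $p$ and $q$ are surjective by \autoref{LEM: van Kampen}, and then invoke \autoref{LEM: Descend Zx regular} to descend the $\lambda$-regularity of $f$ to $\fdf{c}$.
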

\begin{proof}
Since the homomorphisms $p:\pi_1M\to \pi_1M_{\c}$ and $q:\pi_1N\to \pi_1N_{\Psi(\c)}$ are surjective by \autoref{LEM: van Kampen}, we can apply \autoref{LEM: Descend Zx regular} to the commutative diagram (\ref{dehnfillingequ}) in \autoref{THM: Dehn filling}. Since $f$ is $\lambda$-regular (in fact, $M$ and $N$ are cusped, so $b_1(M)=b_1(N)>0$), \autoref{LEM: Descend Zx regular} implies that $\fdf{c}$ is also $\lambda$-regular. 
\end{proof}

The $\lambda$-regularity of $\fdf{c}$ may not convey any information when $b_1(M_{\c})= b_1(N_{\Psi(\c)})=0$, see \autoref{RMK: betti number 0}. However, we can show that $\fdf{c}$ is in fact ``strongly $\lambda$-regular'' in the sense of the following lemma.

\begin{lemma}\label{LEM: strongly regular}
Let $(M,N,f,\lambda,\Psi)$ be a cusped hyperbolic setting of profinite isomorphisms (\autoref{conv+}), and let $\c\in \slp(\partial M)$. Then, for any $\fdf{c}$--corresponding finite covers $M_{\c}'$ and $N_{\Psi(\c)}'$ of $M_{\c}$ and $N_{\Psi(\c)}$, the isomorphism $(\fdf{c})'= \fdf{c}|_{\widehat{\pi_1M_{\c}'}}: \widehat{\pi_1M_{\c}'} \to \widehat{\pi_1 N_{\Psi(\c)}'}$  is $\lambda$-regular.  
\end{lemma}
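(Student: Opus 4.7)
The plan is to realize $(\fdf{c})'$ itself as the Dehn filling quotient map of a restricted cusped hyperbolic setting, and then to invoke \autoref{LEM: pass quotient regular} at that finer level.

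First I would define intermediate $f$--corresponding finite covers $\widetilde M\to M$ and $\widetilde N\to N$ by setting $\widehat{\pi_1\widetilde M}=\widehat{p}^{-1}(\widehat{\pi_1 M_{\c}'})$ and $\widehat{\pi_1\widetilde N}=\widehat{q}^{-1}(\widehat{\pi_1 N_{\Psi(\c)}'})$, where $\widehat{p}$ and $\widehat{q}$ are the surjections from \autoref{LEM: van Kampen}. The commutative diagram (\ref{dehnfillingequ}) from \autoref{THM: Dehn filling} together with $\fdf{c}(\widehat{\pi_1 M_\c'})=\widehat{\pi_1 N_{\Psi(\c)}'}$ implies $f(\widehat{\pi_1\widetilde M})=\widehat{\pi_1\widetilde N}$, so $\widetilde M, \widetilde N$ are genuinely $f$--corresponding. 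Topologically, $\widetilde M$ is the complement in $M_\c'$ of the lifts of the attached solid tori, so $M_\c'=\widetilde M_{\widetilde\c}$ for the multislope $\widetilde\c$ on $\partial\widetilde M$ given by the meridians of these lifted solid tori; similarly $N_{\Psi(\c)}'=\widetilde N_{\widetilde{\mathbf d}}$.

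Next I would verify that $\widetilde f:=f|_{\widehat{\pi_1\widetilde M}}:\widehat{\pi_1\widetilde M}\to\widehat{\pi_1\widetilde N}$ fits into a cusped hyperbolic setting $(\widetilde M,\widetilde N,\widetilde f,\lambda,\widetilde\Psi)$ (in the sense of \autoref{conv+}). Cuspedness and orientability are inherited; by \autoref{COR: homology coef finite cover}, $\coef(\widetilde f)=\coef(f)=\pm\lambda$; \autoref{PROP: respect peripheral structure} and \autoref{THM: peripheral Zx regular} give peripheral $\Zx$-regularity of $\widetilde f$, and by \autoref{samecoef} the coefficient is again $\pm\lambda$. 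Hence $\widetilde f$ admits a unique lift $\widetilde\Psi:\partial\widetilde M\to\partial\widetilde N$ which, by construction, covers $\Psi$ boundary-component-wise. Since each component of $\widetilde\c$ on a boundary torus $T\subset\partial\widetilde M$ projects to the slope $c_i$ on the corresponding $\partial_iM$, and $\widetilde\Psi|_T$ covers $\Psi|_{\partial_iM}$, we get $\widetilde\Psi(\widetilde\c)=\widetilde{\mathbf d}$; this compatibility check is the main technical point of the argument.

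Given this compatibility, \autoref{THM: Dehn filling} applied to $(\widetilde M,\widetilde N,\widetilde f,\lambda,\widetilde\Psi)$ and the slope $\widetilde\c$ produces an isomorphism $\widetilde f^{\mathrm{Df}(\widetilde\c)}:\widehat{\pi_1 M_\c'}\to\widehat{\pi_1 N_{\Psi(\c)}'}$ fitting into a commutative square with $\widetilde f$ and the Dehn filling quotients. Since the outer commutative square of \autoref{THM: Dehn filling} for $(M,N,f,\c)$ restricts, via $\widehat{\pi_1\widetilde M}\hookrightarrow\widehat{\pi_1 M}$ and $\widehat{\pi_1\widetilde N}\hookrightarrow\widehat{\pi_1 N}$, to a commutative square expressing $(\fdf{c})'$ as the lift of $\widetilde f$ across the same surjections, the surjectivity part of \autoref{LEM: van Kampen} gives the uniqueness of such a lift, hence $(\fdf{c})'=\widetilde f^{\mathrm{Df}(\widetilde\c)}$. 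Finally, applying \autoref{LEM: pass quotient regular} to the cusped hyperbolic setting $(\widetilde M,\widetilde N,\widetilde f,\lambda,\widetilde\Psi)$ and the slope $\widetilde\c$ yields that $\widetilde f^{\mathrm{Df}(\widetilde\c)}$ is $\lambda$-regular, so $(\fdf{c})'$ is $\lambda$-regular as required. The two expected obstacles are (i) the slope-correspondence $\widetilde\Psi(\widetilde\c)=\widetilde{\mathbf d}$, which requires tracing through how the peripheral homeomorphism $\Psi$ lifts to covers, and (ii) the identification $(\fdf{c})'=\widetilde f^{\mathrm{Df}(\widetilde\c)}$, which is a diagram chase using surjectivity of the Dehn filling quotients.
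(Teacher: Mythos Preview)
Your proposal is correct and follows essentially the same approach as the paper: both define the intermediate $f$--corresponding covers via pullback along $\widehat p$ and $\widehat q$, recognize $M_\c'$ and $N_{\Psi(\c)}'$ as Dehn fillings of these intermediate manifolds along the lifted slopes, and then apply \autoref{LEM: pass quotient regular} at that level after noting (via \autoref{LEM: homology coefficient well defined}~/~\autoref{COR: homology coef finite cover}) that the restricted isomorphism is again $\lambda$-regular. The paper's write-up is somewhat more compressed on the two points you flag as obstacles (the slope correspondence and the identification $(\fdf{c})'=\widetilde f^{\mathrm{Df}(\widetilde\c)}$), but the argument is the same.
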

\begin{proof}
As before, we denote $p: \pi_1M\to \pi_1M_{\c}$ and $q:\pi_1N\to \pi_1N_{\Psi(\c)}$ as the surjective homomorphisms induced by inclusions. Let $M^\star$ be the finite cover of $M$ corresponding to the finite-index subgroup $p^{-1}(\pi_1M_{\c}')\le \pi_1M$, and let $N^\star$ be the finite cover of $N$ corresponding to the finite-index subgroup $q^{-1}(\pi_1N_{\Psi(\c)}')\le \pi_1N$. Then the commutative diagram (\ref{dehnfillingequ}) implies that $f(\widehat{\pi_1M^\star})=\widehat{\pi_1N^\star}$, that is, $M^\star$ and $N^\star$ is an $f$--corresponding pair of finite covers of $M$ and $N$.  We denote $f^\star= f|_{\widehat{\pi_1M^\star}} : \widehat{\pi_1M^\star}\to \widehat{\pi_1N^\star}$. 

The boundary slope $\c\in \slp(\partial M)$ lifts to its pre-image $\c^{\star}\in \slp(\partial M^\star)$, and similarly, $\Psi(\c)$ lifts to its pre-image $\Psi(\c)^\star \in \slp(\partial N^\star)$. By  construction, $M_{\c}'=(M^\star)_{\c^\star}$ and $N_{\Psi(\c)}'= (N^\star)_{\Psi(\c)^{\star}}$, where $\c^\star$ and $\Psi(\c)^\star$ are $f^\star$--corresponding boundary slopes.  In particular, we have the following commutative diagram.
\begin{equation*}
\begin{tikzcd}[column sep=large]
                                                                                                    & \widehat{\pi_1M}  \arrow[rr, "f"{xshift=-2ex}] &                                                                               & \widehat{\pi_1N} \arrow[dd, "\widehat{q}"{yshift=-2ex}, two heads] \\
\widehat{\pi_1M^\star} \arrow[ru, hook] \arrow[dd, "\widehat{p|}"{yshift=-2ex}, two heads] \arrow[rr, "f^\star"{xshift=2ex}] &                                                                       & \widehat{\pi_1N^\star} \arrow[ru, hook] \arrow[dd, "\widehat{q|}"{yshift=-2.2ex}, two heads] &                                                       \\
                                                                                                    & \widehat{\pi_1M_{\c}}  &                                                                               & \widehat{\pi_1N_{\Psi(\c)}}                           \\
\widehat{\pi_1M_{\c}'} \arrow[ru, hook] \arrow[rr, "(f^\star)^{\mathrm{Df}(\c^\star)}=(\fdf{c})'"]  &                                                                       & \widehat{\pi_1N_{\Psi(\c)}'} \arrow[ru, hook]                                 &                           
\arrow[from=1-2, to=3-2,      "\widehat{p}"{yshift=-2ex}, two heads, crossing over]                  
\arrow[from=3-2, to=3-4,     "\fdf{c}"{xshift=-3ex},crossing over] 
\end{tikzcd}
\end{equation*}


According to \autoref{LEM: homology coefficient well defined}, $f^\star$ is also $\lambda$-regular. 
We apply            \autoref{LEM: pass quotient regular} to the backward block of the commutative diagram, and derive that   $(f^\star)^{\mathrm{Df}(\c^\star)}=(\fdf{c})'  $ is also $\lambda$-regular. 
\end{proof}

To be consistent with the definition of homology coefficients in the case of hyperbolic manifolds, we reformulate the conclusion of \autoref{LEM: strongly regular} as the following corollary. 

\begin{corollary}\label{LEM: coefficient equal}
Let $(M,N,f,\lambda,\Psi)$ be a cusped hyperbolic setting of profinite isomorphisms (\autoref{conv+}). For any $\mathbf{c}\in \slp(\partial M)$ such that $M_{\mathbf{c}}$ is hyperbolic, $\coef(\fdf{c})=\coef(f)$.
\end{corollary}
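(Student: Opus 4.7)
The plan is to reduce the corollary directly to \autoref{LEM: strongly regular} together with \autoref{DEF: homology coefficient}, using Agol--Wise to guarantee the existence of a good finite cover of $M_{\mathbf{c}}$ and $N_{\Psi(\mathbf{c})}$.

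First I would invoke the virtual positive first Betti number property: since $M_{\mathbf{c}}$ is a closed or cusped hyperbolic 3-manifold, and $N_{\Psi(\mathbf{c})}$ is too (it is profinitely isomorphic to it via $\fdf{c}$ from \autoref{THM: Dehn filling}, so it is in particular aspherical and has the same type), the results of Agol and Wise provide an $\fdf{c}$--corresponding pair of finite covers $M_{\mathbf{c}}'$ and $N_{\Psi(\mathbf{c})}'$ such that $b_1(M_{\mathbf{c}}')=b_1(N_{\Psi(\mathbf{c})}')>0$. Let $(\fdf{c})'=\fdf{c}|_{\widehat{\pi_1 M_{\mathbf{c}}'}}:\widehat{\pi_1M_{\mathbf{c}}'}\to \widehat{\pi_1N_{\Psi(\mathbf{c})}'}$ be the induced isomorphism.

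Next, \autoref{LEM: strongly regular} directly implies that $(\fdf{c})'$ is $\lambda$-regular, where $\lambda\in\Zx$ is the coefficient appearing in the fixed cusped hyperbolic setting $(M,N,f,\lambda,\Psi)$. On the other hand, by \autoref{DEF: homology coefficient} applied to $\fdf{c}$, the homology coefficient $\coef(\fdf{c})\in\Zxpm$ is characterized as the unique element $\pm\mu$ such that $(\fdf{c})'$ is $\mu$-regular; uniqueness here is legitimate thanks to $b_1(M_{\mathbf{c}}')>0$ and \autoref{RMK: betti number 0}. Combining these two facts, $\coef(\fdf{c})=\pm\lambda$.

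Finally, by the explanation recorded in \autoref{samecoef} and \autoref{conv+}, we already have $\coef(f)=\pm\lambda$. Therefore $\coef(\fdf{c})=\coef(f)$, which is the desired equality in $\Zxpm$. There is no substantial obstacle: all the work has already been done in \autoref{LEM: strongly regular}; the only point one needs to be a touch careful about is justifying that $M_{\mathbf{c}}$ (and hence $N_{\Psi(\mathbf{c})}$) indeed admits a finite cover of positive first Betti number, for which the hyperbolicity hypothesis of $M_{\mathbf{c}}$ together with virtual specialization suffices.
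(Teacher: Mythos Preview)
Your proof is correct and follows essentially the same approach as the paper's: take an $\fdf{c}$--corresponding pair of finite covers with positive first Betti number (via Agol--Wise), apply \autoref{LEM: strongly regular} to see that the restricted isomorphism is $\lambda$-regular, and conclude via \autoref{DEF: homology coefficient}. Your version is in fact more explicit than the paper's one-sentence proof, including the observation that $N_{\Psi(\mathbf{c})}$ must also be hyperbolic (which is needed for $\coef(\fdf{c})$ to be defined, and follows from profinite detection of hyperbolicity).
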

\begin{proof}
By \autoref{DEF: homology coefficient}, one can take a $\fdf{c}$--corresponding pair of finite covers of $M_{\c}$ and $N_{\Psi(\c)}$ with positive first betti number, and apply \autoref{LEM: strongly regular}. 
\end{proof}

\subsection{Relating profinite mapping degrees}

For preciseness  in defining the mapping degrees, we specify the orientation of Dehn fillings as follows. 
Suppose $M$ is a compact oriented 3-manifold  with toral boundary. For $\mathbf{c}\in \slp(\partial M)$, $M$ is viewed as a submanifold of $M_{\mathbf{c}}$, and $M_{\mathbf{c}}$ is equipped with the orientation coherent to the orientation on the submanifold $M$.

\begin{lemma}\label{LEM: degree equal}
Let $(M,N,f,\lambda,\Psi)$ be a cusped hyperbolic setting of profinite isomorphisms (\autoref{conv+}) with $M$ and $N$ oriented.  Then there exists an open neighbourhood $U$ of $(\infty,\cdots,\infty)$ in $\slp(\partial M)$ such that for any $\mathbf{c}\in U\cap \slope(\partial M)$, $M_{\c}$ and $N_{\Psi(\c)}$ are closed hyperbolic manifolds and $\deg(\fdf{c})=\deg(f)$. 
\end{lemma}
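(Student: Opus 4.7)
The plan has two main ingredients: hyperbolic Dehn surgery to secure hyperbolicity of the fillings, and a commutative-diagram argument to compare the two mapping degrees via the inclusion $M\hookrightarrow M_{\c}$. First, by Thurston's hyperbolic Dehn surgery theorem, there exist open neighbourhoods $U_M$ of $(\infty,\ldots,\infty)\in\slp(\partial M)$ and $U_N$ of $(\infty,\ldots,\infty)\in\slp(\partial N)$ such that $M_{\c}$ is hyperbolic for $\c\in U_M\cap\slope(\partial M)$ and $N_{\mathbf{d}}$ is hyperbolic for $\mathbf{d}\in U_N\cap\slope(\partial N)$. Since $\Psi$ induces a homeomorphism $\slp(\partial M)\to\slp(\partial N)$ fixing $(\infty,\ldots,\infty)$, the set $U:=U_M\cap\Psi^{-1}(U_N)$ is the desired neighbourhood, and for $\c\in U\cap\slope(\partial M)$ both $M_{\c}$ and $N_{\Psi(\c)}$ are closed hyperbolic manifolds.

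Next, I orient $M_{\c}$ and $N_{\Psi(\c)}$ coherently with their submanifolds $M$ and $N$. Let $V_1,\ldots,V_n\subset M_{\c}$ denote the filled solid tori (and $W_1,\ldots,W_n\subset N_{\Psi(\c)}$ those on the $N$-side). On the topological side, excision together with the long exact sequence of the pair $(M_{\c},\sqcup V_i)$, using $H_2(V_i)=H_3(V_i)=0$, yields natural isomorphisms
\begin{equation*}
H_3(M,\partial M;\Z)\xrightarrow{\;\cong\;}H_3(M_{\c},\sqcup V_i;\Z)\xleftarrow{\;\cong\;}H_3(M_{\c};\Z),
\end{equation*}
under which the fundamental classes $[M,\partial M]$, $[M_{\c},\sqcup V_i]$, and $[M_{\c}]$ all correspond, by the orientation convention. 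Through \autoref{COR: 3-mfd homology} and \autoref{PROP: solid tori}, the same chain, tensored with $\widehat{\Z}$, lifts to the profinite side: the three modules are canonically identified with $\H_3(\widehat{\pi_1M},\{\overline{P_i}\};\widehat{\Z})$, $\H_3(\widehat{\pi_1M_{\c}},\{\overline{\pi_1V_i}\};\widehat{\Z})$, and $\H_3(\widehat{\pi_1M_{\c}};\widehat{\Z})$, and are functorial with respect to the surjection $\widehat{p}:\widehat{\pi_1M}\twoheadrightarrow\widehat{\pi_1M_{\c}}$ provided by \autoref{LEM: van Kampen} (which also identifies the image of each $\overline{P_i}$ with $\overline{\pi_1V_i}$ up to conjugation, since $\partial_iM\subseteq\partial V_i$).

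The heart of the argument is then to stack the analogous $N$-diagram beneath the $M$-diagram, connecting them vertically by $f_\ast$ on the $(\widehat{\pi_1M},\{\overline{P_i}\})$-level and by $(\fdf{c})_\ast$ on the $\widehat{\pi_1M_{\c}}$-level. The middle square commutes by \autoref{THM: Dehn filling}; the required conjugations by the elements $g_i\in\widehat{\pi_1N}$ are absorbed by \autoref{PROP: conjugation invariant} and \autoref{PROP: conj natural}; the remaining squares commute by the naturality statements \autoref{PROP: completion induce natural}, \autoref{PROP: space pair group pair natural}, and \autoref{PROP: functorial LES}. Chasing $1\otimes[M_{\c}]$ along the top row produces $\deg(\fdf{c})\otimes[N_{\Psi(\c)}]$, while first descending through the isomorphisms above to $1\otimes[M,\partial M]$ and then running along the bottom row produces $\deg(f)\otimes[N,\partial N]$. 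Since the right-hand column of the diagram identifies these two fundamental classes on the $N$-side, commutativity forces $\deg(\fdf{c})=\deg(f)$.

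The main obstacle I anticipate is not a genuinely new idea but the bookkeeping: assembling excision, the long exact sequence of the pair, cohomological goodness via \autoref{PROP: solid tori}, and the conjugation corrections into a single large commutative diagram, and tracking orientations so that the fundamental classes correspond without sign flips. Once this coherence is established, the equality $\deg(\fdf{c})=\deg(f)$ drops out of a direct diagram chase, analogous in style to the argument carried out in \autoref{Diagram A}.
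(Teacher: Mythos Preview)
Your proposal is correct and follows essentially the same route as the paper's proof: Thurston's hyperbolic Dehn surgery to secure hyperbolicity, then a large commutative diagram built from excision, the long exact sequence of $(M_{\c},\sqcup V_i)$, \autoref{PROP: solid tori}, and the naturality/conjugation lemmas, with a diagram chase on fundamental classes. The paper carries this out explicitly as \autoref{Diagram B}, and your sketch anticipates its structure accurately, including the sign bookkeeping and the role of \autoref{PROP: conj natural} in absorbing the conjugations by $\widehat{q}(g_i)$.

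One small point worth tightening: to invoke \autoref{PROP: solid tori} you need the core curves of the $V_i$ to be homotopically non-trivial in $M_{\c}$; the paper secures this by choosing $U$ small enough that the cores become short geodesics in the hyperbolic structure (hence non-trivial), and you should record this when you set up $U$.
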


\newsavebox{\excisionlargediagram}
\begin{lrbox}{\excisionlargediagram}
\(
\begin{tikzcd}[column sep=0.15cm, row sep=small]
{\tensor H_3(M,\partial M;\Z)} \arrow[dddd, "\cong"] \arrow[rrrr, "1\otimes \text{incl}_{\ast}","\cong\text{ (excision)}"']                                        &  &            &  & {\tensor H_3(M_{\c}, \mathop{\sqcup}\limits_{i=1}^{n} V_i;\Z)} \arrow[dddd, "\cong"]                                                        &  &            &  & \tensor H_3(M_{\c};\Z) \arrow[llll, "-1\otimes \text{LES}"',"\cong"] \arrow[dddd, "\cong"]            \\
                                                                                                                                        &  &            &  &                                                                                                                                                     &  &            &  &                                                                                               \\
                                                                                                                                        &  & \text{(a)} &  &                                                                                                                                                     &  & \text{(b)} &  &                                                                                               \\
                                                                                                                                        &  &            &  &                                                                                                                                                     &  &            &  &                                                                                               \\
{\tensor H_3(\pi_1M, \{ P_i\}_{i=1}^{n};\Z)} \arrow[rrrr, "1\otimes p_\ast"] \arrow[dddd, "\cong"]                                      &  &            &  & {\tensor H_3(\pi_1M_{\c},\{\langle \alpha_i\rangle \}_{i=1}^{n};\Z)} \arrow[dddd, "\cong"]                                                          &  &            &  & \tensor H_3(\pi_1M_{\c};\Z) \arrow[llll, "1\otimes \text{LES}"'] \arrow[dddd,"\cong"]                 \\
                                                                                                                                        &  &            &  &                                                                                                                                                     &  &            &  &                                                                                               \\
                                                                                                                                        &  & \text{(c)} &  &                                                                                                                                                     &  & \text{(d)} &  &                                                                                               \\
                                                                                                                                        &  &            &  &                                                                                                                                                     &  &            &  &                                                                                               \\
{ H_3(\pi_1M, \{ P_i\}_{i=1}^{n};\widehat{\Z})} \arrow[rrrr, "p_\ast"] \arrow[dddd, "\cong"]                                      &  &            &  & { H_3(\pi_1M_{\c},\{\langle \alpha_i\rangle \}_{i=1}^{n};\widehat{\Z})} \arrow[dddd, "\cong"]                                                          &  &            &  &  H_3(\pi_1M_{\c};\widehat{\Z}) \arrow[llll, "\text{LES}"'] \arrow[dddd,"\cong"]                 \\
                                                                                                                                        &  &            &  &                                                                                                                                                     &  &            &  &                                                                                               \\
                                                                                                                                        &  & \text{(e)} &  &                                                                                                                                                     &  & \text{(f)} &  &                                                                                               \\
                                                                                                                                        &  &            &  &                                                                                                                                                     &  &            &  &                                                                                               \\
{\H_3(\widehat{\pi_1M},\{\overline{P_i}\}_{i=1}^{n};\widehat{\Z})} \arrow[rrrr, "\widehat{p}_{\ast}"] \arrow[dddd, "f_\ast"',"\cong"]            &  &            &  & {\H_3(\widehat{\pi_1 M_{\c}},\{\overline{\langle \alpha_i\rangle }\}_{i=1}^{n} ;\widehat{\Z})} \arrow[dddd, "\fdf{c}_{\ast}"',"\cong"]                       &  &            &  & \H_3(\widehat{\pi_1M_{\c}};\widehat{\Z}) \arrow[llll, "\text{LES}"'] \arrow[dddd, "\fdf{c}_\ast"',"\cong"]       \\
                                                                                                                                        &  &            &  &                                                                                                                                                     &  &            &  &                                                                                               \\
                                                                                                                                        &  & \text{(g)} &  &                                                                                                                                                     &  & \text{(h)} &  &                                                                                               \\
                                                                                                                                        &  &            &  &                                                                                                                                                     &  &            &  &                                                                                               \\
{\H_3(\widehat{\pi_1N},\{\overline{Q_i}^{g_i}\}_{i=1}^{n};\widehat{\Z})} \arrow[rrrr, "\widehat{q}_\ast"] \arrow[dddd, "\text{conj}."',"\cong"] &  &            &  & {\H_3(\widehat{\pi_1N_{\Psi(\c)}}, \{\overline{\langle \beta_i \rangle}^{\widehat{q}(g_i)}\}_{i=1}^{n};\widehat{\Z})} \arrow[dddd, "\text{conj.}"',"\cong"] &  &            &  & \H_3(\widehat{\pi_1N_{\Psi(\c)}};\widehat{\Z}) \arrow[llll, "\text{LES}"'] \arrow[dddd, "id"] \\
                                                                                                                                        &  &            &  &                                                                                                                                                     &  &            &  &                                                                                               \\
                                                                                                                                        &  & \text{(i)} &  &                                                                                                                                                     &  & \text{(j)} &  &                                                                                               \\
                                                                                                                                        &  &            &  &                                                                                                                                                     &  &            &  &                                                                                               \\
{\H_3(\widehat{\pi_1N},\{\overline{Q_i}\}_{i=1}^{n};\widehat{\Z})} \arrow[rrrr, "\widehat{q}_\ast"]                                     &  &            &  & {\H_3(\widehat{\pi_1N_{\Psi(\c)}}, \{\overline{\langle \beta_i \rangle}\}_{i=1}^{n};\widehat{\Z})}                                                  &  &            &  & \H_3(\widehat{\pi_1N_{\Psi(\c)}};\widehat{\Z}) \arrow[llll, "\text{LES}"']                    \\
                                                                                                                                        &  &            &  &                                                                                                                                                     &  &            &  &                                                                                               \\
                                                                                                                                        &  & \text{(k)} &  &                                                                                                                                                     &  & \text{(l)} &  &                                                                                               \\
                                                                                                                                        &  &            &  &                                                                                                                                                     &  &            &  &                                                                                               \\
{ H_3(\pi_1N, \{ Q_i\}_{i=1}^{n};\widehat{\Z})} \arrow[uuuu, "\cong"'] \arrow[rrrr, "q_\ast"]                                     &  &            &  & { H_3(\pi_1N_{\Psi(\c)},\{\langle \beta_i\rangle \}_{i=1}^{n};\widehat{\Z})} \arrow[uuuu, "\cong"']                                                    &  &            &  &  H_3(\pi_1N_{\Psi(\c)};\widehat{\Z}) \arrow[uuuu, "\cong"'] \arrow[llll, " \text{LES}"'] \\
                                                                                                                                        &  &            &  &                                                                                                                                                     &  &            &  &                                                                                               \\
                                                                                                                                        &  & \text{(m)} &  &                                                                                                                                                     &  & \text{(n)} &  &                                                                                               \\
                                                                                                                                        &  &            &  &                                                                                                                                                     &  &            &  &                                                                                               \\
{\tensor H_3(\pi_1N, \{ Q_i\}_{i=1}^{n};\Z)} \arrow[uuuu, "\cong"'] \arrow[rrrr, "1\otimes q_\ast"]                                     &  &            &  & {\tensor H_3(\pi_1N_{\Psi(\c)},\{\langle \beta_i\rangle \}_{i=1}^{n};\Z)} \arrow[uuuu, "\cong"']                                                    &  &            &  & \tensor H_3(\pi_1N_{\Psi(\c)};\Z) \arrow[uuuu, "\cong"'] \arrow[llll, "1\otimes \text{LES}"'] \\
                                                                                                                                        &  &            &  &                                                                                                                                                     &  &            &  &                                                                                               \\
                                                                                                                                        &  & \text{(o)} &  &                                                                                                                                                     &  & \text{(p)} &  &                                                                                               \\
                                                                                                                                        &  &            &  &                                                                                                                                                     &  &            &  &                                                                                               \\
{\tensor H_3(N,\partial N;\Z)} \arrow[uuuu, "\cong"'] \arrow[rrrr, "1\otimes \text{incl}_\ast","\cong\text{ (excision)}"']                                         &  &            &  & {\tensor H_3(N_{\Psi(\c)}, \mathop{\sqcup}\limits_{i=1}^{n} W_i;\Z)} \arrow[uuuu, "\cong"']                                                 &  &            &  & \tensor H_3(N_{\Psi(\c)};\Z) \arrow[llll, "-1\otimes \text{LES}"',"\cong"] \arrow[uuuu, "\cong"']    
\end{tikzcd}
\)
\end{lrbox}

\begin{proof}
By Thurston's hyperbolic Dehn surgery theorem \cite[Theorem 5.8.2]{Thurston}, we can choose an open  neighbourhood $U_1$ of $(\infty,\cdots,\infty)$ in $\slp(\partial M)$ such that for any $\c\in U_1$, $M_{\c}$ is hyperbolic, and the core curves of the Dehn filled solid tori are short geodesics in $M_{\c}$. In particular, the core curves of the Dehn filled solid tori are homotopically non-trivial in $M_{\c}$. Similarly, choose an  open  neighbourhood $U_2$ of $(\infty,\cdots,\infty)$ in $\slp(\partial N)$ such that $N_{\c'}$ is hyperbolic for $\c'\in U_2$ and the  core curves of the Dehn filled solid tori are homotopically non-trivial in $N_{\c'}$. Let $U=U_1\cap \Psi^{-1}(U_2)$. Then $U$ is an open neighbourhood of $(\infty,\cdots,\infty)$ in $\slp(\partial M)$.

Now we assume $\c=(c_1,\cdots,c_n)\in U\cap \slope(\partial M)$. As before, denote $p: \pi_1M\to \pi_1M_{\c}$ and $q: \pi_1N\to \pi_1N_{\Psi(\c)}$ as the surjective  homomorphisms induced by inclusion. Denote by $V_1,\cdots, V_n$ the Dehn filled solid tori in $M_{\c}$ attached to the boundary components $\partial_1M,\cdots,\partial_nM$ respectively, and denote by $W_1,\cdots, W_n$ the Dehn filled solid tori in $N_{\Psi(\c)}$ attached to the boundary components $\partial _1N,\cdots, \partial _n N$ respectively. By \autoref{PROP: solid tori}, $\pi_1V_i$ injects into $\pi_1M_{\c}$ and $\pi_1W_i$ injects into $\pi_1N_{\Psi(\c)}$. We specify the conjugacy representatives of $\pi_1V_i$ and $\pi_1W_i$ as $p(P_i)$ and $q(Q_i)$ respectively. Then, $\Z\cong \pi_1V_i=p(P_i)$ is generated by an element $\alpha_i=p(x_i)$, where $x_i\in P_i$ represents a simple closed curve $c_i^{\perp}$ on $\partial_iM$ having geometric intersection number $1$ with $c_i$. Let $y_i=\psi_i(x_i)\in Q_i$, which represents the simple closed curve $\Psi(c_i^{\perp})$ on $\partial_iN$ having  geometric intersection number $1$ with $\Psi(c_i)$. Let $\beta_i=q(y_i)$. Then $\Z\cong \pi_1W_i=q(Q_i)$ is generated by $\beta_i$. By assumption in \autoref{conv+}, $f(x_i)=g_i^{-1}\cdot y_i^\lambda \cdot g_i$, and by the commutative diagram (\ref{dehnfillingequ}), $\fdf{c}(\alpha_i)=\widehat q(g_i)^{-1} \cdot \beta_i^{\lambda}\cdot \widehat q(g_i)$. In particular, $\fdf{c}(\overline{\langle \alpha_i\rangle})=\overline{\langle \beta_i\rangle}^{\widehat q(g_i)}$.

Now we consider  \autoref{Diagram B}, which is a commutative diagram of $\widehat{\Z}$-modules. 
\begin{diagram}[ht]
\resizebox{14cm}{!}{\usebox{\excisionlargediagram}}
\caption{}
\label{Diagram B}
\end{diagram}

Let us first explain its validity. The isomorphisms in the left column of blocks (a), (c), (e), (k), (m), (o) are given by \autoref{COR: 3mfd with surface}. Since   $M_{\c}$ and $N_{\Psi(\c)}$ are hyperbolic and hence aspherical, the isomorphisms in the right column of blocks (a), (c), (e), (k), (m), (o) are given by \autoref{PROP: solid tori}, and the isomorphisms in the right column of blocks (b), (d), (f), (l), (n), (p) are given by \autoref{COR: 3-mfd homology}. The commutativity of blocks (a) and (o) follows from \autoref{PROP: space pair group pair natural} since these cell complex pairs are Eilenberg-MacLane pairs. The commutativity of blocks (b) and (p) follows from \autoref{THM: space pair group pair}, for which we remind the readers that there is a sign change. The commutativity of blocks (c), (d), (m), (n) is given by the universal coefficient theorem. The commutativity of blocks (e) and (k) follows from \autoref{PROP: completion induce natural}. The commutativity of blocks (f) and (l) follows from \autoref{PROP: completion induce LES}. 
The commutativity of block (g) follows from the commutative diagram (\ref{dehnfillingequ}) of maps between the profinite group pairs. The commutativity of block (h) follows from \autoref{PROP: functorial LES}. The commutativity of block (i) follows from \autoref{PROP: conj natural}, and the commutativity of block (j) follows from \autoref{PROP: conjugation invariant}. The maps in the top row of block (a) and the bottom row of block (o) are isomorphisms according to the topological excision theorem \cite[Theorem 31.7]{Mun}. And the maps in the top row of block (b) and the bottom row of block (p) are isomorphisms since in the long exact sequence, $H_3(\sqcup_{i=1}^{n}D^2\times S^1;\Z)=H_2(\sqcup_{i=1}^{n}D^2\times S^1;\Z)=0$. 



By \autoref{DEF: cusped mapping degree}, the left column of \autoref{Diagram B} sends $1\otimes [M]\in \tensor H_3(M,\partial M;\Z)$ to $\deg(f) \otimes [N] \in \tensor H_3(N,\partial N;\Z)$. Recall that $M_{\c}$ and $N_{\Psi(\c)}$ are closed hyperbolic manifolds, so by \autoref{DEF: closed profinite mapping degree}, the right column of \autoref{Diagram B} sends  $1\otimes[M_{\c}]\in \tensor H_3(M_{\c};\Z) $ to $\deg(\fdf{c}) \otimes [N_{\Psi(\c)}]\in \tensor H_3(N_{\Psi(\c)};\Z)$. It is clear from the topological construction  that the topological excision and the long exact sequence (composed with a sign change) in the top and bottom rows of \autoref{Diagram B} send $1\otimes [M]$ to $-1\otimes [M_{\c}]$ and $1\otimes [N]$ to $-1\otimes [N_{\Psi(\c)}]$ respectively. To conclude, if we start from $1\otimes [M]\in \tensor H_3( M,\partial M;\Z)$ in the upper-left corner, first mapping through the top row and then through the right column yields ${-}\deg(\fdf{c})\otimes [N_{\Psi(\c)}]$, while first mapping through the left column and then through the bottom row yields ${-}\deg(f)\otimes [N_{\Psi(\c)}]$. Therefore, $\deg(f)=\deg(\fdf{c})$. 
\end{proof}

\subsection{Conclusions} Now we can summarize the various relations between the homology coefficient and the profinite mapping degree. 
\begin{proposition}\label{PROP: cube}
Suppose $(M,N,f,\lambda,\Psi)$ is a cusped hyperbolic setting of profinite isomorphism, with $M$ and $N$ oriented. Then $\coef(f)^3=\pm \deg(f)$.
\end{proposition}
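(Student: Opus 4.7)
The plan is to reduce the cusped case directly to the closed case via Dehn filling, leveraging the infrastructure established in \autoref{LEM: degree equal} and \autoref{LEM: coefficient equal}. Concretely, I would invoke \autoref{LEM: degree equal} to produce an open neighbourhood $U$ of $(\infty,\ldots,\infty)$ in $\slp(\partial M)$ such that for every $\mathbf{c}\in U$, both $M_{\mathbf{c}}$ and $N_{\Psi(\mathbf{c})}$ are closed hyperbolic; since $\slope(\partial M)$ (honest integral slopes) is dense in $\slp(\partial M)$, one can pick some $\mathbf{c}\in U\cap \slope(\partial M)$ so that $M_\mathbf{c}$ and $N_{\Psi(\mathbf{c})}$ are genuine closed 3-manifolds, oriented coherently with $M$ and $N$ as specified just before \autoref{LEM: degree equal}.

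For such $\mathbf{c}$, \autoref{THM: Dehn filling} produces the induced profinite isomorphism $f^{\mathrm{Df}(\mathbf{c})}\colon\widehat{\pi_1 M_{\mathbf{c}}}\to \widehat{\pi_1 N_{\Psi(\mathbf{c})}}$ between closed oriented hyperbolic 3-manifolds. Applying \autoref{PROP: closed relation} to $f^{\mathrm{Df}(\mathbf{c})}$ yields
\begin{equation*}
\coef\bigl(f^{\mathrm{Df}(\mathbf{c})}\bigr)^3 = \pm \deg\bigl(f^{\mathrm{Df}(\mathbf{c})}\bigr).
\end{equation*}

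The final step is to transfer both sides back to $f$. By \autoref{LEM: coefficient equal}, $\coef(f^{\mathrm{Df}(\mathbf{c})})=\coef(f)$, and by \autoref{LEM: degree equal}, $\deg(f^{\mathrm{Df}(\mathbf{c})})=\deg(f)$. Substituting these equalities into the displayed identity gives $\coef(f)^3=\pm\deg(f)$, as desired.

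The key point was not a new computation at all but rather the compatibility of the two invariants with the Dehn filling construction; the real content was already packaged into \autoref{LEM: coefficient equal} (homology coefficient) and \autoref{LEM: degree equal} (profinite mapping degree), and into the closed-case formula \autoref{PROP: closed relation} derived from Liu's work. No obstacle arises beyond verifying that an admissible filling slope $\mathbf{c}$ exists, which is immediate from Thurston's hyperbolic Dehn surgery theorem and the density of integral slopes in $U$.
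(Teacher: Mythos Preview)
Your proof is correct and follows essentially the same approach as the paper: pick a slope $\mathbf{c}\in U\cap\slope(\partial M)$ from \autoref{LEM: degree equal}, apply \autoref{PROP: closed relation} to the closed hyperbolic Dehn filling, and transfer both invariants back via \autoref{LEM: coefficient equal} and \autoref{LEM: degree equal}. One minor imprecision: \autoref{LEM: degree equal} only asserts hyperbolicity and closedness for $\mathbf{c}\in U\cap\slope(\partial M)$, not for all $\mathbf{c}\in U$, but you handle this correctly in the next line anyway.
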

\begin{proof}
Let $U$ be the open neighbourhood of $(\infty,\cdots,\infty)$ in $\slp(\partial M)$ given by \autoref{LEM: degree equal}, and let $\c\in U\cap \slope(\partial M)$. 
Then according to \autoref{LEM: coefficient equal} and \autoref{LEM: degree equal} respectively, $\coef(\fdf{c})=\coef(f)$ and $\deg(\fdf{c})=\deg(f)$. Note that $M_{\mathbf{c}}$ and $N_{\Psi(\mathbf{c})}$ are closed hyperbolic 3-manifolds, so by \autoref{PROP: closed relation}, $\coef(\fdf{c})^3=\pm \deg(\fdf{c})$. Combining these equations, we have $\coef(f)^3=\coef(\fdf{c})^3= \pm \deg(\fdf{c})=\pm \deg(f)$.  
\end{proof}

We restate \autoref{mainthm: regular} as the following corollary.

\begin{corollary}\label{COR: restate main}
Suppose $(M,N,f,\lambda,\Psi)$ is a cusped hyperbolic setting of profinite isomorphism, with $M$ and $N$ oriented. Then $\coef(f)=\pm1$ and $\deg(f)=\pm1$. In particular, $f$ is regular and peripheral regular.
\end{corollary}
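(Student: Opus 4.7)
The plan is to combine the two structural identities already in hand: the ``square'' identity $\deg(f)=\coef(f)^2\cdot\sgn(\Psi)$ from \autoref{PROP: square}, obtained by reading the boundary-restriction of $f$ against the long exact sequence of the pair $(M,\partial M)$, and the ``cube'' identity $\coef(f)^3=\pm\deg(f)$ from \autoref{PROP: cube}, obtained by pushing $f$ through hyperbolic Dehn fillings to a closed-manifold profinite isomorphism and applying \autoref{PROP: closed relation}. With the distinguished representative $\lambda\in\Zx$ provided by \autoref{conv+} (so that $\coef(f)=\pm\lambda$ and $\deg(f)=\lambda^2\sgn(\Psi)\in\Zx$), these two identities read, as equalities in $\Zx$,
\begin{equation*}
\deg(f)=\lambda^{2}\sgn(\Psi),\qquad \lambda^{3}=\pm\,\deg(f).
\end{equation*}

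Substituting the first into the second gives $\lambda^{3}=\pm\lambda^{2}\sgn(\Psi)=\pm\lambda^{2}$ in $\Zx$. Since $\lambda$ is a unit, I can cancel $\lambda^{2}$ on both sides to conclude $\lambda=\pm1$, hence $\coef(f)=\pm1$ in $\Zxpm$; feeding this back into the square identity yields $\deg(f)=\pm1$ in $\Zx$ as well. This is the ``dimension-shift'' phenomenon advertised in the introduction: the $2$-dimensional peripheral calculation and the $3$-dimensional closed-manifold calculation produce two genuinely different power laws for the same unit, and the only units in $\Zx$ satisfying both are $\pm1$. I do not expect any genuine obstacle here; once \autoref{PROP: square} and \autoref{PROP: cube} are available, the algebra is immediate because $\widehat{\Z}$ is torsion-free and $\Zx$ is multiplicatively free of zero divisors.

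It remains to unpack the conclusion into the regularity and peripheral regularity statements. By \autoref{THM: Zx regular} and the running convention, $f$ is $\lambda$-regular, so the induced map $f_{\ast}\colon\tensor\ab{\pi_{1}M}\to\tensor\ab{\pi_{1}N}$ splits as $\lambda\otimes\phi$ for some isomorphism $\phi\colon\ab{\pi_{1}M}\to\ab{\pi_{1}N}$; with $\lambda=\pm1$ we may absorb the sign into $\phi$ to write $f_{\ast}=1\otimes(\pm\phi)$, which is exactly $1$-regularity and hence regularity in the sense of \autoref{introdef: Zx regular}. For peripheral regularity, \autoref{PROP: respect peripheral structure} already guarantees that $f$ respects the peripheral structure, and \autoref{THM: peripheral Zx regular} combined with \autoref{samecoef} guarantees that each boundary restriction $\partial f_{i}=C_{g_{i}}\circ f|_{\overline{P_{i}}}$ is $\lambda$-regular for the same $\lambda$; the same sign-absorption argument then shows every $\partial f_{i}$ is regular, so $f$ is peripheral regular. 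This finishes the proof of \autoref{mainthm: regular}.
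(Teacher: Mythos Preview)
Your proof is correct and follows essentially the same approach as the paper: combine \autoref{PROP: square} and \autoref{PROP: cube} to force $\lambda=\pm1$, then read off regularity and peripheral regularity from the existing $\lambda$-regularity statements. The paper's version is terser but the argument is identical.
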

\begin{proof}
By \autoref{PROP: square}~(\ref{square-2}) we have $\coef(f)^2=\pm \deg(f)$, and by \autoref{PROP: cube} we have $\coef(f)^3=\pm \deg(f)$. These two equations together imply  that $\coef(f)=\pm 1$ and $\deg(f)=\pm1$. Since $M$ and $N$ are cusped, $b_1(M)=b_1(N)>0$. Thus, by \autoref{DEF: homology coefficient} and its well-definedness, $f$ is $1$-regular, which is regular in the sense of \autoref{introdef: Zx regular}. And $f$ is peripheral regular by \autoref{THM: peripheral Zx regular}.
\end{proof}
This finishes the proof of \autoref{mainthm: regular}.

\section{Application to mixed manifolds}\label{sec:9}
\begin{definition}
A {\em mixed 3-manifold} is a compact, orientable, irreducible 3-manifold with empty or incompressible toral boundary whose JSJ-decomposition is nontrivial and contains a hyperbolic piece.
\end{definition}

This section is devoted to the proof of the following theorem.
\begin{theorem}\label{THM: Mixed peripheral}
Suppose $M$ and $N$ are mixed 3-manifolds, and each Seifert fibered piece of $M$ contains at most one component of $\partial M$. If $f:\widehat{\pi_1M}\to \widehat{\pi_1N}$ is an isomorphism that respects the peripheral structure, then $f$ is peripheral regular.
\end{theorem}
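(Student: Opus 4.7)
The plan is to reduce the statement to the theorems already in hand by decomposing $M$ and $N$ along their JSJ decompositions and then treating the hyperbolic and Seifert pieces in turn. First I would invoke the profinite JSJ recognition results of Wilton--Zalesskii and Wilkes~\cite{WilkesJSJ} to deduce that $f$ induces a bijection between the JSJ pieces of $M$ and the JSJ pieces of $N$ preserving geometric types, and that (after conjugating by suitable elements of $\widehat{\pi_1N}$) $f$ restricts to profinite isomorphisms $f_j:\widehat{\pi_1M_j}\tto \widehat{\pi_1N_j}$ between corresponding pieces. Since $f$ already respects the peripheral structure of $(M,N)$, each boundary component $\partial_iM$ lies in a unique JSJ piece $M_{j(i)}$, matched with a unique $\partial_iN \subset N_{j(i)}$, and the restriction of $f$ identifies the corresponding peripheral closures up to conjugation inside the $f_{j(i)}$.

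Next I would dispose of the boundary tori lying in hyperbolic pieces: for each hyperbolic piece $M_j$, the restricted isomorphism $f_j$ satisfies the hypotheses of \autoref{mainthm: regular} (applied as in \autoref{COR: restate main}), and is therefore peripheral regular. In particular, for every $\partial_iM$ contained in a hyperbolic piece $M_j$, the restricted map $\partial f_i$ is $1$-regular. This simultaneously provides regularity on every JSJ torus of $M$ that bounds at least one hyperbolic JSJ piece, because such a JSJ torus is a peripheral torus of that hyperbolic piece.

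For the Seifert pieces $S$ of $M$, I would apply the peripheral $\widehat{\mathbb{Z}}^{\times}$-regularity machinery for mixed manifolds developed in \cite{Xu24a} to extract a unified coefficient $\lambda_S\in\Zx$ such that the restriction of $f$ to each peripheral subgroup of $S$ (i.e.\ boundary tori of $M$ inside $S$ together with JSJ tori of $S$) is $\lambda_S$-regular. The hypothesis that each Seifert piece $S$ contains at most one boundary component of $\partial M$ is exactly what is needed so that the unification argument of \cite{Xu24a} goes through uniformly: a Seifert piece in a mixed manifold has at least one JSJ torus on its boundary, and under this hypothesis its structure is rigid enough that the unified coefficient $\lambda_S$ exists and coincides on all peripheral subgroups, including the at most one $\partial M$ boundary. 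Then, since $M$ is mixed, the JSJ graph of $M$ is connected and contains a hyperbolic vertex; by step two the coefficient on any JSJ torus adjacent to a hyperbolic piece equals $1$, and propagating along the connected JSJ graph (where adjacent pieces share a common torus on which the coefficient must agree) forces $\lambda_S=1$ for every Seifert piece $S$. Combining the two steps gives $\partial f_i$ is $1$-regular for every boundary component of $M$, which is the definition of peripheral regularity.

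The principal obstacle I expect is the third paragraph: extending the peripheral $\Zx$-regularity arguments of \cite{Xu24a} so that they apply uniformly to Seifert pieces of a mixed manifold and provide a single coefficient $\lambda_S$ valid on both the internal JSJ tori and on an actual boundary torus of $M$. The Seifert fiber is a central element of $\pi_1S$ lying in every peripheral subgroup, so the regular fiber direction is automatically controlled consistently across tori; the real content is in the section direction, and this is where the hypothesis bounding the number of $\partial M$ boundary tori in $S$ is essential, since it prevents competing choices of base-orbifold sections from producing inconsistent values of $\lambda_S$. Once this unification is in place, the propagation through the JSJ graph is purely combinatorial.
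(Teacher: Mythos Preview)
Your overall architecture matches the paper's proof: decompose along the JSJ graph, apply \autoref{mainthm: regular} to the hyperbolic pieces, and propagate through the graph to the Seifert pieces. However, your step for Seifert pieces has a genuine gap.

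You posit that for each Seifert piece $S$ there is a single ``unified coefficient $\lambda_S$'' such that $f|_S$ is $\lambda_S$-regular on every peripheral torus of $S$. This is not what the machinery of \cite{Xu24a} provides, and in general no such $\lambda_S$ exists for a Seifert piece in isolation. What one actually has is a \emph{scale type} $(\pm\lambda,\pm\mu)\in(\Zxpm)^2$: the boundary map on each torus is upper-triangular with diagonal $(\pm\lambda,\pm\mu)$ in the (fiber, section) basis (see equation~\eqref{scaletypeequ}). The restriction to a peripheral torus is $\nu$-regular only when $\lambda=\pm\mu=\pm\nu$ \emph{and} the off-diagonal entry lies in $\nu\Z$, neither of which is automatic. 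So asserting a unified $\lambda_S$ before any propagation essentially assumes what must be proved.

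The paper's argument reverses your order. First, the scale type \emph{pair} always exists; across a JSJ torus joining two Seifert pieces the pair swaps, $(\pm\lambda,\pm\mu)\leftrightarrow(\pm\mu,\pm\lambda)$ (\autoref{LEM: gluing Seifert}(\ref{sl1})). Since some Seifert piece is adjacent to a hyperbolic piece, \autoref{mainthm: regular} plus \autoref{LEM: JSJ}(\ref{lem3}) force that piece to have scale type $(\pm1,\pm1)$, and the swap-propagation then gives $(\pm1,\pm1)$ for every Seifert piece. Only now does one obtain peripheral $1$-regularity at every JSJ torus of each Seifert piece (\autoref{LEM: gluing Seifert}(\ref{sl2})). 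Finally, the hypothesis that at most one component of $\partial M$ lies in each Seifert piece enters exactly here, through \autoref{LEM: Seifert last one}: having established $1$-regularity at the $n-1$ JSJ boundaries of $M_v$, one uses the relation $\sum\rho_i=0$ to force $1$-regularity at the single remaining $\partial M$-boundary. So the hypothesis controls the off-diagonal section term at the last torus, not the existence of a single coefficient per piece.
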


\begin{remark}
The conclusion of \autoref{THM: Mixed peripheral} is invalid if some Seifert fibered piece of $M$ contains at least two components $\partial_1^\star M$ and $\partial_2^\star M$ of $\partial M$. A counter-example can be constructed from `Dehn twists' along a fiber-parallel annulus joining  $\partial_1^\star M$ and $\partial_2^\star M$ with coefficients in $\widehat{\Z}$. Indeed, there is a submanifold $X$ of $M$ homeomorphic to $S_{0,3}\times S^1$ that contains $\partial_1^\star M$ and $\partial_2^\star M$ as two boundary components, where $S_{0,3}$ denotes $S^2$ removing 3 open disks. $\pi_1X\cong F_2\times \Z$ has the presentation $\left\langle a,b,c,h\mid abc=1,\,h\text{ central}\right\rangle$, where $\langle a,h\rangle$ and $\langle b,h\rangle$ are peripheral subgroups corresponding to $\partial_1^\star M$ and $\partial_2^\star M$, and $\langle c,h\rangle$ is the peripheral subgroup corresponding to the remaining boundary component of $X$ that belongs to the interior of $M$. For $\mu\in \widehat{\Z}$, one can define an automorphism $\phi$ on $\widehat{\pi_1X}$ such that   $\phi(a)=ah^\mu$, $\phi(b)=bh^{-\mu}$, $\phi(c)=c$ and $\phi(h)=h$. In particular, $\phi$ is identity on the third boundary component, so one can combine $\phi$ with $id$ on $\widehat{\pi_1(M\setminus X)}$ to obtain an automorphism $f$ of $\widehat{\pi_1M}$ that respects the peripheral structure. However, when $\mu\notin \Z$, $f$ is not peripheral regular at $\partial_1^\star M$ and $\partial_2^\star M$. 
\end{remark}


\subsection{Profinite detection of JSJ-decomposition}

Let $M$ be a compact, irreducible 3-manifold with empty or incompressible toral boundary. The JSJ-decomposition endows $M$ with a graph-of-space structure $(M_{\bullet},\Gamma^M)$. $\Gamma^M$ is a finite combinatorial graph called the JSJ-graph of $M$, each vertex  $v\in V(\Gamma^M)$ corresponds to a JSJ-piece $M_v$ which is either Seifert fibered or hyperbolic, and each edge  $e\in E(\Gamma^M)$ corresponds to a JSJ-torus $T_e$. The graph-of-space structure of $M$ defines a graph-of-group structure on $\pi_1M$. Moreover, it is shown by \cite[Theorem A]{WZ10} that this structure is preserved under profinite completion. Indeed, $\widehat{\pi_1M}$ is the profinite fundamental group of a finite graph of profinite groups $(\widehat{\mathcal{G}^M},\Gamma^M)$, where $\widehat{G^M_v}=\widehat{\pi_1M_v}$ for $v\in V(\Gamma_M)$, and $\widehat{G^M_e}=\widehat{\pi_1T^M_e}\cong \widehat{\Z}^2$ for $e\in E(\Gamma_M)$. We refer the readers to \cite{WZ19} for details.

The profinite detection of geometrization and JSJ-decomposition was proven  by Wilton--Zalesskii in a series of works \cite{WZ17,WZ17b,WZ19}. In \cite{Xu24a}, the author reformulated this result in the language of profinite Bass-Serre theory.

\newsavebox{\butterfly}
\begin{lrbox}{\butterfly}
\(
\begin{tikzcd}[column sep=tiny, row sep=huge]
{\widehat{\pi_1 M_{\partial_-(e)}}} \arrow[r,symbol=\supseteq] \arrow[d, "\text{conj}\circ f_{\partial_-(e)}"'] & {\overline{\pi_1\partial_iM_{\parital_-(e)}}} \arrow[d, "\partial f_{\partial_-(e)}"] &  & &  & {\widehat{\pi_1T^M_e}} \arrow[llll, "\widehat{\varphi_-}"'] \arrow[d,"f_{e}"] \arrow[rrrr, "\widehat{\varphi_+}"] &  & &  & {\overline{\pi_1\partial_kM_{\parital_+(e)}}} \arrow[d, "\partial f_{\partial_+(e)}"'] \arrow[r,symbol=\subseteq] & {\widehat{\pi_1 M_{\partial_+(e)}}} \arrow[d, "\text{conj}\circ f_{\partial_+(e)}"] \\
{\widehat{\pi_1 N_{\partial_-(\F(e))}}} \arrow[r,symbol=\supseteq]                                              &{ \overline{\pi_1\partial_jN_{\parital_-(\F(e))}}  }                                   &  & & & {\widehat{\pi_1T^N_{\F(e)}} }\arrow[llll, "\widehat{\psi_-}"] \arrow[rrrr, "\widehat{\psi_+}"']           &  &  & & {\overline{\pi_1\partial_lN_{\parital_+(\F(e))}}} \arrow[r,symbol=\subseteq]                                      & {\widehat{\pi_1 N_{\partial_+(\F(e))}}                                            }
\end{tikzcd}
\)
\end{lrbox}

\begin{theorem}[{\cite[Theorem 5.5]{Xu24a}}]\label{THM: JSJ}
Suppose $M$ and $N$ are compact, irreducible 3-manifolds with empty or incompressible toral boundary which are not closed $Sol$-manifolds. Let $f:\widehat{\pi_1M}\to \widehat{\pi_1N}$ be an isomomrphism. Then $f$ induces a {congruent isomorphism} between graphs of profinite groups
$$
f_{\bullet}\;: (\widehat{\mathcal{G}^M},\Gamma^M)\xrightarrow{\;\;\cong\;\;} (\widehat{\mathcal{G}^N},\Gamma^N)
$$
consisting of the following ingredients. 
\begin{enumerate}[leftmargin=*]
\item With $\Gamma^M$ and $\Gamma^N$ equipped with appropriate orientations, $\F:\Gamma^M\to \Gamma^N$ is an isomorphism of oriented graphs. 
\item For each $x\in \Gamma^M=V(\Gamma^M)\cup E(\Gamma^M)$, $f_x: \widehat{G^M_{x}}\to \widehat{G^N_{\F(x)}}$ is an isomorphism that fits into the following commutative diagram.
\begin{equation}\label{congruentdiag1}
\begin{tikzcd}
\widehat{G^M_x} \arrow[rr, hook] \arrow[d, "f_x"'] &                                       & \widehat{\pi_1M} \arrow[d, "f"] \\
\widehat{G^N_{\F(x)}} \arrow[r, hook]              & \widehat{\pi_1N} \arrow[r, "C_{g_x}","\cong"'] & \widehat{\pi_1N}               
\end{tikzcd}
\end{equation}
\item For each $e\in E(\Gamma^M)$ there is a coherence relation shown by the following commutative diagram
\begin{equation}\label{butterfly}
\scalebox{0.94}{\usebox{\butterfly}}
\end{equation}
where $\varphi_\pm$ and $\psi_{\pm}$ denote  the gluing maps.
\end{enumerate}
\end{theorem}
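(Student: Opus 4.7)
The plan is to reformulate Wilton--Zalesskii's profinite detection of the JSJ decomposition \cite{WZ17,WZ17b,WZ19} in the language of profinite Bass--Serre theory. By \cite{WZ10}, the graph-of-group splitting of $\pi_1M$ induced by the JSJ decomposition is efficient; consequently $\widehat{\pi_1M}$ is the profinite fundamental group of $(\widehat{\mathcal{G}^M},\Gamma^M)$ and acts on a standard profinite Bass--Serre tree $T^M$ with quotient $\Gamma^M$, whose vertex and edge stabilizers are conjugates of $\widehat{G^M_v}$ and $\widehat{G^M_e}$ respectively; the same holds for $\widehat{\pi_1N}$ and $T^N$. I would first transport $T^N$ along $f$ to obtain a second $\widehat{\pi_1M}$-action on a profinite tree, then argue that this transported action is $\widehat{\pi_1M}$-equivariantly isomorphic to the original action on $T^M$. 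The two essential inputs are: (a) the profinite invariance of the conjugacy-class collection of edge stabilizers, identified with the closures of the rank-two maximal abelian peripheral subgroups of JSJ pieces \cite{WZ17,WZ19}; and (b) the profinite invariance of the geometric type (Seifert or hyperbolic) of each vertex stabilizer \cite{WZ17,WZ17b,Wil19}, together with rigidity of the incidence pattern of edges and vertices. The exclusion of closed $Sol$-manifolds is what ensures that no anomalous edge-vertex collapse occurs.

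Taking the $\widehat{\pi_1M}$-quotient, the equivariant tree isomorphism descends to a bijection $\F:\Gamma^M\to\Gamma^N$ preserving edge-vertex incidence, hence a graph isomorphism; orientations on edges can then be chosen compatibly so that $\F$ is an isomorphism of oriented graphs. For each $x\in V(\Gamma^M)\cup E(\Gamma^M)$, I would choose a lift $\widetilde{x}\in T^M$ of $x$ and take $g_x\in\widehat{\pi_1N}$ to be any element realising the equivariant tree isomorphism on stabilizers at $\widetilde{x}$; explicitly, $g_x$ conjugates $f(\overline{G^M_x})$ onto $\overline{G^N_{\F(x)}}$. Setting $f_x=C_{g_x}\circ f|_{\widehat{G^M_x}}$ then provides the isomorphism $f_x:\widehat{G^M_x}\to\widehat{G^N_{\F(x)}}$ together with the commutativity of diagram (\ref{congruentdiag1}).

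It remains to verify the butterfly coherence (\ref{butterfly}) at each edge $e$. This is automatic once all the $g_x$ are extracted from the same equivariant isomorphism of profinite trees: the lifts $\widetilde{e}$, $\widetilde{\partial_-(e)}$, $\widetilde{\partial_+(e)}$ can be chosen so that $\widetilde{\partial_\pm(e)}$ are the endpoints of $\widetilde{e}$ in $T^M$, and their images under the tree isomorphism form the analogous incidence configuration in $T^N$; the restrictions of the resulting $f_x$'s to the peripheral subgroups then match the gluing maps $\widehat{\varphi_\pm}$ and $\widehat{\psi_\pm}$ on the nose. The main obstacle is the coherent choice of lifts across all edges simultaneously, but this is exactly the data packaged into the equivariant tree isomorphism, and therefore presents no intrinsic difficulty once the comparison in the first paragraph has been carried out. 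All the substantive analytic content ultimately rests on Wilton--Zalesskii's detection of geometric and JSJ pieces via finite quotients.
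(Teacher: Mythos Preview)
The paper does not give its own proof of this statement: it is quoted verbatim as \cite[Theorem 5.6]{Xu24a} and used as a black box. Your outline is consistent with what the paper says about the origin of the result, namely that it is a reformulation in profinite Bass--Serre language of the Wilton--Zalesskii detection theorems \cite{WZ17,WZ17b,WZ19}, relying on efficiency of the JSJ splitting \cite{WZ10}; so at the level of strategy there is nothing to compare against here.

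One point to flag: your sketch treats the equivariant isomorphism $T^M\to T^N$ as essentially given once the collections of edge and vertex stabilisers are known to match, but this is the step that actually requires work. Knowing that $f$ carries closures of JSJ-torus subgroups to conjugates of JSJ-torus subgroups, and vertex-group closures to conjugates of vertex-group closures, does not by itself produce a tree isomorphism; one must show that the transported $\widehat{\pi_1M}$-tree has the same stabiliser pattern \emph{with the same incidence relations}, and then invoke a uniqueness or acylindricity statement for such profinite trees. In the abstract setting this is handled in \cite{Xu24a} via the machinery built there, and the exclusion of closed $Sol$-manifolds is needed precisely because their JSJ graph-of-groups structure is degenerate for these purposes. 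Your last paragraph acknowledges the coherent-lift issue but underestimates it: producing the equivariant tree map is the content, not a formality after the fact.
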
  

Some useful properties within this congruent isomorphism are listed as follows. 

\begin{lemma}\label{LEM: JSJ}
Follow the setting of \autoref{THM: JSJ}.
\begin{enumerate}[leftmargin=*]
\item\label{lem1} For $v\in V(\Gamma^M)$, $M_v\cap \partial M=\varnothing$ if and only if $N_{\F(v)}\cap \partial N=\varnothing$.
\item\label{lem2} For $v\in V(\Gamma^M)$, $M_v$ is hyperbolic (resp. Seifert fibered) if and only if $N_{\F(v)}$ is hyperbolic (resp. Seifert fibered).
\item\label{lem3} Following the notation in the commutative diagram (\ref{butterfly}), $f_{\partial_-(e)}$ is peripheral $\lambda$-regular at $\partial_iM_{\partial_-(e)}$ if and only if $f_{\partial_+(e)}$ is peripheral $\lambda$-regular at $\partial_kM_{\partial_+(e)}$. Here, by $f_{\partial_-(e)}$ being peripheral $\lambda$-regular at $\partial_iM_{\partial_-(e)}$, we mean that $\partial f_{\partial_-(e)}$ is $\lambda$-regular, and likewise for $f_{\partial_+(e)}$. 
\item\label{lem4} $f$ respects the peripheral structure if and only if all the $f_v$'s ($v\in V(\Gamma^M)$) respect  the peripheral structure.
\end{enumerate}
\end{lemma}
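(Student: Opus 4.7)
The plan is that all four parts follow from the structural information packaged in \autoref{THM: JSJ}. The key point is that $f_{\bullet}$ is not merely a bijection on vertices and edges but a congruent isomorphism: the vertex maps $f_v$ agree with $f$ on the vertex subgroups up to conjugation, and the coherence square (\ref{butterfly}) ties the boundary behaviour of adjacent vertex isomorphisms through the edge isomorphism $f_e$.

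For (1) and (2), I would first invoke the profinite detection of geometry type for JSJ-pieces due to Wilton--Zalesskii to pass hyperbolic vs.\ Seifert-fibered across $f_v$, giving (2). For (1), I count: by construction of the JSJ-decomposition, the components of $M_v\cap\partial M$ are exactly those boundary tori of $M_v$ that are \emph{not} JSJ-tori, and their number equals $b(M_v)-d(v)$, where $b(M_v)$ is the total number of boundary tori of $M_v$ and $d(v)$ is the valency of $v$ in $\Gamma^M$. Since $\F$ is a graph isomorphism, $d(v)=d(\F(v))$; and since $f_v$ is a profinite isomorphism between hyperbolic or Seifert-fibered pieces with toral boundary, $b(M_v)=b(N_{\F(v)})$ (for hyperbolic pieces this is \autoref{PROP: respect peripheral structure}; for Seifert-fibered pieces the peripheral structure is encoded in the fundamental group and is profinitely detectable). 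Combining these gives (1).

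Part (3) is read directly off the diagram (\ref{butterfly}). The gluing maps $\varphi_\pm$ and $\psi_\pm$ are isomorphisms between discrete copies of $\Z^2$, so their profinite completions are $1$-regular isomorphisms of $\widehat{\Z}^2$. Rewriting the coherence relation gives
\[
\partial f_{\partial_+(e)} \;=\; (\widehat{\psi_+}\circ\widehat{\psi_-}^{-1}) \;\circ\; \partial f_{\partial_-(e)} \;\circ\; (\widehat{\varphi_-}\circ\widehat{\varphi_+}^{-1}),
\]
and since pre- and post-composition of a $\lambda$-regular isomorphism $\lambda\otimes\psi$ by $1$-regular isomorphisms $1\otimes A$ has the form $\lambda\otimes(A\circ\psi)$ or $\lambda\otimes(\psi\circ A)$, the equivalence in (3) is immediate.

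For (4), I would identify each peripheral subgroup of $\pi_1M$ as a peripheral subgroup of a unique vertex group $\pi_1M_v$ that is \emph{not} an edge (JSJ-torus) subgroup, and similarly for $\pi_1N$. The diagram (\ref{congruentdiag1}) says that on $\widehat{G^M_v}$, $f$ factors as a conjugation $C_{g_v}$ composed with $f_v$ after inclusion into $\widehat{\pi_1N}$. The coherence (\ref{butterfly}) ensures that $f_v$ automatically sends each edge peripheral subgroup of $\widehat{G^M_v}$ to a conjugate of the corresponding edge peripheral subgroup of $\widehat{G^N_{\F(v)}}$. Hence $f$ respects the peripheral structure of $(M,N)$ if and only if each $f_v$ sends the remaining external peripheral subgroups of $\widehat{G^M_v}$ to conjugates of external peripheral subgroups of $\widehat{G^N_{\F(v)}}$, which is precisely the condition for $f_v$ to respect the peripheral structure between $M_v$ and $N_{\F(v)}$. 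The main delicacy throughout is distinguishing internal (JSJ) from external peripheral subgroups; this is controlled entirely by the graph isomorphism $\F$ and the edge coherence in \autoref{THM: JSJ}.
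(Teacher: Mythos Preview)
Your treatment of parts (2), (3), and (4) matches the paper's: (2) and (4) are cited from Wilton--Zalesskii and \cite[Theorem 5.6 (2)]{Xu24a} respectively, and (3) is exactly the observation that the coherence square (\ref{butterfly}) expresses $\partial f_{\partial_+(e)}$ as $\partial f_{\partial_-(e)}$ sandwiched between profinite completions of abstract $\Z^2$-isomorphisms, which is \cite[Lemma 6.11]{Xu24a}.

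For (1) you take a genuinely different route from the paper. The paper argues homologically: the congruent isomorphism together with \autoref{PROP: conjugation invariant} and \autoref{COR: 3mfd with surface} yields
\[
H_3(M_v,\partial^{\mathcal{T}}M_v;\Z)\;\cong\;H_3(N_{\F(v)},\partial^{\mathcal{T}}N_{\F(v)};\Z),
\]
and $M_v\cap\partial M=\varnothing$ is equivalent to this group being $\Z$. This is uniform in $v$ and uses exactly the relative-cohomology machinery developed in \autoref{sec:5}. Your counting argument, $\lvert M_v\cap\partial M\rvert=b(M_v)-d(v)$ with $d(v)=d(\F(v))$, is more elementary in spirit but requires the input $b(M_v)=b(N_{\F(v)})$. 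For hyperbolic pieces this is \autoref{PROP: respect peripheral structure}; for Seifert pieces you assert it is ``profinitely detectable'' without justification. That claim is true --- it follows from Wilkes' profinite rigidity for bounded Seifert fibered spaces \cite{Wil17,Wil18} (and the twisted $I$-bundle over the Klein bottle, the only non-major piece that can occur, is handled separately) --- but it is not a triviality and needs to be cited. The paper's approach avoids this case split entirely, which is its main advantage; your approach has the virtue of not invoking the relative-cohomology framework, at the cost of importing Wilkes' classification.
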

\begin{proof}
(\ref{lem2}) is proven by \cite{WZ17,WZ17b}, see also \cite[Theorem 4.18 and 4.20]{Reid:2018}. (\ref{lem3}) is proven in \cite[Lemma 6.13]{Xu24a}, and (\ref{lem4}) is proven in \cite[Corollary 6.11]{Xu24a}.

We now prove (\ref{lem1}). For each $v\in V(\Gamma^M)$, let $\partial^{\mathcal{T}}M_v=\partial^{\mathcal{T}}_1M_v\cups \partial^{\mathcal{T}}_kM_v$ be the boundary components given by the JSJ-tori, where $k$ is the degree of $v$ in $\Gamma^M$.  Similarly, let $\partial^{\mathcal{T}}N_{\F(v)}=\partial^{\mathcal{T}}_1N_{\F(v)}\cups \partial^{\mathcal{T}}_kN_{\F(v)}$ be the boundary components given by the JSJ-tori. Then, up to a reordering of indices, the coherence relation (\ref{butterfly}) implies that 
$$f_v: (\widehat{\pi_1M_v},\{\overline{\pi_1\partial^{\mathcal{T}}_iM_v}\}_{i=1}^{k}) \tto (\widehat{\pi_1N_{\F(v)}},\{\overline{\pi_1\partial^{\mathcal{T}}_iN_{\F(v)}}^{h_i}\}_{i=1}^{k})$$
is an isomorphism of profinite group pairs, where $h_1,\cdots, h_k\in \widehat{\pi_1N_{\F(v)}}$ are some elements that induces the conjugation. 
Thus, by \autoref{PROP: conjugation invariant},
$$
\H_3(\widehat{\pi_1M_v},\{\overline{\pi_1\partial^{\mathcal{T}}_iM_v}\}_{i=1}^{k};\widehat{\Z})\cong \H_3(\widehat{\pi_1N_{\F(v)}},\{\overline{\pi_1\partial^{\mathcal{T}}_iN_{\F(v)}}\}_{i=1}^{k};\widehat{\Z}).
$$

Moreover, according to \autoref{COR: 3mfd with surface},
$$
\H_3(\widehat{\pi_1M_v},\{\overline{\pi_1\partial^{\mathcal{T}}_iM_v}\}_{i=1}^{k};\widehat{\Z}) \cong \tensor H_3(M_v,\partial^{\mathcal{T}}M_v;\Z)
$$
and
$$
\H_3(\widehat{\pi_1N_{\F(v)}},\{\overline{\pi_1\partial^{\mathcal{T}}_iN_{\F(v)}}\}_{i=1}^{k};\widehat{\Z})\cong  \tensor H_3(N_{\F(v)},\partial^{\mathcal{T}}N_{\F(v)};\Z).
$$
Since $H_3(M_v,\partial^{\mathcal{T}}M_v;\Z)$ and $H_3(N_{\F(v)},\partial^{\mathcal{T}}N_{\F(v)};\Z)$ are finitely generated, we derive that
$$
H_3(M_v,\partial^{\mathcal{T}}M_v;\Z)\cong H_3(N_{\F(v)},\partial^{\mathcal{T}}N_{\F(v)};\Z).
$$
Note that $M_v\cap \partial M=\varnothing$ if and only if $H_3(M_v,\partial^{\mathcal{T}}M_v;\Z)\cong \Z$ and $N_{\F(v)}\cap \partial N=\varnothing$ if and only if $ H_3(N_{\F(v)},\partial^{\mathcal{T}}N_{\F(v)};\Z)\cong \Z$. So this finishes the proof of (\ref{lem1}).
\end{proof}

\subsection{Profinite isomorphism between Seifert fibered spaces}

In this subsection, we shall only consider Seifert fibered spaces with non-empty boundary which are not $S^1\times D^2$, $S^1\times S^1\times I$, or the orientable $I$-bundle over Klein bottle. To simplify notation, the Seifert fibered spaces being considered are called {\em major Seifert fibered spaces}, following the notation of Wilkes \cite{Wil17,Wil18}. 

Any profinite isomorphism $f:\widehat{\pi_1M}\to \widehat{\pi_1N}$  between major Seifert fibered spaces that respects the peripheral structure  has a well-defined scale type $(\pm\lambda,\pm \mu)\in (\Zx/\{\pm1\})^2$. In fact, fixing a Seifert presentation of the fundamental group $\pi_1M$, we can find  a positively oriented $\Z$-basis $(h^M,e_i^M)$ for each peripheral subgroup $\pi_1\partial_iM$ such that $h^M$ represents the regular fiber, and $e_i^M$ represents the peripheral cross-section. We use the similar notation for $N$. We remind the readers that the regular fibers $h^M$ and $h^N$ generate infinite cyclic normal subgroups in $\pi_1M$ and $\pi_1N$, and  profinite isomorphisms between major Seifert fibered spaces always preserve the closure of the fiber subgroup according to \cite[Theorem 5.5]{Wil17}. The scale type $(\pm\lambda,\pm \mu)$ is defined such that
\begin{equation}\label{scaletypeequ}
C_{g_i}\circ  f\begin{pmatrix}  h^M & e_i^M \end{pmatrix}= \begin{pmatrix}  h^N & e_i^N \end{pmatrix} \begin{pmatrix} o(g_i)\lambda & o(g_i)\rho_i \\ 0 &  o(g_i)\mu\end{pmatrix}
\end{equation}
where $o:\widehat{\pi_1N}\to \widehat{\pi_1^{\text{orb}}\O_N}\to \{\pm1\}$ is the orientation homomorphism when the base orbifold $\O_N$ is non-orientable, and  $o(g)=1$ when the base orbifold $\O_N$ is orientable. 
Note that $\lambda$ and $\mu$ are unified when varying between boundary components. We refer the readers to \cite{Wil17,Wil18} and \cite[Section 8]{Xu24a} for details.

\begin{lemma}[{\cite[Lemma 9.3]{Xu24a}}]\label{LEM: gluing Seifert}
Follow the setting of \autoref{THM: JSJ} and the notation in diagram (\ref{butterfly}). Suppose $e\in E(\Gamma^M)$ such that both $M_{\partial_-(e)}$ and $M_{\partial_+(e)}$ are Seifert fibered, and both $f_{\partial_-(e)}$ and $f_{\partial_+(e)}$ respects the peripheral structure.
\begin{enumerate}[leftmargin=*]
\item\label{sl1} If $f_{\partial_-(e)}$ has scale type $(\pm\lambda, \pm\mu)$, then $f_{\partial_+(e)}$ has scale type $(\pm\mu, \pm\lambda)$.
\item\label{sl2} If $f_{\partial_-(e)}$ has scale type $(\pm\lambda, \pm\lambda)$, then $f_{\partial_-(e)}$ is peripheral $\lambda$-regular at $\partial_iM_{\partial_-(e)}$ and $f_{\partial_+(e)}$ is peripheral $\lambda$-regular at $\partial_kM_{\partial_+(e)}$.
\end{enumerate}
\end{lemma}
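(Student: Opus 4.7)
My plan is to derive both conclusions from a single matrix identity extracted from the coherence diagram~(\ref{butterfly}) of \autoref{THM: JSJ}, combined with elementary facts about $\widehat{\Z}$ including \autoref{LEM: Zx+-1}. First, using the gluing maps $\widehat{\varphi_\pm}$, I will identify $\widehat{\pi_1T^M_e}\cong\widehat{\Z}^2$ with the two peripheral subgroups, equipping it with two ordered bases $B_-=(h_-,e_i^{\partial_-})$ and $B_+=(h_+,e_k^{\partial_+})$ coming from the Seifert presentations on the $-$ and $+$ sides. Since the Seifert fibrations on two adjacent JSJ pieces cannot merge, $h_+$ is not parallel to $h_-$, so the change-of-basis matrix $A=\bigl(\begin{smallmatrix}a & c\\ b & d\end{smallmatrix}\bigr)\in\GL(2,\Z)$ expressing $B_+$ in terms of $B_-$ has $b\neq 0$. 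The analogous setup on the target torus $\widehat{\pi_1T^N_{\F(e)}}$ gives bases $B_\pm'$ with change-of-basis matrix $A'\in\GL(2,\Z)$ and nonzero lower-left entry $b'$, using \autoref{LEM: JSJ}~(\ref{lem2}) to confirm that $N_{\F(\partial_\pm(e))}$ are both Seifert fibered. In these bases the scale-type data expresses $\partial f_{\partial_\pm(e)}$ as $D_\pm=o_\pm\bigl(\begin{smallmatrix}\lambda_\pm & \rho_\pm\\ 0 & \mu_\pm\end{smallmatrix}\bigr)$, where $(\lambda_-,\mu_-)=(\lambda,\mu)$ and $(\lambda_+,\mu_+)=(\lambda',\mu')$; the coherence relation $\partial f_{\partial_-(e)}=\partial f_{\partial_+(e)}$ then becomes the single matrix identity
\[
D_-\cdot A \;=\; A'\cdot D_+.
\]

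For part~(\ref{sl1}), I will compare $(2,1)$-entries of this identity to obtain $o_-b\mu=o_+b'\lambda'$; since $b,b'\in\Z\setminus\{0\}$ and $\lambda',\mu\in\Zx$, \autoref{LEM: Zx+-1} immediately yields $\lambda'=\pm\mu$. Taking determinants of $D_-A=A'D_+$ then gives $\lambda'\mu'=\pm\lambda\mu$, so $\mu'=\pm\lambda$, which is precisely the scale type $(\pm\mu,\pm\lambda)$. For part~(\ref{sl2}), assume $\mu=\pm\lambda$ (so also $\lambda'=\pm\lambda$ by part~(\ref{sl1})); comparing $(1,1)$-entries of the matrix identity gives an equation of the shape $b\rho=n\lambda$ with $n\in\Z$ depending on the various matrix entries. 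Projecting to $\Z/b$ and using that $\lambda$ is a unit mod $b$ yields $b\mid n$; torsion-freeness of $\widehat{\Z}$ then forces $\rho/\lambda\in\Z$, whence $D_-=\lambda\cdot B$ for some $B\in\GL(2,\Z)$. This is exactly the statement that $\partial f_{\partial_-(e)}$ is $\lambda$-regular. The $\lambda$-regularity of $\partial f_{\partial_+(e)}$ will then follow by the symmetric computation applied to the $(2,2)$-entry, using $\mu'=\pm\lambda=\pm\lambda'$.

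The main obstacle I anticipate is organizational rather than conceptual: fixing conventions for the left versus right action of matrices, tracking the orientation factors $o_\pm$ from the Seifert presentations consistently between the two sides, and verifying that the JSJ non-extension hypothesis on fibrations does force $b,b'\neq 0$ in the adopted bases. Once these are pinned down, the algebraic core of the argument reduces to elementary linear algebra over $\widehat{\Z}$.
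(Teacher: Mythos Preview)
The paper does not supply its own proof of this lemma; it is quoted verbatim from \cite[Lemma~9.3]{Xu24a} without argument. Your approach is correct and self-contained: the coherence diagram~(\ref{butterfly}) does collapse to the matrix identity $D_-A=A'D_+$ over $\widehat{\Z}$ once one identifies the edge torus with both peripheral subgroups via the gluing maps $\widehat{\varphi_\pm},\widehat{\psi_\pm}$; the minimality of the JSJ decomposition forces the regular fibers $h_\pm$ of the two adjacent Seifert pieces to be non-parallel on $T^M_e$ (and likewise on $T^N_{\F(e)}$), giving $b,b'\neq 0$; and then the $(2,1)$-entry plus determinants handle part~(\ref{sl1}) via \autoref{LEM: Zx+-1}, while the $(1,1)$- and $(2,2)$-entries combined with the unit property of $\lambda$ in every $\Z/b\Z$ and torsion-freeness of $\widehat{\Z}$ handle part~(\ref{sl2}).

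One small point worth making explicit in your write-up: the scale type $(\pm\lambda,\pm\mu)$ is by definition a \emph{unified} invariant across all boundary components of a given Seifert piece (see the discussion preceding~(\ref{scaletypeequ})), so determining it at the single boundary component meeting $T^M_e$ determines it globally, which is what part~(\ref{sl1}) asserts. Your argument establishes exactly this. The orientation factors $o_\pm\in\{\pm1\}$ are indeed harmless throughout since the conclusions are all stated up to sign.
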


\begin{lemma}\label{LEM: Seifert last one}
Let $M$ and $N$ be Seifert fibered spaces each containing at least two boundary components. Label the boundary components of $M$ as $\partial_1M,\cdots, \partial _n M$ ($n\ge 2$). Suppose $f:\widehat{\pi_1M}\to \widehat{\pi_1N}$ is an isomorphism that respects the peripheral structure, and $f$ is peripheral $\lambda$-regular at $\partial_1M,\cdots,\partial_{n-1}M$. 
Then, $M$ is homeomorphic to $N$, and $f$ is also peripheral $\lambda$-regular at $\partial_nM$.
\end{lemma}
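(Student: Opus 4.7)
The plan is to combine the scale-type classification of profinite isomorphisms between major Seifert fibered spaces, encapsulated in equation (\ref{scaletypeequ}), with a global abelianization identity extracted from the Seifert presentation of $\pi_1 M$. The only Seifert fibered space with at least two boundary components that is not major is $T^2\times I$, and in that case $\pi_1 M\cong \Z^2$ coincides with each peripheral subgroup, so peripheral $\lambda$-regularity at $\partial_1 M$ already says $f$ itself is $\lambda$-regular and the conclusion is immediate; I therefore assume throughout that $M$ and $N$ are major. The homeomorphism $M\cong N$ itself will follow at the end from the profinite rigidity of Seifert fibered $3$-manifolds with non-empty incompressible boundary due to Wilkes \cite{Wil17}.

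First I would pin down the scale type of $f$. Writing the scale type as $(\pm\alpha,\pm\beta)\in (\Zxpm)^2$, the map $\partial f_i$ is represented on the Seifert bases by the upper-triangular matrix in (\ref{scaletypeequ}). Peripheral $\lambda$-regularity at $\partial_1 M$ forces this matrix (with $i=1$) to factor as $\lambda\cdot A_1$ with $A_1\in \GL(2,\Z)$, and \autoref{LEM: Zx+-1} applied to each diagonal entry $o(g_1)\alpha$ and $o(g_1)\beta$ forces $\alpha=\pm\lambda$ and $\beta=\pm\lambda$. So the scale type is $(\pm\lambda,\pm\lambda)$, and for every $i$ the matrix of $\partial f_i$ has the shape $\begin{pmatrix}\pm\lambda & o(g_i)\rho_i\\0 & \pm\lambda\end{pmatrix}$. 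Consequently $f$ is peripheral $\lambda$-regular at $\partial_i M$ if and only if $\rho_i\in \lambda\Z$; by hypothesis this holds for $i=1,\ldots,n-1$, and the task reduces to showing $\rho_n\in \lambda\Z$.

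Next I would transfer the $\lambda$-divisibility to $\rho_n$ through the abelianization. Assume first that the base orbifolds of $M$ and $N$ are orientable (the non-orientable case will be handled by lifting $f$ to the orientation double covers, under which both the hypothesis and the conclusion are preserved). From the Seifert presentation, the surface relation together with the cone-point relations $q_k^{\alpha_k}h^{\beta_k}=1$ yields a $\Z$-linear identity
\begin{equation*}
A\sum_{i=1}^{n} e_i^M \;=\; B\cdot h^M \ \ \text{in}\ H_1(M;\Z)_{\mathrm{free}},\qquad A=\prod_k\alpha_k>0,\ \ B=\sum_k \beta_k\!\prod_{j\ne k}\!\alpha_j\in\Z,
\end{equation*}
and analogously $A'\sum_{i=1}^{n} e_i^N=B'\cdot h^N$ in $H_1(N;\Z)_{\mathrm{free}}$. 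Applying $f_\ast:\tensor H_1(M)_{\mathrm{free}}\to \tensor H_1(N)_{\mathrm{free}}$ (\autoref{COR: abelianization}) and inserting the abelianized scale-type formulas $f_\ast(h^M)=\tilde\alpha\cdot h^N$ and $f_\ast(e_i^M)=\tilde\rho_i\cdot h^N+\tilde\beta\cdot e_i^N$, with $\tilde\alpha,\tilde\beta\in\{\pm\lambda\}$ independent of $i$ (the independence is forced by the fact that $f_\ast(h^M)$ is a single element of $\tensor H_1(N)_{\mathrm{free}}$, so $o(g_i)$ must be a single sign that can be absorbed into $\tilde\alpha,\tilde\beta$), and clearing via $A'$ times the $N$-relation, yields the single scalar identity
\begin{equation*}
\Bigl(AA'\sum_{i=1}^{n}\tilde\rho_i\;+\;A\tilde\beta B'\;-\;A'B\tilde\alpha\Bigr)\cdot h^N\;=\;0 \ \ \text{in}\ \tensor H_1(N;\Z)_{\mathrm{free}}.
\end{equation*}
Since $N$ has non-empty boundary and orientable base, $h^N$ has infinite order in $H_1(N;\Z)$ and so is a non-torsion element of the free $\widehat{\Z}$-module $\tensor H_1(N)_{\mathrm{free}}$, forcing the scalar coefficient to vanish. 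This gives $AA'\sum_i\tilde\rho_i\in \lambda\Z$. Writing $\tilde\rho_i=\lambda r_i$ with $r_i\in\Z$ for $i<n$, one gets an equation $AA'\,\tilde\rho_n=\lambda\cdot D$ in $\widehat{\Z}$ with $D\in\Z$; since $\tilde\rho_n\in\widehat{\Z}$ and $\lambda\in\Zx$, the elementary identity $\Q\cap \widehat{\Z}=\Z$ forces $AA'\mid D$ in $\Z$, hence $\tilde\rho_n\in \lambda\Z$, i.e.\ $\rho_n\in\lambda\Z$.

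The main obstacle is the careful sign bookkeeping: tracking the orientation character $o(g_i)$ and the ambiguities $\pm\alpha,\pm\beta$ through both the scale-type matrix and the abelianization computation. This is not a genuine difficulty but must be executed precisely; the crucial observation is that $f_\ast(h^M)$ does not depend on $i$, which pins $o(g_i)$ to a constant sign and collapses all sign data into the two constants $\tilde\alpha,\tilde\beta$, after which the argument proceeds cleanly. A secondary technical point is the non-orientable base case, where $h^N$ can be $2$-torsion and the abelianization identity degenerates; there one passes to the orientation double cover of the base (using that $f$ respects the kernel of the orientation character, which is detectable from the peripheral structure), runs the above argument on the orientable-base covers, and descends the conclusion back to $\partial_n M$.
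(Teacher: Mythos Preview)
Your argument is correct and reaches the same conclusion, but the route is genuinely different from the paper's. The paper first invokes \cite[Proposition 8.20]{Xu24a} to obtain $M\cong N$, then chooses the \emph{same} Seifert presentation on both sides; with that identification in hand, \cite[Lemma 8.16]{Xu24a} gives the exact identity $\sum_{i=1}^n\rho_i=0$, from which $\rho_n\in\lambda\Z$ is immediate. You instead keep the presentations of $M$ and $N$ distinct, push the scale-type formula through $f_\ast$ on $\tensor H_1(\,\cdot\,)_{\mathrm{free}}$, and combine the two Seifert relations to obtain only a divisibility $AA'\sum_i\tilde\rho_i\in\lambda\Z$; you then recover $\rho_n\in\lambda\Z$ via $\Q\cap\widehat{\Z}=\Z$. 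Your approach is more self-contained (it does not require the cited lemmas from \cite{Xu24a}, and you even treat the non-major case $T^2\times I$ that the paper silently excludes by its standing convention), at the cost of being longer and requiring a separate treatment of the non-orientable base via the orientation double cover. The paper's approach is shorter precisely because the heavy lifting---matching the presentations and proving the exact sum-zero relation across all base types---has been outsourced to \cite{Xu24a}. One small remark: your double-cover reduction works because each peripheral subgroup lies in $\ker(o)$ (both $h$ and $e_i$ are orientation-preserving), so every boundary torus lifts to two copies with the peripheral data intact; this should be stated explicitly, but it is routine.
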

\begin{proof}
The fact that $M$ is homeomorphic to $N$ follows from \cite[Proposition 8.19]{Xu24a}. As a result, one can choose the same Seifert presentation for $\pi_1M$ and $\pi_1N$, which defines the $\Z$-bases $(h^M,e_i^M)$ and $(h^N,e_i^N)$ for the peripheral subgroups in $\pi_1M$ and $\pi_1N$. 
In this case, the scale type of $f$ is $(\pm\lambda,\pm\lambda)$. By a careful choice of the orientations, we can assume that
$$
C_{g_i}\circ  f\begin{pmatrix}  h^M & e_i^M \end{pmatrix}= \begin{pmatrix}  h^N & e_i^N \end{pmatrix} \begin{pmatrix} o(g_i)\lambda & o(g_i)\rho_i \\ 0 &  o(g_i)\lambda\end{pmatrix}
$$

Since $f$ is peripheral $\lambda$-regular at $\partial_1M,\cdots,\partial_{n-1}M$, we have $\rho_1,\cdots, \rho_{n-1}\in \lambda\Z$. In addition, since we are starting from the same Seifert presentation, \cite[Lemma 8.15]{Xu24a} implies that 
$$
\sum_{i=1}^{n} \rho_i=0.
$$
Therefore, $\rho_n\in \lambda\Z$, and $f$ is peripheral $\lambda$-regular at $\partial_nM$. 
\end{proof}

\subsection{The mixed case}
In this subsection, we finish the proof of \autoref{THM: Mixed peripheral}. 
\begin{proof}[Proof of \autoref{THM: Mixed peripheral}]
Let $f_{\bullet}:(\widehat{\mathcal{G}^M},\Gamma^M)\to (\widehat{\mathcal{G}^N},\Gamma^N)$ be the congruent isomorphism induced by $f$ given in \autoref{THM: JSJ}. Then, by \autoref{LEM: JSJ}~(\ref{lem4}), for any $v\in V(\Gamma^K)$, $f_v$ respects the peripheral structure. 

We claim that for any $v\in V(\Gamma^K)$ such that $M_v$ is Seifert fibered, the scale type of $f_v$ is $(\pm1,\pm1)$. First, we consider any Seifert fiberd piece $M_v$ adjacent to a hyperbolic piece $M_u$. Then, according to \autoref{mainthm: regular}, $f_u$ is peripheral regular at the boundary component corresponding to their common JSJ-torus, and  \autoref{LEM: JSJ}~(\ref{lem3}) implies that $f_v$ is also peripheral regular at this corresponding boundary component. Thus, by definition in (\ref{scaletypeequ}), $f_u$ has scale type $(\pm1,\pm1)$. Next,  we consider a Seifert fiberd piece $M_v$ not adjoint to a hyperbolic piece. Then there exists a finite sequence of edges in $\Gamma^M$ with Seifert fibered endpoints that connects $v$ to a Seifert fibered vertex $w$ which is adjacent to a hyperbolic piece. We have shown that $f_w$ has scale type $(\pm1,\pm1)$, so by repeated applying \autoref{LEM: gluing Seifert}~(\ref{sl1}) along this path, we derive that $f_v$ also has scale type $(\pm1,\pm1)$. 

Now we prove the peripheral regularity. Suppose $\partial_iM$ is a boundary component of $M$ belonging to the JSJ-piece $M_v$. According to the commutative diagram (\ref{congruentdiag1}), in order to show that $f$ is peripheral regular at $\partial_iM$, it suffices to show that $f_v$ is peripheral regular at $\partial_iM$. When $M_v$ is a hyperbolic piece, this follows directly from \autoref{mainthm: regular}. When $M_v$ is a Seifert fibered piece, by assumption, all other boundary components of $M_v$ are given by the JSJ-tori. Since $f_v$ has scale type $(\pm1,\pm1)$, \autoref{LEM: gluing Seifert}~(\ref{sl2}) implies that $f_v$ is peripheral regular at all boundary components given by JSJ-tori. Thus, it follows from  \autoref{LEM: Seifert last one} that $f_v$ is also peripheral regular at the only remaining boundary component $\partial_iM$, which finishes the proof. 
\end{proof}


\section{The $A$-polynomial of knots}\label{sec:10}

\subsection{Definition of $A$-polynomial}\label{subsec: A-poly}
The $A$-polynomial of a knot in $S^3$   was introduced by Cooper--Culler--Gillet--Long--Shalen in \cite{CCGLS94}, and further studied in \cite{CL96,CL98}.

Suppose $K$ is a (smooth) knot in $S^3$. Let $N(K)$ denote the tubular neighbourhood of $K$, and let $X(K)=S^3-\text{int}(N(K))$ be the exterior of $K$. 
Let $m_K$ and $l_K$ be oriented simple closed curves on $\partial N(K)$ that represent the meridian and the longitude respectively, such that the ordered basis $(m_K,l_K)$  represents the boundary orientation of $\partial N(K)$ induced from the orientation of $N(K)$ as a submanifold of the oriented ambient space $S^3$. When $K$ is oriented, we further require that $l_K$ follows the orientation of $K$. We choose the basepoint $\ast$ of $X(K)$ at the intersection of $m_K$ and $l_K$ in $\partial N(K)$. By a slight abuse of notation, the {\em preferred meridian-longitude pair} of $K$ is also the pair of elements $(m_K, l_K)$ in $\pi_1(X(K),\ast)$. In particular, $m_K$ and $l_K$ commutes with each other. Whenever $K$ is a non-trivial knot, $m_K$ and $l_K$ generates a peripheral subgroup in $\pi_1(X(K))$ isomorphic to $\Z^2$. 

The $A$-polynomial of $K$ is defined through restricting the $\mathrm{SL}(2,\C)$-representation variety of $X(K)$ to its peripheral subgroup and then taking its eigenvalues. We provide a quick definition following \cite{CL96}.

\begin{definition}\label{DEF: A-polynomial}
Let $K$ be a knot in $S^3$. Define
$$
\xi(K)=\left \{ (M,L)\in \C^2\left |\;  \begin{gathered}\text{there exists a representation }\rho: \pi_1(X(K))\to \mathrm{SL}(2,\C)\\ \text{such that }\rho(m_K)=\begin{pmatrix} M & 0 \\ 0 & M^{-1}\end{pmatrix} \text{ and }\rho(l_K)=\begin{pmatrix} L & 0 \\ 0 & L^{-1}\end{pmatrix}\end{gathered} \right. \right\}.
$$
and 
$$
\xi^\ast(K)=\left \{ (M,L)\in \C^2 \left |\;  \begin{gathered}\text{there is a non-abelian representation }\rho: \pi_1(X(K))\to \mathrm{SL}(2,\C)\\ \text{such that }\rho(m_K)=\begin{pmatrix} M & 0 \\ 0 & M^{-1}\end{pmatrix} \text{ and }\rho(l_K)=\begin{pmatrix} L & 0 \\ 0 & L^{-1}\end{pmatrix}\end{gathered} \right. \right\}.
$$
The closures $\overline{\xi(K)}$ and $\overline{\xi^\ast(K)}$ in $\C^2$ (either in the complex analytic topology or in the Zariski topology) are  affine algebraic sets. The {\em $A$-polynomial of $K$}, denoted as $A_K(M,L)\in \C[M^{\pm},L^{\pm}]$, is the product of the defining equations for the 1-dimensional components in $\overline{\xi(K)}$; and the {\em enhanced $A$-polynomial of $K$}, denoted as $\widetilde{A}_K(M,L)\in  \C[M^{\pm},L^{\pm}]$, is the product of the defining equations for the 1-dimensional components in $\overline{\xi^\ast(K)}$
\end{definition}


Elements in $\xi(K)$ given by abelian representations are obviously $\{(M,1)\mid M\in \C^\times\}$, since $H_1(X(K);\Z)\cong \Z$ is generated by $[m_K]$. Thus, by definition, 
\begin{equation}\label{Arelationequ}
A_K(M,L)=\mathrm{Red}((L-1)\widetilde{A}_K(M,L)),
\end{equation}
where `$\mathrm{Red}$' means reducing repeated factors. 
This is to say that $A_K$ and $\widetilde{A}_K$ do not have repeated factors, and $A_K$ always contains $L-1$ as a factor while $\widetilde{A}_K$ may or may not contain $L-1$ as a factor. 

The $A$-polynomial is defined up to multiplicative units in $\C[M^{\pm},L^{\pm}]$. It is shown in \cite{CCGLS94} that a multiplicative  constant in $\C^\times$ can be chosen so that $A_K(M,L)$ has integral coefficients. We say that $A_K(M,L)$ or $\widetilde{A}_K(M,L)$ is {\em normalized} if it belongs to $\Z[M,L]$, it is not divisible by $M$ or $L$, and its coefficients have no common divisors. Then, for normalized $A$-polynomials, $A_J(M,L)\doteq A_K(M,L)$ if and only if $A_J(M,L)=\pm A_K(M,L)$.  

In \autoref{DEF: A-polynomial}, one might worry  that the preferred meridian-longitude basis is not unique for an unoriented knot, since the orientation of $m_K$ and $l_K$ can be simultaneously reversed. This is resolved by the following proposition.

\begin{proposition}[{\cite[Proposition 4.2]{CL96}}]\label{PROP: A-polynomial reverse}
Reversing the orientation of $K$ does not change $A_K$ and $\widetilde{A}_K$. However, reversing the orientation of the ambient space $S^3$ changes $A_K$; to be precise, $\widetilde{A}_{\overline{K}}(M,L)\doteq \widetilde{A}_{K}(M^{-1},L)$ where $\overline{K}$ is the mirror image of $K$,
\end{proposition}

\subsection{Profinite isomorphism between knot complements}

This subsection serves as  a preparation for the proof of  \autoref{mainthm: A-polynomial general}. We shall prove two lemmas. 

\begin{lemma}\label{THM: knot boundary}
Let $J$ and $K$ be non-trivial knots in $S^3$, and let $(m_J,l_J)$, $(m_K,l_K)$ be the preferred meridian-longitude pair of $J$ and $K$ respectively. Suppose $f: \widehat{\pi_1(X(J))}\to \widehat{\pi_1(X(K))}$ is an isomorphism which is peripheral regular. Then, there exists $g\in \widehat{\pi_1(X(K))}$ such that $$f(m_J)=g^{-1} \cdot (m_K)^{\pm 1}\cdot g\quad \text{and}\quad f(l_J)=g^{-1} \cdot (l_K)^{\pm 1}\cdot g. $$ 
\end{lemma}
\begin{proof}
The proof mainly follows from \cite[Theorem 4.5]{Xu24b}.

Let $P\le \pi_1(X(J))$ be the peripheral subgroup generated by $m_J$ and $l_J$, and let $Q\le \pi_1(X(K))$ be the peripheral subgroup generated by $m_K$ and $l_K$. Then, by assumption of peripheral regularity, there exists $g$ such that $f(\overline{P})=\overline{Q}^g$, and there exists an isomorphism $\psi:P\to Q$ such that $C_g\circ f: \widehat{P}\cong \overline{P}\to \overline{Q}\cong \widehat{Q}$ is the profinite completion of $\psi$. It suffices to show that $\psi(m_J)=\pm m_K$ and $\psi(l_J)=\pm l_K$ (where the peripheral subgroups are written in additive convention). 

Let $\Psi: \partial X(J)\to \partial X(K)$ be the unique homeomorphism up to isotopy that induces $\psi$ on the fundamental group. 
Readers may check that the proof of \autoref{THM: Dehn filling} only relies on the peripheral $\Zx$-regularity of the profinite isomorphism. Thus, by the same construction of \autoref{THM: Dehn filling}, for any $\c\in \slope(\partial X(J))$, there is an isomorphism $\widehat{\pi_1X(J)_{\c}}\cong \widehat{\pi_1 X(K)_{\Psi(\c)}}$. 

On one hand, the longitude $l_J$ is the only slope on $\partial X(J)$ such that $b_1(X(J)_{\c})>0$, and the longitude $l_K$ is the only slope on $\partial X(K)$ such that $b_1(X(K)_{\c'})>0$. Thus, by \autoref{COR: b1},  $\Psi(l_J)$ is the unoriented longitude $l_K$. 

On the other hand, $X(J)_{m_J}\cong S^3$, so $\widehat{\pi_1X(K)_{\Psi(m_J)}}\cong \widehat{\pi_1X(J)_{m_J}}$ is the trivial group. Since $\pi_1X(K)_{\Psi(m_J)}$ is residually finite by \autoref{PROP: RF},  $\pi_1X(K)_{\Psi(m_J)}$ is also the trivial group. The validity of the Poincar\'e conjecture implies that $X(K)_{\Psi(m_J)}\cong S^3$. Thus, \cite[Theorem 2]{GL89} implies that $\Psi(m_J)$ is also the unoriented meridian $m_K$.

Therefore, when equipped with orientations, $\psi(m_J)=\pm m_K$ and $\psi(l_J)=\pm l_K$, finishing the proof. 
\end{proof}

By a {\em graph knot}, we mean a satellite knot whose JSJ-pieces are all Seifert fibered. 

\begin{lemma}\label{LEM: graph knot}
Suppose $J$ and $K$ are graph knots in $S^3$. If there is an isomorphism $f:\widehat{\pi_1(X(J))}\to \widehat{\pi_1(X(K))}$ which respects the peripheral structure, then $J$ and $K$ are equivalent (including mirror image).
\end{lemma}
\begin{proof}
For simplicity of notation, let $M=X(J)$ and $N=X(K)$. Let $f_{\bullet}:(\widehat{\mathcal{G}^M},\Gamma^M)\to (\widehat{\mathcal{G}^N},\Gamma^N)$ be the congruent isomomrphism induced by $f$ given in \autoref{THM: JSJ}. Then by \autoref{LEM: JSJ}~(\ref{lem4}), each $f_v$ respects the peripheral structure. In fact, the JSJ-graphs $\Gamma^M$ and $\Gamma^N$ are rooted trees whose roots correspond to the outermost JSJ-piece, i.e. the pieces containing the boundary component of $M$ and $N$. In the proof of \cite[Theorem 4.1]{WilkesKnot}, Wilkes showed that if $v\in \Gamma^M$ is a leaf which is not the root, then $f_v$ has scale type $(\pm\lambda,\pm\lambda)$. In fact, this follows from the classification of profinite isomorphisms between graph manifolds \cite{Wil18} and the fact that the piece $M_v$ has non-zero `total slope' \cite[Lemma 4.2]{WilkesKnot}. Since $\Gamma^M$ is connected, by repeatedly applying \autoref{LEM: gluing Seifert}~(\ref{sl1}), we derive that $f_u$ has scale type $(\pm\lambda,\pm\lambda)$ for all $u\in V(\Gamma^M)$. In particular, let $u$ be the root. \autoref{LEM: gluing Seifert}~(\ref{sl2}) implies that $f_u$ is peripheral $\lambda$-regular at all its boundary components obtained from JSJ-tori, so \autoref{LEM: Seifert last one} implies that $f_u$ is also peripheral $\lambda$-regular at the remaining boundary component $\partial M$. The commutative diagram (\ref{congruentdiag1}) then implies that 
$f$ is peripheral $\lambda$-regular. Then, by \cite[Theorem 9.11]{Xu24a}, $M=X(J)$ and $N=X(K)$ are homeomorphic. It then follows from \cite{GL89} that $J$ and $K$ are equivalent.
\end{proof}


\subsection{Matching up $\SL(n,\C)$--representations}

\def\Q{\mathbb{Q}}

\newsavebox{\rep}
\begin{lrbox}{\rep}
\(
\begin{tikzcd}[row sep=large,column sep=large]
             G \arrow[d, "\iota_G"']          \arrow[rrrrd, "\rho"]         &   &                                   &  &             \\
\widehat{G} \arrow[rr, "\Phi","\text{(continuous)}"'] &                                                                   & \SL(n,\overline{\Q_p}) \arrow[rr, "\varsigma_\ast", "\cong"'] &  & {\SL(n,\C)}
\end{tikzcd}
\)
\end{lrbox}


In a series of works by Spitler and Bridson--McReynolds--Reid--Spitler \cite{BMRS20,BMRS21,Spi19}, a standard method has been established to study the interplay between linear representations and the  profinite completion. 
We shall explain how peripheral regularity plays its role in this process. Firstly, let us reintroduce the construction in  \cite{BMRS20,BMRS21,Spi19}.

\begin{lemma}\label{LEM: extend}
Let $G$ be a finitely generated group, and let $\rho: G\to \SL(n,\C)$ be a representation, where $n\in \mathbb{N}$. 
Then, there exists a prime number $p$, a field isomorphism $\varsigma:\overline{\Q_p}\to \mathbb{C}$ (which is not continuous in the classical topology), and a continuous homomorphism $\Phi:\widehat{G}\to \SL(n,\overline{\Q_p})$ such that the following diagram commutes.
\begin{equation*}
\usebox{\rep}
\end{equation*}
\end{lemma}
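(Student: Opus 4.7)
The plan is to produce $\Phi$ by factoring $\rho$ through a compact $p$-adic matrix group, which is automatically profinite and hence accepts a continuous extension to $\widehat{G}$, and then to transport the target back to $\SL(n,\mathbb{C})$ via an abstract field isomorphism $\overline{\mathbb{Q}_p}\cong \mathbb{C}$. This is the now-standard strategy used by Bridson--McReynolds--Reid--Spitler \cite{BMRS20,BMRS21,Spi19}.

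First I would fix a finite generating set $g_1,\dots,g_k$ of $G$ and let $R\subset\mathbb{C}$ be the subring of $\mathbb{C}$ generated by the entries of $\rho(g_1),\dots,\rho(g_k)$. Since $\rho(g_i)\in \SL(n,\mathbb{C})$, the entries of $\rho(g_i)^{-1}$ are polynomials in the entries of $\rho(g_i)$ (via the adjugate formula applied with $\det=1$), so $\rho(G)\subseteq \SL(n,R)$, and $R$ is a finitely generated $\mathbb{Z}$-algebra. The key classical embedding lemma (underlying Mal\textquoteright cev\textquoteright s residual finiteness theorem) then provides a prime $p$, a finite extension $K/\mathbb{Q}_p$ with ring of integers $\mathcal{O}_K$, and a ring embedding $\tau\colon R\hookrightarrow \mathcal{O}_K$: one embeds the fraction field $\mathrm{Frac}(R)$ into $\overline{\mathbb{Q}_p}$ using a transcendence basis sent to suitably $p$-adically small elements, and after multiplying the transcendence generators by a sufficiently high power of $p$ the finitely many generators of $R$ are all forced into $\mathcal{O}_K$.

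Next, since $\mathcal{O}_K$ is compact, Hausdorff, and totally disconnected in its $p$-adic topology, $\SL(n,\mathcal{O}_K)$ is a profinite group by \autoref{PROP: Top}. The composition
\begin{equation*}
G \xrightarrow{\;\rho\;} \SL(n,R) \xrightarrow{\;\tau_\ast\;} \SL(n,\mathcal{O}_K)
\end{equation*}
lands in a profinite group, so by the universal property of the profinite completion it extends uniquely to a continuous homomorphism $\widetilde{\Phi}\colon \widehat{G}\to \SL(n,\mathcal{O}_K)$. Composing with the (continuous) inclusion $\SL(n,\mathcal{O}_K)\hookrightarrow \SL(n,\overline{\mathbb{Q}_p})$ yields the desired continuous $\Phi$.

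Finally, I would choose the field isomorphism $\varsigma$. Both $\overline{\mathbb{Q}_p}$ and $\mathbb{C}$ are algebraically closed fields of characteristic $0$ and cardinality $2^{\aleph_0}$, hence have transcendence degree $2^{\aleph_0}$ over $\mathbb{Q}$; by a standard transcendence-basis argument one may therefore choose an abstract field isomorphism $\varsigma\colon \overline{\mathbb{Q}_p}\to \mathbb{C}$ that extends $\tau^{-1}|_{\tau(R)}\colon \tau(R)\to R\subset\mathbb{C}$. Then for every $g\in G$ one has $\varsigma_\ast\Phi(\iota_G(g))=\varsigma_\ast\tau_\ast\rho(g)=\rho(g)$, so the diagram commutes. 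The main obstacle is the embedding $R\hookrightarrow \mathcal{O}_K$ in the second paragraph; the remaining steps are essentially formal, but the integrality after rescaling the transcendence basis is what makes the whole approach work.
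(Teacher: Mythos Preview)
Your proposal is correct and follows essentially the same approach as the paper: both factor $\rho$ through a compact $p$-adic matrix group via the embedding lemma (which the paper cites as \cite[Lemma~4.2]{BMRS20} and you sketch directly), extend to $\widehat{G}$ by the universal property of profinite completion, and transport back via a field isomorphism $\overline{\mathbb{Q}_p}\cong\mathbb{C}$. The only cosmetic difference is that the paper starts from a global field isomorphism $\theta:\mathbb{C}\to\overline{\mathbb{Q}_p}$ making $\theta_\ast\rho$ bounded and then sets $\varsigma=\theta^{-1}$, whereas you first construct the ring embedding $\tau:R\hookrightarrow\mathcal{O}_K$ and afterwards extend $\tau^{-1}$ to $\varsigma$ by the transcendence-basis argument.
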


\begin{proof}
It is shown in \cite[Lemma 4.2]{BMRS20} (though only stated for $\SL(2,\C)$ representations, but also works for $\SL(n,\C)$-representations) that there exist some prime number $p$ and a field isomorphism $\theta: \C\to \overline{\Q_p}$ such that the representation
\begin{equation*}
\begin{tikzcd}
\theta_\ast \rho:\; G\arrow[r,"\rho"] & \SL(n,\C) \arrow[r,"\theta","\cong"'] & \SL(n,\overline{\Q_p})
\end{tikzcd}
\end{equation*}
is bounded, that is, the matrix entries in $\mathrm{im}(\theta_{\ast}\rho)$ is bounded with respect to the unique $p$-adic norm. 

Since $G$ is finitely generated, there exists a finite extension $K$ of $\Q_p$ such that $\mathrm{im}(\theta_{\ast}\rho)\subseteq \SL(n,K)$.
Since $K$ is finite over $\Q_p$, $K$ is a complete and locally compact metric space, and so is $\SL(n,K)$. Thus, the boundness of $\theta_\ast\rho$ implies that the closure $\overline{\mathrm{im}(\theta_{\ast}\rho)}$ in $\SL(n,K)$ is compact. Moreover, $K$ is totally disconnected, and so is $\SL(n,K)$. Therefore, the topological group $\overline{\mathrm{im}(\theta_{\ast}\rho)}$  is compact, Hausdorff, and totally disconnected, which is thus a profinite group according to \autoref{PROP: Top}. 

Consider the homomorphism
\begin{equation*}
\begin{tikzcd}
\psi:\;G\arrow[r,"{\theta_{\ast}\rho}"] & \mathrm{im}(\theta_\ast\rho)\arrow[r,hook] & \overline{\mathrm{im}(\theta_\ast\rho)}\subseteq \SL(2,K).
\end{tikzcd}
\end{equation*}
The universal property of profinite completion \cite[Lemma 3.2.1]{RZ10} implies that the homomorphism $\psi$ extends to a continuous homomorphism $\Psi:\widehat{G}\to \overline{\mathrm{im}(\theta_\ast\rho)}\subseteq \SL(2,K)$ such that $\Psi\circ \iota_{G}=\psi$. 
Then, 
we define a continuous homomorphism
\begin{equation*}
\begin{tikzcd}
\Phi: \;\widehat{G}\arrow[rr,"\Psi","\text{(continuous)}"'] & &   \overline{\mathrm{im}(\theta_\ast\rho)}\subseteq \SL(2,K) \arrow[r,hook] & \SL(2,\overline{\Q_p}).
\end{tikzcd}
\end{equation*}
Denote $\varsigma=\theta^{-1}:\overline{\Q_p}\to \C$, then $\varsigma_\ast\circ \Phi\circ \iota_G=\varsigma_\ast \circ \theta_\ast \rho=\rho$, which yields the commutative diagram in the statement of this lemma. 
%
\end{proof}

\autoref{LEM: extend} enables us to `match up' $\SL(n,\C)$-representations  of a profinitely isomorphic pair of finitely generated groups in a weak sense. Indeed, the construction of $\phi$ depends on the choice of $p$ and $\varsigma$; however, some properties of $\phi$ are independent with the choice,  for instance, regarding the ingredients of the discrete groups that are preserved under the profinite isomorphism. We apply this observation to a peripheral regular profinite isomorphism between bounded 3-manifolds. 


As a reminder for the definition of peripheral regularity, we reformulate \autoref{introdef: peripheral Zx regular} into the following setting. 

\begin{convention}\label{convsprs}
A {\em standard peripheral regular setting} $(\!(M,N,f)\!) $ consists of the following ingredients. 
\begin{enumerate}[leftmargin=*]
\item $M$ and $N$ are compact 3-manifolds with incompressible toral boundaries, and  $f:\widehat{\pi_1M}\to \widehat{\pi_1N}$ is a  peripheral regular isomorphism. 
\item Denote the boundary components of $M$ and $N$ as $\partial_1M,\cdots, \partial  _ k M$ and $\partial_1N,\cdots, \partial_kN$ respectively, and choose conjugacy representatives of peripheral subgroups $\pi_1\partial_iM\le \pi_1M$ and $\pi_1\partial_iN\le \pi_1N$.
\item Up to a reordering, assume $f(\overline{\pi_1\partial_iM})= \overline{\pi_1\partial_iN}^{g_i}$ for $g_i\in \widehat{\pi_1N}$.
\item\label{convsprs4} There are isomorphisms $\psi_i: \pi_1\partial_iM\to \pi_1\partial_iN$ such that $C_{g_i}\circ f|_{\overline{\pi_1\partial_iM}}: \widehat{\pi_1\partial_iM}\to \widehat{\pi_1\partial_iN}$ is the profinite completion of $\psi_i$. 
\end{enumerate} 
\end{convention}


\begin{proposition}\label{THM: rep matchup}
Let $(\!(M,N,f)\!)$ be a standard peripheral regular setting (\autoref{convsprs}), and let $n\in \mathbb{N}$. Given any representation $\theta: \pi_1N\to \SL(n,\C)$, there exists a representation $\phi:\pi_1M\to \SL(n,\C)$ such that for each $1\le i \le k$, the two representations
\begin{equation*}
\begin{gathered}
\phi_i: \pi_1\partial_iM\hookrightarrow \pi_1 M \xrightarrow{\phi} \SL(n,\C) \quad \text{and}\quad 
\psi_i^{\ast}\theta_i: \pi_1\partial_iM \xrightarrow{\psi_i}\pi_1\partial_iN \hookrightarrow \pi_1N \xrightarrow{\theta} \SL(n,\C)
\end{gathered}
\end{equation*}
are equivalent by conjugation in $\SL(n,\C)$. 
Moreover, when $\theta$ is non-abelian (resp. irreducible),  $\phi$ is also non-abelian (resp. irreducible).
\end{proposition}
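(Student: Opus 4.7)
The plan is to apply \autoref{LEM: extend} to $\theta$ and then pre-compose through $f$. First I would invoke \autoref{LEM: extend} to obtain a prime $p$, a (non-continuous) field isomorphism $\varsigma:\overline{\Q_p}\to\C$, and a continuous homomorphism $\Theta:\widehat{\pi_1N}\to\SL(n,\overline{\Q_p})$ satisfying $\varsigma_\ast\circ \Theta\circ \iota_{\pi_1N}=\theta$. (In fact, by the proof of \autoref{LEM: extend}, $\Theta$ lands in $\SL(n,K)$ for some finite extension $K/\Q_p$, which is locally compact and totally disconnected.) I would then define
\begin{equation*}
\phi\;=\;\varsigma_\ast\circ \Theta\circ f\circ \iota_{\pi_1M}\;:\;\pi_1M\tto \SL(n,\C).
\end{equation*}

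Next I would verify the peripheral compatibility. For $x\in\pi_1\partial_iM$, identified with its image in $\widehat{\pi_1M}$, item (\ref{convsprs4}) of \autoref{convsprs} gives $f(x)=g_i^{-1}\cdot\iota_{\pi_1N}(\psi_i(x))\cdot g_i$ inside $\widehat{\pi_1N}$. Applying $\Theta$ (a homomorphism) and then $\varsigma_\ast$, and using $\varsigma_\ast\circ \Theta\circ \iota_{\pi_1N}=\theta$, I would obtain
\begin{equation*}
\phi(x)\;=\;h_i^{-1}\cdot \theta(\psi_i(x))\cdot h_i\;=\;h_i^{-1}\cdot(\psi_i^\ast\theta_i)(x)\cdot h_i,\qquad h_i:=\varsigma_\ast(\Theta(g_i))\in \SL(n,\C),
\end{equation*}
showing that $\phi_i$ and $\psi_i^\ast\theta_i$ are conjugate in $\SL(n,\C)$.

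For the final clause I would compare Zariski closures inside $\SL(n,\overline{\Q_p})$. Write $Z$ and $Z'$ for the Zariski closures of $\Theta(\iota_{\pi_1N}(\pi_1N))$ and $\Theta(f(\iota_{\pi_1M}(\pi_1M)))$ respectively. The key point is that $\iota_{\pi_1M}(\pi_1M)$ is dense in $\widehat{\pi_1M}$ and $f$ is a homeomorphism, so $f(\iota_{\pi_1M}(\pi_1M))$ is dense in $\widehat{\pi_1N}$; by continuity of $\Theta$ and compactness of $\widehat{\pi_1N}$, both $\Theta(\iota_{\pi_1N}(\pi_1N))$ and $\Theta(f(\iota_{\pi_1M}(\pi_1M)))$ are $p$-adically dense in the compact set $\Theta(\widehat{\pi_1N})\subseteq\SL(n,K)$. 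Since Zariski closed sets are $p$-adically closed, both $Z$ and $Z'$ contain $\Theta(\widehat{\pi_1N})$, and hence each contains the other's generating set, forcing $Z=Z'$. Applying the algebraic isomorphism $\varsigma_\ast$ transports this equality to Zariski closures in $\SL(n,\C)$, so $\theta(\pi_1N)$ and $\phi(\pi_1M)$ have the same Zariski closure. Non-abelianness (the commutator map being non-trivial) and irreducibility (absence of a proper invariant subspace) are both determined by the Zariski closure, so they transfer from $\theta$ to $\phi$.

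The main subtle step is the last paragraph: the field isomorphism $\varsigma$ is wildly discontinuous, so one cannot directly transport $p$-adic topological statements to $\C$. The workaround is to do all topological work on the $\overline{\Q_p}$-side (where $\Theta$ is continuous and the image of $\widehat{\pi_1N}$ is $p$-adically compact), and to use $\varsigma_\ast$ only to transport \emph{algebraic} (Zariski) data, which is faithfully preserved.
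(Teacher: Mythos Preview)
Your construction of $\phi$ and the peripheral compatibility check are identical to the paper's proof. For the final clause, the paper treats the two properties separately and more directly: for non-abelianness it shows that if $\phi$ were abelian then $\overline{[\pi_1M,\pi_1M]}=\overline{[\widehat{\pi_1M},\widehat{\pi_1M}]}\subseteq\ker\Phi$, pushes this through $f$ to get $[\pi_1N,\pi_1N]\subseteq\ker\theta$; for irreducibility it observes that a $\Phi(\pi_1M)$-invariant subspace $W\subsetneq\overline{\Q_p}^{\,n}$ is, by $p$-adic density and continuity of $\Phi$, invariant under all of $\Phi(\widehat{\pi_1M})=\Theta(\widehat{\pi_1N})$, and then $\varsigma(W)$ is $\theta(\pi_1N)$-invariant. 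Your Zariski-closure argument is correct and packages both cases at once: since $\Theta(\iota(\pi_1N))$ and $\Theta(f(\iota(\pi_1M)))$ have the same $p$-adic closure $\Theta(\widehat{\pi_1N})$ and Zariski-closed sets are $p$-adically closed, their Zariski closures agree, and $\varsigma_\ast$ transports this to $\C$; both non-abelianness and irreducibility are read off the Zariski closure of the image. The underlying mechanism---density plus continuity into a $p$-adic target---is the same as the paper's, but your formulation is a tidy unification.
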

\begin{proof}
According to  \autoref{LEM: extend}, there exists a field isomorphism $\varsigma: \overline{\Q_p}\to \C$ such that $\theta$ extends to a representation 
\begin{equation*}
\begin{tikzcd}
 \widehat{\pi_1N}\arrow[rr,"\Theta","\text{(continuous)}"'] & & \SL(n,\overline{\Q_p})   \arrow[r,"\varsigma_\ast"] &  \SL(n,\C). 
\end{tikzcd}
\end{equation*}
Let $\Phi=\Theta\circ f: \widehat{\pi_1M}\to \SL(n,\Q_p)$, which is continuous, and let 
\begin{equation*}
\begin{tikzcd}
\phi:\;\pi_1M \arrow[r,hook] & \widehat{\pi_1M} \arrow[rr,"\Phi=\Theta\circ f","\text{(continuous)}"'] &  & \SL(n,\overline{\Q_p}) \arrow[r,"\varsigma_\ast","\cong"']& \SL(n,\C).
\end{tikzcd}
\end{equation*}

Then, for any $1\le i\le k$ and any $\alpha \in \pi_1\partial_iM$, 
\begin{align*}
\phi_{i}(\alpha)&=\varsigma_\ast\Theta(f(\alpha))=\varsigma_\ast\Theta(g_i ^{-1}\psi_i(\alpha)g_i) &(\text{\autoref{convsprs}~(\ref{convsprs4})}) \\
&=\varsigma_\ast\Theta(g_i)  ^{-1} \cdot \theta(\psi_i(\alpha))  \cdot  \varsigma_\ast\Theta(g_i)&\\
&= \varsigma_\ast\Theta(g_i)^{-1}\cdot (\psi_i^{\ast}\theta_i) (\alpha)  \cdot \varsigma_\ast\Theta(g_i).& 
\end{align*}
Note that the element $\varsigma_\ast\Theta(g_i)\in \SL(n,\C)$ that induces the conjugation is irrelavent with $\alpha$. Therefore, ${\phi_i}$ and ${\psi_i^\ast \theta_i}$ are equivalent.

Next, we prove the two extra properties. First, we show that if the presentation $\phi$ in this construction is abelian, then $\theta$ is also abelian. 
In this case, $[\pi_1M,\pi_1M]\subseteq \ker(\phi)$; since $\Phi$ is continuous, we have $\overline{[\pi_1M,\pi_1M]}\subseteq \ker(\Phi)$. 
According to \cite[Lemma 2.8]{Xu24a}, $\overline{[\pi_1M,\pi_1M]}=\overline{[\widehat{\pi_1M},\widehat{\pi_1M}]}$. 
Since $f$ is an isomorphism of profinite groups, $f(\overline{[\widehat{\pi_1M},\widehat{\pi_1M}]})= \overline{[\widehat{\pi_1N},\widehat{\pi_1N}]}$. 
Therefore, $\overline{[\widehat{\pi_1N},\widehat{\pi_1N}]}\subseteq \ker(\Phi\circ f^{-1})=\ker(\Theta)$. Since $[\pi_1N,\pi_1N]\subseteq \overline{[\widehat{\pi_1N},\widehat{\pi_1N}]}$, we finally derive that $[\pi_1N,\pi_1N]\subseteq \ker(\theta)$, which is to say  that $\theta$ is also abelian. 

Second, we show that if the presentation $\phi$ in this construction is reducible, then $\theta$ is also reducible. In this case, $\Phi(\pi_1M)$ keeps invariant a non-trivial linear subspace  $W\subsetneq \overline{\Q_p}^n$. Since $\pi_1M$ is dense in $\widehat{\pi_1M}$ and $\Phi$ is continuous, $\Phi(\widehat{\pi_1M})$ actually keeps $W$ invariant. Note that $\Phi(\widehat{\pi_1M})=\Theta(\widehat{\pi_1N})$, so $\Theta(\widehat{\pi_1N})$ also keeps $W$ invariant. Consequently, $\theta(\pi_1N)$ keeps the non-trivial linear subspace $\varsigma(W)\subsetneq \C^n$ invariant, which is to say $\theta$ is also reducible. 
\end{proof}

For a finitely generated group $G$, let $\mathscr{Y}_n(G)$ be the set of  equivalence classes of $\SL(n,\C)$ representations of $G$. There is a standard quotient map $\mathscr{Y}_n(G)\to \mathscr{X}_n(G)$ onto the $\SL(n,\C)$--character variety. When $G=\pi_1Y$ is the fundamental group of a compact, connected manifold $Y$, we abbreviate $\mathscr{Y}_n(G)$ into $\mathscr{Y}_n(Y)$. When $Z$ is  a disconnected manifold consisting of components $Z_1,\cdots, Z_k$, we denote $\mathscr{Y}_n(Z)=\mathscr{Y}_n(Z_1)\times \cdots \times \mathscr{Y}_n(Z_k)$. Any continuous map $Z\to Y$ between compact manifolds induces a map $\mathscr Y_n(Y)\to \mathscr{Y}_n(Z)$ by pulling back. 

\begin{corollary}\label{COR: rep variety}
Let $(\!(M,N,f)\!)$ be a standard peripheral regular setting (\autoref{convsprs}). Then, the images of the restriction maps $\mathscr{Y}_n(M)\to \mathscr{Y}_n(\partial M)$ and $\mathscr{Y}_n(N)\to \mathscr{Y}_n(\partial N)$  are identical through $\Psi=(\psi_1,\cdots,\psi_k)$. 
\end{corollary}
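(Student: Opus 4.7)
The plan is to apply Proposition~\ref{THM: rep matchup} in both directions. First, let me unpack the statement: the restriction maps $\mathrm{res}_M: \mathscr{Y}_n(M) \to \mathscr{Y}_n(\partial M)$ and $\mathrm{res}_N: \mathscr{Y}_n(N) \to \mathscr{Y}_n(\partial N)$ are well-defined on equivalence classes because simultaneous conjugation in $\SL(n,\mathbb{C})$ preserves restrictions, and the homeomorphism $\Psi$ inherited from the isomorphisms $\psi_i$ induces a pullback bijection $\Psi^{\ast} = (\psi_1^{\ast}, \ldots, \psi_k^{\ast}): \mathscr{Y}_n(\partial N) \to \mathscr{Y}_n(\partial M)$. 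The claim ``images identical through $\Psi$'' then means precisely $\Psi^{\ast}(\mathrm{im}(\mathrm{res}_N)) = \mathrm{im}(\mathrm{res}_M)$.

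For the inclusion $\Psi^{\ast}(\mathrm{im}(\mathrm{res}_N)) \subseteq \mathrm{im}(\mathrm{res}_M)$, I would take any $[\theta] \in \mathscr{Y}_n(N)$ and apply Proposition~\ref{THM: rep matchup} directly to produce $\phi: \pi_1 M \to \SL(n,\mathbb{C})$ with $\phi_i$ equivalent to $\psi_i^{\ast}\theta_i$ for every $1 \le i \le k$. This yields $\mathrm{res}_M([\phi]) = \Psi^{\ast}(\mathrm{res}_N([\theta]))$.

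For the opposite inclusion, I would exploit the manifest symmetry: $f^{-1}: \widehat{\pi_1 N} \to \widehat{\pi_1 M}$ fits into a standard peripheral regular setting $(\!(N, M, f^{-1})\!)$. Indeed, from the relations $f(\overline{\pi_1 \partial_i M}) = \overline{\pi_1 \partial_i N}^{g_i}$ and $C_{g_i} \circ f|_{\overline{\pi_1 \partial_i M}} = \widehat{\psi_i}$, one verifies directly that $f^{-1}(\overline{\pi_1 \partial_i N}) = \overline{\pi_1 \partial_i M}^{h_i}$ with $h_i = f^{-1}(g_i^{-1})$, and that $C_{h_i} \circ f^{-1}|_{\overline{\pi_1 \partial_i N}} = \widehat{\psi_i^{-1}}$. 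Applying Proposition~\ref{THM: rep matchup} to $(\!(N, M, f^{-1})\!)$ then produces, for any $[\phi] \in \mathscr{Y}_n(M)$, a representation $\theta: \pi_1 N \to \SL(n,\mathbb{C})$ with $\theta_i$ equivalent to $(\psi_i^{-1})^{\ast} \phi_i$, equivalently $\psi_i^{\ast} \theta_i$ equivalent to $\phi_i$, so $\mathrm{res}_M([\phi]) = \Psi^{\ast}(\mathrm{res}_N([\theta]))$.

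Since the argument reduces to a direct application of Proposition~\ref{THM: rep matchup} combined with the symmetry $f \leftrightarrow f^{-1}$, I do not anticipate any substantial obstacle. The only nontrivial check is the inheritance of peripheral regularity under inversion, which is a routine rearrangement of the defining equations in Convention~\ref{convsprs}; the bulk of the work has already been absorbed into Lemma~\ref{LEM: extend} and Proposition~\ref{THM: rep matchup}, whose proofs rely on the $p$-adic boundedness trick of Bridson--McReynolds--Reid--Spitler together with density of $\pi_1M$ in $\widehat{\pi_1M}$.
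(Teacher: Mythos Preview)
Your proposal is correct and follows essentially the same approach as the paper: apply Proposition~\ref{THM: rep matchup} for one inclusion, then invoke the symmetry $f\leftrightarrow f^{-1}$ with the explicit choice $h_i=f^{-1}(g_i^{-1})$ and $\psi_i^{-1}$ to obtain the reverse inclusion. The paper's proof is terser but identical in content.
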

\begin{proof}
According to \autoref{THM: rep matchup},  $ \Psi^\ast(\mathrm{im}(\mathscr{Y}_n(N)\to \mathscr{Y}_n(\partial N))) \subseteq \mathrm{im}(\mathscr{Y}_n(M)\to \mathscr{Y}_n(\partial M)) $. 
By symmetry, we consider $f^{-1}:\widehat{\pi_1N}\to \widehat{\pi_1M}$. Let $h_i=f^{-1}(g_i^{-1})\in\widehat{\pi_1M}$. Then $f^{-1}(\overline{\pi_1\partial_i N})= {\overline{\pi_1\partial_iM}}^{h_i}$, and $C_{h_i}\circ f^{-1}|_{\overline{\pi_1\partial_iN}}$ is the profinite completion of $\psi_i^{-1}$. Again by \autoref{THM: rep matchup}, $(\Psi^{-1})^\ast (\mathrm{im}(\mathscr{Y}_n(M)\to \mathscr{Y}_n(\partial M))\subseteq \mathrm{im}(\mathscr{Y}_n(N)\to \mathscr{Y}_n(\partial N))$. Combining the two sides yields the conclusion. 
\end{proof}

\autoref{COR: rep variety}, in particular, holds for cusped hyperbolic 3-manifolds according to \autoref{mainthm: regular}, in which the boundary homeomorphism $\Psi:\partial M\to \partial N$ is either orientation-preserving on all components or orientation-reversing on all components according to \autoref{PROP: square}~(\ref{7.1-1}). This proves \autoref{mainthm: character variety}. 
\newsavebox{\repnew}
\begin{lrbox}{\repnew}
\(
\begin{tikzcd}
                               & \pi_1(X(K)) \arrow[ld, hook] \arrow[rd, "\theta"'{yshift=0.1ex}] \arrow[rrrd, "\tau"] &                                   &  &             \\
\widehat{\pi_1(X(K))} \arrow[rr, "\Theta"{xshift=-1ex},"\text{(continuous)}"'{xshift=-0.5ex}] &                                                                   & \Pi \arrow[rr, "\varsigma", hook] &  & {\SL(2,\C)}
\end{tikzcd}
\)
\end{lrbox}

We can now prove the first part of \autoref{mainthm: A-polynomial general}. Note that $\Z$, the group of the unknot, is profinitely rigid among all finitely generated  residually finite groups, so we are justified to stay with non-trivial knots in the proof of \autoref{mainthm: A-polynomial general}.

\begin{theorem}\label{THM: A-polynomial peripheral}
Let $J$ and $K$ be non-trivial knots in $S^3$. Suppose   $f: \widehat{\pi_1(X(J))}\to \widehat{\pi_1(X(K))}$ is an isomorphism that respects the peripheral structure. Then, up to possibly replacing $K$ with its mirror image, the enhanced $A$-polynomials of $J$ and $K$ coincide: $\widetilde{A}_J(M,L)\doteq \widetilde{A}_K(M,L)$.
\end{theorem}
\begin{proof}
By the profinite detection of geometrization and JSJ-decomposition (\autoref{LEM: JSJ}), $J$ and $K$ falls in the same class of torus knots, graph knots, hyperbolic knots, and satellite knots containing hyperbolic JSJ-pieces. When $J$ and $K$ are torus knots or graph knots, \cite[Theorem 1.5]{BF19} for the former case and \autoref{LEM: graph knot} for the latter case imply that $J$ and $K$ are equivalent. Hence, in these two cases, the (enhanced) $A$-polynomials of $J$ and $K$ certainly coincide up to a possible mirror image. 

Now we assume that $J$ and $K$ are hyperbolic knots or satellite knots containing hyperbolic JSJ-pieces.  Then, by \autoref{mainthm: regular} for the former case, and by \autoref{THM: Mixed peripheral} for the latter case, $f$ is peripheral regular. 
We follow the standard peripheral regular setting in \autoref{convsprs} for $(\!(X(J),X(K),f)\!)$. Then, according to \autoref{THM: knot boundary}, the isomorphism $\psi: \pi_1(\partial X(J))\to \pi_1(\partial X(K))$ is defined by $\psi(m_J)=\pm m_K$ and $\psi(l_J)=\pm l_K$. Up to possibly replacing $K$ with its mirror image and possibly changing the orientation of $l_K$, we can assume that $\psi(m_J)=m_K$ and $\psi(l_J)=l_K$.

By \autoref{DEF: A-polynomial}, in order to show $\widetilde{A}_J\doteq \widetilde{A}_K$, it suffices to show that $\xi^\ast(J)=\xi^\ast(K)$. We first show that $\xi^\ast(J)\supseteq \xi^\ast(K)$. For any $(M,L)\in \xi^\ast(K)$, let $\theta: \pi_1 (X(K))\to \SL(2,\C)$ be a non-abelian representation such that
$$
\tau(m_K)=\begin{pmatrix} M & 0 \\ 0 & M^{-1}\end{pmatrix} \quad \text{and}\quad \tau(l_K)=\begin{pmatrix} L & 0 \\ 0 & L^{-1}\end{pmatrix}.
$$

According to \autoref{THM: rep matchup}, there exists a non-abelian representation $\phi:\pi_1(X(J))\to \SL(2,\C)$ such that the two representations $\phi|: \pi_1(\partial X(J)) \to \SL(2,\C)$ and $\tau|\circ \psi:\pi_1(\partial X(J))\to \pi_1(\partial X(K))\to \SL(2,\C)$ are equivalent. Thus, we can compose $\phi$ with a conjugation in $\SL(2,\C)$ to obtain a representaion $\phi':\pi_1(X(J))\to \SL(2,\C)$, such that $\phi'|: \pi_1(\partial X(J)) \to \SL(2,\C)$ is equal to $\tau|\circ \psi:\pi_1(\partial(X(J))\to \SL(2,\C)$. In particular,
$$
\phi'(m_J)=\tau(\psi(m_J))=\tau(m_K)=\begin{pmatrix} M & 0 \\ 0 & M^{-1}\end{pmatrix} \; \text{ and }\;\phi'(l_J)=\tau(\psi(l_J))=\tau(l_K)=\begin{pmatrix} L & 0 \\ 0 & L^{-1}\end{pmatrix}.
$$
Therefore, $(M,L)\in \xi^\ast(J)$.

By symmetry, replacing $f$ with $f^{-1}$ also implies $\xi^\ast (J)\subseteq\xi^\ast(K)$. Thus, $\xi^\ast(K)=\xi^\ast (J)$, which finishes the proof. 
\end{proof}
\subsection{Getting rid of the peripheral structure}

Now we prove the second part of \autoref{mainthm: A-polynomial general}. Most cases here can be restored to \autoref{THM: A-polynomial peripheral}, such as the case of hyperbolic knots and hyperbolic-type satellite knots (i.e. satellite knots whose outermost JSJ-piece is hyperbolic), as profinite isomorphisms between them always respect the peripheral structure. The most tricky case here is cabled knots, for which we utilize the cabling formula by Ni--Zhang \cite{NZ17} to determine the $A$-polynomials. 

\begin{definition}
Let $K$ be a knot in $S^3$, and let $p,q$ be a coprime pair of integers, where $\abs{q}\ge 2$. The $(p,q)$-cable of $K$ is a knot $C_{p,q}(K)$ lying on $\partial N(K)$ that represents the homotopy class $p[m_K]+q[l_K]$ on $\partial N(K)$, where $(m_K,l_K)$ is the  preferred meridian-longitude pair.
\end{definition}
Usually, the sign of $q$  determines whether the orientation of $C_{p,q}(K)$ follows or reverses that of $K$. However, the $A$-polynomial is not sensible to orientations on the knot (see \autoref{PROP: A-polynomial reverse}), so in the discussion of this article, we may always assume $q>0$.  

Now we state the cabling formula for the (enhanced) $A$-polynomial introduced by Ni--Zhang.
\begin{theorem}[{\cite[Theorem 2.8]{NZ17}}]\label{THM: cable formula}
Let $K$ be a non-trivial knot, and let $p,q$ be a pair of coprime integers such that $q\ge 2$. Let $\widetilde{A}_K(M,L)$ be the normalized enhanced $A$-polynomial. Then
$$
\widetilde{A}_{C_{p,q}(K)}(M,L)=\left\{ \begin{aligned} &\mathrm{Red}\left(F_{p,q}(M,L)\cdot \mathrm{Res}_{y}(\widetilde{A}_{K}(M^q,y),y^q-L)\right ), & \text{if }\mathrm{deg}_{L}(\widetilde{A}_K)\neq 0\\&
F_{p,q}(M,L)\cdot \widetilde{A}_K(M^q), & \text{if }\mathrm{deg}_{L}(\widetilde{A}_K)=0 \end{aligned}\right.
$$
where $\mathrm{Res}_y$ is the resultant of two polynomials in $y$, and $F_{p,q}$ is defined by
$$
F_{p,q}(x,y)=\left\{\begin{aligned} & 1+x^{2p}y, &\text{if }q=2\text{ and }p>0,\\
&x^{-2p}+y, & \text{if }q=2\text{ and }p<0,\\
&-1+x^{2pq}y^2, &\text{if }q>2\text{ and }p>0,\\
&-x^{-2pq}+y^2, & \text{if }q>2\text{ and }p<0.\end{aligned}\right. 
$$
\end{theorem}

Though \autoref{THM: cable formula} is stated for the enhanced $A$-polynomial, one can deduce from the relation (\ref{Arelationequ}) and the formula $\mathrm{Res}_y(f(y)g(y),h(y))=\mathrm{Res}_y(f(y),h(y))\mathrm{Res}_y(g(y),h(y))$ that for any non-trivial knot $K$,
$$
A_{C_{p,q}(K)}(M,L)=\mathrm{Red}\left(F_{p,q}(M,L)\cdot \mathrm{Res}_{y}({A}_{K}(M^q,y),y^q-L)\right ),
$$
where $A_{K}$ is normalized. 

In particular, $\widetilde{A}_{C_{p,q}(K)}$ only relies on $\widetilde{A}_K$ and $(p,q)$, and ${A}_{C_{p,q}(K)}$ only relies on ${A}_K$ and $(p,q)$. 

\newsavebox{\gluecableequ}
\begin{lrbox}{\gluecableequ}
\(
\begin{tikzcd}[column sep=tiny, row sep=large]
\widehat{\pi_1M_r} \arrow[d, "\text{conj}\circ f_r"'] \arrow[r,symbol=\supseteq] & \overline{\pi_1\partial_0M_r} \arrow[d, "\partial f^0_r"] \arrow[rrrrr, "\widehat{\varphi}","\cong"'] &  & & &  & \overline{\pi_1\partial M^0} \arrow[d, "\partial f^0"'] \arrow[r,symbol=\mathop{=}] & {\mathrm{Span}_{\widehat{\Z}}\{m_{c(J)},l_{c(J)}\}} \\
\widehat{\pi_1N_s} \arrow[r, symbol=\supseteq ]                                    & \overline{\pi_1\partial_0N_s} \arrow[rrrrr, "\widehat{\psi}","\cong"']                                &  &  & &  & \overline{\pi_1\partial N^0} \arrow[r,symbol=\mathop{=}]                            & {\mathrm{Span}_{\widehat{\Z}}\{m_{c(K)},l_{c(K)}\}}
\end{tikzcd}
\)
\end{lrbox}

\begin{theorem}\label{THM: A prime}
Suppose $J$ and $K$ are knots in $S^3$ and $\widehat{\pi_1(X(J))}\cong \widehat{\pi_1(X(K))}$. Assume one of $J$ and $K$ is a prime knot. Then, up to possibly replacing $K$ with its mirror image, $\widetilde{A}_{J}(M,L)\doteq \widetilde A_{K}(M,L)$.
\end{theorem}
\begin{proof}
As in \autoref{THM: A-polynomial peripheral}, we may also assume that $J$ and $K$ are non-trivial knots. 
According to \autoref{THM: JSJ} and \autoref{LEM: JSJ}~(\ref{lem2}), which does not rely on the peripheral structure, $J$ and $K$ belong  to the same class of torus knots, graph knots, hyperbolic knots, or satellite knots with hyperbolic pieces. When they are torus knots, again by \cite{BF19}, $J$ and $K$ are equivalent; and when they are graph knots, Wilkes \cite[Theorem B]{WilkesKnot} proved that $J$ and $K$ are equivalent whenever one of them is prime. Thus, in these two cases, obviously $\widetilde A_J\doteq \widetilde A_K$ up to a possible mirror image. When $J$ and $K$ are hyperbolic knots, it follows from \autoref{PROP: respect peripheral structure} that any isomorphism  $\widehat{\pi_1(X(J))}\cong \widehat{\pi_1(X(K))}$ respects the peripheral structure. Hence, the conclusion for this case follows from \autoref{THM: A-polynomial peripheral}.

In what follows, we assume that $J$ and $K$ are satellite knots with hyperbolic pieces. 
For simplicity of notation, let $M=X(J)$ and $N=X(K)$. The JSJ-graphs $\Gamma^M$ and $\Gamma^N$ are rooted trees, where the roots $r\in \Gamma^M$ and $s\in \Gamma^N$ correspond to the outermost JSJ-pieces. Let $f:\widehat{\pi_1M}\to \widehat{\pi_1N}$ be an isomorphism, and let $f_\bullet: (\widehat{\mathcal{G}^M},\Gamma^M)\xrightarrow{\cong}(\widehat{\mathcal{G}^N},\Gamma^N)$ be the congruent isomorphism given by \autoref{THM: JSJ}. Then, by \autoref{LEM: JSJ}~(\ref{lem1}), $\F(r)=s$. In particular, $M_r$ and $N_s$ are either both hyperbolic or both Seifert fibered by (\ref{lem2}) of the same lemma.  

When $M_r$ and $N_s$ are hyperbolic, it follows from \cite[Proposition 3.3]{Xu24b} that $f$ respects the peripheral structure. This can also be deduced from \autoref{LEM: JSJ}~(\ref{lem4}). Thus, in this case, the conclusion follows from \autoref{THM: A-polynomial peripheral}.

Now we assume that $M_r$ and $N_s$ are Seifert fibered. Then, according to \cite[Theorem 4.18]{Bud06}, the outermost Seifert fibered piece of a satellite knot can only be a cable  space $C_{p,q}=(S_{0,2}; p/q)$ ($\abs{q}\ge 2$) or a composite space $S_{0,n}\times S^1$ ($n\ge 3$), where $S_{0,n}$ denotes $S^2$ removing $n$ open disks. Note that when the outermost piece is a cable space, the degree of the root in the JSJ-graph is $1$, and when  the outermost piece is a composite space, the degree of the root in the JSJ-graph is strictly greater than $1$. Thus, the graph isomorphism $\F:\Gamma^M\to \Gamma^N$ implies that $M_r$ and $N_s$ are either both cable spaces or both composite spaces. When they are both composite spaces, both $J$ and $K$ are non-prime. Thus, by assumption, $M_r=C_{p,q}$ and $N_s=C_{p',q'}$ are cable spaces.

Consequently, $J$ is the $(p,q)$-cable of the companion knot $c(J)$, and $K$ is the $(p',q')$-cable of the companion knot $c(K)$. As our results are not affected by the orientation of the knots, we may assume  that $q>0$ and $q'>0$.  By assumption, $c(J)$ and $c(K)$ are hyperbolic knots or satellite knots containing hyperbolic pieces. Let $M^0=M-M_r$ which is the exterior of $c(J)$, and let $N^0=N-N_s$ which is the exterior of $c(K)$. The congruent isomorphism $f_\bullet$ restricts to a congruent isomorphism $f_\bullet^0:(\widehat{{\mathcal{G}}^{M^0}},\Gamma^M-r)\to (\widehat{{\mathcal{G}}^{N^0}},\Gamma^N-s)$. Then $f_\bullet^0$ also induces an isomorphism $f^0:\widehat{\pi_1M^0}\to \widehat{\pi_1N^0}$ according to \cite[Proposition 3.11]{Xu24a}.  For any $v\in V(\Gamma^M-r)$,  $f_v^0=f_v$ respects the peripheral structure, as is completely witnessed in the JSJ-graph. Thus, by \autoref{LEM: JSJ}~(\ref{lem4}), $f^0$ respects the peripheral structure. Then, by \autoref{THM: A-polynomial peripheral}, up to a possible mirror image, $\widetilde A_{c(J)}\doteq \widetilde A_{c(K)}$.

Let us be precise about the possible mirror image. 
Since $M^0$ and $N^0$ are either hyperbolic or mixed, by \autoref{mainthm: regular} and \autoref{THM: Mixed peripheral} respectively, $f^0$ is peripheral regular. $\partial M^0$ is endowed with  a preferred meridian-longitude basis $(m_{c(J)}, l_{c(J)})$, and $\partial N^0$ is endowed with  a preferred meridian-longitude basis $(m_{c(K)}, l_{c(K)})$. According to \autoref{THM: knot boundary}, there exists $g\in \widehat{\pi_1(N^0)}$ such that $f^0(m_{c(J)})=g^{-1}  (m_{c(K)})^{\pm 1}  g$ and $f^0(l_{c(J)})=g^{-1}  (l_{c(K)})^{\pm 1}  g $. Up to possibly  reversing the orientation of the  orientation of the ambient space $S^3$, and possibly reversing the orientation of the knot $K$ along with its companion knot $c(K)$, we may assume $f^0(m_{c(J)})=g^{-1}   m_{c(K)}  g$ and $f^0(l_{c(J)})=g^{-1}   l_{c(K)}  g $. In this case, as shown in the proof of  \autoref{THM: A-polynomial peripheral}, $\widetilde A_{c(J)}\doteq \widetilde A_{c(K)}$. 

We follow this adjusted orientation. Then the cabling parameters $(p,q)$ and $(p',q')$ can be determined as follows. For the knot $J$, the cabling parameter is $(p,q)$ if and only if the regular fiber of the cable space $C_{p,q}=M_r$ is glued to the boundary slope $\pm (pm_{c(J)}+ql_{c(J)})$ at the JSJ-torus $\partial M^0$. Similarly, the regular fiber of the cable space $C_{p',q'}=N_s$ is glued to the boundary slope $\pm (p'm_{c(K)}+q'l_{c(K)})$ at the JSJ-torus $\partial N^0$. Denote $\partial f^0 = C_g\circ f|_{\overline{\pi_1\partial M^0}}$. In fact, $\partial f^0$ does not depend on the choice of the element $g$ that induces the conjugation, according to \cite[Proposition 6.7]{Xu24a}. Then, the coherence relation (\ref{butterfly}) implies a commutative diagram
\begin{equation*}
\usebox{\gluecableequ}
\end{equation*}
where $\varphi$ and $\psi$ are gluing maps at the JSJ-tori. It is shown in \cite[Theorem 5.5]{Wil17} that $f_r$ preserves the closure of the fiber subgroup. Indeed, let $h_M\in \pi_1\partial_0 M_r\subseteq \pi_1M_r$ and $h_N\in \pi_1\partial_0 N_s\subseteq \pi_1N_s$ represent the regular fibers. If we write these peripheral subgroups in the addditive convention of $\widehat{\Z}$-modules, then $\partial f_r^0(h_M)= \lambda\cdot h_N$ for some $\lambda\in \Zx$. Note that $$\partial f^0(\widehat{\varphi}(h_M))=\partial f^0(\pm (pm_{c(J)}+ql_{c(J)}))= \pm (pm_{c(K)}+ql_{c(K)}),$$ while $$\widehat{\psi}(\partial f^0_r( h_M))= \widehat{\psi}(\lambda h_N )=\pm \lambda(p'm_{c(K)}+q'l_{c(K)}).$$ Thus, it follows from \autoref{LEM: Zx+-1} that $\lambda=\pm1$ and $(p,q)=\pm (p',q')$. As we have assumed $q,q'>0$, we finally obtain that $(p,q)=(p',q')$.

Thus, by the cabling formula \autoref{THM: cable formula}, $\widetilde A_{c(J)}\doteq \widetilde  A_{c(K)}$ and $(p,q)=(p',q')$ implies that $\widetilde A_{J}\doteq \widetilde A_{K}$. 
\end{proof}

\section*{Acknowledgements}
The author would like to thank his advisor   Yi Liu for insightful discussions and suggestions.

\bibliographystyle{amsplain}
\bibliography{main.bib}

\end{sloppypar}
\end{document}